\theoremstyle{plain}
\newtheorem{lem}{Lemma}[section]
\newtheorem{thm}{Theorem}[section]
\newtheorem{prp}{Proposition}[section]
\newtheorem{cor}{Corollary}[section]
\theoremstyle{definition}
\newtheorem{dfn}{Definition}[section]
\theoremstyle{remark}
\newtheorem{rem}{Remark}[section]
\newtheorem{bsp}{Example}[section]
\newtheorem{ntn}{Notation}[section]
\DeclareMathOperator*{\im}{im\hspace{0.025cm}}
\DeclareMathOperator*{\spec}{spec}
\DeclareMathOperator*{\FinRk}{FinRk}
\DeclareMathOperator*{\curv}{Curv}
\DeclareMathOperator*{\mcurv}{Curv_{m}}
\DeclareMathOperator*{\essinf}{ess\,inf}
\DeclareMathOperator*{\esssup}{ess\,sup}
\DeclareMathOperator*{\End}{End}
\begin{document}

\title{\raggedright\Large{\textbf{$L^{2}$-Wasserstein distances of tracial $W^{*}$-algebras\\
and their disintegration problem}}}
\date{}

\maketitle

\vspace{-1.8cm}

\begin{flushleft}
David F. Hornshaw\\
\textit{Institute for Applied Mathematics, University of Bonn\\
Endenicher Allee 60, 53115 Bonn, Germany\\
\textnormal{E-mail:} hornshaw@iam.uni-bonn.de}
\end{flushleft}

\begin{abstract}
\noindent We introduce $L^{2}$-Wasserstein distances on densities of tracial $W^{*}$-algebras based on a Benamou-Brenier formulation, replacing multiplication by densities with multiplication operators arising as the logarithmic mean under a functional calculus. Furthermore, we concern ourselves with $L^{2}$-Wasserstein distances induced by decomposed derivations on $C^{*}$-algebras of continuous sections of a $\mathcal{K}(H)$-bundle vanishing at infinity. We prove a distintegration theorem for such distances, introduce mean entropic curvature bounds in case $H$ is finite-dimensional and show control of these by the essential infimum of the entropic curvature bounds on the fibres. To conclude, we give sufficient conditions for disintegrating arbitrary $L^{2}$-Wasserstein distances for unital $C^{*}$-algebras that are Morita equivalent to a commutative unital $C^{*}$-algebra.
\end{abstract}

\tableofcontents

\newpage

\section*{Introduction}
We present our motivation and main results, after which we give an overview of the paper's content and structure. The overview is followed by a discussion concerning earlier results by Carlen and Maas for certain finite-dimensional cases.\newline\par

\textbf{Motivation.} Our motivation is to conduct geometric analysis on noncommutative spaces. For this, understanding of noncommutative curvature is essential. Doing so presents an ongoing challenge, one prominent approach being modular curvature for the special case of noncommutative tori. This was developed in \cite{CoMoModCur2T}, \cite{KhalkaliScalCurv4Tori} and \cite{LeMoModCurvME}. In general, an inability to access \textit{local} information prevents straight-forward generalisation of classical definitions to the noncommutative setting. An example of this is a lack of elementary ODE theory, even the notion of a chart, in the proper noncommutative setting. On the other hand, fruitful geometric analysis is possible in case we only have curvature bounds. A fundamental example is Bochner's inequality, see Li and Yau \cite{LiYauParKer}. Such bounds are \textit{global} information of the underlying space and can be expressed synthetically, hence we expect them to have a noncommutative analogue.\par
Rather than understanding curvature directly, we therefore seek to establish a noncommutative analogue of Ricci curvature bounds. Synthetic Ricci curvature bounds for metric measure spaces in the form of entropic curvature bounds were introduced by Sturm \cite{SturmGMMSI}. Utilising $L^{2}$-Wasserstein distances, this approach leads to rich metric geometry for metric measure spaces beginning in \cite{SturmGMMSI} and \cite{SturmGMMSII}. The paper \cite{ErKuStEquivalence} by Erbar, Kuwada and Sturm shows equivalence of various curvature bound conditions, as well as an analogue of Bochner's inequality for specific metric measure spaces. Other examples of Bochner's inequality applied to the geometric analysis of singular spaces are \cite{GiKuwHeatAlex}, \cite{OhStBeWeitz} and \cite{ZhaZhuAlexandrov}.\newline\par

\textbf{Main Results.} Our main results are twofold. Firstly, we introduce $L^{2}$-Wasserstein distances on the space of densities of a tracial $W^{*}$-algebra. A $C^{*}$-algebra $A$ equipped with a l.s.c.~semi-finite trace $\tau$ on $A$ yields a tracial $W^{*}$-algebra $L^{\infty}(A,\tau)$ represented over $L^{2}(A,\tau)$. Given a type of $A$-derivation $\partial$ from $L^{2}(A,\tau)$ to a submodule of a sum $\bigoplus_{k=1}^{m}L^{2}(A,\tau)$, we follow a Benamou-Brenier approach to define the $L^{2}$-Wasserstein distance. We will call such derivations symmetric gradients. The distance is given on the space of densities $\mathcal{D}:=\{p\in L_{+}^{1}(A,\tau)\ |\ \tau(p)=1\}$ by the minimisation problem

\begin{align*}
\mathcal{W}_{2}(p,q):=\inf_{(\rho_{t},v_{t})\in\mathcal{A}(p,q)}\sqrt{\frac{1}{2}\int_{0}^{1}||v_{t}||_{\rho_{t}}dt}
\end{align*}

\noindent where $\rho_{t}\in\mathcal{D}$, $\rho_{0}=p$, $\rho_{1}=q$, while $v_{t}$ lies in a tangent space constructed over each $\rho_{t}$. Furthermore, we demand a continuity equation $\frac{d}{dt}\tau(pa)=\langle v_{t},a\rangle_{\rho_{t}}$ to be satisfied. Here, $a$ is an element of an appropriate $^{*}$-subalgebra $\mathfrak{A}\subset A\cap D(\partial)$ with

\begin{align*}
||a||_{\rho_{t}}^{2}=\sum_{k=1}^{m}\int_{0}^{1}\tau(\rho_{t}^{1-\alpha}\partial a\rho_{t}^{\alpha}\partial a)d\alpha
\end{align*}

\noindent yielding the tangent space over $\rho_{t}$ via Hausdorff completion of $\mathfrak{A}$. Our choice of inner product arises naturally from a noncommutative chain rule if we wish to retain classical relationships between finiteness of $\mathcal{W}_{2}$ on bounded densities, the heat flow and relative entropy.\par
Secondly, we consider examples of form $C_{0}(X,\mathcal{K}(H))$ and give conditions for disintegrating an $L^{2}$-Wasserstein distance into $L^{2}$-Wasserstein distances for $(\mathcal{K}(H),\textrm{tr})$. Let $X$ be a locally compact Hausdorff space with Radon measure $\nu$ such that $(X,\mathcal{B}(X))$ is a separable measure space. Then $A=C_{0}(X,\mathcal{K}(H))$ equipped with trace

\begin{align*}
(\nu\otimes\textrm{tr})(F):=\int_{X}\textrm{tr}(F(x))d\nu
\end{align*}

\noindent provides a setting giving rise to well-behaved $L^{2}$-Wasserstein distances. A sufficiently regular decomposition $\partial=(\partial_{x})_{x\in X}$ into symmetric gradients $\partial_{x}$ for $(\mathcal{K}(H),\textrm{tr})$ induces a distance for which minimisers disintegrate. The disintegration theorem reads as follows:

\setcounter{section}{4}
\setcounter{thm}{0}

\begin{thm}
Let $\partial$ be a vertical gradient such that $\partial_{x}$ has continuous dependence of minimisers on start- and endpoints for a.e.~$x\in X$. For all $P,Q\in\mathcal{D}$ with finite distance, we have 

\begin{align*}
\mathcal{W}_{2}^{2}(P,Q)=\int_{X}\mathcal{W}_{2,x}^{2}(\theta_{P}(x)^{2}P(x),\theta_{P}(x)^{2}Q(x))d\nu_{P}
\end{align*}

\noindent and there exists a minimiser $\mu_{t}$ of $\mathcal{W}_{2}(P,Q)$ such that $\theta_{P}(x)^{2}\mu_{t}(x)\in\mathcal{M}(\theta_{P}(x)^{2}P(x),\theta_{P}(x)^{2}Q(x))$ for a.e.~$x\in X$. 
\end{thm}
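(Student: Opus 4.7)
The overall strategy is to prove the claimed identity by matching upper and lower bounds on $\mathcal{W}_{2}^{2}(P,Q)$, and then to deduce existence of the disintegrating minimiser $\mu_{t}$ from equality in a Fubini-type estimate. The key structural input is that $\partial$ is a vertical gradient, so $(\partial a)(x) = \partial_{x} a(x)$ fibrewise, and the trace $\nu \otimes \textrm{tr}$ factors through $\nu$ and $\textrm{tr}$; together these let Fubini translate the global continuity equation and the norm $\|\cdot\|_{\rho_{t}}$ into their fibrewise counterparts.

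For the lower bound, I would take an arbitrary admissible pair $(\rho_{t}, V_{t})$ for $\mathcal{W}_{2}(P,Q)$ and localise the global continuity equation $\frac{d}{dt}(\nu \otimes \textrm{tr})(\rho_{t} a) = \langle V_{t}, a\rangle_{\rho_{t}}$ by testing against elements of $\mathfrak{A}$ of product form $f \cdot a$ with $f \in C_{c}(X)$. Letting $f$ concentrate at $x$ yields, for $\nu$-a.e.~$x$, a fibrewise continuity equation for $(\rho_{t}(x), V_{t}(x))$; after rescaling endpoints by $\theta_{P}(x)^{2}$, the pair is admissible for $\mathcal{W}_{2,x}(\theta_{P}(x)^{2}P(x), \theta_{P}(x)^{2}Q(x))$. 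Applying Fubini to the identity $\|V_{t}\|_{\rho_{t}}^{2} = \int_{X} \|V_{t}(x)\|_{\rho_{t}(x)}^{2} d\nu$ and then taking fibrewise infima produces the inequality $\mathcal{W}_{2}^{2}(P,Q) \geq \int_{X} \mathcal{W}_{2,x}^{2}(\theta_{P}(x)^{2}P(x), \theta_{P}(x)^{2}Q(x)) d\nu_{P}$.

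For the reverse inequality and the construction of the minimiser, I would select for each $x$ a minimising pair $(\mu_{t}(x), v_{t}(x))$ of $\mathcal{W}_{2,x}(\theta_{P}(x)^{2}P(x), \theta_{P}(x)^{2}Q(x))$. The continuous dependence hypothesis, combined with the (essentially Borel) dependence of the endpoint data on $x$, yields $\nu_{P}$-measurability of the selection — either by a direct continuity argument along a countable dense subset of the endpoint data or via a Kuratowski--Ryll-Nardzewski type selection. Gluing these fibrewise minimisers and reversing the Fubini argument produces a pair $(\mu_{t}, v_{t}) \in \mathcal{A}(P,Q)$ whose action equals $\int_{X} \mathcal{W}_{2,x}^{2}(\ldots) d\nu_{P}$. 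Combining both directions forces equality, and since equality in the Fubini estimate is attained only when $\theta_{P}(x)^{2}\mu_{t}(x)$ realises the fibrewise distance $\mathcal{W}_{2,x}$ for $\nu_{P}$-a.e.~$x$, the membership in $\mathcal{M}(\theta_{P}(x)^{2}P(x), \theta_{P}(x)^{2}Q(x))$ follows.

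The main obstacle is the upper bound: ensuring that the measurable selection of fibrewise minimisers glues into an admissible global curve rather than just a measurable family. Concretely one must control joint measurability in $(t,x)$, $L^{2}$-integrability of the velocity field over $X$ with respect to $\nu_{P}$, and compatibility with the tangent-space construction built via Hausdorff completion of $\mathfrak{A}$ — in particular that $t \mapsto \tau((\nu \otimes \textrm{tr}) \mu_{t} a)$ is absolutely continuous with derivative given by the glued $v_{t}$. The continuous dependence hypothesis is precisely the input that permits this selection and gluing, thereby justifying the interchange of the fibrewise infimum with the integral $\int_{X} \cdots d\nu_{P}$.
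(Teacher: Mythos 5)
Your overall strategy coincides with the paper's: a lower bound obtained by disintegrating an arbitrary admissible path into fibrewise admissible paths plus Fubini, and an upper bound obtained by a measurable selection of fibrewise minimisers glued into a global admissible path, with continuous dependence on start- and endpoints as the input for the selection theorem. Two of the steps you sketch, however, conceal the real work and would not go through as literally described. First, the localisation of the continuity equation by ``letting $f$ concentrate at $x$'' does not work directly: for each fixed $t$ the density $\rho_{t}$ is only an equivalence class, and the exceptional null set in $X$ depends both on $t$ and on the test element, so a pointwise limit in $f$ produces no single fibrewise path. The paper instead \emph{constructs} a rectified representative by defining $\tilde{P}_{t}(x)$ through the integrated continuity equation $T\longmapsto\int_{0}^{t}\langle W_{s}(x),M_{P_{s}(x)}^{\frac{1}{2}}\partial_{x}T\rangle_{\mathcal{H}}ds$, proves it is $w^{*}$-continuous in $t$ and strongly measurable in $x$, and only then identifies $\tilde{P}_{t}+P$ with $P_{t}$ up to a null set uniform in $t$ (Lemmas \ref{LEM.Rcft_Msrbl} and \ref{LEM.Rctf}); this is what makes the mean energy representation of Corollary \ref{COR.Rctf_DblInt}, and hence your lower bound, legitimate.

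Second, for the selection: continuous dependence of minimisers is formulated for $\|\cdot\|_{\mathcal{S}_{1}(H)}$-convergent sequences of endpoints, whereas $x\longmapsto\theta_{P}(x)^{2}P(x)$ is merely measurable, so a ``direct continuity argument along a countable dense subset of the endpoint data'' has nothing to latch onto. The paper resolves this by approximating $\theta_{P}^{2}P$ and $\theta_{P}^{2}Q$ pointwise a.e.\ by simple functions valued in density matrices (Lemma \ref{LEM.Pos_Appr}), defining the multifunction $\psi_{P,Q}(x)$ as those minimisers arising as $D$-limits of minimisers for the step-function data, and checking measurability of $\hat{\psi}_{P,Q}(U)$ by decomposing into the compact sets $\mathrm{Lip}_{n}$ and exploiting that on each cell of the partition the approximants are constant. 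You also omit the final projection $V_{t}=R_{t}(W_{t})$ of the glued velocity field onto $T_{P_{t}}\mathcal{D}$, which is what guarantees the glued path is admissible with energy not exceeding $\int_{X}\mathcal{W}_{2,x}^{2}\,d\nu_{P}$. None of this changes your architecture, but these are the points where your sketch must be replaced by a genuine construction rather than tightened.
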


\setcounter{section}{0}
\setcounter{thm}{0}

\noindent In the theorem's formulation, $P,Q\in\mathcal{D}\subset L^{1}(X,\mathcal{S}_{1}(H))$, $d\nu_{P}=\textrm{tr}(P(x))d\nu$ and $\mathcal{W}_{2,x}$ is the $L^{2}$-Wasserstein distance induced by $\partial_{x}$ on density matrices. Furthermore, we have 

\begin{align*}
\theta_{P}(x):= \begin{cases}\big(\textrm{tr}(P(x))\big)^{-\frac{1}{2}} & \textrm{if}\ P(x)\neq 0 \\
0 & \textrm{else}
\end{cases}
\end{align*}

\noindent for each $P\in\mathcal{D}$ and $\mathcal{M}(p,q)$ is the set of minimisers for density matrices w.r.t~the appropriate fibre-geometry. If $H$ is finite-dimensional, Theorem \ref{THM.Disint} implies existence of minimisers between all $P,Q\in\mathcal{D}$ with $\textrm{tr}(P(x))=\textrm{tr}(Q(x))$ for a.e.~$x\in X$. We extend the theorem to $\mathcal{K}(H)$-bundles and give sufficient conditions for viewing an arbitrary $L^{2}$-Wasserstein distance as one that disintegrates in the above sense. To decide if a disintegration is possible for a given $L^{2}$-Wasserstein distance is its \textit{disintegration problem}.\newline\par

\textbf{Content overview.} A Benamou-Brenier approach to $L^{2}$-Wasserstein distances \cite{BenBreW2Mech} requires us to introduce a concept of noncommutative gradient. In our case, these are symmetric derivations on a $C^{*}$-algebra $A$ taking values in a symmetric Hilbert $A$-bimodule. Such derivations appear in \cite{CiDrchltFrmsNCS} and \cite{CiSaDrivSqRtsDrchltFrms} as the natural noncommutative extension of gradients induced by Dirichlet forms. The strongly related notion of derivation on a $W^{*}$-algebra was studied in detail in \cite{WeavDerI} and \cite{WeavDerII}, with \cite{WeavDerII_Err} as list of errata. Symmetry and the Leibniz rule give rise to a noncommutative chain rule, yielding

\begin{align*}
\partial\log x =\big((L_{x}\otimes R_{x})(D\log)\big)(\partial x).
\end{align*}

\noindent Here, $x$ is an appropriate element of the domain $D(\partial)$, $D\log$ is the quantum derivative of the logarithm, and $L_{x}\otimes R_{x}$ is a $C^{*}$-representation of $C(\spec(x)\times\spec(x))$ over $H$ induced by the bimodule action of $A$. In general, we expect

\begin{align*}
(L_{x}\otimes R_{x})(D\log)=x^{-1}
\end{align*}

\noindent to be true if and only if $L_{x}=R_{x}$ holds. If we wish to maintain classical relations between relative entropy, heat flow and finiteness of $L^{2}$-Wasserstein distances on bounded densities, we are lead to replace multiplication by a density with multiplication by an operator involving functional calculus and the representation above. More precisely, we define 

\begin{align*}
M_{p}=(L_{p}\otimes R_{p})(M_{lm})
\end{align*}

\noindent for each density $p\in L^{\infty}(A,\tau)$. Here, $M_{lm}$ is the logarithmic mean. If $p$ is invertible, this implies $M_{p}=(L_{x}\otimes R_{x})(D\log)^{-1}$, assuring

\begin{align*}
M_{\rho_{t}}(\partial\log \rho_{t}) = \partial \rho_{t}
\end{align*}

\noindent to remain true for $\rho_{t}:=e^{-t\Delta}p_{0}$. Here, $\Delta=\partial^{*}\partial$ is the induced Laplace operator and $p_{0}$ a bounded density. Having constructed our multiplication operator, we define the tangent space at a bounded density analogous to the commutative case. From this, an $L^{2}$-Wasserstein distance on the space of \textit{bounded} densities $\mathcal{D}_{b}$ follows via minimisation over admissible paths.\par
Our finiteness result requires a setting similar to well-behaved commutative ones, for example a compact Riemannian manifold. Setting $P_{t}:=e^{-t\Delta}$, we first prove

\setcounter{section}{2}

\begin{thm}
If $\partial$ satisfies a Poincar\'e-type inequality, the distance between any two invertible bounded densities is finite. If $A$ is unital, $p\in\mathcal{D}_{b}$ and $P_{t}$ regularity improving, the distance between $p$ and $\rho_{t}:=P_{t}(p)$ is finite for each $t\in [0,1]$.
\end{thm}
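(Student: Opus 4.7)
Both parts proceed by exhibiting an explicit admissible path and bounding its action.

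\textbf{Part 1.} For invertible bounded densities $p,q$, take $\rho_{t}:=(1-t)p+tq$ for $t\in[0,1]$. Invertibility and boundedness of both endpoints give constants $0<c\leq C<\infty$ with $c\cdot 1\leq\rho_{t}\leq C\cdot 1$. Since the logarithmic mean of two numbers in $[c,C]$ lies in $[c,C]$, joint functional calculus yields the uniform bound
\begin{align*}
c\sum_{k}\|\partial_{k}a\|^{2}\leq\|a\|_{\rho_{t}}^{2}\leq C\sum_{k}\|\partial_{k}a\|^{2}
\end{align*}
for $a\in\mathfrak{A}$. The continuity equation reduces to $\tau((q-p)a)=\langle v_{t},a\rangle_{\rho_{t}}$, and since $\tau(q-p)=0$ the left-hand side vanishes on constants. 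Combining the Poincar\'e-type inequality $\|a-\tau(a)\|_{2}^{2}\leq C_{P}\sum_{k}\|\partial_{k}a\|^{2}$ with the lower bound above yields $|\tau((q-p)a)|\leq\|q-p\|_{2}\sqrt{C_{P}/c}\,\|a\|_{\rho_{t}}$. Riesz representation in the Hausdorff completion then produces $v_{t}$ in the tangent space with $\|v_{t}\|_{\rho_{t}}$ bounded independently of $t$, and integrating in $t$ gives a finite action.

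\textbf{Part 2.} Reparameterise the heat semigroup as $\mu_{s}:=P_{ts}(p)$ for $s\in[0,1]$, so that $\mu_{0}=p$ and $\mu_{1}=P_{t}p$. The regularity improving property of $P_{t}$ ensures $\mu_{s}$ is bounded and invertible for $s>0$, so $\log\mu_{s}$ lies in $D(\partial)$. Set $v_{s}:=-t\log\mu_{s}$. The noncommutative chain rule $\partial\mu_{s}=M_{\mu_{s}}\partial\log\mu_{s}$ gives
\begin{align*}
\tfrac{d}{ds}\tau(\mu_{s}a)=-t\,\tau((\Delta\mu_{s})a)=-t\,\langle\partial\mu_{s},\partial a\rangle=\langle v_{s},a\rangle_{\mu_{s}},
\end{align*}
verifying the continuity equation. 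Substituting $u=ts$ and applying the entropy dissipation identity $\tfrac{d}{du}\tau(P_{u}p\log P_{u}p)=-\langle\partial P_{u}p,\partial\log P_{u}p\rangle$ yields
\begin{align*}
\tfrac{1}{2}\int_{0}^{1}\|v_{s}\|_{\mu_{s}}^{2}\,ds=\tfrac{t^{2}}{2}\int_{0}^{1}\langle\partial\log\mu_{s},\partial\mu_{s}\rangle\,ds=\tfrac{t}{2}\bigl(\tau(p\log p)-\tau(P_{t}p\log P_{t}p)\bigr),
\end{align*}
which is finite because unitality of $A$ and boundedness of $p$ make $\tau(p\log p)$ finite.

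\textbf{Main obstacle.} The delicate point in Part 2 is the endpoint $s=0$: if $p$ is not invertible, $\log p$ need not be defined and $v_{s}$ may blow up as $s\to 0^{+}$. The regularity improving property of $P_{t}$ is precisely what legitimises the chain-rule identities for each $s>0$, while integrability of $\langle\partial P_{u}p,\partial\log P_{u}p\rangle$ near $u=0$ is guaranteed by the entropy dissipation identity together with finiteness of $\tau(p\log p)$. In Part 1 the remaining subtlety is ensuring that the Riesz representative actually lies in the Hausdorff completion of $\mathfrak{A}$ and depends measurably enough on $t$ for $(\rho_{t},v_{t})$ to form an admissible path, which can be handled by a density argument together with the uniform bounds derived above.
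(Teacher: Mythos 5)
Your proposal is correct and follows essentially the same route as the paper: for the first part, linear interpolation $\rho_t=(1-t)p+tq$ between invertible densities, uniform two-sided control of $M_{\rho_t}$ (the paper bounds $\|M_{\rho_t}^{-1}\|$ via $\|D\log\|$ on $[C,\|\rho_t\|]^2$, you bound the logarithmic mean from below — equivalent), the Poincar\'e-type inequality to represent $a\mapsto\tau((q-p)a)$ as a bounded functional on the tangent space (the paper invokes its Proposition on the Riesz-type representation for elements of $\ker\tau$, which also covers the case where $\ker\partial$ is larger than the scalars, a point your $a-\tau(a)$ normalisation glosses over), and Riesz representation; for the second part, the heat flow itself as admissible path with velocity $-\partial\log\rho_s$ and the entropy-dissipation identity of Lemma \ref{LEM.Entrp_Mlt} to bound the action by $\mathrm{Ent}(p|\tau)-\mathrm{Ent}(P_t p|\tau)$.
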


\noindent We thus recover the classical case and thereby justify our initial definitions. Moreover, we lift a theorem first proved by Simon \cite{SiPosImpr} to the noncommutative setting showing

\begin{thm}
If $e^{tL}$ is a semigroup of self-adjoint, positivity preserving operators on $L^{2}(A,\tau)$, it is positivity improving if and only if it is ergodic.
\end{thm}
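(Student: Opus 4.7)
The theorem mirrors Simon's classical dichotomy, and I would treat the two implications separately, treating the converse as the substantive direction.

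For \emph{positivity improving $\Rightarrow$ ergodic}: take a fixed vector $\xi$ of the semigroup in $L^{2}(A,\tau)$. Since $e^{tL}$ is self-adjoint and positivity preserving with respect to the cone arising from the involution of $A$, it commutes with the adjoint operation, so I may replace $\xi$ by its self-adjoint part. Jordan-decompose $\xi = \xi_{+} - \xi_{-}$ with $\xi_{+}\xi_{-} = 0$. The crucial step is to show that each $\xi_{\pm}$ is itself fixed: the Beurling--Deny inequality for self-adjoint positivity preserving semigroups yields $e^{tL}\xi_{+} \geq \xi_{+}$ in the positive cone of $L^{2}(A,\tau)$, while $L^{2}$-contractivity (inherited from $L \leq 0$) gives the reverse norm control. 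Writing $\delta_{t} := e^{tL}\xi_{+} - \xi_{+} \geq 0$, the tracial expansion $\|e^{tL}\xi_{+}\|_{2}^{2} = \|\xi_{+}\|_{2}^{2} + 2\tau(\xi_{+}\delta_{t}) + \|\delta_{t}\|_{2}^{2}$ together with $\tau(\xi_{+}\delta_{t}) \geq 0$ forces $\delta_{t} = 0$ by faithfulness of $\tau$. With both Jordan parts fixed, $\langle e^{tL}\xi_{+}, \xi_{-}\rangle = \tau(\xi_{+}\xi_{-}) = 0$, but the positivity improving hypothesis would make this strictly positive unless one of $\xi_{\pm}$ vanishes. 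Hence every fixed vector has a definite sign, and a standard real-linear argument on the resulting totally ordered cone shows the fixed-point space is at most one-dimensional.

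For the converse I argue contrapositively: assume nonzero $\xi, \eta \in L^{2}_{+}(A,\tau)$ and $t_{0} > 0$ satisfy $\langle e^{t_{0}L}\xi, \eta\rangle = 0$. Faithfulness and traciality of $\tau$ equate this with $\mathrm{supp}(\eta) \cdot e^{t_{0}L}\xi = 0$. The function $f(t) := \langle e^{tL}\xi, \eta\rangle$ is real-analytic on $(0,\infty)$ by the spectral theorem for $L$ and nonnegative there by positivity preservation and self-duality of the positive cone, while it admits the Laplace-transform representation $f(t) = \int e^{t\lambda}\,d\mu_{\xi,\eta}(\lambda)$ against the spectral measure of $L$, supported in $(-\infty,0]$. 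Combining analyticity, pointwise nonnegativity, and monotonicity of each signed part of this Laplace transform, I would show that the vanishing at $t_{0}$ propagates to all $t > 0$, so $f \equiv 0$. This produces a closed proper $e^{tL}$-invariant subspace $\overline{\mathrm{span}}\{e^{sL}\xi : s \geq 0\} \subset L^{2}(A,\tau)$ whose orthogonal complement contains $\eta$, and both summands are invariant under the self-adjoint semigroup.

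The main obstacle is converting this invariant decomposition into an extra fixed vector contradicting ergodicity. I expect to accomplish this by showing that the support projection $q \in L^{\infty}(A,\tau)$ extracted from the invariant subspace commutes with $e^{tL}$ as a bounded operator on $L^{2}(A,\tau)$; classically in $L^{2}(X,\mu)$ this is the familiar assertion that the indicator of an invariant set commutes with a Markov semigroup. In the tracial noncommutative setting it requires iterating the above analytic propagation over all support projections appearing in the orbits of $\xi$ and $\eta$, and combining with self-adjointness of $e^{tL}$ to upgrade support disjointness in the positive cone to genuine operator commutation. Once $q$ commutes with $e^{tL}$, ergodicity forces $q \in \{0,1\}$, contradicting the nontriviality of $\xi$ and $\eta$; the delicate point is the noncommutative support calculus needed to ensure that $q$ really lies in $L^{\infty}(A,\tau)$ and yields an honest commuting projection in the algebra generated by the semigroup.
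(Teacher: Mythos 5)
Your proposal misses the single noncommutative ingredient on which the paper's proof turns, and the step where you would need it is exactly the step you leave vague. The paper reduces the theorem to Simon's original argument by first proving Lemma \ref{LEM.Sim_NC}: if $T$ is positivity preserving and $x,y\in L^{2}_{+}(A,\tau)$ satisfy $\langle x,y\rangle\neq 0$, then $\langle Tx,Ty\rangle\neq 0$. This is what propagates a zero of $f(t):=\langle e^{tL}\xi,\eta\rangle$ \emph{downward}: writing $f(t_{0})=\langle e^{(t_{0}-s)L/2}(e^{sL}\xi),\,e^{(t_{0}-s)L/2}\eta\rangle$ and applying the contrapositive of the lemma gives $f(s)=0$ for all $s\leq t_{0}$, after which analyticity gives $f\equiv 0$. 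Your substitute --- ``analyticity, pointwise nonnegativity, and monotonicity of each signed part of the Laplace transform'' --- does not accomplish this: a nonnegative real-analytic function can vanish to second order at an interior point, and the spectral measure $\mu_{\xi,\eta}$ is signed, so nothing you invoke forces the zero at $t_{0}$ to spread. The lemma itself is where the noncommutativity lives: its proof requires the lattice operations $x\wedge y$ and $x_{+}$ in the self-polar cone of the standard form (Lemma \ref{LEM.L2_Pst_Prj} together with Cipriani's Lemma 2.50), showing that $x\wedge y=0$ forces $xy=yx=0$. None of this appears in your proposal.

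Two further mismatches. First, the paper's definition of ergodicity is: for all nonzero $x,y\in L^{2}_{+}(A,\tau)$ there exists $t>0$ with $\tau(xe^{tL}y)>0$. Under this definition the implication ``positivity improving $\Rightarrow$ ergodic'' is immediate, since $e^{tL}y$ has full support and hence $\tau(xe^{tL}y)=\|x^{1/2}(e^{tL}y)^{1/2}\|_{2}^{2}>0$; your Jordan-decomposition argument instead aims at one-dimensionality of the fixed-point space, which is a different statement, and along the way it assumes a contractivity of $e^{tL}$ (i.e.~$L\leq 0$) that is not part of the hypotheses. Second, and for the same reason, your final paragraph is unnecessary: once $f\equiv 0$ you have directly negated ergodicity, so the ``delicate'' extraction of a commuting support projection $q\in L^{\infty}(A,\tau)$ --- the point you yourself flag as unresolved --- never has to be carried out. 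As written, the proposal both omits the essential lemma and concentrates its effort on an obstacle that the correct formulation removes.
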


\setcounter{section}{0}

\noindent Ergodicity therefore becomes a necessary condition for $P_{t}$ to be regularity improving. Do note that so far, we have restricted ourselves to bounded densities.\par 
In order to extend the distance to unbounded densities, we require additional assumptions on domain and codomain of our symmetric gradients. To begin with, we assume $\partial$ to take values in a symmetric Hilbert $L^{\infty}(A,\tau)$-subbimodule of a sum $\bigoplus_{k=1}^{m}L^{2}(A,\tau)$ equipped with the canonical $L^{\infty}(A,\tau)$-bimodule action and Hilbert space structure. Next, we assume existence of an extension algebra $\mathfrak{A}\subset A\cap D(\partial)$ which lies dense and such that $\alpha\longmapsto p^{\alpha}\partial ap^{1-\alpha}$ is an element of $L^{1}([0,1],L^{1}(A,\tau))$ for each $a\in\mathfrak{A}$. Under these assumptions, the first statement of Proposition \ref{PRP.dlog} is sufficient to extend $\mathcal{W}_{2}$ to unbounded densities. Multiplication operators will be of form

\begin{align*}
\big(M_{p}(x)\big)_{k}=\int_{0}^{1}p^{\alpha}x_{k}p^{1-\alpha}d\alpha\in L^{1}(A,\tau)
\end{align*}

\noindent for $p\in\mathcal{D},x\in\bigoplus_{k=1}^{m}L^{\infty}(A,\tau)$ and $k\in\{1,...,m\}$. This yields the tangent space norm on elements of $\mathfrak{A}$ we wrote down at the very beginning. Knowing this, extending the distance becomes a straightforward task.\par
Turning to vertical gradients, our first task is to understand $L^{p}$-spaces that are defined by $(C_{0}(X,\mathcal{K}(H)),\nu\otimes\textrm{tr})$. Proposition \ref{PRP.Prd_Tr_LP} shows these to equal $L^{p}(X,\mathcal{S}_{p}(H))$ for each $p\in[0,\infty]$. We construct vertical gradients as $(\partial F)(x)=\partial_{x}F(x)$ for each $F\in D(\partial)$, where $\partial_{x}$ is a symmetric gradient for $(\mathcal{K}(H),\textrm{tr})$ mapping to some $\bigoplus_{k=1}^{m}\mathcal{S}_{2}(H)$ with $m\in\mathbb{N}$ fixed. We demand $C_{c}(X)\odot\FinRk(H)$ to be an extension algebra. As outlined above, this allows us to extend multiplication operators to densities. We have

\begin{align*}
\big(M_{P}(F)\big)_{k}(x)=\int_{0}^{1}P(x)^{\alpha}F_{k}(x)P(x)^{1-\alpha}d\alpha\in L^{1}(X,\mathcal{S}_{1}(H))
\end{align*}

\noindent for each $P\in\mathcal{D}, F\in\bigoplus_{k=1}^{m}L^{\infty}(X,\mathcal{B}(H))$ and $k\in\{1,...,m\}$. We impose other conditions ensuring mass preservation in each fibre, in particular assuming each $\partial_{x}$ to be a fibre gradient. The latter is a notion introduced at the very beginning of Subsection 3.2. Mass preservation in almost every fibre is necessary for showing the disintegration theorem and proved in Proposition \ref{PRP.Mass_Prsv_Unbd}. First consequences are given in Corollary \ref{COR.Mass_Prsv_Unbd}. For example, $\mathcal{D}$ disintegrates into subspaces whose elements have the same mass in almost every fibre. This gives rise to $L^{2}$-Wasserstein distances that do not metrisise the $w^{*}$-topology even as the underlying $C^{*}$-algebra is unital, see Remark \ref{REM.Mass_Prsv_Unbd}.\par
Before proving the disintegration theorem, we concern ourselves with symmetric gradients on $(\mathcal{K}(H),\textrm{tr})$. These present our fibre-geometries. Proposition \ref{PRP.Mass_Prsv} yields the aforementioned conditions for mass preservation along fibres. Furthermore, we introduce the notion of continuous dependence on start- and endpoints necessary for a measurable selection theorem used in the proof of Theorem \ref{THM.Disint}. The proof itself is divided into two parts. In the first, we show every admissible path to have a representative inducing an admissible path on almost every fibre. In the second, we utilise a measurable selection theorem to show existence of an integrable choice of fibre-wise minimisers. Key for the second step is approximation of marginals by well-chosen step functions and utilisation of continuous dependence of minimisers on start- and endpoints. This will enable us to show a condition required for applying the measurable selection theorem.\par
As an application, we consider mean entropic curvature bounds. If $H$ is finite, the relative entropy for any density $p$ in a fibre is $\textrm{tr}(p\log p)$. For bounded densities $P$ on a compact $X$, the noncommutative relative entropy becomes

\begin{align*}
\textrm{Ent}_{m}(P|\nu\otimes\textrm{tr})=\int_{X}\textrm{tr}(P(x)\log P(x))d\nu
\end{align*}

\noindent which also makes sense for unbouded densities. Declaring it to be the mean relative entropy, we consider synthetic Ricci curvature bounds in analogy to the commutative case. Doing so, we obtain global curvature bounds both on the fibres and the whole geometry. Adapting the notion of continuous dependence of minimisers on start- and endpoints for the proof, we obtain

\setcounter{section}{4}
\setcounter{thm}{1}

\begin{thm}
If $\partial$ is a vertical gradient for $(C_{0}(X,M_{n}(\mathbb{C})),\nu\otimes\textrm{tr})$, then 

\begin{align*}
\mcurv(\nu\otimes\textrm{tr},\partial)\geq \essinf_{x\in X}\curv(M_{n}(\mathbb{C}),\textrm{tr},\partial_{x}).
\end{align*}
\end{thm}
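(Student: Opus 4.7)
The plan is to leverage the disintegration theorem (Theorem~4.1) and then integrate the fibre-wise curvature estimate against the mass measure $d\nu_P$. Write $K := \essinf_{x\in X}\curv(M_{n}(\mathbb{C}),\textrm{tr},\partial_{x})$; if $K = -\infty$ the bound is trivial, so assume $K > -\infty$. Fix $P,Q\in\mathcal{D}$ with $\mathcal{W}_{2}(P,Q) < \infty$. By the mass preservation statement along fibres (Proposition~3.2.x and Corollary~3.2.y), $c(x) := \textrm{tr}(P(x)) = \textrm{tr}(Q(x))$ for $\nu$-a.e.~$x$. Set $\tilde{P}(x) := \theta_{P}(x)^{2}P(x)$ and $\tilde{Q}(x) := \theta_{P}(x)^{2}Q(x)$, which are fibre densities of unit trace where $c(x) > 0$. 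Invoking the adapted continuous-dependence hypothesis built into the curvature-bounded setting, Theorem~4.1 produces a minimising curve $\mu_{t}$ of $\mathcal{W}_{2}(P,Q)$ such that $\tilde{\mu}_{t}(x) := \theta_{P}(x)^{2}\mu_{t}(x)$ is a $\mathcal{W}_{2,x}$-minimiser between $\tilde{P}(x)$ and $\tilde{Q}(x)$ for a.e.~$x$, with
\begin{align*}
\mathcal{W}_{2}^{2}(P,Q) = \int_{X}\mathcal{W}_{2,x}^{2}(\tilde{P}(x),\tilde{Q}(x))\,d\nu_{P}.
\end{align*}

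Next I use the affine scaling identity $\textrm{tr}(P(x)\log P(x)) = c(x)\log c(x) + c(x)\,\textrm{tr}(\tilde{P}(x)\log\tilde{P}(x))$, valid where $c(x) > 0$, which rewrites the mean relative entropy as
\begin{align*}
\textrm{Ent}_{m}(P|\nu\otimes\textrm{tr}) = \int_{X} c(x)\log c(x)\,d\nu + \int_{X}\textrm{tr}(\tilde{P}(x)\log\tilde{P}(x))\,d\nu_{P},
\end{align*}
and analogously for $Q$ and $\mu_{t}$. The key point is that the first summand is \emph{identical} for $P$, $Q$ and every $\mu_{t}$ because the mass profile $c(\cdot)$ is preserved along the geodesic. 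Hence the inequality to be proved reduces to an inequality for the fibre-entropy integral against $d\nu_{P}$. For a.e.~$x$ with $c(x) > 0$, the bound $\curv(M_{n}(\mathbb{C}),\textrm{tr},\partial_{x}) \geq K$ applied along the fibre geodesic $\tilde{\mu}_{t}(x)$ gives
\begin{align*}
\textrm{tr}(\tilde{\mu}_{t}(x)\log\tilde{\mu}_{t}(x)) \leq (1-t)\,\textrm{tr}(\tilde{P}(x)\log\tilde{P}(x)) + t\,\textrm{tr}(\tilde{Q}(x)\log\tilde{Q}(x)) - \tfrac{K}{2}t(1-t)\mathcal{W}_{2,x}^{2}(\tilde{P}(x),\tilde{Q}(x)).
\end{align*}

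Integrating this inequality against $d\nu_{P}$, adding back the common $\int c\log c\,d\nu$ term on both sides, and using the distance decomposition above yields
\begin{align*}
\textrm{Ent}_{m}(\mu_{t}|\nu\otimes\textrm{tr}) \leq (1-t)\,\textrm{Ent}_{m}(P|\nu\otimes\textrm{tr}) + t\,\textrm{Ent}_{m}(Q|\nu\otimes\textrm{tr}) - \tfrac{K}{2}t(1-t)\mathcal{W}_{2}^{2}(P,Q),
\end{align*}
so $\mcurv(\nu\otimes\textrm{tr},\partial) \geq K$. The main obstacle I expect is the measurability housekeeping: one has to verify that $x\mapsto \textrm{tr}(\tilde{\mu}_{t}(x)\log\tilde{\mu}_{t}(x))$, $x\mapsto \mathcal{W}_{2,x}^{2}(\tilde{P}(x),\tilde{Q}(x))$, and the null set on which the fibre bound fails are all compatible with the essinf, so that the integrated inequality genuinely uses the essinf rather than a stronger pointwise infimum. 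This requires revisiting the measurable selection argument underlying Theorem~4.1 under the adapted notion of continuous dependence on endpoints, which is precisely the modification flagged in the overview preceding the theorem.
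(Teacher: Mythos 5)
Your proposal is correct and follows essentially the same route as the paper: disintegrate via Theorem \ref{THM.Disint} with the selection run over the restricted minimiser sets $\mathcal{M}(p,q,K)$, apply the fibre-wise convexity inequality, use the scaling identity $\textrm{tr}(Cp\log Cp)=\textrm{tr}(Cp\log C)+\textrm{tr}(Cp\log p)$ to separate off the mass-profile term, and integrate against $d\nu_{P}$. The obstacle you flag at the end --- that the selected fibre minimisers must actually satisfy the entropy inequality, which requires re-running the measurable selection with $\mathcal{M}(p,q,K)$ in place of $\mathcal{M}(p,q)$ --- is exactly what the paper resolves via Lemma \ref{LEM.FinDim_CntDpd_III} and Proposition \ref{PRP.Cnt_Dpd_II}, which show these smaller sets are non-empty, $D$-closed, and enjoy the same continuous dependence on endpoints.
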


\setcounter{section}{0}
\setcounter{thm}{0}

\noindent showing control of global curvature bounds by those for the fibre-geometries. Theorem \ref{THM.Disint} and \ref{THM.NC_MCrv} taken together indicate reasonable control of the global geometry by that of the fibres. In the fifth section, we extend vertical gradients and Theorem \ref{THM.Disint} to the general $\mathcal{K}(H)$-bundle case. This will present no great challenge, as most of the work occurs locally.\par
Lastly, we introduce the disintegration problem. $A\cong\Gamma(\textrm{End}(V))$ for a finite-dimensional hermitian vector bundle $V$ over a compact Hausdorff space $X$ holds if and only if $A$ is Morita equivalent to $C(X)$. Given a symmetric gradient $\partial$ for $(A,\tau)$, we ask if it is possible to find a $C^{*}$-algebra isomorphism from $A$ to some $\Gamma(\textrm{End}(V))$ such that $\partial$ is a vertical gradient after push-forward. While we can show $\tau$ to have form $\nu\otimes\textrm{tr}$ locally, the same cannot be said for symmetric gradients. If for example $A=C(X)$ with non-zero gradient, $V$ must be one-dimensional and $1_{\mathbb{C}}$ the sole density in each fibre. If $\partial$ were vertical, $\partial$ vanishes by the Leibniz rule. Thus vertical gradients are a purely noncommutative phenomenon. We are able to provide sufficient conditions for disintegration in Corollary \ref{COR.Grd_Dcp_I}, after which we briefly outline plans to extend results to fields of elementary $C^{*}$-algebras and beyond.\par
A last word regarding our choice of Wasserstein distance is in order. While easier to handle, $L^{1}$-Wasserstein distances are unsuitable for our purposes. Even in the commutative case and irrespective of the underlying metric measure space, their geodesics are convex combinations of the marginal states. Thus their metric geometry on states is independent of the underlying space's metric geometry. We nevertheless recommend the discussion in \cite{MartiViewOptTrnspNCG} as an introduction and point to \cite{LatrQuanLocCmpct} for a noncommutative $L^{1}$-Wasserstein distance based on ideas first formulated by Rieffel in his paper on compact quantum metric spaces \cite{RiefQuanMetrSp}. Relations between noncommutative $L^{1}$- and $L^{2}$-Wasserstein distances for finite-dimensional $C^{*}$-algebras mirroring the commutative setting have recently been announced \cite{RouNilRelL2andL1LogSobolev}.\newline\par

The noncommutative continuity equation first presented by Carlen and Maas in \cite{CaMaW2RiemI} for the special case of the CAR algebra generated by $n$ bounded operators on a Hilbert space of dimension $n^{2}$, as well as its generalisation to other finite-dimensional cases in \cite{CaMaW2RiemII}, was essentially derived by the same reasoning we apply here. One minor difference is that Carlen and Maas aimed to replace multiplication by $p^{-1}$, rather than $p$, with a noncommutative analogue. While the noncommutative chain rule was not mentioned in either publication, the multiplication operator produced in both is indeed $(L_{\rho_{s}}\otimes R_{\rho_{s}})(D\log)$. This can be seen by applying the Proposition \ref{PRP.dlog}, i.e.~Pedersen's calculus, to the multiplication operators defined in \cite{CaMaW2RiemI} or \cite{CaMaW2RiemII}. As this replacement procedure is key to our approach, we view both papers as foundational.\par
Maas' own work \cite{MaGrdtFlwsEntrpFin} concerning an analogue of $L^{2}$-Wasserstein distances for discrete spaces already utilised a similar replacement procedure involving the logarithmic mean. This is something we expect to see since noncommutative geometry aims, among other things, to unify continuous and discrete geometries. Many of the difficulties we face in the noncommutative setting already arise in the discrete case. Various alternative techniques were used in \cite{ErMaRiCuFinMarkCh} and \cite{ErMaRiCuBL} to obtain analogues of classical curvature bounds, as well as Gromov-Hausdorff convergence for a discrete commutative setting in \cite{GiMaGromHausConvgDiscrtTrnsprt}.\par
Finally, we wish to point out that Wirth is developing an $L^{2}$-Wasserstein distance based on the same replacement procedure we engage in here. We stress that both Wirth and the author developed their approach independently of one another, only realising their ideas' similarity after they had matured. Notes were exchanged. In particular, Lemma \ref{LEM.L2_Pst_Prj} is a slight adaption of a lemma proved by Wirth in future work of his.\newline\par

\noindent\textbf{Structure.} This paper is divided into two major parts. In the first, we introduce noncommutative $L^{2}$-Wasserstein distances and prove finiteness results on bounded densities. These are the first two sections. The second part comprises the last three sections. In the third section, we introduce fibre-geometries in preparation of the vertical gradient case. We deal with vertical gradients on trivial $\mathcal{K}(H)$-bundles and prove Theorem \ref{THM.Disint} and \ref{THM.NC_MCrv} in the fourth section. We extend to the general bundle case and introduce the disintegration problem in the fifth section.

\smallskip

\noindent\textbf{Notation and conventions.} $\mathcal{B}(H)$ is the space of bounded linear operators on a Hilbert space while $\mathcal{K}(H)$ are the compact operators. Furthermore, $\mathcal{S}_{p}(H)$ denotes the Schatten ideals for $p\in [1,\infty]$, with $\mathcal{S}_{\infty}(H)=\mathcal{B}(H)$. For a suitable measure space $(X,\nu)$ and Banach space $E$, $L^{p}(X,E,d\nu)$ denotes the Bochner-$L^{p}$-space. We often drop $\nu$ from our notation. Given a $C^{*}$-algebra $A$, let $A_{h}$ be its self-adjoint and $A_{+}$ its positive elements. We call $\tau$ a trace on $A$ if it is a l.s.c.\@, semi-finite trace according to Definition 6.1.1 in \cite{DixC*Alg} and set $D(\tau):=\{\tau(|a|)<\infty\ |\ a\in A\}$.  In this case, we call $(A,\tau)$ a tracial $C^{*}$-algebra. By default, we consider its n.s.f. extension to $L^{\infty}(A,\tau)$ which we again denote by $\tau$. We write $A\subset L^{p}(A,\tau)$ when considering the image of $A$ under the canonical inclusion.

\smallskip

\noindent\textbf{Standard references.} General references concerning $C^{*}$- and $W^{*}$-algebras are \cite{DixC*Alg}, \cite{BK.PedC*Auto}, \cite{BK.SakC*W*}, and \cite{TakTOAI}. For noncommutative integration theory, the original paper \cite{SegNCInt} and its correction \cite{SegNCIntCorr} provide a detailed introduction, while both \cite{BK.HndbkGeomBS} and \cite{NelNotesNCInt} give streamlined ones. Broad introductions to noncommutative geometry are \cite{CoNCG} and \cite{KhalBasicNCG}, with \cite{VarilAnIntroNCG} focusing on differential geometric aspects from a functional analytic point of view. We recommend \cite{VilOptTrsp} as a reference for $L^{p}$-Wasserstein distances in the commutative case. For a Benamou-Bernier approach in the commutative setting, we refer to \cite{AmbGreenBook}. Results concerning the Bochner integral can be found in the usual works \cite{TreLocConvSpaces} and \cite{ZeiNLFAIIb}. A reference for vector bundles is \cite{BK.LeeSmthMfds}.

\smallskip

\noindent\textbf{Acknowledgements.} The author's position at time of writing was funded by the ERC Advanced Grant \textit{Metric measure spaces and Ricci curvature - analytic, geometric, and probabilistic challenges} awarded to K.-T. Sturm. The author wishes to express gratitude to K.-T. Sturm for his continued advice and support.

\section{Preliminaries}

We define symmetric gradients as noncommutative analogues of gradients into $L^{2}$-sections, provide a noncommutative chain rule, and describe a differential calculus developed by Pedersen \cite{PedOpDiffFct} useful when dealing with Fr\'echet derivatives on $\mathcal{B}(H)$ involving the continuous functional calculus. This provides a standard representation of our multiplication operator in case $H=L^{2}(A,\tau)$. 

\subsection{Gradients into bimodules over $C^{*}$-algebras}

We define bimodules over $C^{*}$-algebras and introduce symmetric gradients. A primary reference and source of examples is \cite{CiDrchltFrmsNCS}. In it, derivation rather than gradient is the preferred terminology. 

\begin{dfn}
Let $A$ be a $C^{*}$-algebra. We define a bimodule over $A$, or simply $A$-bimodule, to be a $C^{*}$-representation $\pi$ of $A\otimes_{max}A^{op}$ over a Hilbert space.
\end{dfn}

\begin{ntn}
A representation $\pi$ induces an $A$-bimodule structure on $H$ in the algebraic sense, with both actions bounded w.r.t. the Hilbert space topology. We use $\pi$ and $H$ interchangably.
\end{ntn}

\begin{dfn}\label{DFN.NCGrad}
Let $(A,\tau)$ be a tracial $C^{*}$-algebra. Furthermore, let $H$ be a bimodule over $A$. A gradient $\partial$ for $(A,\tau)$ is a densely defined, closed linear operator from $L^{2}(A,\tau)$ to $H$ such that
\begin{itemize}
\item[1)] $D(\partial)$ is closed under the $^{*}$-operation on $L^{2}(A,\tau)$,
\item[2)] $A_{\partial}:=A\cap D(\partial)$ is a dense $^{*}$-subalgebra of $A$ and a core of $\partial$,
\item[3)] $\partial$ is an algebra derivation from $A_{\partial}$ to $H$.
\end{itemize}
\end{dfn}

\begin{ntn}
We write $(A,\tau,\partial)$ for a tracial $C^{*}$-algebra $A$ with trace $\tau$ and gradient $\partial$.
\end{ntn}

We wish to make sense of expressions $\partial f(a)$ for self-adjoint $a\in A_{\partial}$ and sufficiently regular $f\in C(\spec(a))$. By necessity, we require $f(a)\in D(\partial)$ to hold. If this is true, we expect a noncommutative chain rule to apply. Such a chain rule exists if we have an involution on $H$ compatible with the gradient.

\begin{dfn}
A bimodule $H$ over $A$ is called symmetric if there exists an isometric, anti-linear involution $J$ on $H$ such that $J(ahb)=b^{*}ha^{*}$ for each $a,b\in A$ and $h\in H$.
\end{dfn}

\noindent We required the domain of our gradient to be closed under adjoining in $A$ by definition, hence compatibility of $\partial$ and $J$ as defined next makes sense. 

\begin{dfn}
If $\partial$ is a gradient for $(A,\tau)$, $\partial$ is symmetric if $\partial(a^{*})=J(\partial a)$ for each $a\in A_{\partial}$.
\end{dfn}

\begin{rem}\label{REM.Cmplxfy}
Replacing $A$ by $A_{h}$, we consider Definition \ref{DFN.NCGrad} for the real case by demanding $H$ to be a real Hilbert space with an $A_{h}$-bimodule structure. In this case, tensoring with $\mathbb{C}$ yields a symmetric gradient $\partial_{\mathbb{C}}:=\partial+i\partial$. 
\end{rem}

For $a\in A_{h}$, $C(\spec(a))$ is unital and thus \textit{not} equal to $C^{*}(a)\subset A$ for non-unital $A$. We thus cannot obtain a representation of $C(\spec(a))\otimes C(\spec(a))$ over $H$ by restricting $\pi$ to $C(\spec(a))\otimes C(\spec(a))^{op}$ if $A$ is non-unital. Instead, we first consider the left representation $L_{a}$ of $C(\spec(a))$ over $H$ uniquely determined by

\begin{align*}
L_{a}(f)(x) := \begin{cases} 
f(a).x & \textrm{if}\ f(0)=0 \\
x & \textrm{if}\ f=1 
\end{cases}
\end{align*}

\noindent We construct a right representation $R_{a}$ of $C(\spec(a))^{op}$ over $H$ analogously, replacing left by right action of $A$ on $H$ in our definition above. Tensoring both $L_{a}$ and $R_{a}$, we have

\begin{align*}
L_{a}\otimes R_{a}:C(\spec(a))\otimes C(\spec(a))^{op}\longrightarrow\mathcal{B}(H)
\end{align*}

\noindent with $L_{a}\otimes R_{a}$ depending on the bimodule $\pi$ by construction. We do not care about this, as $H$ will remain fix once chosen. Commutativity of $C(\spec(a))$ implies $C(\spec(a))\otimes C(\spec(a))$ and $C(\spec(a)\times\spec(a))$ to be isomorphic.

\begin{prp}\label{PRP.LR_Rpr}
Let $H$ be a symmetric bimodule over $A$, $a\in A_{h}$ and $I\subset\mathbb{R}$ a closed interval containing $\spec(a)$. Then for all $f\in C(I\times I)$, we have

\begin{align*}
||(L_{a}\otimes R_{a})(f)||_{\mathcal{B}(H)}\leq ||f||_{C(I\times I)}
\end{align*}

\end{prp}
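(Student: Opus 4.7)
The strategy is to reduce the inequality to the standard fact that any $*$-homomorphism between $C^{*}$-algebras is automatically contractive, applied to a tensor-product representation built out of $L_{a}$ and $R_{a}$.

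First, I would verify that $L_{a}$ extends, from the partial prescription given above the statement, to a unital $*$-homomorphism of the commutative $C^{*}$-algebra $C(\spec(a))$ into $\mathcal{B}(H)$. Writing $f=(f-f(0)\cdot 1)+f(0)\cdot 1$ reduces everything to the case $f(0)=0$, where $f(a)\in A$ is provided by the non-unital continuous functional calculus on $a$; left multiplication by $f(a)$ on $H$ is bounded because $\pi$ is a $C^{*}$-representation, and linearity, multiplicativity and adjoint-preservation follow from the analogous properties of the left $A$-action on $H$ together with those of the functional calculus. The map $R_{a}:C(\spec(a))^{op}\to\mathcal{B}(H)$ is treated identically, now using the right $A$-action on $H$ (whose involutive compatibility follows from $\pi$ being a $*$-representation of $A\otimes_{max}A^{op}$).

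Next, the ranges of $L_{a}$ and $R_{a}$ commute in $\mathcal{B}(H)$: because $\pi$ is a representation of $A\otimes_{max}A^{op}$, the left and right $A$-actions commute on $H$, and commutation for arbitrary elements of the two unital extensions $L_{a}(C(\spec(a)))$ and $R_{a}(C(\spec(a))^{op})$ follows by first reducing to the vanishing-at-$0$ parts (on which $L_{a}$ and $R_{a}$ are defined via the $A$-actions) and then noting that $L_{a}(1)=\mathrm{Id}_{H}=R_{a}(1)$ commutes with everything. By the universal property of the maximal tensor product, $L_{a}$ and $R_{a}$ therefore induce a $*$-homomorphism
\begin{align*}
L_{a}\otimes R_{a}:C(\spec(a))\otimes_{max}C(\spec(a))^{op}\longrightarrow\mathcal{B}(H).
\end{align*}

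Since $C(\spec(a))$ is commutative, hence nuclear, its maximal and minimal tensor-product $C^{*}$-norms coincide, and the standard identification $C(\spec(a))\otimes C(\spec(a))^{op}\cong C(\spec(a)\times\spec(a))$ applies. Automatic contractivity of $*$-homomorphisms of $C^{*}$-algebras then yields
\begin{align*}
\|(L_{a}\otimes R_{a})(g)\|_{\mathcal{B}(H)}\leq\|g\|_{C(\spec(a)\times\spec(a))}
\end{align*}
for every $g\in C(\spec(a)\times\spec(a))$. Finally, for $f\in C(I\times I)$ the operator $(L_{a}\otimes R_{a})(f)$ is defined through the restriction of $f$ to $\spec(a)\times\spec(a)$, and this restriction is a contraction from $C(I\times I)$ to $C(\spec(a)\times\spec(a))$ because $\spec(a)\times\spec(a)\subset I\times I$ is compact; composing gives the claimed bound.

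The main obstacle is the bookkeeping for the non-unital case: ensuring that the partial description of $L_{a}$ and $R_{a}$ really does assemble into unital $*$-homomorphisms of the unital algebras $C(\spec(a))$ and $C(\spec(a))^{op}$, and that the commutation of their ranges (inherited from the left/right $A$-bimodule structure only on $A$-elements) genuinely extends to the unitised functional calculi so that the universal property of $\otimes_{max}$ can be invoked. Once these points are in place, contractivity is immediate from general $C^{*}$-algebra theory.
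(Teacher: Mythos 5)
Your proposal is correct and follows essentially the same route as the paper: the paper's proof simply observes that $L_{a}\otimes R_{a}$ is a $C^{*}$-representation, hence contractive, and that restriction from $C(I\times I)$ to $C(\spec(a)\times\spec(a))$ is contractive since $\spec(a)\subset I$. The additional verifications you supply (unitality of $L_{a}$ and $R_{a}$, commutation of their ranges, nuclearity of the commutative factor) are the construction details the paper carries out in the discussion preceding the proposition rather than in the proof itself.
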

\begin{proof}
$L_{a}\otimes R_{a}$ is a representation, hence a homomorphism of $C^{*}$-algebras. Thus its norm is less or equal to one and therefore $||(L_{a}\otimes R_{a})(f)||_{\mathcal{B}(H)}\leq ||f||_{C(\spec(a)\times\spec(a))}$, while $||f||_{C(\spec(a)\times\spec(a))}\leq ||f||_{C(I\times I)}$ follows at once from $\spec(a)\subset I$. 
\end{proof}

We are ready to discuss the noncommutative chain rule. It will involve the quantum derivative of a function. Outside of our immediate context, the quantum derivative is a natural analogue of the classical derivative in the discrete setting. Examples are discrete gradients on graphs. A useful introduction is provided by \cite{KacQuantumCalc}. Here, the quantum derivative is simply the correct object when searching for a chain rule involving gradients.

\begin{dfn}
Let $I\subset\mathbb{R}$ be a closed interval. For $f\in C^{1}(I)$, its quantum derivative is

\begin{align*}
Df(s,t):=\begin{cases} 
\frac{f(s)-f(t)}{s-t} & \textrm{if}\ s\neq t\\
f'(s) & \textrm{if}\ s=t 
\end{cases}
\end{align*}

\noindent with $(s,t)\in I\times I$.
\end{dfn}

\noindent $Df$ is continuous by hypothesis on $f$. 

\begin{prp}
Let $\partial$ be a symmetric gradient for $(A,\tau)$ and $a\in A_{h}\cap A_{\partial}$. If $f\in C^{1}(\spec(a))$ such that $f(0)=0$, then

\begin{itemize}
\item[1)] $f(a)\in D(\partial)$ with $\partial(f(a))=(L_{a}\otimes R_{a})(Df)(\partial(a))$,
\item[2)] $||\partial(f(a))||_{H}\leq ||f'||_{C(\spec(a))}||\partial(a)||_{H}$.
\end{itemize}

\noindent If we know $A$ and $H$ to be commutative, then $(L_{a}\otimes R_{a})(Df)(h)=f'(\partial(a)).h$ for each $h\in H$.
\end{prp}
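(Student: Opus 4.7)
The plan is to prove the chain rule first for monomials using the Leibniz rule and then extend to all $f \in C^{1}(\spec(a))$ with $f(0)=0$ by a density argument built on Proposition \ref{PRP.LR_Rpr} and the closedness of $\partial$. For $f(t)=t^{n}$ with $n\geq 1$, an induction on the Leibniz rule for $\partial$ on $A_{\partial}$ gives $\partial(a^{n})=\sum_{j=0}^{n-1}a^{j}(\partial a)a^{n-1-j}$, while $Df(s,t)=\sum_{j=0}^{n-1}s^{j}t^{n-1-j}$, so by definition of $L_{a}\otimes R_{a}$,
\begin{align*}
(L_{a}\otimes R_{a})(Df)(\partial a)=\sum_{j=0}^{n-1}a^{j}(\partial a)a^{n-1-j}=\partial(a^{n}).
\end{align*}
Linearity extends this to arbitrary polynomials $p$ with $p(0)=0$.

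For the extension step, approximate $f'$ uniformly on $\spec(a)$ by polynomials $q_{n}$ (Weierstrass) and set $p_{n}(t):=\int_{0}^{t}q_{n}(s)\,ds$; these are polynomials with $p_{n}(0)=0$, $p_{n}'\to f'$ and $p_{n}\to f$ uniformly on $\spec(a)$. After extending $f$ to a $C^{1}$-function on a closed interval $I\supset\spec(a)$, the integral representation $Dg(s,t)=\int_{0}^{1}g'(t+u(s-t))\,du$ for $g\in C^{1}(I)$ yields $\|Dp_{n}-Df\|_{C(I\times I)}\leq\|p_{n}'-f'\|_{\infty}\to 0$, so Proposition \ref{PRP.LR_Rpr} gives $(L_{a}\otimes R_{a})(Dp_{n})\to(L_{a}\otimes R_{a})(Df)$ in operator norm. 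To obtain $L^{2}$-convergence $p_{n}(a)\to f(a)$, write $(p_{n}-f)(t)=t\cdot h_{n}(t)$ with $h_{n}\in C(\spec(a))$ and $\|h_{n}\|_{\infty}\leq\|p_{n}'-f'\|_{\infty}\to 0$; traciality then gives
\begin{align*}
\|(p_{n}-f)(a)\|_{L^{2}(A,\tau)}=\|a\cdot h_{n}(a)\|_{L^{2}(A,\tau)}\leq\|h_{n}\|_{\infty}\cdot\|a\|_{L^{2}(A,\tau)}\longrightarrow 0
\end{align*}
using $a\in A_{\partial}\subset L^{2}(A,\tau)$. Combined with the polynomial identity $\partial(p_{n}(a))=(L_{a}\otimes R_{a})(Dp_{n})(\partial a)$ and closedness of $\partial$, we conclude $f(a)\in D(\partial)$ with the formula in (1).

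Claim (2) follows from (1): the integral representation yields the pointwise bound $|Df(s,t)|\leq\|f'\|_{C(\spec(a))}$, and Proposition \ref{PRP.LR_Rpr} bounds the operator norm of $(L_{a}\otimes R_{a})(Df)$ accordingly. In the commutative case, left and right actions of $a$ on $H$ coincide, so on elementary tensors $(L_{a}\otimes R_{a})(g\otimes h)\xi=(gh)(a).\xi$; Stone--Weierstrass extends this to $(L_{a}\otimes R_{a})(F)\xi=F|_{\Delta}(a).\xi$ for every $F\in C(\spec(a)\times\spec(a))$, and with $F=Df$ we have $Df|_{\Delta}=f'$, proving the last claim. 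The main technical obstacle is the $L^{2}$-convergence $p_{n}(a)\to f(a)$: in the non-unital setting the $C^{*}$-norm does not dominate the $L^{2}$-norm, so uniform convergence on $\spec(a)$ is not enough by itself, and the hypothesis $f(0)=0$ is precisely what makes the factorisation $(p_{n}-f)(t)=t\cdot h_{n}(t)$ available.
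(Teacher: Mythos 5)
Your proof is correct, but it takes a genuinely different route from the paper: for claims $1)$ and $2)$ the paper offers no argument at all, simply citing \cite{CiDrchltFrmsNCS}, and only the commutative statement is sketched (checked on polynomials vanishing at the origin, then extended by density). What you have written is a self-contained reconstruction of the cited result: the Leibniz-rule computation for monomials matching $Dt^{n}(s,t)=\sum_{j}s^{j}t^{n-1-j}$, the choice of antiderivatives $p_{n}$ of Weierstrass approximants of $f'$ so that simultaneously $Dp_{n}\to Df$ uniformly (hence $(L_{a}\otimes R_{a})(Dp_{n})\to(L_{a}\otimes R_{a})(Df)$ in $\mathcal{B}(H)$ by Proposition \ref{PRP.LR_Rpr}) and $p_{n}(a)\to f(a)$ in $L^{2}(A,\tau)$, followed by closedness of $\partial$. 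The genuinely delicate point in the non-unital, semi-finite setting --- that uniform convergence of $p_{n}$ to $f$ on $\spec(a)$ does not by itself give $L^{2}$-convergence of $p_{n}(a)$ --- is exactly the point you isolate and resolve via the factorisation $(p_{n}-f)(t)=t\,h_{n}(t)$ with $\|h_{n}\|_{\infty}\leq\|p_{n}'-f'\|_{\infty}$; this is where $f(0)=0$ is used and the step is sound. Two small remarks: the bound in $2)$, both as stated and as you derive it, should really be $\|f'\|_{C(I)}$ for a closed interval $I\supset\spec(a)$, since the integral representation of $Df$ evaluates $f'$ on segments joining spectral points, which may leave $\spec(a)$ when the spectrum is disconnected (cf.\ Remark \ref{REM.QD_Intvl}); and the expression $f'(\partial(a)).h$ in the statement is evidently a typo for $f'(a).h$, which is what your Stone--Weierstrass argument on elementary tensors proves and which agrees with the paper's own sketch of the commutative case.
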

\begin{proof}
The first and second statement are proved in \cite{CiDrchltFrmsNCS}, while the third can be checked immediately on polynomials vanishing at the origin. This extends to all $f$ we consider by density of such polynomials.
\end{proof}

\begin{rem}\label{REM.QD_Intvl}
If $f=g$ on $I$, then $Df=Dg$ in $C(\spec(a)\times \spec(a))$. We are thus able to compute the chain rule for elements $f\in C^{1}(\spec(a))$ even if they are not continuously differentiable outside of $I$, or in case $f(0)\neq 0$ but $0\notin I$. To do so, we simply replace $f_{|I}$ with an appropriate extension $g$ defined on $\mathbb{R}$. The result is independent of our choice.
\end{rem}

\subsection{Pedersen's differential calculus}
In \cite{PedOpDiffFct}, Pedersen developed a differential calculus based on usual Frech\'et differentiation yet well-behaved with respect to functional calculus. For a more thorough treatment, we refer to the original paper. 

\begin{dfn}
Let $H$ be a separable Hilbert space, $I\subset\mathbb{R}$ a closed interval. We denote the space of all self-adjoint, bounded operators over $H$ with spectra in $I$ by $B(H)_{h}^{I}$. We call a function $f:I\longrightarrow\mathbb{R}$ operator differentiable if the map

\begin{align*}
f:B(H)_{h}^{I}\longrightarrow B(H),\ T\mapsto f(T)
\end{align*}

\noindent is Fr\'echet differentiable, and denote its Fr\'echet derivative at $T$ by $df_{T}$.
\end{dfn}

Pedersen showed operator differentiable maps to form a Banach $^{*}$-algebra, denoted by $C_{op}^{1}(I)$. This notation is justified because every operator differentiable function $f$ is continuously Fr\'echet differentiable, cmpr.~Theorem 2.6 in \cite{PedOpDiffFct}. 

\begin{prp}\label{PRP.dlog}
If $H$ is a separable Hilbert space and $I\subset\mathbb{R}_{>0}$ a closed interval, $d\hspace{0.05cm}\log_{\hspace{0.05cm}T}(S)$ is the unique solution to the integral equation

\begin{align*}
\int_{0}^{1}T^{s}XT^{1-s}ds=S
\end{align*}

\noindent for each $S,T\in B(H)_{sa}^{I}$. Let furthermore $(A,\tau)$ be a tracial $C^{*}$-algebra represented over $H$ and $L^{1}(A,\tau)$ separable. If $T\in L^{\infty}(A,\tau)\cap\mathcal{B}(H)_{sa}^{I}$ and $S\in\mathcal{B}(H)_{sa}^{I}$ such that $d\log_{T}(S)\in L^{1}(A,\tau)$, then 

\begin{align*}
\tau(Td\log_{\hspace{0.05cm}T}(S))=\tau(S).
\end{align*}
\end{prp}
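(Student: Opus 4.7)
For the first assertion, the strategy is to recognise both sides as elements of the $C^{*}$-representation $L_{T}\otimes R_{T}$ applied to reciprocal continuous functions on $I\times I$. Pedersen's theorem identifies $d\log_{\hspace{0.05cm}T}(S)$ with $(L_{T}\otimes R_{T})(D\log)(S)$, where $D\log$ is the quantum derivative of the logarithm; this is legitimate since $\log$ is operator-differentiable on a neighbourhood of $I\Subset(0,\infty)$. On the other hand, the spectral theorem for $T$ combined with Fubini gives
\begin{align*}
\int_{0}^{1}T^{s}XT^{1-s}ds = (L_{T}\otimes R_{T})(M_{lm})(X),
\end{align*}
where the continuous function
\begin{align*}
M_{lm}(\lambda,\mu):=\int_{0}^{1}\lambda^{s}\mu^{1-s}ds
\end{align*}
is precisely the logarithmic mean of $\lambda,\mu\in I$. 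Since $M_{lm}\cdot D\log\equiv 1$ pointwise on $I\times I$ and $L_{T}\otimes R_{T}$ is a $C^{*}$-algebra homomorphism, the two operators $(L_{T}\otimes R_{T})(M_{lm})$ and $d\log_{\hspace{0.05cm}T}$ are mutually inverse bounded maps on $\mathcal{B}(H)$, which yields both existence and uniqueness of the solution $X=d\log_{\hspace{0.05cm}T}(S)$.

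For the second assertion, I would apply $\tau$ directly to the integral equation with $X:=d\log_{\hspace{0.05cm}T}(S)\in L^{1}(A,\tau)$. Since $\spec(T)\subset I$ is compact in $(0,\infty)$, the family $\{T^{s}\}_{s\in[0,1]}\subset L^{\infty}(A,\tau)$ is uniformly norm-bounded and continuous in $s$, so by H\"older the map $s\mapsto T^{s}XT^{1-s}$ is continuous from $[0,1]$ into $L^{1}(A,\tau)$ and hence Bochner-integrable there. Its Bochner integral realises $S$ also as an element of $L^{1}(A,\tau)$, and continuity of $\tau$ on $L^{1}(A,\tau)$ permits interchange with the integral. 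Using cyclicity of $\tau$ together with $T^{1-s}T^{s}=T$, each integrand equals the constant $\tau(TX)$, giving
\begin{align*}
\tau(S)=\int_{0}^{1}\tau(T^{s}XT^{1-s})ds=\tau(TX)=\tau(T\,d\log_{\hspace{0.05cm}T}(S)).
\end{align*}

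The only real obstacle is verifying that the operator-level equation genuinely lifts to a Bochner integral in $L^{1}(A,\tau)$, which is what justifies both the membership $S\in L^{1}(A,\tau)$ and the swap of trace and integration in the second part. This is where the hypothesis $\spec(T)\subset I\subset\mathbb{R}_{>0}$ pays off, yielding uniform control of all $T^{s}$ for $s\in[0,1]$ and in turn making the spectral identification of part 1 a routine functional-calculus identity; the remainder of part 2 then reduces to the trace's cyclicity.
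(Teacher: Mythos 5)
Your argument for the trace identity is essentially the paper's own computation read in reverse: Bochner-integrability of $s\mapsto T^{s}XT^{1-s}$ as a path in $L^{1}(A,\tau)$ (which you obtain from norm-continuity of $s\mapsto T^{s}$, whereas the paper uses weak measurability together with Pettis' theorem and separability), the interchange of $\tau$ with the integral, and traciality to collapse each integrand to $\tau(TX)$ — so the substantive part matches. For the first assertion the paper simply cites Pedersen (p.~155 of \cite{PedOpDiffFct}); your self-contained derivation via the pointwise identity $M_{lm}\cdot D\log\equiv 1$ on $I\times I$ is consistent with how the paper uses this fact elsewhere and is sound, with the one caveat that the joint functional calculus $(L_{T}\otimes R_{T})(f)$ for a non-product continuous $f$ is a priori a $C^{*}$-homomorphism only on a Hilbert bimodule such as $\mathcal{S}_{2}(H)$, so the mutual-inverse relation should be verified there (or directly from the two integral formulas) and then extended to $\mathcal{B}(H)$ by density.
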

\begin{proof}
The first statement is proved on p.~155 of \cite{PedOpDiffFct}. For the second one, note $||T^{\alpha}||_{\infty}||T^{1-\alpha}||_{\infty}=||T||_{\infty}$ since $I\subset\mathbb{R}_{>0}$ closed by hypothesis while $\lambda\longmapsto\lambda^{\alpha}$ increases monotonically for $\lambda>0$, $\alpha\in (0,1]$. For all $R\in L^{1}(A,\tau)$, we therefore know

\begin{align*}
||T^{\alpha}RT^{1-\alpha}||_{L^{1}(A,\tau)}\leq ||T^{\alpha}||_{\infty}||T^{1-\alpha}||_{\infty}||R||_{L^{1}(A,\tau)}=||T||_{\infty}||R||_{L^{1}(A,\tau)}.
\end{align*}

\noindent Multiplication in $L^{\infty}(A,\tau)$ is $||.||_{\infty}$-continuous. Thus $\alpha\longmapsto\tau(T^{\alpha}RT^{1-\alpha}X)=\tau(RT^{1-\alpha}XT^{\alpha})$ is continuous, hence measurable, for arbitrary $X\in L^{\infty}(A,\tau)$. Hence $T^{\alpha}RT^{1-\alpha}$ is Bochner-integrable as path from $[0,1]$ to $L^{1}(A,\tau)$ by separability of the latter. Using continuity of $\tau$ w.r.t.~the $||.||_{L^{1}(A,\tau)}$-topology and $d\log_{T}(S)\in L^{1}(A,\tau)$, we are now able to calculate

\begin{align*}
\tau(Td\log_{\hspace{0.05cm}T}(S)) &= \int_{0}^{1}\tau(Td\log_{\hspace{0.05cm}T}(S))ds\\ \\
&=\int_{0}^{1}\tau(T^{s}d\log_{\hspace{0.05cm}T}(S)T^{1-s})ds\\ \\
&=\tau(\int_{0}^{1}T^{s}d\log_{\hspace{0.05cm}T}(S)T^{1-s}ds)\\ \\
&=\tau(S).
\end{align*}
\end{proof}

\begin{rem}\label{REM.dlog}
By definition of $d\log$ as Fr\'echet derivative, $d\log_{T}(S)\in L^{\infty}(A,\tau)$ if $T,S\in L^{\infty}(A,\tau)$. Hence finiteness of $\tau$ implies $d\log_{T}(S)\in L^{1}(A,\tau)$ whenever $S,T\in L^{\infty}(A,\tau)$.
\end{rem}

We assume $A$ to be a $C^{*}$-algebra representable over a separable Hilbert space for the remainder of this section. Pedersen developed a noncommutative chain rule involving his differential calculus and derivations on $A$. 

\begin{prp}
If $\partial:A\longrightarrow A$ is a closed derivation, then $f(a)\in D(\partial)$ and $\partial(f(a))=df_{a}(\partial(a))$ for each $f\in C_{op}^{1}(I)$ with $\spec(a)\subset I$.
\end{prp}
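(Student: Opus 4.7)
The plan is to verify the chain rule first on a class of functions where it follows by direct computation, and then extend to all of $C_{op}^{1}(I)$ via a density argument combined with closedness of $\partial$.

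First I would handle polynomials. For $f(x) = x^{n}$, iterating the Leibniz rule for $\partial$ gives
\begin{align*}
\partial(a^{n}) = \sum_{k=0}^{n-1} a^{k}\,(\partial a)\, a^{n-1-k},
\end{align*}
and the Fréchet derivative of $T \mapsto T^{n}$ at $T=a$ applied to $\partial a$ yields the same expression. By linearity, $p(a) \in D(\partial)$ with $\partial(p(a)) = dp_{a}(\partial a)$ for every polynomial $p$. An analogous direct verification—differentiating $(\lambda - a)^{-1}(\lambda - a) = 1$ and solving—gives $\partial((\lambda-a)^{-1}) = (\lambda-a)^{-1}(\partial a)(\lambda-a)^{-1}$ for $\lambda \notin I$, which matches the Fréchet derivative of $T \mapsto (\lambda-T)^{-1}$ at $a$ in direction $\partial a$. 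Hence the chain rule already holds on polynomials and resolvent functions.

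Next I would invoke Pedersen's structural description of $C_{op}^{1}(I)$ as a Banach $^{*}$-algebra \cite{PedOpDiffFct}, in which polynomials together with resolvents generate a dense $^{*}$-subalgebra. Any $f \in C_{op}^{1}(I)$ is therefore approximated by a sequence $(f_{n})$ from this subalgebra in the $C_{op}^{1}$-norm. The point of working in this norm is twofold: it dominates the uniform norm on $I$, so $f_{n}(a) \to f(a)$ in $A$ by continuity of the functional calculus; and, by Theorem 2.6 of \cite{PedOpDiffFct}, it dominates the operator norm of the Fréchet-derivative assignment evaluated at $a$, so $df_{n,a}(\partial a) \to df_{a}(\partial a)$ in $A$.

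Finally I appeal to closedness of $\partial$. Each $f_{n}(a) \in D(\partial)$ with $\partial(f_{n}(a)) = df_{n,a}(\partial a)$ by the first step, and both sequences converge in $A$ by the second, so $(f(a), df_{a}(\partial a))$ lies in the graph of $\partial$. This gives $f(a) \in D(\partial)$ and $\partial(f(a)) = df_{a}(\partial a)$. The hard part is marshalling Pedersen's continuity estimates for the assignment $f \mapsto df_{T}$ in the $C_{op}^{1}$-topology and ensuring the generating subalgebra of polynomials and resolvents is actually $C_{op}^{1}$-norm dense; once these ingredients from \cite{PedOpDiffFct} are in place, the remaining steps are routine consequences of the Leibniz rule and closedness.
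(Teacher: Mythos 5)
The paper does not actually prove this statement; its proof is the single line ``This is Theorem 3.7 in \cite{PedOpDiffFct}'', so you are in effect reconstructing Pedersen's argument. Your skeleton --- verify the chain rule on a tractable class, then transfer to all of $C_{op}^{1}(I)$ by approximation together with closedness of $\partial$ --- is the right shape, and both the polynomial computation via the Leibniz rule and the final closed-graph step are sound. The genuine gap is the step you yourself flag as ``the hard part'': density of the algebra generated by polynomials and resolvents in the $C_{op}^{1}$-norm. This is not established in \cite{PedOpDiffFct}, and there is good reason to doubt it. The $C_{op}^{1}$-norm controls $\sup_{T}\|df_{T}\|$, a Schur-multiplier-type quantity, and such multiplier norms typically render the function space non-separable; if $C_{op}^{1}(I)$ is non-separable, then the closed subalgebra generated by polynomials and resolvents is separable (it is the closure of the rational-coefficient rational functions with poles off $I$) and hence proper. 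Since this density claim carries the entire weight of the passage from polynomials to general $f$, the proof as written fails precisely at its load-bearing step.

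Pedersen's own route avoids norm-density altogether: he first proves the chain rule for $t\mapsto e^{ita}$ via the Duhamel formula and closedness, hence for all $f$ with $\int|\hat{f}(t)|\,|t|\,dt<\infty$ by Fourier inversion; he then approximates a general $f\in C_{op}^{1}(I)$ by mollifications $f\ast\phi_{\varepsilon}$, which lie in that Fourier class, converge to $f$ uniformly, and satisfy $d(f\ast\phi_{\varepsilon})_{a}(\partial a)\longrightarrow df_{a}(\partial a)$ because mollification does not increase the $C_{op}^{1}$-norm and $T\longmapsto df_{T}$ is continuous (Theorem 2.6 of \cite{PedOpDiffFct}). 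Closedness of $\partial$ then finishes the proof: one only needs convergence of the approximants and of their derivatives at the single point $a$ in the single direction $\partial a$, which is far weaker than $C_{op}^{1}$-norm convergence. If you want to salvage your approach, replace ``polynomials are $C_{op}^{1}$-dense'' by this mollification scheme. A secondary point: before you can ``differentiate $(\lambda-a)^{-1}(\lambda-a)=1$'' you must first know $(\lambda-a)^{-1}\in D(\partial)$; this is true for closed derivations (Neumann series for $|\lambda|>\|a\|$ followed by a connectedness argument over the resolvent set), but it is an argument, not a formal manipulation.
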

\begin{proof}
This is Theorem 3.7 in \cite{PedOpDiffFct}.
\end{proof}

\noindent If $H=L^{2}(A,\tau)$, $(L_{a}\otimes R_{a})(Df)$ reduces to Pedersen's derivative $df_{a}$. As such, Pedersen derived a special case of the noncommutative chain rule we discussed above.

\begin{prp}
Let $L^{2}(A,\tau)$ be equipped with the canonical $A$-bimodule structure. If $\partial$ is a symmetric gradient such that $\partial_{|A}$ is a closed derivation on $A$, then $(L_{a}\otimes R_{a})(Df)=df_{a}$ for each $f\in C_{op}^{1}(I)$.
\end{prp}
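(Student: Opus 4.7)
My plan is to verify the identity first on polynomials by direct computation, then lift to all of $C_{op}^{1}(I)$ by a continuity argument. Both sides are linear in $f$ and define bounded operators on $L^{2}(A,\tau)$: Proposition \ref{PRP.LR_Rpr} controls $(L_{a}\otimes R_{a})(Df)$ by $\|Df\|_{C(I\times I)}$, while Pedersen's estimates in \cite{PedOpDiffFct} give an operator-norm bound on $df_{a}$ in terms of $\|f\|_{C_{op}^{1}(I)}$. It therefore suffices to check equality on a dense subclass of functions.

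For $f(x)=x^{n}$ the quantum derivative factors explicitly as $Df(s,t)=\sum_{k=0}^{n-1}s^{k}t^{n-1-k}$. Applying $L_{a}\otimes R_{a}$ termwise, using $L_{a}(1)=R_{a}(1)=\mathrm{id}$ for the two boundary summands (which arise because the constant factor corresponds to $f=1$ in the definition of $L_{a}$ and $R_{a}$) together with the canonical left/right multiplication for the interior ones, I obtain the operator $b\mapsto\sum_{k=0}^{n-1}a^{k}ba^{n-1-k}$ on $L^{2}(A,\tau)$. Differentiating $t\mapsto(a+tb)^{n}$ at $t=0$ in the sense of Fr\'echet and invoking the Leibniz rule identifies $df_{a}$ with precisely the same operator. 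The two sides therefore agree on monomials, hence on all polynomials by linearity.

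To extend from polynomials to $f\in C_{op}^{1}(I)$, I would invoke density of polynomials in $C_{op}^{1}(I)$ together with the two continuity statements recorded above. I expect the main obstacle to lie precisely in this density claim: Pedersen's class is strictly smaller than $C^{1}(I)$ and carries a finer norm, so density is not automatic from Stone-Weierstrass. Should it not follow directly from the Banach $^{*}$-algebra structure of $C_{op}^{1}(I)$ established in \cite{PedOpDiffFct}, my fallback is to avoid density entirely by appealing to a Daletskii-Krein type double operator integral representation $df_{a}(b)=\iint Df(s,t)\,dE^{a}(s)\,b\,dE^{a}(t)$, where $E^{a}$ is the spectral measure of $a$. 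In the canonical representation on $L^{2}(A,\tau)$ the spectral measures of the commuting bounded self-adjoint operators $L_{a}$ and $R_{a}$ are the left and right multiplication images of $E^{a}$, so their joint continuous functional calculus applied to $Df$ coincides with the same double integral. This identifies $(L_{a}\otimes R_{a})(Df)$ with $df_{a}$ directly and bypasses the density question altogether.
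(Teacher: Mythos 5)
Your primary argument --- verifying the identity on monomials, where both sides reduce to $b\longmapsto\sum_{k=0}^{n-1}a^{k}ba^{n-1-k}$, and then extending by linearity and density --- is exactly the paper's proof, which is stated in one sentence: the identity is checked on polynomials vanishing at the origin and the general case follows by density. The density concern you flag for $C_{op}^{1}(I)$ with Pedersen's norm is legitimate, but the paper offers no more detail on that point than you do, so your double-operator-integral fallback, while a sensible insurance policy, goes beyond what the paper itself supplies.
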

\begin{proof}
This is checked immediately on polynomials vanishing at the origin, and the general statement follows by density. 
\end{proof}

The case of $H=L^{2}(A,\tau)$ yields a most canonical setting for our extension problem as it will provide an integral representation of our multiplication operator $M_{p}$ given by

\begin{align*}
M_{p}(h)=\int_{0}^{1}p^{\alpha}hp^{1-\alpha}d\alpha.
\end{align*}

\noindent Here, $h\in L^{2}(A,\tau)$ and $p$ is a bounded density. $C^{*}$-dynamical systems induce gradients of this form. As such, even \textit{bounded} symmetric gradients arise canonically in infinite dimensions. All $i[y,\hspace{0.05cm}.\hspace{0.05cm}]$ with $y\in A_{h}$ are of this form. We obtain them by differentiating $\alpha_{t}(x):=e^{tiy}xe^{-tiy}$ at the origin.

\section{$L^{2}$-Wasserstein distances on noncommutative densities}

Starting from noncommutative relative entropy for unital $C^{*}$-algebras, we motivate our notion of multiplication operator. Natural definitions of tangent space, energy functional and $L^{2}$-Wasserstein distance for bounded densities follow. Our justification is completed by finiteness results emulating the compact Riemannian case. We extend $L^{2}$-Wasserstein distances to unbounded densities for symmetric gradients mapping into symmetric Hilbert $L^{\infty}(A,\tau)$-subbimodules and having an extension algebra.

\subsection{Noncommutative relative entropy}

For this subsection, we assume $(A,\tau)$ to be a unital tracial $C^{*}$-algebra with $\tau(1_{A})=1$ such that both $L^{1}(A,\tau)$ and $L^{2}(A,\tau)$ are separable. This occurs if $A$ is separable. Unitality is required when using Petz's variational description of Araki's noncommutative relative entropy. Set $M:=L^{\infty}(A,\tau)$. 

\begin{dfn}\label{DFN.Bd_Dst_Untl}
$\mathcal{D}_{b}:=\{p\in M_{+}\ |\ \tau(p)=1\}$ is the space of bounded densities.
\end{dfn}

The noncommutative relative entropy is defined as a Legendre 	. In the commutative case, this reduces to the familiar representation of the relative entropy as the Legendre dual of the logarithmic Laplace transform. 

\begin{dfn}
For all $p\in\mathcal{D}_{b}$, the noncommutative relative entropy is defined as

\begin{align*}
\textrm{Ent}(p|\tau):=\underset{x\in M_{sa}}{\sup}\{\tau(xp)-\log\tau(e^{x})\}
\end{align*}
\end{dfn}

\noindent We refer to Petz's paper \cite{PetzVarRelEnt} for the variational description we make use of here. See \cite{PetzPrpRelEnt} for a more general description of its operator algebraic properties. Convexity and hence lower semicontinuity of the noncommutative relative entropy in all relevant operator algebraic topologies follow immediately from the definition. The noncommutative relative entropy additionally takes the expected form $\tau(p\log p)$ on bounded densities.

\begin{prp}\label{PRP.Entrp_Rpr}
If $p\in\mathcal{D}_{b}$, then $\textrm{Ent}(p|\tau)=\tau(p\log p)<\infty$. 
\end{prp}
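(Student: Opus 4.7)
The plan is to establish finiteness of $\tau(p \log p)$ and then prove the two inequalities giving equality in the variational formula. Since $p \in M_+$ with $\|p\|_{\infty} < \infty$, the continuous function $f(t) := t \log t$ (with $f(0) := 0$) lies in $C(\spec(p))$, so $p \log p := f(p) \in M$ is well-defined via continuous functional calculus. As $\tau(1_A) = 1$, the n.s.f.~extension of $\tau$ to $M$ is a normal state, so $|\tau(p \log p)| \leq \|p \log p\|_{\infty} < \infty$.

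For the upper bound $\textrm{Ent}(p|\tau) \leq \tau(p \log p)$, fix $x \in M_{sa}$. Since $e^x \geq e^{-\|x\|_{\infty}} 1_M$ is invertible, the element $q := e^x/\tau(e^x) \in \mathcal{D}_b$ satisfies $\log q = x - \log\tau(e^x) \cdot 1_M$ by continuous functional calculus. Using $\tau(p) = 1$, I rewrite
\[
\tau(xp) - \log \tau(e^x) = \tau\bigl(p (x - \log \tau(e^x) \cdot 1_M)\bigr) = \tau(p \log q),
\]
reducing the inequality to $\tau(p \log p - p \log q) \geq 0$. This follows from Klein's inequality applied to the convex function $f(t) = t \log t$ with $f'(t) = \log t + 1$:
\[
\tau\bigl(f(p) - f(q) - (p-q) f'(q)\bigr) \geq 0,
\]
which upon expansion and the use of $\tau(p) = \tau(q) = 1$ collapses to $\tau(p \log p) \geq \tau(p \log q)$.

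For the matching lower bound, I construct an approximating sequence $(x_n) \subset M_{sa}$ by setting
\[
x_n := \log\bigl(p + \tfrac{1}{n} 1_M\bigr) - \log\bigl(1 + \tfrac{1}{n}\bigr) \cdot 1_M,
\]
which is well-defined and bounded since $p + \tfrac{1}{n} 1_M$ has spectrum in $[1/n, \|p\|_{\infty} + 1/n]$. A direct functional-calculus computation yields $e^{x_n} = (p + \tfrac{1}{n} 1_M)/(1 + \tfrac{1}{n})$ and hence $\tau(e^{x_n}) = 1$, so $\log \tau(e^{x_n}) = 0$. Uniform convergence of $t \log(t + 1/n) \to t \log t$ on $[0, \|p\|_{\infty}]$, verified via $0 \leq t \log(1 + 1/(nt)) \leq 1/n$, together with norm-continuity of the state $\tau$, yields $\tau(x_n p) \to \tau(p \log p)$, giving $\textrm{Ent}(p|\tau) \geq \tau(p \log p)$.

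The principal obstacle is Klein's inequality in the tracial $W^*$-setting beyond finite dimensions: in matrices it reduces to the spectral theorem plus scalar convexity, and its extension to our setting is the nontrivial step, typically handled by spectral approximation of $p$ and $q$ by finite linear combinations of their spectral projections and passage to the limit, or by direct appeal to Petz's treatment in \cite{PetzVarRelEnt} and \cite{PetzPrpRelEnt} on which the definition itself is based.
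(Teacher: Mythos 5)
Your proof is correct, but it takes a genuinely different route from the paper on the main inequality. The paper first assumes $p$ invertible and invokes Petz's variational theorem directly: traciality trivialises the modular operator, the supremum in the variational formula equals Araki's relative entropy, and Petz's attainment criterion shows the supremum is reached at $x=\log p$; the general case is then recovered by lower semicontinuity of $\textrm{Ent}(\cdot|\tau)$ along invertible approximants (for $\leq$) and a truncation of $\log p$ from below (for $\geq$). You instead prove both inequalities directly for arbitrary $p\in\mathcal{D}_{b}$, with no invertibility reduction: the upper bound via Klein's inequality for $f(t)=t\log t$, which collapses to $\tau(p\log p)\geq\tau(p\log q)$ for $q=e^{x}/\tau(e^{x})$, and the lower bound via the explicit sequence $x_{n}=\log(p+\tfrac1n)-\log(1+\tfrac1n)$, whose normalisation $\tau(e^{x_{n}})=1$ is a cleaner regularisation than the paper's truncation (which, incidentally, contains a sign slip: the intended operator is $\max\{\log p,-\varepsilon\}$). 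What each approach buys: the paper's argument outsources everything, including the attainment condition, to Petz, at the cost of needing faithfulness of $\tau(\cdot\,p)$ and hence the two-step approximation; yours avoids lower semicontinuity and SOT-continuity of the functional calculus altogether, but hinges on Klein's inequality in a finite tracial $W^{*}$-algebra. That inequality does hold in this setting (spectral approximation plus scalar convexity, exactly as you indicate), and you flag it honestly rather than proving it — a deferral no worse than the paper's own reliance on Petz's first proposition. If you wanted the argument fully self-contained you would need to supply that spectral-approximation step, since $f(t)=t\log t$ fails to be differentiable at $0$ and $0$ may lie in $\spec(p)$; the standard fix of applying Klein on $[\varepsilon,\|p\|_{\infty}]$ to $(1-\varepsilon)p+\varepsilon 1_{M}$ and letting $\varepsilon\downarrow 0$ works because only $f'(q)$, with $q$ invertible, appears in the inequality.
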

\begin{proof}
Assume $p$ to be invertible. In \cite{PetzVarRelEnt}, the relative entropy $S(\varphi,\omega)$ after Araki is discussed in full generality. Our case reduces to $\varphi=\tau$ and $\omega=\tau(\hspace{0.05cm}.\hspace{0.05cm}p)$. Both are faithful normal states. Traciality of $\tau$ implies equality of $\mathit{\Delta}$ and the identity. In the notation of \cite{PetzVarRelEnt}, this ensures $\varphi^{h}=\tau(\hspace{0.05cm}.\hspace{0.05cm}e^{h})$. Using this, the first proposition in \cite{PetzVarRelEnt} implies

\begin{align*}
S(\varphi,\omega)=\underset{x\in M_{h}}{\sup}\{\tau(xp)-\log\tau(e^{x})\}.
\end{align*}

\noindent The same proposition also  tells us that the supremum is reached if and only if 

\begin{align*}
\tau(\hspace{0.025cm}.\hspace{0.05cm}p)=\tau(\hspace{0.05cm}.\hspace{0.05cm}\frac{e^{x}}{\tau(e^{x})})
\end{align*}

\noindent holds, which is true for $x=\log p$. Thus $\tau(p\log p)-\log(\tau(p))=\tau(p\log p)$ equals the supremum.\par
For arbitrary $p$, we have $p\log p\in M$. This follows by continuity of $\lambda\log\lambda$ on $\mathbb{R}_{\geq 0}$, allowing us to apply Borel functional calculus. Any sequence of invertible operators $p_{i}$ converging to $p$ in the strong operator-topology implies 

\begin{align*}
\textrm{Ent}(p|\tau)\leq\liminf_{i}\textrm{Ent}(p_{i}|\tau)=\tau(p\log p)
\end{align*}

\noindent by lower semi-continuity of the noncommutative relative entropy, as well as continuity of the functional calculus under the strong operator-topology. For the converse, define $x_{\varepsilon}:=\min\{\log p,\varepsilon\}$. Then $\tau(px_{-\varepsilon})$ converges to $\tau(p\log p)$, resp. $\tau(e^{x_{-\varepsilon}})$ to $1$, for $\varepsilon\longrightarrow\infty$.
\end{proof}

Let $\partial$ be a symmetric gradient for $(A,\tau)$ and $\Delta:=\partial^{*}\partial$ its Laplacian. We examine an interaction between the noncommutative relative entropy and heat semigroup $P_{t}:=e^{-t\Delta}$. For our following statements, we require the heat semigroup to regularise elements in $M_{+}$ sufficiently well. The derivative in the upcoming definition is the Fr\'echet derivative w.r.t.~the $||.||_{M}$-topology.

\begin{dfn}
Set $D_{Fr}(\Delta):=\{x\in D(\Delta)\ |\ \frac{d}{dt}_{|t=0}P_{t}(x)\ \textrm{exists} \}$. We call $P_{t}$ regularity improving if for all $x\in M_{+}$ and all $t\in (0,1]$, we have $P_{t}(x)\in GL(M)\cap D_{Fr}(\Delta)$.
\end{dfn}

\noindent By the semigroup property, $x\in D_{Fr}(\Delta)$ if and only if $t\longmapsto P_{t+s}(x)$ is Fr\'echet differentiable at the origin for each $s\in (0,1)$. An example from commutative geometry is the heat semigroup on a compact Riemannian manifold. Uniform convergence of the heat kernel yields the required property, see Chapter 8 of \cite{GrigHeatKernel}. In the finite-dimensional case, $\Delta$ having one-dimensional kernel implies the heat semigroup to be regularity improving. We will prove this at the end of this Section 2.3. The next lemma shows the logarithm's quantum derivative appearing when differentiating the relative entropy evaluated at the heat semigroup.

\begin{lem}\label{LEM.Entrp_Mlt}
Let $p\in\mathcal{D}_{b}$ and set $\rho_{t}:=P_{t}(p)$. If $P_{t}$ is regularity improving, then

\begin{align*}
\frac{d}{dt}_{|t=s}Ent(\rho_{t}\hspace{0.05cm}|\hspace{0.05cm}\tau)=-\langle \big((L_{\rho_{s}}\otimes R_{\rho_{s}})(D\log)\big)(\partial \rho_{s}),\partial\rho_{s}\rangle_{H}
\end{align*}

\noindent for each $s\in (0,1)$. 
\end{lem}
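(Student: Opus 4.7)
The plan is to start from the explicit representation $\mathrm{Ent}(\rho_t|\tau)=\tau(\rho_t\log\rho_t)$ given by Proposition \ref{PRP.Entrp_Rpr}, differentiate via a Leibniz/Fr\'echet rule, and then eliminate the $d\log$-term using Proposition \ref{PRP.dlog}. Since $P_t$ is regularity improving, for $s\in(0,1)$ we have $\rho_s\in GL(M)\cap D_{Fr}(\Delta)$; in particular $\spec(\rho_s)\subset [c,C]$ with $c>0$, and $\dot\rho_s=-\Delta\rho_s$ exists in the $\|\cdot\|_M$-norm. The operator function $\log$ is operator differentiable on any closed subinterval of $\mathbb{R}_{>0}$, hence $t\mapsto\log\rho_t$ is Fr\'echet differentiable at $s$ with derivative $(d\log)_{\rho_s}(\dot\rho_s)$. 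Because $\tau(1_A)<\infty$ makes $\tau$ $\|\cdot\|_M$-continuous, the product rule gives
\begin{align*}
\frac{d}{dt}\Big|_{t=s}\tau(\rho_t\log\rho_t)=\tau(\dot\rho_s\log\rho_s)+\tau(\rho_s\cdot(d\log)_{\rho_s}(\dot\rho_s)).
\end{align*}

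Next, apply Proposition \ref{PRP.dlog}: the integrability hypothesis $(d\log)_{\rho_s}(\dot\rho_s)\in L^{1}(A,\tau)$ is automatic by Remark \ref{REM.dlog} since $\dot\rho_s\in M$ and $\tau$ is finite, so the second summand equals $\tau(\dot\rho_s)$. Mass preservation $\tau(\rho_t)=\tau(P_tp)=\tau(p)=1$ then forces $\tau(\dot\rho_s)=0$. Using cyclicity and self-adjointness of $\log\rho_s$, the surviving term becomes
\begin{align*}
\tau(\dot\rho_s\log\rho_s)=-\tau(\log\rho_s\cdot\Delta\rho_s)=-\langle\partial\rho_s,\partial\log\rho_s\rangle_H,
\end{align*}
where the last identity requires $\log\rho_s\in D(\partial)$ and uses $\Delta=\partial^{*}\partial$.

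For the containment $\log\rho_s\in D(\partial)$ and an explicit expression for $\partial\log\rho_s$, I invoke Remark \ref{REM.QD_Intvl}: since $\spec(\rho_s)\subset[c,C]\subset(0,\infty)$, replace $\log$ by a $C^{1}$-extension $g\colon\mathbb{R}\to\mathbb{R}$ with $g(0)=0$, apply the chain rule of Proposition \ref{PRP.LR_Rpr}'s follow-up, and observe $Dg=D\log$ on $\spec(\rho_s)\times\spec(\rho_s)$, yielding
\begin{align*}
\partial\log\rho_s=\bigl((L_{\rho_s}\otimes R_{\rho_s})(D\log)\bigr)(\partial\rho_s).
\end{align*}
Substituting gives the claimed expression up to the order of arguments in the inner product. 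To swap, note that $D\log$ is real-valued on $\spec(\rho_s)\times\spec(\rho_s)$, and $L_{\rho_s}\otimes R_{\rho_s}$ is a $^{*}$-representation of the commutative $C^{*}$-algebra $C(\spec(\rho_s)\times\spec(\rho_s))$, so $(L_{\rho_s}\otimes R_{\rho_s})(D\log)$ is self-adjoint in $\mathcal{B}(H)$. Hence $\langle\partial\rho_s,(L_{\rho_s}\otimes R_{\rho_s})(D\log)(\partial\rho_s)\rangle_H$ is real and equals its complex conjugate, producing the required form.

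The main technical obstacle will be justifying the initial product rule and the Fr\'echet differentiability of $t\mapsto\log\rho_t$ rigorously in $M$-norm; once that is in place together with verification that the $L^{1}$-integrability hypothesis of Proposition \ref{PRP.dlog} holds, everything else reduces to an application of the chain rule and the self-adjointness observation above.
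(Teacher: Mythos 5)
Your proposal is correct and follows essentially the same route as the paper's proof: represent the entropy as $\tau(\rho_t\log\rho_t)$, differentiate by the product rule together with Pedersen's calculus, kill the $d\log$-term via Proposition \ref{PRP.dlog} and mass preservation (the paper phrases this as $1_{A}\in\ker\partial$, which is the same fact), and convert $\tau(\Delta\rho_s\log\rho_s)$ into $\langle\partial\rho_s,\partial\log\rho_s\rangle_H$ before applying the noncommutative chain rule and the self-adjointness of $(L_{\rho_s}\otimes R_{\rho_s})(D\log)$. Your explicit appeal to Remark \ref{REM.QD_Intvl} for the $C^1$-extension of $\log$ vanishing at the origin is a welcome extra detail the paper leaves implicit.
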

\begin{proof}
For all $t\in (0,1]$, we have $\frac{d}{dt}\rho_{t}=-\Delta\rho_{t}$ and we know the limit on the left-hand side to lie in $M$. Moroever, $\rho_{t}$ is a bounded \textit{density} for each $t\in [0,1]$ since $P_{t}$ preserves mass by unitality of $A$. Finally, $\rho_{t}$ is invertible for each $t\in (0,1]$. Taken together, this implies $\log(\rho_{t})$ to be Fr\'echet differentiable on $(0,1)$. Application of the chain rule allows us to express its derivative using Pedersen's differential calculus.\par
We calculate

\begin{align*}
\frac{d}{dt}_{|t=s}\textrm{Ent}(\rho_{t}\hspace{0.05cm}|\hspace{0.05cm}\tau)=-\tau(\Delta\hspace{0.05cm}\rho_{s}\log \rho_{s})+\tau(\rho_{s}d\log_{\rho_{s}}(-\Delta\hspace{0.05cm}\rho_{s}))
\end{align*}

\noindent where we used Pedersen's chain rule and the derivative of a bounded bilinear map. The second summand equals $-\tau(\Delta\hspace{0.05cm}\rho_{s})$ by the second statement of Proposition \ref{PRP.dlog} and Remark \ref{REM.dlog}. Once more, $\tau(\Delta\rho_{s})=0$ as $1_{A}\in\ker\partial$ by unitality. We obtain

\begin{align*}
\frac{d}{dt}_{|t=s}\textrm{Ent}(\rho_{t}\hspace{0.05cm}|\hspace{0.05cm}\tau)=-\tau(\Delta\hspace{0.05cm}\rho_{s}\log\rho_{s})
\end{align*}

\noindent for each $s\in (0,1)$. We have $\tau(\Delta\hspace{0.05cm}\rho_{s}\log\rho_{s})=\langle\Delta\hspace{0.05cm}p_{s},\log\rho_{s}\rangle_{L^{2}(A,\tau)}=\langle \partial\rho_{s},\partial\log\rho_{s}\rangle_{H}$. From this and $\spec(\rho_{s})\subset\mathbb{R}_{>0}$ being bounded from below by invertibility of $\rho_{s}$, the noncommutative chain rule for symmetric gradients shows 

\begin{align*}
\frac{d}{dt}_{|t=s}\textrm{Ent}(\rho_{t}\hspace{0.05cm}|\hspace{0.05cm}\tau)=-\langle\partial\rho_{s},\big((L_{\rho_{s}}\otimes R_{\rho_{s}})(D\log)\big)(\partial\rho_{s})\rangle_{H}.
\end{align*}

\noindent $D\log$ is real-valued, hence $(L_{\rho_{s}}\otimes R_{p_{s}})(D\log)$ is self-adjoint. The statement follows by shifting the operator to the left-hand side of the inner product.
\end{proof}

\begin{rem}
If $H$ is commutative, acting by $(L_{\rho_{s}}\otimes R_{\rho_{s}})(D\log)$ reduces to multiplication by $\rho_{s}^{-1}$.
\end{rem}

If $P_{t}$ is regularity improving and $\rho_{t}$ as above, $\rho_{t}$ should not only solve a noncommutative equivalent of the continuity equation but have finite energy. Under this condition, Lemma \ref{LEM.Entrp_Mlt} shows how noncommutativity leads us to replace $\rho_{t}^{-1}$ by $(L_{\rho_{s}}\otimes R_{\rho_{s}})(D\log)$. We seek to generalise multiplication by $\rho_{s}$, thus we consider the inverse of $(L_{p_{s}}\otimes R_{\rho_{s}})(D\log)$. As $\rho_{s}$ is invertible for $s>0$, this implies

\begin{align*}
(L_{p_{s}}\otimes R_{\rho_{s}})(D\log)^{-1}=(L_{p_{s}}\otimes R_{\rho_{s}})(D\log^{-1})=(L_{p_{s}}\otimes R_{\rho_{s}})(M_{lm})
\end{align*}

\noindent Here, $M_{lm}:=D\log^{-1}$ is the logarithmic mean. It is defined on all of $\mathbb{R}_{\geq 0}^{2}$, hence we obtained a candidate for a multiplication operator even if $p$ is not invertible. This presents our starting point for defining the noncommutative $L^{2}$-Wasserstein distance.

\subsection{Energy functional and definition for bounded densities}

For the remainder of this section, let $(A,\tau)$ be a tracial $C^{*}$-algebra and set $M:=L^{\infty}(A,\tau)$. We demand the actions of $A$ on $H$ to extend to bounded actions of $M$. Such actions will be classified as part of future work by Wirth. For direct summands of $L^{2}(A,\tau)$, the canonical $M$-actions clearly extend those of $A$.\par
We define a multiplication operator given an element in $M_{+}$. The logarithmic mean $M_{lm}$ is continuous on all of $\mathbb{R}_{\geq 0}$, vanishing at the boundary. For motivation, we refer to the previous subsection.

\begin{dfn}
If $x\in M_{+}$, then $M_{x}:=(L_{x}\otimes R_{x})(M_{lm})$ is the multiplication operator for $x$. 
\end{dfn}

\begin{bsp}
If $H=L^{2}(A,\tau)$, then $M_{x}(h)=\int_{0}^{1}x^{\alpha}hx^{1-\alpha}d\alpha$ by Proposition \ref{PRP.dlog}. This extends to direct sums of $L^{2}(A,\tau)$, as well as appropriate submodules defined in Subsection 2.4.
\end{bsp}

\noindent Since $M_{lm}$ is positive, we have $M_{x}\in B(H)_{+}$ in general. Furthermore, $M_{x}$ is invertible if $x$ is and for all $x\in M_{+}\cap GL(M)$, we have

\begin{align*}
M_{x}=(L_{x}\otimes R_{x})(M_{lm})=(L_{x}\otimes R_{x})(D\log)^{-1}
\end{align*}

\noindent because $L_{x}\otimes R_{x}$ is an algebra homomorphism. If we are in the situation of Lemma \ref{LEM.Entrp_Mlt} and set $v_{s}:=(L_{\rho_{s}}\otimes R_{\rho_{s}})(D\log)(\partial\rho_{s})$, the same lemma implies

\begin{align*}
-\frac{d}{dt}_{|t=s}\textrm{Ent}(\rho_{t}\hspace{0.05cm}|\hspace{0.05cm}\tau)=\langle v_{s},M_{\rho_{s}}v_{s}\rangle_{H}=||M_{\rho_{s}}^\frac{1}{2}v_{s}||_{H}^{2}.
\end{align*}

\noindent We view $||M_{p}^{\frac{1}{2}}h||_{H}$ as our analogue of the tangent space norm. Before defining admissible paths and the energy functional, we prove a crucial statement bounding the norm of $M_{x}\in B(H)$ by that of $x\in M$. 

\begin{prp}\label{PRP.Nrm_Mlt_Op}
For $x\in M_{+}$, we have $||M_{x}||_{B(H)}\leq ||x||_{M}$. Equality holds if $\pi$ is faithful.
\end{prp}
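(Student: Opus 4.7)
The upper bound $\|M_x\|_{B(H)} \leq \|x\|_M$ follows by direct application of Proposition \ref{PRP.LR_Rpr} with $I = \spec(x)$, yielding
\begin{align*}
\|M_x\|_{B(H)} \leq \|M_{lm}\|_{C(\spec(x) \times \spec(x))},
\end{align*}
combined with elementary properties of the logarithmic mean: since $M_{lm}(s,t)$ lies between $\min(s,t)$ and $\max(s,t)$ on $\mathbb{R}_{\geq 0}^{2}$ and satisfies $M_{lm}(\lambda, \lambda) = \lambda$, while $\|x\|_M = \max \spec(x) \in \spec(x)$ for $x \in M_+$, the supremum on $\spec(x) \times \spec(x)$ is attained on the diagonal at $(\|x\|_M, \|x\|_M)$ and equals $\|x\|_M$.

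For the equality statement under faithfulness of $\pi$, my strategy exploits commutativity of $C(\spec(x) \times \spec(x))$: because the left and right actions on $H$ commute, $L_x \otimes R_x$ has commutative image, so
\begin{align*}
\|(L_x \otimes R_x)(f)\|_{B(H)} = \sup_{(s,t) \in K} |f(s,t)|,
\end{align*}
where $K \subset \spec(x) \times \spec(x)$ is the closed support of the representation. It suffices to show $(\|x\|_M, \|x\|_M) \in K$, for this combined with $M_{lm}(\|x\|_M, \|x\|_M) = \|x\|_M$ yields the reverse inequality $\|M_x\|_{B(H)} \geq \|x\|_M$.

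To place $(\|x\|_M, \|x\|_M)$ in $K$, I would choose, for each $\epsilon > 0$, a bump $g_\epsilon \in C_0(\spec(x) \setminus \{0\})$ supported in $[\|x\|_M - \epsilon, \|x\|_M]$ with $g_\epsilon(\|x\|_M) = 1$ (the degenerate case $\|x\|_M = 0$ means $x = 0$ and is trivial). Setting $f_\epsilon(s, t) := g_\epsilon(s) g_\epsilon(t)$ and using that $g_\epsilon(0) = 0$ so the definitions of $L_x, R_x$ reduce to functional calculus composed with left and right action, one obtains
\begin{align*}
(L_x \otimes R_x)(f_\epsilon) = \pi\bigl(g_\epsilon(x) \otimes g_\epsilon(x)^{op}\bigr).
\end{align*}
The positive element $g_\epsilon(x) \in A_+$ is nonzero since $\|x\|_M \in \spec(x)$; for any state $\phi$ on $A$ with $\phi(g_\epsilon(x)) > 0$, the product state $\phi \otimes \phi^{op}$ extends to $A \otimes_{max} A^{op}$ and satisfies $(\phi \otimes \phi^{op})(g_\epsilon(x) \otimes g_\epsilon(x)^{op}) = \phi(g_\epsilon(x))^{2} > 0$, so the elementary tensor is nonzero in $A \otimes_{max} A^{op}$. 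Faithfulness of $\pi$ finally yields $(L_x \otimes R_x)(f_\epsilon) \neq 0$, and since $f_\epsilon$ is supported in an arbitrarily small neighborhood of $(\|x\|_M, \|x\|_M)$, this places the point in $K$.

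The main obstacle I anticipate is the mild technical point of verifying nonvanishing of positive elementary tensors in the maximal tensor product, handled above by the product-state construction; the remainder is a direct assembly of Proposition \ref{PRP.LR_Rpr}, commutativity of $C(\spec(x) \times \spec(x))$, and the definitions of $L_x$, $R_x$ on functions vanishing at the origin.
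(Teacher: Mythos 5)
Your proof is correct, and it is worth recording how it relates to the paper's. For the upper bound both arguments reduce to showing $\sup_{[0,C]^{2}}M_{lm}=C$ and then using that the representation is norm-decreasing; the paper does this by an explicit critical-point analysis of $t\mapsto M_{lm}(s,t)$, whereas you invoke the standard mean inequalities $\min(s,t)\leq M_{lm}(s,t)\leq\max(s,t)$ together with $M_{lm}(\lambda,\lambda)=\lambda$ — slightly cleaner, same content. The real divergence is in the equality statement: the paper's own proof consists of the single remark that $\|x\|_{M}\in\spec(x)$ and $M_{lm}(s,s)=s$, leaving implicit why faithfulness of $\pi$ forces the diagonal point $(\|x\|_{M},\|x\|_{M})$ to contribute to the operator norm. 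You supply precisely this missing step: the image of the commutative algebra $C(\spec(x)\times\spec(x))$ is isometrically $C(K)$ for the closed support $K$ of $L_{x}\otimes R_{x}$, and $(\|x\|_{M},\|x\|_{M})\in K$ because the bump tensors $g_{\epsilon}\otimes g_{\epsilon}$ map to $\pi(g_{\epsilon}(x)\otimes g_{\epsilon}(x)^{op})$, which is nonzero by the product-state argument for $A\otimes_{max}A^{op}$ plus faithfulness of $\pi$. So your route is not shorter but is genuinely more complete at exactly the point where the paper is terse; the only cosmetic caveat is that Proposition \ref{PRP.LR_Rpr} is stated for a closed interval $I\supset\spec(x)$, so one should take $I=[0,\|x\|_{M}]$ rather than $I=\spec(x)$ (the estimate over $\spec(x)\times\spec(x)$ appears inside its proof and is what you actually use).
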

\begin{proof}
For $C>0$, consider $f(s,t):=M_{lm}(s,t)$ on $[0,C]\times [0,C]$. We claim $||f||_{\infty}=C$. To see this, first observe $f(C,C)=C$ since $D\log(C,C)=C^{-1}$. For the converse, note how $f(s,0)=0$ for each $s\in [0,\infty)$. Thus finding and comparing maxima of the differentiable functions $f_{s}(t):=f(s,t)$ on $(0,C)$ for each fix $s\in (0,C)$ is sufficient for our purposes. A calculation shows 

\begin{align*}
\frac{d}{dt}f_{s}(t)=0 \iff t=f_{s}(t)
\end{align*}

\noindent to hold. Since $s,t>0$, the right hand side is equivalent to $\log(\frac{s}{t})=\frac{s}{t}-1$. Yet, $\log(x)=x-1$ implies $x=1$ as the functions $\log(x)$ and $x-1$ intersect tangentially while $\log$ is concave. Hence $f_{s}$ has an extrema on $(0,C)$ if any only if $s=t$. Knowing $s<C$, this implies $f_{s}(s)=f(s,s)=s>0=f_{s}(0)$ to be an extreme point. It must therefore be a global maximum as $f_{s}$ is continuous on $[0,C]$. If $s=C$, there is no extreme point on $(0,C)$ but $f_{C}(C)=C>0=f_{C}(0)$ still holds. Thus the global maximum of $f$ is given by $C$.\par
The claim is trivial for $x=0$, hence let $x\in M_{+}$ be non-zero. As $M_{x}$ is given by the image of $f_{|\spec(x)\times\spec(x)}$ under $\pi$, we have 

\begin{align*}
||M_{x}||_{B(H)}\leq\sup_{s,t\in\spec(x)}|f(s,t)|\leq ||x||_{M}
\end{align*}

\noindent where the first inequality stems from $\pi$ being a $^{*}$-homomorphism of $C^{*}$-algebras. We used the first part of this proof to obtain the second estimate. However, $||x||\in\spec(x)$ holds by self-adjointness of $x$. The statement follows from $f(s,s)=s$. 
\end{proof}

\begin{dfn}\label{DFN.Dst}
Let $\mathcal{D}:=\{p\in L_{+}^{1}(A,\tau)\ |\ \tau(p)=1\}$ and $\mathcal{D}_{b}:=\{p\in\mathcal{D}\ |\ p\in L^{\infty}(A,\tau)\}$ be the space of densities, resp. the space of bounded densities.
\end{dfn}

\begin{rem}
We defined $\mathcal{D}_{b}$ before in Definition \ref{DFN.Bd_Dst_Untl}. It was merely a question of exposition.
\end{rem}

We are ready to define tangent spaces, admissible paths, the energy functional and finally the $L^{2}$-Wasserstein distance. In the following, assume $\partial$ to be a symmetric gradient for $(A,\tau)$. 

\begin{dfn}
For all $p\in\mathcal{D}_{b}$ and all $a,b\in A_{\partial}$, set

\begin{align*}
\langle a,b\rangle_{p}:=\langle M_{p}\partial a,\partial b\rangle_{H}.
\end{align*}
\end{dfn}

\begin{rem}
Each $\langle \ ,\hspace{0.05cm} \rangle_{p}$ is a semi-definite, positive bilinear form on $A_{\partial}$ by positivity of $M_{p}$.
\end{rem}

\begin{dfn}
The tangent space $T_{p}\mathcal{D}_{b}$ at $p\in\mathcal{D}_{b}$ is defined to be the Hausdorff completion of $A_{\partial}$ w.r.t.~$\langle \ ,\hspace{0.05cm} \rangle_{p}$. The tangent bundle is defined as  $T\mathcal{D}_{b}:=\underset{p\in\mathcal{D}_{b}}{\coprod}\ \{p\}\times T_{p}\mathcal{D}_{b}$.
\end{dfn}

\begin{ntn}
A path $\mu_{t}$ in $T\mathcal{D}_{b}$ splits into a pair of paths $\mu_{t}=(\rho_{t},v_{t})$ with unique $\rho_{t}\in\mathcal{D}_{b}$ and $v_{t}\in T_{\rho_{t}}\mathcal{D}_{b}$. We always use this or analogous notation when decomposing a path in the tangent bundle.
\end{ntn}

\begin{dfn}\label{DFN.W2_Bd}
Let $\mu_{t}:[0,1]\longrightarrow T\mathcal{D}_{b}$ such that $t\longmapsto\tau(\rho_{t}a)$ is absolutely continuous for each $a\in A_{\partial}$. We say that $\mu_{t}$ satisfies the noncommutative continuity equation if 

\begin{align*}
\frac{d}{dt}\tau(\rho_{t}a)=\langle v_{t},a\rangle_{\rho_{t}}
\end{align*}

\noindent for each $a\in A_{\partial}$ and a.e.~$t\in [0,1]$. 
\end{dfn}

\begin{ntn}
We drop the adjective ''noncommutative'' in the future.
\end{ntn}

We are able to represent any $v\in T_{p}\mathcal{D}_{b}$ in $H$. Given $v$, choose a sequence of $a_{i}\in A_{\partial}$ converging to $v$. From this, we obtain

\begin{align*}
M_{p}^{\frac{1}{2}}\partial a_{i}\longrightarrow w
\end{align*}

\noindent in $H$. In the above, $w\in H$ is independent of our choice of $a_{i}$ by definition of the inner product. This defines a bounded linear map from $(T_{p}\mathcal{D}_{b},||.||_{p})$ to $H$, sending $v$ to $w$. It is an isometry by construction. In particular, the image of $T_{p}\mathcal{D}_{b}$ in $H$ is closed. We thereby view each $T_{p}\mathcal{D}_{b}$ as a closed subspace of $H$, and $T_{p}D_{b}$ as a subspace of $D_{b}\times H$. Using this, we rewrite the continutiy equation as

\begin{align*}
\frac{d}{dt}\tau(\rho_{t}a)=\langle w_{t},M_{\rho_{t}}^{\frac{1}{2}}\partial a\rangle_{H}.
\end{align*}

\begin{ntn}
For a given path $\mu_{t}$ satisfying the continuity equation, we consider $v_{t}$ and $w_{t}$ interchangably from now on. Furthermore, we denote the projection from $H$ to $T_{\rho_{t}}\mathcal{D}_{b}$ by $R_{t}$.
\end{ntn}

\begin{dfn}\label{DFN.Bd_Adm}
Let $p,q\in\mathcal{D}_{b}$. An admissible path from $p$ to $q$ is a $\mu_{t}:[0,1]\longrightarrow T\mathcal{D}_{b}$ such that 

\begin{itemize}
\item[1)] $\mu_{t}$ satisfies the continuity equation,
\item[2)] $\rho_{0}=p$ and $\rho_{1}=q$,
\item[3)] $t\longmapsto ||v_{t}||_{\rho_{t}}^{2}=||w_{t}||_{H}^{2}\in L^{1}([0,1])$.
\end{itemize}

\noindent We denote the set of all admissible paths between $p$ and $q$ by $\mathcal{A}(p,q)$.
\end{dfn}

\noindent Let $\varphi$ be a linear reparametrisation and $\mu_{t}$ satisfy the continuity equation. We decompose $\mu_{\varphi(t)}$ into $\mu_{\varphi(t)}=(\rho_{\varphi(t)},v_{\varphi(t)}\dot{\varphi
}(t))$. Hence precomposition by $t\longmapsto -t$ maps admissible paths to admissible paths. The decomposition additionally shows that concatenating two admissible paths, in the canonical topological sense, again yields an admissible path. From this we obtain symmetry, resp.~the triangle-inequality for our distance candidate once we have defined the latter. 

\begin{dfn}\label{DFN.Bd_L2WDst}
We define the energy functional on admissible paths as

\begin{align*}
E(\mu_{t}):=\frac{1}{2}\int_{0}^{1}||v_{t}||_{\rho_{t}}^{2}dt
\end{align*}

\noindent and the noncommutative $L^{2}$-Wasserstein distance on bounded densities by

\begin{align*}
\mathcal{W}_{2}(p,q)=\inf_{\mu_{t}\in\mathcal{A}(p,q)}\sqrt{E(\mu_{t})}.
\end{align*}
\end{dfn}

\begin{ntn}
As before, we drop ''noncommutative'' in the above description.
\end{ntn}

We prove $\mathcal{W}_{2}$ to be a distance. By the discussion just prior to Definition \ref{DFN.Bd_L2WDst} and $E\geq 0$, we only need to check definiteness. To do so, we assume existence of a function $g$ allowing control of $\langle M_{p}\partial a,\partial a\rangle_{H}$ on a sufficiently large subset $S\subset A_{\partial}$.

\begin{dfn}\label{DFN.Spfct}
Let $S\subset A_{\partial}$ and $g:S\longrightarrow\mathbb{R}_{\geq 0}$ such that for all $p,q\in\mathcal{D}_{b}$, we have

\begin{itemize}
\item[1)] $\tau(pa)=\tau(qa)$ for each $a\in S$ if and only if $p=q$,
\item[2)] $||a||_{p}^{2}\leq g(a)$ for each $a\in S$.
\end{itemize}

\noindent Then $g$ is called a separating function.
\end{dfn}

\begin{prp}
If there exists a separating function $g$, then $\mathcal{W}_{2}$ is a distance.
\end{prp}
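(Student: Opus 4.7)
The plan is to reduce the claim to a single estimate via Cauchy--Schwarz and then invoke the two defining properties of the separating function $g$. Symmetry and the triangle inequality were already observed in the paragraph preceding Definition \ref{DFN.Bd_L2WDst} (from reparametrisation and concatenation of admissible paths), and non-negativity is immediate from $E\geq 0$, so only $\mathcal{W}_{2}(p,p)=0$ and definiteness remain to be verified.

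For $\mathcal{W}_{2}(p,p)=0$, I would take the constant path $\mu_{t}:=(p,0)$. Since $t\longmapsto\tau(pa)$ is constant (hence absolutely continuous) with derivative equal to $0=\langle 0,a\rangle_{p}$ for every $a\in A_{\partial}$, the continuity equation is trivially satisfied; the endpoint conditions hold by construction, and $\|v_{t}\|_{\rho_{t}}^{2}\equiv 0$ lies in $L^{1}([0,1])$. Hence $\mu_{t}\in\mathcal{A}(p,p)$ and $E(\mu_{t})=0$.

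For definiteness, suppose $\mathcal{W}_{2}(p,q)=0$ and fix $a\in S$. Pick a sequence $\mu^{(n)}_{t}\in\mathcal{A}(p,q)$ with $E(\mu^{(n)}_{t})\longrightarrow 0$. Absolute continuity of $t\longmapsto\tau(\rho^{(n)}_{t}a)$ combined with the continuity equation yields
\begin{align*}
\tau(qa)-\tau(pa)=\int_{0}^{1}\langle v^{(n)}_{t},a\rangle_{\rho^{(n)}_{t}}\,dt.
\end{align*}
Applying Cauchy--Schwarz pointwise in $t$ with respect to the inner product $\langle\,,\,\rangle_{\rho_{t}^{(n)}}$ (equivalently, working with the isometric representation $w_{t}\longmapsto M_{\rho_{t}}^{1/2}\partial a$ in $H$) and then property $2)$ of the separating function, one obtains
\begin{align*}
|\langle v^{(n)}_{t},a\rangle_{\rho^{(n)}_{t}}|\leq\|v^{(n)}_{t}\|_{\rho^{(n)}_{t}}\cdot\|a\|_{\rho^{(n)}_{t}}\leq\sqrt{g(a)}\cdot\|v^{(n)}_{t}\|_{\rho^{(n)}_{t}}.
\end{align*}
A second Cauchy--Schwarz application, this time on $[0,1]$, gives
\begin{align*}
|\tau(qa)-\tau(pa)|\leq\sqrt{g(a)}\int_{0}^{1}\|v^{(n)}_{t}\|_{\rho^{(n)}_{t}}\,dt\leq\sqrt{g(a)}\cdot\sqrt{2E(\mu^{(n)}_{t})}\longrightarrow 0.
\end{align*}
Thus $\tau(pa)=\tau(qa)$ for every $a\in S$, and property $1)$ of the separating function forces $p=q$.

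I do not anticipate any serious obstacle: the only point requiring mild care is confirming that the Cauchy--Schwarz step is legitimate, i.e.\ that $t\longmapsto\|v_{t}\|_{\rho_{t}}$ is measurable and square-integrable. Measurability follows because it arises as the $H$-norm of the measurable representative $w_{t}=M_{\rho_{t}}^{1/2}\partial a$-pairings encoded in the continuity equation (or simply from $\|v_{t}\|_{\rho_{t}}^{2}\in L^{1}([0,1])$ by condition $3)$ of Definition \ref{DFN.Bd_Adm}), and square-integrability is then exactly condition $3)$.
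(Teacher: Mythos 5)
Your proof is correct and follows essentially the same route as the paper: the paper likewise reduces to definiteness via the reparametrisation/concatenation discussion, and establishes it through the estimate $|\tau((\rho_{1}-\rho_{0})a)|\leq\sqrt{2g(a)E(\mu_{t})}$ obtained by the same two applications of Cauchy--Schwarz together with properties $1)$ and $2)$ of the separating function. Your explicit treatment of the constant path for $\mathcal{W}_{2}(p,p)=0$ and of the measurability of $t\longmapsto\|v_{t}\|_{\rho_{t}}$ is only a more careful spelling-out of details the paper leaves implicit.
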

\begin{proof}
We only need to show definiteness. For all admissible paths $\mu_{t}$ and all $a\in S$, we have

\begin{align*}
\tau((\rho_{1}-\rho_{0})a)&=\int_{0}^{1}\frac{d}{dt}\tau(\rho_{t}a)dt\\ 
&=\int_{0}^{1}\langle v_{t},a\rangle_{\rho_{t}}dt\\
&\leq \sqrt{2g(a)E(\mu_{t})}
\end{align*}

\noindent where we used $2)$ in Definition \ref{DFN.Spfct} for the last estimate. By $1)$ in the same definition and construction of $\mathcal{W}_ {2}$, $\mathcal{W}_{2}(p,q)=0$ if and only if $p=q$.
\end{proof}

\begin{ntn}
Distances can be infinite in metric geometry. While we use this convention, distances with infinite value are also called extended distances, or extended metrics.
\end{ntn}

\begin{rem}
Our definition is compatible with the commutative case if the underlying metric measure space $(X,d,m)$ satisfies the reduced curvature-dimension condition $CD^{*}(K,N)$, see 2) of Theorem 1.2 in \cite{RajLInfBound}. Other examples are measured-length spaces, defined in \cite{GiHaContMetrBdDens}. We will see in Example \ref{BSP.Unbd_Comm_Case} how to recover the $C^{\infty}$-manifold setting in general after having extended to all densities for particular gradients in Subsection 2.4.
\end{rem}

\begin{bsp}\label{BSP.Bd_Comm_Case}
Let $(X,h)$ be a smooth Riemannian manifold with density $d|\omega|$ and connection $\nabla$. Set $A=C_{0}(X)$, $\tau=d|\omega|\otimes\mathbb{C}$, $\partial:=\nabla\otimes\mathbb{C}$ and $H$ to be the space of $L^{2}$-sections of $TX\otimes\mathbb{C}$ w.r.t.~$hd|\omega|$. If $S:=C_{c}^{\infty}(X)$, we have

\begin{align*}
||a||_{p}^{2}=\int_{X}ph(\partial a,\partial a)d|\omega|\leq ||h(\partial a,\partial a)||_{\infty}
\end{align*}

\noindent for each $p\in D_{b}$ and $a\in S$. Hence $g(a):=||h(\partial a,\partial a)||_{\infty}$ is a separating function.
\end{bsp}

\begin{bsp}\label{BSP.SF_Fct_I}
Let $H$ be separable. Consider $A=\mathcal{K}(H)$ and $\tau=\theta\textrm{tr}$ for $\theta>0$ fix. Then $\theta^{-1}L^{1}(A,\tau)$ equals $\mathcal{S}_{1}(H)$, thus $||x||_{M}\leq\theta^{-1}||x||_{L^{1}(A,\tau)}$ for each $x\in L^{1}(A,\tau)$. For $S=A_{\partial}$, we have

\begin{align*}
\langle M_{p}\partial a,\partial a\rangle_{H}\leq\theta^{-1}||\partial a||_{H}^{2}
\end{align*}

\noindent by Proposition \ref{PRP.Nrm_Mlt_Op} and $p\in\mathcal{D}_{b}$. Hence $g(a):=\theta^{-1}||\partial a||_{H}^{2}$ is a separating function. The fourth section deals with a wide generalisation of this example.
\end{bsp}

\begin{bsp}
All symmetric gradients of type considered in Subsection 2.4 have a canonical separating function, see Proposition \ref{PRP.Unbd_Sep}.
\end{bsp}

We end this subsection with a lemma useful when discussing vertical gradients.

\begin{lem}\label{LEM.Msrbl_Bd}
Assume there exists a separating function and let $\mu_{t}$ be an admissible path. If $L^{1}(A,\tau)$ is separable, then $\rho_{t}\in L^{1}([0,1],L^{1}(A,\tau))$.
\end{lem}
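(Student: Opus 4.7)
The plan is to invoke Pettis's measurability theorem. Since $L^{1}(A,\tau)$ is separable by hypothesis, a function $[0,1]\to L^{1}(A,\tau)$ is strongly measurable if and only if it is weakly measurable. Under the identification $L^{1}(A,\tau)^{*}\cong L^{\infty}(A,\tau)=M$, weak measurability of $t\mapsto\rho_{t}$ amounts to measurability of $\phi_{x}(t):=\tau(\rho_{t}x)$ for every $x\in M$. The norm-integrability half is immediate: positivity of $\rho_{t}$ and $\tau(\rho_{t})=1$ give $\|\rho_{t}\|_{L^{1}(A,\tau)}=1$, hence $\int_{0}^{1}\|\rho_{t}\|_{L^{1}}\,dt=1<\infty$.

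To verify weak measurability I would proceed in three density steps of increasing generality. First, for $a\in A_{\partial}$, Definition \ref{DFN.W2_Bd} together with admissibility of $\mu_{t}$ says that $\phi_{a}$ is absolutely continuous, hence measurable. Second, for $a\in A$, I use density of $A_{\partial}$ in $A$ (Definition \ref{DFN.NCGrad}) together with the estimate $|\phi_{a}(t)-\phi_{a'}(t)|\le\|\rho_{t}\|_{L^{1}}\,\|a-a'\|_{M}=\|a-a'\|_{A}$, valid because the $C^{*}$-norm on $A$ agrees with the operator norm it inherits from $M$. This produces uniform convergence $\phi_{a_{n}}\to\phi_{a}$ whenever $a_{n}\to a$ in $A$, so measurability extends from $A_{\partial}$ to all of $A$. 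Third, for general $x\in M$, I exploit that $A$ is $\sigma$-weakly dense in $M=L^{\infty}(A,\tau)$ and that the $\sigma$-weak topology on norm-bounded subsets of $M$ is metrizable, the latter being a direct consequence of separability of the predual $L^{1}(A,\tau)$. Kaplansky density then furnishes a \emph{sequence} $a_{n}\in A$ with $\|a_{n}\|_{M}\le\|x\|_{M}$ and $a_{n}\to x$ in the $\sigma$-weak topology, so $\phi_{a_{n}}(t)=\tau(\rho_{t}a_{n})\to\tau(\rho_{t}x)=\phi_{x}(t)$ pointwise in $t$. As pointwise limits of measurable functions are measurable, $\phi_{x}$ is measurable.

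Combining the three steps, $t\mapsto\rho_{t}$ is weakly measurable; Pettis together with separability of $L^{1}(A,\tau)$ upgrades this to strong measurability, which in tandem with $\|\rho_{t}\|_{L^{1}}=1$ yields $\rho\in L^{1}([0,1],L^{1}(A,\tau))$. The main, albeit modest, obstacle is the third step: the passage from the norm-dense $C^{*}$-algebra $A$ to its $\sigma$-weak closure $M$. What makes this sequential (rather than merely net-theoretic) is precisely the separability hypothesis on $L^{1}(A,\tau)$, which metrizes the $\sigma$-weak topology on bounded subsets of $M$; without it one only gets a net from Kaplansky density, and pointwise limits of nets of measurable functions need not be measurable.
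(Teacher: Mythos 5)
Your proof is correct and follows essentially the same route as the paper: Pettis' theorem via separability of $L^{1}(A,\tau)$, combined with measurability of $t\mapsto\tau(\rho_{t}a)$ for $a\in A_{\partial}$ (from the continuity equation) and a $w^{*}$-density argument to pass to arbitrary $x\in M$. If anything, your version is more careful than the paper's, which passes directly from $A_{\partial}$ to $M$ with a $w^{*}$-convergent sequence without spelling out why a bounded approximating \emph{sequence} exists — the Kaplansky-density-plus-metrizability point you make explicit.
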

\begin{proof}
$A_{\partial}$ lies dense in $A$, hence dense in $M$ w.r.t.~the $w^{*}$-operator topology. We already know $\tau(\rho_{t}a)\in C([0,1])$ for each $a\in A_{\partial}$ by hypothesis. If on the other hand $a_{i}\in A_{\partial}$ converges to $x\in M$ in the $w^{*}$-topology, we know that $\tau(\rho_{t}a_{i})$ converges to $\tau(\rho_{t}x)$. Thus $\tau(\rho_{t}x)$ is approximated pointwise by measurable functions $\tau(\rho_{t}a_{i})$, where $x\in M$ was arbitrary but fix. Since $L^{1}(A,\tau)$ was separable and $L^{1}(A,\tau)^{*}=M$, Pettis' theorem shows strong measurability of $\rho_{t}$. Thus Bochner-integrability follows from $||\rho_{t}||_{L^{1}(A,\tau)}=1$. 
\end{proof}

\subsection{Finiteness on bounded densities for unital $C^{*}$-algebras}

For this subsection, let $\partial$ be a symmetric gradient for $(A,\tau)$ and assume existence of a separating function $g$. We show finiteness of $\mathcal{W}_{2}$ if $A$ is unital, $\partial$ satisfies a Poincar\'e-type inequality and the heat semigroup $P_{t}:=e^{-t\Delta}$ is regularity improving. For the latter, we show ergodicity to be a necessary condition.

\begin{dfn}
We say that $\partial$ satisfies a Poincar\'e-type inequality if there exists some $C>0$ such that $||a||_{L^{2}(A,\tau)}\leq C||\partial a||_{H}$ for each $a\in(\ker\partial)^{\bot}\cap A_{\partial}$.
\end{dfn}

\begin{prp}\label{PRP.Pncr_Inq}
Let $\partial$ satisfy a Poincar\'e-type inequality. For all $x\in M\cap L_{sa}^{2}(A,\tau)\cap\ker\tau$, there exists an $h\in H$ such that $\tau(xa)=\langle h,\partial a\rangle_{H}$ for each $a\in A_{\partial}$.
\end{prp}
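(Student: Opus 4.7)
The plan is to apply Riesz representation to the functional $L : A_\partial \to \mathbb{C}$ given by $L(a) := \tau(xa)$. Since $A_\partial \subseteq D(\partial) \subseteq L^{2}(A,\tau)$, this is well-defined and equals $\langle x, a \rangle_{L^{2}(A,\tau)}$ by self-adjointness of $x$. Set $K := \ker \partial$, a closed subspace of $L^{2}(A,\tau)$, and let $P$ denote the orthogonal projection onto $K^{\perp}$. The goal is to realize $L$ as a bounded linear functional of $\partial a$, so that Riesz then produces $h$.

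First I would verify that $x \in K^{\perp}$. In the unital setting naturally underlying a Poincar\'e-type inequality, $\ker\partial$ is spanned by $1_A$: the Leibniz rule forces $\partial 1_A = 0$, and the spectral-gap content of the Poincar\'e inequality excludes further kernel. Consequently, $\tau(x) = \langle x, 1_A\rangle_{L^{2}(A,\tau)} = 0$ gives $x \perp K$. For $a \in A_\partial$, writing $a = (1-P)a + Pa$ and noting that $(1-P)a \in K \subseteq D(\partial)$, one obtains $Pa \in D(\partial) \cap K^{\perp}$ with $\partial(Pa) = \partial a$, and therefore $L(a) = \langle x, Pa \rangle_{L^{2}(A,\tau)}$.

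Second I would extend the Poincar\'e inequality from $A_\partial \cap K^{\perp}$ to $D(\partial) \cap K^{\perp}$ using that $A_\partial$ is a core and $\partial$ is closed, giving $\|Pa\|_{L^{2}(A,\tau)} \leq C\|\partial a\|_H$. Cauchy--Schwarz then yields $|L(a)| \leq C\|x\|_{L^{2}(A,\tau)}\|\partial a\|_H$. Setting $\Phi(\partial a) := L(a)$ on $\partial(A_\partial) \subseteq H$, this bound together with $x \in K^{\perp}$ makes $\Phi$ well-defined (if $\partial a = \partial b$, then $a - b \in K \cap A_\partial$ and $L(a-b) = 0$) and continuous with norm at most $C\|x\|_{L^{2}(A,\tau)}$. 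Extending $\Phi$ by continuity to $\overline{\partial(A_\partial)}$ and invoking Riesz yields $h \in \overline{\partial(A_\partial)} \subseteq H$ with $\tau(xa) = \langle h, \partial a\rangle_H$ for each $a \in A_\partial$.

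The main obstacle is the extension of the Poincar\'e inequality to $D(\partial) \cap K^{\perp}$. Given $b \in D(\partial) \cap K^{\perp}$ and core-approximants $a_n \in A_\partial$ with $a_n \to b$ and $\partial a_n \to \partial b$, the projections $Pa_n$ satisfy $Pa_n \to b$ and $\partial(Pa_n) = \partial a_n$ but generally escape $A_\partial$, so the inequality has to be transported across a density argument on the $\ker\partial$-part of $a_n$. In the unital regime with $K = \mathbb{C}\cdot 1_A$ this reduces to a one-dimensional correction and is immediate; under this same reduction, the identification $x \in K^{\perp}$ from the first step becomes tautological, so the plan essentially collapses to a direct Riesz argument once the kernel is known to be one-dimensional.
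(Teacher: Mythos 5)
The paper does not actually prove this proposition internally: its ``proof'' is a citation to Theorem 9.2 of \cite{ZaevTopicsNCSpaces}. Your Riesz-representation strategy --- factor $a\mapsto\tau(xa)$ through $\partial$, use the Poincar\'e-type inequality for boundedness, then represent the resulting functional on $\overline{\partial(A_{\partial})}\subset H$ --- is the natural route and is presumably what the cited result does. But there is a genuine gap in the step where you place $x$ in $(\ker\partial)^{\perp}$.

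You justify $x\perp\ker\partial$ by asserting that ``the spectral-gap content of the Poincar\'e inequality excludes further kernel.'' It does not. As defined in the paper, the inequality reads $\|a\|_{L^{2}(A,\tau)}\leq C\|\partial a\|_{H}$ \emph{only} for $a\in(\ker\partial)^{\perp}\cap A_{\partial}$; it places no constraint on $\ker\partial$ itself and is perfectly compatible with a large kernel (e.g.\ $A=A_{1}\oplus A_{2}$ with $\partial$ vanishing on the second summand still satisfies the inequality on $(\ker\partial)^{\perp}$). Moreover, the subsection does not make unitality a blanket assumption, so even the identification $\ker\tau\cap L^{2}_{sa}\subset(1_{A})^{\perp}$ is not free. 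The orthogonality you need is not cosmetic: taking $a\in\ker\partial\cap A_{\partial}$ in the conclusion forces $\tau(xa)=\langle h,0\rangle_{H}=0$, so $x\perp(\ker\partial\cap A_{\partial})$ is \emph{necessary} for the statement, and it is exactly what makes your map $\Phi(\partial a):=\tau(xa)$ well defined and what lets you write $\tau(xa)=\langle x,Pa\rangle_{L^{2}(A,\tau)}$. None of this follows from the stated hypotheses $x\in M\cap L^{2}_{sa}(A,\tau)\cap\ker\tau$ alone; you must additionally assume $A$ unital with $1_{A}\in A_{\partial}$ and $\ker\partial=\mathbb{C}1_{A}$ (ergodicity in the sense of Corollary \ref{COR.Erg}), which is an extra hypothesis, not a consequence of the Poincar\'e-type inequality. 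Once that hypothesis is granted, your argument closes up: $P$ then preserves $A_{\partial}$ (since $Pa=a-\tau(a)\tau(1_{A})^{-1}1_{A}$), which also disposes of the ``main obstacle'' you flag about extending the inequality from $A_{\partial}\cap(\ker\partial)^{\perp}$ to $D(\partial)\cap(\ker\partial)^{\perp}$, and the bound $|\tau(xa)|\leq C\|x\|_{L^{2}(A,\tau)}\|\partial a\|_{H}$ plus Riesz finishes the proof. In short: correct skeleton, but the reduction to the one-dimensional-kernel case is asserted rather than derived, and it cannot be derived from the paper's definition of the Poincar\'e-type inequality.
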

\begin{proof}
This is proved in Theorem 9.2. of \cite{ZaevTopicsNCSpaces} for general $x\in L^{2}(A,\tau)$. 
\end{proof}

To show finiteness, we first prove that a Poincar\'e-type inequality suffices to have finite distance between invertible elements. After this, we use the regularity improving property of the heat semigroup to connect non-invertible elements to invertible ones. Finite energy of these paths will follow from Lemma \ref{LEM.Entrp_Mlt}.

\begin{thm}\label{THM.Fin_Dst}
If $\partial$ satisfies a Poincar\'e-type inequality, the distance between any two invertible bounded densities is finite. If $A$ is unital, $p\in\mathcal{D}_{b}$ and $P_{t}$ regularity improving, the distance between $p$ and $\rho_{t}:=P_{t}(p)$ is finite for each $t\in [0,1]$.
\end{thm}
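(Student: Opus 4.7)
The plan is to verify both statements by constructing, in each setting, an explicit admissible path and bounding its energy directly.

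For the first statement I would take the linear interpolation $\rho_s := (1-s)p + sq$ between two invertible bounded densities. Both endpoints being invertible in $M_{+}$, the path stays bounded away from zero: there is $c>0$ with $\rho_s \geq c$ uniformly in $s$, and the spectrum of $\rho_s$ lies in some compact $[c,C] \subset \mathbb{R}_{>0}$. Since $M_{lm}(\lambda,\mu) \geq \min(\lambda,\mu)$ on such an interval, functional calculus gives $M_{\rho_s} \geq c$ on $H$, so $M_{\rho_s}^{-1/2}$ is a bounded operator with norm at most $c^{-1/2}$ uniformly in $s$. Next, apply Proposition \ref{PRP.Pncr_Inq} to $q-p \in M_{sa}\cap\ker\tau$ to obtain $h \in H$ with $\tau((q-p)a) = \langle h, \partial a\rangle_H$ for every $a \in A_{\partial}$. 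Setting $w_s := R_s(M_{\rho_s}^{-1/2}h)$ and using that $M_{\rho_s}^{1/2}\partial a$ already lies in the image of the tangent space, the continuity equation
\begin{align*}
\langle w_s, M_{\rho_s}^{1/2}\partial a\rangle_H = \langle h, \partial a\rangle_H = \tau((q-p)a) = \tfrac{d}{ds}\tau(\rho_s a)
\end{align*}
holds for all $a \in A_\partial$, and the energy is bounded by $\|h\|_H^2/(2c) < \infty$.

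For the second statement I would run the heat flow itself as the connecting path. Fix $t\in (0,1]$ (the case $t=0$ being trivial) and set $\tilde\rho_u := P_{tu}(p)$ for $u \in [0,1]$, so $\tilde\rho_0 = p$ and $\tilde\rho_1 = \rho_t$. Unitality together with $1_A \in \ker\partial$ forces $P_s(1_A)=1_A$, keeping each $\tilde\rho_u$ a bounded density; regularity improvement places $\tilde\rho_u \in GL(M)\cap D_{Fr}(\Delta) \subset D(\partial)$ for $u>0$, making both $M_{\tilde\rho_u}^{-1/2}$ and $\partial\tilde\rho_u$ well-defined on $H$. I would then define $w_u := R_u(-tM_{\tilde\rho_u}^{-1/2}\partial\tilde\rho_u)$. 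The identity $\tfrac{d}{du}\tilde\rho_u = -t\Delta\tilde\rho_u$ reduces the continuity equation to $\tfrac{d}{du}\tau(\tilde\rho_u a) = -t\langle\partial\tilde\rho_u,\partial a\rangle_H$ via integration by parts in $L^2(A,\tau)$. Reparametrising Lemma \ref{LEM.Entrp_Mlt} by the chain rule gives
\begin{align*}
\tfrac{d}{du}\textrm{Ent}(\tilde\rho_u|\tau) = -t\langle M_{\tilde\rho_u}^{-1}\partial\tilde\rho_u,\partial\tilde\rho_u\rangle_H,
\end{align*}
so $\|w_u\|_H^2 \leq t^2 \langle M_{\tilde\rho_u}^{-1}\partial\tilde\rho_u, \partial\tilde\rho_u\rangle_H = -t\,\tfrac{d}{du}\textrm{Ent}(\tilde\rho_u|\tau)$. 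Integrating collapses the total energy to $\tfrac{t}{2}\bigl(\textrm{Ent}(p|\tau) - \textrm{Ent}(\rho_t|\tau)\bigr)$, which is finite by Proposition \ref{PRP.Entrp_Rpr}.

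The step I expect to require the most care is verifying admissibility in the sense of Definition \ref{DFN.Bd_Adm} for the heat-flow path, specifically the absolute continuity of $u \mapsto \tau(\tilde\rho_u a)$ on the closed interval $[0,1]$ through the possibly singular endpoint $u=0$. On $(0,1]$ analyticity of the semigroup supplies a $C^1$-derivative bounded by $t\|\partial a\|_H\,\|\partial\tilde\rho_u\|_H$, and the Dirichlet identity $\tfrac{d}{du}\|\tilde\rho_u\|_{L^2}^2 = -2t\|\partial\tilde\rho_u\|_H^2$ integrates to $\int_0^1 \|\partial\tilde\rho_u\|_H^2\,du \leq \|p\|_{L^2}^2/(2t)$, so the derivative sits in $L^2([0,1]) \subset L^1([0,1])$. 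Combined with strong continuity of $P_s$ on $L^2(A,\tau)$ at $u=0$, this will upgrade pointwise differentiability on $(0,1]$ to absolute continuity on the full interval, closing the proof.
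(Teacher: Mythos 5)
Your proposal is correct and follows essentially the same route as the paper: linear interpolation combined with the Poincar\'e-type inequality (Proposition \ref{PRP.Pncr_Inq}) and a uniform lower bound on $M_{\rho_s}$ for the first statement, and the heat flow with the entropy-dissipation identity of Lemma \ref{LEM.Entrp_Mlt} bounding the energy by the entropy difference for the second. The minor variations (bounding $M_{\rho_s}$ from below via $M_{lm}\geq\min$ rather than bounding $\|M_{\rho_s}^{-1}\|$ via $\|D\log\|$, and the explicit treatment of absolute continuity at $u=0$) do not change the argument.
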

\begin{proof}
We begin with the first statement. Thus let $p,q$ be invertible bounded densities and set $C:=\min\{\inf\spec (p),\inf\spec (q)\}$. We have $C>0$ because $p$ and $q$ are invertible in $M$. Writing $\rho_{t}:=(1-t)p+tq$, we have $C\langle x,x\rangle_{L^{2}(A,\tau)}\leq \langle\rho_{t}x,x\rangle_{L^{2}(A,\tau)}$ for each $x\in L^{2}(A,\tau)$. Hence $\rho_{t}$ is invertible for each $t\in [0,1]$. As $\partial$ satisfies a Poincar\'e-type inequality, Proposition \ref{PRP.Pncr_Inq} allows us to choose an $h\in H$ such that 

\begin{align*}
\tau((q-p)a)=\langle h,\partial a\rangle_{H}=\langle M_{\rho_{t}}^{-\frac{1}{2}}h,M_{\rho_{t}}^{\frac{1}{2}}\partial a\rangle_{H}
\end{align*}

\noindent for each $a\in A_{\partial}$. By Proposition \ref{PRP.LR_Rpr}, $
||M_{\rho_{t}}^{-1}||_{B(H)}\leq ||D\log||_{C([C,||\rho_{t}||_{M}]\times [C,||\rho_{t}||_{M}])}$ with the right-hand term bounded on $[0,1]$ by continuity of $\rho_{t}$. Hence $a\longmapsto \tau((q-p)a)=\tau(\dot{\rho}_{t}a)$ are bounded linear functionals on $T_{\rho_{t}}\mathcal{D}$ for each $t\in [0,1]$, represented by a unique $v_{t}\in T_{\rho_{t}}\mathcal{D}$. As an element in $H$, $v_{t}$ is given by 

\begin{align*}
w_{t}=R_{t}(M_{\rho_{t}}^{-\frac{1}{2}}h).
\end{align*}

\noindent $M_{p_{t}}^{\frac{1}{2}}h$ is continuous by the $||.||_{M}$-continuity of $p_{t}$, and $R_{t}$ a projection for each $t\in [0,1]$. Thus $w_{t}$ is strongly measurable in $H$, and $||w_{t}||^{2}$ lies in $L^{1}([0,1])$. Hence $\mu_{t}:=(\rho_{t},v_{t})$ is an admissible path from $p$ to $q$. Since $p$ and $q$ were arbitrary, the first statement follows.\par
For the second statement, let $p\in\mathcal{D}_{b}$ and note that we now assume $A$ to be unital. Without loss of generality, we norm $\tau$ to one. By Proposition \ref{PRP.Entrp_Rpr}, $p$ has finite relative entropy. Since $P_{t}$ is regularity improving, $\rho_{t}:=P_{t}(p)$ is an invertible bounded density for each $t\in (0,1]$. To see

\begin{align*}
-\frac{d}{dt}\tau(\rho_{t}a)=\tau(\Delta\rho_{t}a)=\langle \partial\rho_{t},\partial a\rangle_{H}=\langle M_{\rho_{t}}^{\frac{1}{2}}\partial\log \rho_{t},M_{\rho_{t}}^{\frac{1}{2}}\partial a\rangle_{H}
\end{align*} 

\noindent we expand by $M_{\rho_{t}}^{-1}$ and apply the noncommutative chain rule as in the proof of Lemma \ref{LEM.Entrp_Mlt}. Analogous to the first statement's proof, this induces a bounded linear functional represented by some $v_{t}$, for each $t\in (0,1]$. In $H$, $v_{t}$ is given by

\begin{align*}
w_{t}=-M_{\rho_{t}}^{\frac{1}{2}}\partial\log \rho_{t}.
\end{align*}

\noindent which gives a vector field on $[0,1]$. Frech\'et differentiability of $\rho_{t}$ on $(0,1)$ implies $||.||_{M}$-continuity of $\rho_{t}$, thus $w_{t}$ is strongly measurable in $H$ as $\partial$ is linear. To show $||w_{t}||^{2}\in L^{1}([0,1])$, observe that $||R_{t}||=1$ implies

\begin{align*}
||v_{t}||_{\rho_{t}}\leq ||M_{\rho_{t}}^{\frac{1}{2}}\partial\log \rho_{t}||_{H}
\end{align*}

\noindent for each $t\in (0,1]$. Using this, we estimate

\begin{align*}
\int_{0}^{1}||v_{t}||_{\rho_{t}}^{2}dt \leq \int_{0}^{1}||M_{\rho_{t}}^{\frac{1}{2}}\partial\log \rho_{t}||_{H}^{2}dt=\textrm{Ent}(p|\tau)-\textrm{Ent}(\rho(1)|\tau)<\infty
\end{align*}

\noindent We applied Lemma \ref{LEM.Entrp_Mlt} for the last equality. It follows that $\mu_{t}:=(\rho_{t},v_{t})$ is an admissible path.
\end{proof}

\begin{cor}
Let $A$ be unital. If $\partial$ satisfies a Poincar\'e-type inequality and has regularity improving heat semigroup, $\mathcal{W}_{2}$ is finite.
\end{cor}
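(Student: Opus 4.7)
The plan is to deduce this directly from Theorem \ref{THM.Fin_Dst} by the triangle inequality, using the heat semigroup to regularise both endpoints into invertible bounded densities. Concretely, given arbitrary $p,q\in\mathcal{D}_{b}$, I would fix some $t_{0}\in(0,1]$ (for instance $t_{0}=\tfrac{1}{2}$) and consider the three paths
\begin{align*}
p\ \longrightarrow\ P_{t_{0}}(p)\ \longrightarrow\ P_{t_{0}}(q)\ \longrightarrow\ q.
\end{align*}

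First I would note that since $A$ is unital and $P_{t}$ is regularity improving, both $P_{t_{0}}(p)$ and $P_{t_{0}}(q)$ lie in $\mathcal{D}_{b}\cap GL(M)$; mass preservation along $P_{t}$ here follows from $1_{A}\in\ker\partial$ (unitality), so they are bona fide invertible bounded densities. The second part of Theorem \ref{THM.Fin_Dst} therefore gives $\mathcal{W}_{2}(p,P_{t_{0}}(p))<\infty$ and $\mathcal{W}_{2}(q,P_{t_{0}}(q))<\infty$, while the first part, together with the standing Poincar\'e-type inequality, yields $\mathcal{W}_{2}(P_{t_{0}}(p),P_{t_{0}}(q))<\infty$.

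The triangle inequality is available for $\mathcal{W}_{2}$: as observed just after Definition \ref{DFN.Bd_Adm}, reparametrisation and concatenation of admissible paths produce admissible paths with the additive energy bound, and symmetry follows from the time-reversal reparametrisation $t\mapsto 1-t$. Combining these three estimates then gives
\begin{align*}
\mathcal{W}_{2}(p,q)\ \leq\ \mathcal{W}_{2}(p,P_{t_{0}}(p))+\mathcal{W}_{2}(P_{t_{0}}(p),P_{t_{0}}(q))+\mathcal{W}_{2}(P_{t_{0}}(q),q)\ <\ \infty.
\end{align*}

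There is no real obstacle here beyond verifying the three inputs above apply. The only point that deserves care is that the theorem only supplies finiteness on the regularised pair once they are invertible; this is where the hypothesis that $P_{t}$ is regularity improving is used, and it is also the reason for passing through $P_{t_{0}}(p)$ and $P_{t_{0}}(q)$ rather than attempting to connect $p$ and $q$ directly. One could equivalently concatenate the admissible paths constructed in the proof of Theorem \ref{THM.Fin_Dst} (heat-flow pieces at the ends, linear interpolation in the middle) and integrate the energies, but invoking the triangle inequality is cleaner.
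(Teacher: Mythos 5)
Your proposal is correct and is precisely the intended deduction: the paper states the corollary without proof as an immediate consequence of Theorem \ref{THM.Fin_Dst}, and your argument (regularise both endpoints with $P_{t_{0}}$, apply the two parts of the theorem, and concatenate via the triangle inequality established after Definition \ref{DFN.Bd_Adm}) is exactly how it follows.
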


To end this subsection, we provide a necessary condition for $P_{t}$ to be regularity improving. In this, we lift Simon's original proof \cite{SiPosImpr} to the noncommutative setting. We make use of notations and results immediately leading up to and found on p.~204-205 in \cite{CiDrchltFrmsNCS}. 

\begin{lem}\label{LEM.L2_Pst_Prj}
If $x\in L^{2}(A,\tau)$ is self-adjoint, then $\max\{x,0\}$ is given by the metric projection $x_{+}$ of $x$ onto the self-polar cone $L_{+}^{2}(A,\tau)$ in $L^{2}(A,\tau)$. 
\end{lem}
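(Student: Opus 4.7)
The plan is to combine the standard Hilbert-space characterisation of the metric projection onto a closed convex cone with the spectral decomposition of self-adjoint elements and the self-polarity of $L_+^2(A,\tau)$.

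First I would apply the Borel functional calculus to $x$ via $\lambda\mapsto\max\{\lambda,0\}$ and $\lambda\mapsto\max\{-\lambda,0\}$ to define $x_+$ and $x_-$ as $\tau$-measurable operators affiliated with $L^\infty(A,\tau)$. Since $x\in L^2(A,\tau)$ and $|x_\pm|\leq|x|$ in the functional calculus, both lie in $L^2(A,\tau)$, indeed in $L_+^2(A,\tau)$. By construction $x=x_+-x_-$; moreover the spectral supports of $x_+$ and $x_-$ are the mutually orthogonal projections $1_{(0,\infty)}(x)$ and $1_{(-\infty,0)}(x)$, so $x_+x_-=0$ as operators. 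Hence, using the Hölder-type inequality $\|x_+x_-\|_{L^1}\leq\|x_+\|_{L^2}\|x_-\|_{L^2}$ to make sense of the trace,
\begin{align*}
\langle x_+,x_-\rangle_{L^2(A,\tau)}=\tau(x_-x_+)=0.
\end{align*}

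Next, to identify $x_+$ with the metric projection, I would verify the variational inequality
\begin{align*}
\|x-y\|_{L^2}^2\geq\|x-x_+\|_{L^2}^2 \qquad\text{for all }y\in L_+^2(A,\tau).
\end{align*}
Expanding, using $x-x_+=-x_-$ and $\tau(x_+x_-)=0$, one finds
\begin{align*}
\|x-y\|_{L^2}^2-\|x-x_+\|_{L^2}^2 = \|x_+-y\|_{L^2}^2+2\,\mathrm{Re}\,\tau(x_-y).
\end{align*}
The first summand is manifestly nonnegative. The second is nonnegative precisely because $x_-\in L_+^2(A,\tau)$, $y\in L_+^2(A,\tau)$, and the cone is \emph{self-polar}: the inner product of any two of its elements is nonnegative. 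Strict convexity of $\|\cdot\|_{L^2}^2$ gives uniqueness of the minimiser, so $x_+$ is the metric projection.

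The main obstacle I anticipate is the careful handling of the functional calculus in the noncommutative $L^2$-setting when $x$ is unbounded: one has to check that $x_\pm$ really land in $L^2(A,\tau)$ (and not just among $\tau$-measurable operators), and that $\tau(x_+x_-)=0$ is a legitimate equality via orthogonality of the spectral projections combined with Hölder in noncommutative $L^p$-spaces, as set up in \cite{SegNCInt} and \cite{NelNotesNCInt}. Once this is in place, the remainder of the argument is purely the self-polarity of $L_+^2(A,\tau)$ feeding into the standard projection criterion, and no further computation is required.
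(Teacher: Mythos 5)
Your proposal is correct and follows essentially the same route as the paper: both verify the variational characterisation of the metric projection onto the closed convex cone $L_{+}^{2}(A,\tau)$, using $\tau(x_{+}x_{-})=0$ together with the nonnegativity of $\tau(x_{-}y)$ for $y$ in the cone. Your direct expansion of $\|x-y\|_{L^{2}}^{2}-\|x-x_{+}\|_{L^{2}}^{2}$ is just the squared-norm form of the first-order inequality $\mathrm{Re}\langle x-x_{+},y-x_{+}\rangle\leq 0$ used in the paper, so the two arguments coincide in substance.
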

\begin{proof}
We know $\max\{x,0\}\in L_{+}^{2}(A,\tau)$ by construction of the positive elements. A general metric projection $P_{C}$ onto a closed convex set $C$ in a Hilbert space can be characterised uniquely by satisfying $\textrm{Re}\langle x-P_{C}(x),y-P_{C}(x)\rangle_{H}\leq 0$ for each $y\in C$. A calculation in our setting using any $y\geq 0$ yields

\begin{align*}
\tau((x-\max\{x,0\})(y-\max\{x,0\}))&=\tau((-\min\{x,0\})^{\frac{1}{2}}(\max\{x,0\}-y)(-\min\{x,0\})^{\frac{1}{2}})\\
&\leq \tau((-\min\{x,0\})^{\frac{1}{2}}\max\{x,0\}(-\min\{x,0\})^{\frac{1}{2}})\\
&=-\tau(\min\{x,0\}\max\{x,0\})\\
&=0.
\end{align*}
\end{proof}

\begin{rem}
Using the above characterisation of the metric projection to show the result was pointed out to the author by Wirth in a personal communication as a derivative of a lemma in future work of his.
\end{rem}

\noindent We turn to a second lemma that closely orients itself along Lemma 3.4 of Simon's proof.

\begin{lem}\label{LEM.Sim_NC}
Let $T$ be a positivity preserving operator on $L^{2}(A,\tau)$. If $x,y\in L_{+}^{2}(A,\tau)$ with $\langle x,y\rangle_{L^{2}(A,\tau)}\neq 0$, then $\langle Tx,Ty\rangle_{L^{2}(A,\tau)}\neq 0$.
\end{lem}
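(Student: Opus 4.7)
The natural approach is by contradiction: assume $\langle Tx,Ty\rangle_{L^{2}(A,\tau)}=0$ and derive $\langle x,y\rangle_{L^{2}(A,\tau)}=0$, contradicting the hypothesis. Since $T$ is positivity preserving and $x,y\in L_{+}^{2}(A,\tau)$, both $Tx$ and $Ty$ lie in $L_{+}^{2}(A,\tau)$, so the inner product is automatically non-negative and the content of the lemma is the strict inequality. Writing the inner product as $\tau((Tx)(Ty))$ via self-adjointness of positive elements and applying the cyclic property of the trace, one obtains $\tau((Ty)^{1/2}(Tx)(Ty)^{1/2})=0$. The integrand is a positive element, hence faithfulness of $\tau$ on the positive cone of $L^{1}(A,\tau)$ gives $(Ty)^{1/2}(Tx)(Ty)^{1/2}=0$, from which a computation of $\|(Tx)^{1/2}(Ty)^{1/2}\|_{L^{2}(A,\tau)}^{2}$ yields $(Tx)(Ty)=0$ in $L^{1}(A,\tau)$. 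In other words, the support projections of $Tx$ and $Ty$ are mutually orthogonal in $L^{\infty}(A,\tau)$.

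The heart of the argument is transferring this orthogonality back to $x$ and $y$. Here I would use Lemma \ref{LEM.L2_Pst_Prj} in the following way. Let $e$ be the support projection of $Ty$ and split $Tx=eTx+(1-e)Tx=(1-e)Tx$ since $eTx=0$. The same decomposition applied to arbitrary $z\in L^{2}(A,\tau)$ and combined with the characterisation of $\max\{z,0\}$ as the metric projection onto $L_{+}^{2}(A,\tau)$ should show that for every $z\in L_{+}^{2}(A,\tau)$ whose image $Tz$ satisfies $eTz=0$, one can extract a decomposition $z=z_{1}+z_{2}$ with $z_{1},z_{2}\in L_{+}^{2}(A,\tau)$ such that $Tz_{1}$ lies in the subspace annihilated by $e$ and $z_{2}$ is separated from the analogous structure for $y$. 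Iterating between $Tx$ and $Ty$ and using self-polarity of $L_{+}^{2}(A,\tau)$, one then reads off $\langle x,y\rangle_{L^{2}(A,\tau)}=\tau(xy)=0$.

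The main obstacle is precisely this last step: in the classical setting considered by Simon \cite{SiPosImpr} one uses the integral kernel of $T$ and pointwise positivity of $T(x\mathbf{1}_{E})$ on explicit measurable sets, but in the tracial $W^{*}$-setting there is no such kernel, and positivity preservation is only a one-sided condition that does not allow a direct inversion. The role of Lemma \ref{LEM.L2_Pst_Prj} is to supply a non-commutative substitute for pointwise truncations, namely the explicit identification of $\max\{\cdot,0\}$ with the metric projection onto the self-polar cone, which then lets us decompose vectors along support projections while remaining inside $L_{+}^{2}(A,\tau)$. Once this truncation machinery is in place, the rest of the argument becomes a straightforward transcription of Simon's bookkeeping to operator language.
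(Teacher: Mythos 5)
Your opening computation is correct but is performed at the wrong end of the map, and the step you yourself identify as ``the heart of the argument'' is precisely the part that is missing and that cannot be completed. You argue by contradiction from $\langle Tx,Ty\rangle_{L^{2}(A,\tau)}=0$, deduce that the supports of $Tx$ and $Ty$ are orthogonal, and then propose to ``transfer this orthogonality back to $x$ and $y$'' by decomposing along support projections and ``iterating between $Tx$ and $Ty$''. Positivity preservation is a condition on images, not on preimages: it gives no control over which positive vectors $T$ maps into the corner annihilated by the support projection of $Ty$, and neither Lemma \ref{LEM.L2_Pst_Prj} nor any truncation device supplies such control, because what you are asking for amounts to inverting $T$ on the positive cone. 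A positivity preserving map can perfectly well send two non-orthogonal positive vectors to operators with orthogonal supports (or collapse one of them entirely), so the implication you are trying to establish in the backward direction is not a consequence of the hypotheses; the sketch in your second paragraph is a description of the desired conclusion rather than an argument for it.

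The proof runs in the forward direction and hinges on producing a common positive minorant of $x$ and $y$. Working in the standard form $(L^{\infty}(A,\tau),L^{2}(A,\tau),L^{2}_{+}(A,\tau),{}^{*})$ one has a lattice infimum $h:=x\wedge y$, and the first step is to show $\langle x,y\rangle_{L^{2}(A,\tau)}\neq 0$ forces $h\neq 0$: if $x\wedge y=0$, the identities of Lemma 2.50 in \cite{CiDrchltFrmsNCS} give $x+y=|x-y|=y+(x-y)_{+}$, and Lemma \ref{LEM.L2_Pst_Prj} identifies $(x-y)_{+}$ with $\max\{x-y,0\}$, so that $x=\max\{x-y,0\}$ and $y=-\min\{x-y,0\}$ are the mutually orthogonal positive and negative parts of $x-y$, whence $xy=yx=0$ and $\langle x,y\rangle_{L^{2}(A,\tau)}=0$. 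Having $0\leq h\leq x,y$ with $h\neq 0$, one writes $\langle Tx,Ty\rangle=\langle T(x-h),T(y-h)\rangle+\langle T(x-h),Th\rangle+\langle Th,T(y-h)\rangle+\|Th\|^{2}\geq\|Th\|^{2}$, all cross terms being non-negative because $T$ preserves the cone and the cone is self-polar; this is Simon's first lemma from \cite{SiPosImpr} with $\min\{x,y\}$ replaced by $x\wedge y$. So the single idea you are missing is the lattice infimum and the forward use of positivity preservation; the actual r\^ole of Lemma \ref{LEM.L2_Pst_Prj} is to reconcile the functional-calculus positive part with the metric projection onto $L^{2}_{+}(A,\tau)$ in step one, not to serve as a substitute for pointwise truncation when pulling information back through $T$.
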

\begin{proof}
$L^{2}(A,\tau)=L^{2}(M,\tau)$ is a standard form of $M$ with cyclic vector $1_{A}$. We are thus able to use analogues of the pointwise supremum and infimum operations. Assume $x\wedge y=0$, where $x\wedge y$ is our analogue of the infimum of $x$ and $y$.\par
Point five of Lemma 2.50 in \cite{CiDrchltFrmsNCS} yields $x+y=|x-y|$. The fifth and third points of the same lemma together give $|x-y|=y+(x-y)_{+}$. From this, $x+y=y+(x-y)_{+}$ follows by Lemma \ref{LEM.L2_Pst_Prj} above. All in all, $x=\max\{x-y,0\}\in L_{+}^{2}(A,\tau)$ holds. Evoking Lemma 2.50 one last time, we have $y=\min\{x-y,0\}$ and thus $xy=yx=0$. This shows $x\wedge y\neq 0$ for $x,y\geq 0$. By definition, $x\wedge y\leq x,y$ holds for positive $x$ and $y$. From here on out, we nearly proceed verbatim as Simon did in the first lemma of \cite{SiPosImpr}. We only need to replace the minimum of $x$ and $y$ by $x\wedge y$.
\end{proof}

\begin{dfn}
A positive semigroup $e^{tL}$ is ergodic if for all $x,y\in L^{2}_{+}(A,\tau)$ with $x,y\neq 0$, there exists a $t>0$ such that $\tau(xe^{tL}y)>0$. A semigroup $e^{tL}$ on $L^{2}(A,\tau)$ is called positivity improving if $e^{tL}x$ has strictly positive spectrum for each $x\in L_{+}^{2}(A,\tau)$ and each $t\in (0,\infty]$.
\end{dfn}

\begin{rem}
An operator $T$ has strictly positive spectrum if $\spec (T)\subset\mathbb{R}_{>0}$. It does not imply existence of a uniform lower bound. We follow the commutative terminology in this, where a function $f$ is strictly positive if $f>0$ almost everywhere.
\end{rem}

\begin{thm}\label{THM.Erg}
If $e^{tL}$ is a semigroup of self-adjoint, positivity preserving operators on $L^{2}(A,\tau)$, it is positivity improving if and only if it is ergodic.
\end{thm}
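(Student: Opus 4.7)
The plan is to follow Simon's classical strategy from~\cite{SiPosImpr}, with Lemma~\ref{LEM.Sim_NC} and the spectral calculus for the self-adjoint generator $L$ replacing the pointwise arguments of the commutative case. Throughout, I would interpret ``strictly positive spectrum of $e^{tL}x$'' as the condition that the support projection of $e^{tL}x$, viewed as a positive element of $L^{2}(A,\tau)$ affiliated with $M$, equals $1_{M}$; this is the natural noncommutative reading of ``$f>0$ almost everywhere'' indicated in the remark following the definition.

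For the implication positivity improving implies ergodic, I would fix nonzero $x,y\in L_{+}^{2}(A,\tau)$ and any $t>0$, and argue by contradiction: if $\tau(y\hspace{0.05cm}e^{tL}x)=0$, then faithfulness of $\tau$ together with traciality forces $y\cdot e^{tL}x=0$, so the support projections of $y$ and $e^{tL}x$ are orthogonal. The hypothesis that the support of $e^{tL}x$ is $1_{M}$ then yields $y=0$, a contradiction. This actually gives the stronger statement $\tau(y\hspace{0.05cm}e^{tL}x)>0$ for every $t>0$, which certainly implies ergodicity.

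For the converse, assume ergodicity and fix nonzero $x,y\in L_{+}^{2}(A,\tau)$. Put $f(r):=\langle y,e^{rL}x\rangle$, which is $\geq 0$ by positivity preservation. Ergodicity supplies some $s>0$ with $f(s)>0$. I would then apply Lemma~\ref{LEM.Sim_NC} with $T=e^{rL}$ to the positive vectors $y$ and $e^{sL}x$, and use self-adjointness of $e^{rL}$ to rewrite the conclusion as $f(2r+s)>0$; varying $r>0$ gives $f>0$ on $(s,\infty)$. To fill in the remaining interval $(0,s]$, I would invoke real analyticity of $f$: the spectral theorem for the self-adjoint generator $L$ represents $f$ as a Laplace transform $f(r)=\int e^{r\lambda}\,d\langle y,E(\lambda)x\rangle$ against a finite signed spectral measure supported in $(-\infty,0]$, so $f$ extends holomorphically to the right half-plane and is real-analytic on $(0,\infty)$. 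Combined with $f\geq 0$, any zero of $f$ in $(0,s]$ would be a local minimum of a non-negative real-analytic function and hence a zero of infinite order; by the identity principle this would force $f\equiv 0$, contradicting $f>0$ on $(s,\infty)$. Hence $f(t)>0$ for all $t>0$, and since $y$ was arbitrary the support of $e^{tL}x$ equals $1_{M}$.

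The main obstacle is the passage from positivity of $f$ at one time $s$ to positivity at all positive times: Lemma~\ref{LEM.Sim_NC} only propagates strict positivity forward in time, so the ``backward'' direction $(0,s]$ cannot be addressed by the lemma alone and analyticity is indispensable. The only technical hinge is the holomorphic extension of $f$ through the spectral theorem, which is immediate from self-adjointness of $L$ and boundedness of $e^{rL}$ for $r>0$.
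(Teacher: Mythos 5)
Your overall route is the one the paper takes: it simply substitutes Lemma~\ref{LEM.Sim_NC} for Simon's first lemma and then runs the proof of Theorem~1 of \cite{SiPosImpr} verbatim, which is exactly the forward-propagation-plus-analyticity scheme you describe. The easy direction and the propagation step $f(s)\neq 0\Rightarrow f>0$ on $[s,\infty)$ are fine, and the Laplace-transform/analytic-continuation representation of $f(r)=\langle y,e^{rL}x\rangle$ is also correct.

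There is, however, one genuinely wrong justification in the step that handles $(0,s]$. You claim that a zero of a non-negative real-analytic function at a local minimum is automatically a zero of infinite order; this is false, as $t\mapsto (t-1)^{2}$ shows — non-negativity and analyticity only force the first non-vanishing derivative to have even order, so isolated zeros of finite even order are perfectly possible and the identity principle does not yet apply. The correct argument (and the one in \cite{SiPosImpr}) uses the propagation lemma once more, in contrapositive form: the set $\{t>0: f(t)\neq 0\}$ is upward closed, so if $f(t_{1})=0$ for some $t_{1}\in(0,s]$ then necessarily $f\equiv 0$ on the whole interval $(0,t_{1}]$ — otherwise some $t<t_{1}$ with $f(t)\neq 0$ would force $f(t_{1})\neq 0$. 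Only now does one have a set of zeros with an accumulation point (indeed an interval), so the identity theorem yields $f\equiv 0$ on $(0,\infty)$, contradicting $f(s)>0$. With this repair the proof closes exactly as you intend, and the remaining ingredients of your write-up are sound.
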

\begin{proof}
After replacing Simon's first lemma with Lemma \ref{LEM.Sim_NC}, the proof is given verbatim to the one of Theorem 1 in \cite{SiPosImpr}.
\end{proof}

In \cite{CiDrchltFrmsNCS}, Cipriani provides necessary and sufficient conditions for ergodicity of a semigroup. If $A$ is unital, Corollary 2.48 in \cite{CiDrchltFrmsNCS} implies the heat semigroup to be ergodic if and only if $1_{A}$ is a \textit{simple} eigenvector of $\Delta$. 

\begin{cor}\label{COR.Erg}
If $A$ is unital, then $P_{t}$ is positivity improving if and only if $1_{A}$ is a simple eigenvector of $\Delta$.
\end{cor}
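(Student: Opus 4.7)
The plan is to combine Theorem \ref{THM.Erg} with the cited Corollary 2.48 of \cite{CiDrchltFrmsNCS}, treating the corollary essentially as a bookkeeping exercise. The first task is to check that the two hypotheses of Theorem \ref{THM.Erg} are satisfied by $P_t = e^{-t\Delta}$. Self-adjointness is automatic because $\Delta = \partial^*\partial$ is a non-negative self-adjoint operator on $L^2(A,\tau)$, so its semigroup consists of self-adjoint operators. Positivity preservation is where the structure of a symmetric gradient enters: the quadratic form $\mathcal{E}(a) = \langle \partial a, \partial a\rangle_H$ associated with $\partial$ is a completely Dirichlet form in the sense of \cite{CiDrchltFrmsNCS}, and the correspondence between completely Dirichlet forms and positivity preserving (in fact Markovian) semigroups guarantees that $P_t$ preserves $L^2_+(A,\tau)$. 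This is precisely the setting discussed around p.~204--205 of \cite{CiDrchltFrmsNCS}, which is already invoked for Lemma \ref{LEM.L2_Pst_Prj}.

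Once both properties are secured, Theorem \ref{THM.Erg} gives the first equivalence: $P_t$ is positivity improving if and only if it is ergodic in the sense of the definition preceding Theorem \ref{THM.Erg}. It then remains to translate ergodicity into a spectral condition on $\Delta$. For this, since $A$ is unital we have $1_A \in A_\partial$, and the Leibniz rule yields $\partial(1_A) = \partial(1_A \cdot 1_A) = 2\partial(1_A)$, so that $\partial(1_A) = 0$ and $1_A \in \ker\Delta$. Thus $1_A$ is automatically an eigenvector with eigenvalue zero, and the nontrivial content of the corollary is whether that eigenspace is one-dimensional. Precisely this is the hypothesis appearing in Corollary 2.48 of \cite{CiDrchltFrmsNCS}, which characterises ergodicity of the heat semigroup in the unital case by simplicity of $1_A$ as an eigenvector of $\Delta$.

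Chaining the two equivalences yields the statement: positivity improving $\Leftrightarrow$ ergodic $\Leftrightarrow$ $1_A$ simple eigenvector of $\Delta$. The only step that could plausibly cause friction is verifying positivity preservation of $P_t$ within our formalism, since Definition \ref{DFN.NCGrad} does not phrase things in Dirichlet form language. However, symmetry of the gradient (Definition just before Remark \ref{REM.Cmplxfy}) is exactly the ingredient needed for $\mathcal{E}$ to be completely Dirichlet, so no additional hypothesis is required, and the proof reduces to two citations plus the one-line verification that $\partial(1_A) = 0$.
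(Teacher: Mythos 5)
Your proposal is correct and follows essentially the same route as the paper: the paper's proof likewise consists of observing that $(M,L^{2}(A,\tau),L^{2}_{+}(A,\tau),{}^{*})$ is a standard form with cyclic vector $1_{A}$ and then chaining Theorem \ref{THM.Erg} with the equivalence of the first and third statements of Corollary 2.48 in \cite{CiDrchltFrmsNCS}. The extra detail you supply — self-adjointness of $P_{t}$, positivity preservation via the Dirichlet-form correspondence for symmetric gradients, and $\partial(1_{A})=0$ from the Leibniz rule — is left implicit in the paper but is the right justification.
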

\begin{proof}
In the notation of \cite{CiDrchltFrmsNCS}, $(M,L^{2}(A,\tau),L_{+}^{2}(A,\tau),^{*})$ is a standard form of $M$ with cyclic vector $1_{A}$. Applying the equivalence between the first and third statement of Corollary 2.48 in \cite{CiDrchltFrmsNCS}, as well as Theorem \ref{THM.Erg} above, we obtain the statement.
\end{proof}

\begin{bsp}\label{BSP.FinDim_RgImprSG}
Let $A$ be a finite-dimensional $C^{*}$-algebra. If $\partial$ has one-dimensional kernel, so does $\Delta$. Moreover, $\partial$ satisfies a Poincar\'e-type inequality since $\Delta$ becomes a positive operator on the orthogonal complement of the kernel. Then Corollary \ref{COR.Erg} implies $P_{t}$ to be positivity improving. Since all relevant operator topologies are equivalent and a strictly positive spectrum implies invertibility of the operator in finite-dimensions, $P_{t}$ is regularity improving.
\end{bsp}		

\subsection{Extending to unbounded densities}

So far, we required densities to be bounded. We now extend $\mathcal{W}_{2}$ to all densities. To do so, we impose conditions on the domain and codomain of the gradient. In particular, we consider multiplication operators given by 

\begin{align*}
M_{p}(x)=\int_{0}^{1}p^{\alpha}xp^{1-\alpha}d\alpha
\end{align*}

\noindent on summands. For this, $\partial$ will have to take values in some $\bigoplus_{k=1}^{m}L^{2}(A,\tau)$ equipped with the canonical symmetric $L^{\infty}(A,\tau)$-bimodule structure and Hilbert space norm. All $L^{\infty}(A,\tau)$-subbimodules $H\subset\bigoplus_{k=1}^{m}L^{2}(A,\tau)$ are assumed to be closed subspaces throughout the paper. Furthermore, we assume $L^{1}(A,\tau)$ and $L^{2}(A,\tau)$ to be separable in this subsection.

\begin{dfn}\label{DFN.Can_Set}
If $H\subset\bigoplus_{k=1}^{m}L^{2}(A,\tau)$ is an $L^{\infty}(A,\tau)$-bimodule closed under adjoining of operators, we call $H$ a symmetric Hilbert $L^{\infty}(A,\tau)$-subbimodule. 
\end{dfn}

\begin{rem}
Note the important assumptions made at the beginning of this subsection.
\end{rem}

\noindent For the remainder of the subsection, let $\partial$ map into a symmetric Hilbert $L^{\infty}(A,\tau)$-subbimodule $H$. Morally, we view $H$ as a module of $L^{2}$-sections embedded in the $L^{2}$-sections of the trivial $m$-bundle over the space $A$ models. We do not assume $H$ to be a finitely generated, projective module.

\begin{ntn}\label{NTN.Dcp_Grd_Std}
As $\partial$ maps into $\bigoplus_{k=1}^{m}L^{2}(A,\tau)$, we view each $\partial_{k}$ as a symmetric gradient in itself.
\end{ntn}

We will have to replace $A_{\partial}$ by more suitable $^{*}$-subalgebra $\mathfrak{A}$. One can think of $\mathfrak{A}$ as playing a r\^ole similar to that of smooth functions with compact support, but the analogy is not too strict.

\begin{dfn}
Let $\mathfrak{A}\subset A_{\partial}$ be a dense $^{*}$-subalgebra of $A$ such that it is again a core for $\partial$. If furthermore $\partial a\in\bigoplus_{k=1}^{m}\big(L^{2}(A,\tau)\cap L^{\infty}(A,\tau)\big)$ for each $a\in\mathfrak{A}$, we call $\mathfrak{A}$ an extension algebra.
\end{dfn}

\begin{rem}
In the above definition, $L^{2}(A,\tau)\cap L^{\infty}(A,\tau)$ is viewed as an $L^{\infty}(A,\tau)$-bimodule in the algebraic sense. No topology is being considered.
\end{rem}

\begin{bsp}\label{BSP.Unbd_Comm_Case}
In example \ref{BSP.Bd_Comm_Case}, let $X$ be embedded isometrically into some $\mathbb{R}^{m}$. Then $TX\otimes\mathbb{C}\subset X\times\mathbb{C}^{m}$ and we set $\mathfrak{A}:=C_{c}^{\infty}(X)$ as an extension algebra. Thus we capture the smooth Riemannian setting with our formalism.  
\end{bsp}

\begin{bsp}
By Definition \ref{DFN.Vrt_Grd}, $C_{c}(X)\odot\FinRk(H)$ is an extension algebra for each vertical gradient.
\end{bsp}

\begin{bsp}
Let $(A,\mathbb{R},\alpha_{t})$ be a $C^{*}$-dynamical system such that the $^{*}$-algebra 

\begin{align*}
\mathfrak{A}:=\{x\in A\ |\ \partial(x):=\frac{d}{dt}_{|t=0}\alpha_{t}(x)\in A\cap L^{2}(A,\tau)\}
\end{align*}

\noindent lies dense in $A$ and is a core for $\partial$. If $A$ is unital and $\alpha_{t}(x)$ Fr\'echet differentiable at the origin for each $x\in A$, $\mathfrak{A}=A$ is an extension algebra.
\end{bsp}

\begin{lem}\label{LEM.Ext_Mlt}
For all $p\in L_{+}^{1}(A,\tau)$ and all $x\in L^{\infty}(A,\tau)$, we have

\begin{align*}
||p^{\alpha}xp^{1-\alpha}||_{L^{1}(A,\tau)}\leq ||p||_{\mathcal{S}_{1}(H)}||x||_{L^{\infty}(A,\tau)}
\end{align*}

\noindent and $p^{\alpha}xp^{1-\alpha}\in L^{1}([0,1],L^{1}(A,\tau))$.

\end{lem}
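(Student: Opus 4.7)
My plan is to derive both statements from the non-commutative H\"older inequality combined with a Pettis-type measurability argument in the spirit of Proposition \ref{PRP.dlog}.

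For the norm bound, I would apply non-commutative H\"older with the triple of conjugate exponents $(1/\alpha, \infty, 1/(1-\alpha))$. Since $\|p^\alpha\|_{L^{1/\alpha}(A,\tau)} = \tau(p)^\alpha$ by the tracial property, this yields at once
\begin{equation*}
\|p^\alpha x p^{1-\alpha}\|_{L^1(A,\tau)} \leq \|p^\alpha\|_{L^{1/\alpha}(A,\tau)} \|x\|_{L^\infty(A,\tau)} \|p^{1-\alpha}\|_{L^{1/(1-\alpha)}(A,\tau)} = \|p\|_{L^1(A,\tau)} \|x\|_{L^\infty(A,\tau)},
\end{equation*}
uniformly in $\alpha\in[0,1]$; at the endpoints $p^0$ is read as the support projection $s(p)$, whose $L^\infty$-norm is at most one, so the same bound persists.

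For Bochner-integrability, I would invoke Pettis' theorem exactly as in the proof of Proposition \ref{PRP.dlog}: since $L^1(A,\tau)$ is separable and isometrically predual to $L^\infty(A,\tau)$, strong measurability is equivalent to measurability of $\alpha \mapsto \tau(y\, p^\alpha x p^{1-\alpha})$ for every $y \in L^\infty(A,\tau)$. To verify this, introduce the spectral truncations $p_n := p \cdot 1_{(1/n, n]}(p) \in L^1_+(A,\tau) \cap L^\infty(A,\tau)$, whose support projections have trace at most $n\tau(p)$. On the compact spectrum $[1/n,n]$, polynomial approximation of $\lambda \mapsto \lambda^\alpha$ yields $\|\cdot\|_{L^\infty}$-continuity of $\alpha \mapsto p_n^\alpha$; the finite-trace support then upgrades this to $\|\cdot\|_{L^1}$-continuity of $\alpha \mapsto p_n^\alpha x p_n^{1-\alpha}$.

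To pass to the limit in $n$, I would use $\|p^\alpha - p_n^\alpha\|_{L^{1/\alpha}(A,\tau)}^{1/\alpha} = \tau\bigl(p \cdot 1_{(0,1/n]\cup(n,\infty)}(p)\bigr) \to 0$ by dominated convergence, and apply the H\"older estimate of the first step to the difference. This yields $p_n^\alpha x p_n^{1-\alpha} \to p^\alpha x p^{1-\alpha}$ in $L^1(A,\tau)$ for each $\alpha \in (0,1)$, so $\alpha \mapsto p^\alpha x p^{1-\alpha}$ is a pointwise $L^1$-limit of $\|\cdot\|_{L^1}$-continuous maps and therefore strongly measurable. The uniform bound from the first step then gives Bochner-integrability. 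The main care lies in coordinating the spectral-calculus convergence $p_n^\alpha \to p^\alpha$ through the sandwich $x$ across the full range of $\alpha$, but the H\"older exponents $(1/\alpha, \infty, 1/(1-\alpha))$ are tailored precisely for this.
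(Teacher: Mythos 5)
Your proposal is correct and follows essentially the same route as the paper: the norm bound via the generalised H\"older inequality with exponents $(1/\alpha,\infty,1/(1-\alpha))$, and strong measurability by truncating $p$ to bounded operators, using the $L^{1/\alpha}$-convergence $p_n^\alpha\to p^\alpha$ together with H\"older on the differences, and realising the path as a pointwise $L^1$-limit of measurable (in your case continuous) maps before concluding Bochner-integrability from the uniform bound. The only differences are cosmetic — two-sided spectral truncation instead of $\min\{p,C_i\}$, and an explicit continuity argument where the paper declares the bounded case clear.
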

\begin{proof}
We show $p^{\alpha}xp^{1-\alpha}\in L^{1}(A,\tau)$ by applying the generalised H\"older inequality twice. Since $\alpha\in [0,1]$, we know $\alpha^{-1},(1-\alpha)^{-1}\in [1,\infty]$. Using H\"older for $1=\alpha+(1-\alpha)$ and $\alpha=\alpha+0$, we obtain

\begin{align*}
||p^{\alpha}xp^{1-\alpha}||_{1}&\leq ||p^{\alpha}x||_{\alpha^{-1}}||p^{1-\alpha}||_{(1-\alpha)^{-1}}\\
&\leq ||p^{\alpha}||_{\alpha^{-1}}||x||_{\infty}||p^{1-\alpha}||_{(1-\alpha)^{-1}}\\
&=\tau(p)^{\alpha}||x||_{\infty}\tau(p)^{1-\alpha}\\
&=||p||_{\mathcal{S}_{1}(H)}||x||_{\infty}.
\end{align*}

Once we know $\alpha\longmapsto p^{\alpha}xp^{1-\alpha}$ to be strongly measurable, the above yields Bochner-integrability. Measurability is clear if $p$ is bounded. Choose a strictly monotonically increasing sequence of $C_{i}\geq 1$ diverging to infinity, and set $p_{i}:=\min\{p,C_{i}\}$. Arguing by functional calculus shows $p_{i}^{\alpha}$ to approximate $p^{\alpha}$ in $L^{\alpha^{-1}}(A,\tau)$ for each $\alpha\in (0,1]$.\par
We claim that $p_{i}^{\alpha}xp_{i}^{1-\alpha}$ $||.||_{L^{1}(A,\tau)}$-converges to $p^{\alpha}xp^{1-\alpha}$ for each fixed $\alpha\in [0,1]$. To see this, we have to show convergence of $p^{\alpha}xp^{1-\alpha}-p_{i}^{\alpha}xp_{i}^{1-\alpha}$ to zero. We do so by using the triangle inequality and then applying H\"older as above to $p^{\alpha}x(p^{1-\alpha}-p_{i}^{1-\alpha})$, resp. $(p^{\alpha}-p_{i}^{\alpha})xp_{i}^{1-\alpha}$. Hence our path is a pointwise limit of strongly measurable ones, therefore strongly measurable itself. 
\end{proof}

\begin{dfn}
For all $p\in\mathcal{D}$ and $x\in\bigoplus_{k=1}^{m}L^{\infty}(A)$, we set 

\begin{align*}
M_{p}(x):=\Big(\int_{0}^{1}p^{\alpha}x_{k}p^{1-\alpha}d\alpha\Big)_{k=1}^{m}\in\bigoplus_{k=1}^{m}L^{1}(A,\tau)
\end{align*}
\end{dfn}

\begin{prp}\label{PRP.Mlt_Ctrc_Apprx}
For all $p\in\mathcal{D}$ and all $x\in\bigoplus_{k=1}^{m}L^{\infty}(A,\tau)$, the linear operator

\begin{align*}M_{p}:\bigoplus_{k=1}^{m}L^{\infty}(A,\tau)\longrightarrow\bigoplus_{k=1}^{m}L^{1}(A,\tau)
\end{align*}

\noindent is a contraction and $M_{p_{i}}(x)$ $||.||_{L^{1}(A,\tau)}$-converges to $M_{p}(x)$ if $(p_{i})_{i\in\mathbb{N}}\subset L_{+}^{\infty}(A,\tau)$ is defined as in the proof of Lemma \ref{LEM.Ext_Mlt}.
\end{prp}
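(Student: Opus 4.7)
My plan is to handle the two assertions separately, deducing both from Lemma \ref{LEM.Ext_Mlt} applied coordinate-wise.

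For the contraction claim, I would work componentwise. By Lemma \ref{LEM.Ext_Mlt}, for each $k$ the path $\alpha\longmapsto p^{\alpha}x_{k}p^{1-\alpha}$ is Bochner-integrable as a map $[0,1]\longrightarrow L^{1}(A,\tau)$, so $(M_{p}(x))_{k}$ is a well-defined element of $L^{1}(A,\tau)$. Applying the norm estimate from that lemma under the integral and using $\|p\|_{\mathcal{S}_{1}(H)}=\tau(p)=1$ yields $\|(M_{p}(x))_{k}\|_{L^{1}(A,\tau)}\leq\|x_{k}\|_{L^{\infty}(A,\tau)}$. Since this bound is componentwise, it transfers to the direct sums for any compatible choice of norm, giving the contraction.

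For the convergence statement, I would again reduce to componentwise $L^{1}$-convergence and interchange norm with Bochner integral:
\begin{align*}
\|(M_{p_{i}}(x))_{k}-(M_{p}(x))_{k}\|_{L^{1}(A,\tau)}\leq\int_{0}^{1}\|p_{i}^{\alpha}x_{k}p_{i}^{1-\alpha}-p^{\alpha}x_{k}p^{1-\alpha}\|_{L^{1}(A,\tau)}d\alpha,
\end{align*}
then apply dominated convergence on $[0,1]$. For the integrable dominating function, the triangle inequality together with Lemma \ref{LEM.Ext_Mlt} applied to both $p_{i}$ and $p$ (using $\tau(p_{i})\leq\tau(p)=1$) gives the uniform bound $2\|x_{k}\|_{L^{\infty}(A,\tau)}$, which is clearly integrable on $[0,1]$.

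Pointwise convergence in $\alpha$ is obtained by the telescoping identity
\begin{align*}
p^{\alpha}x_{k}p^{1-\alpha}-p_{i}^{\alpha}x_{k}p_{i}^{1-\alpha}=(p^{\alpha}-p_{i}^{\alpha})x_{k}p^{1-\alpha}+p_{i}^{\alpha}x_{k}(p^{1-\alpha}-p_{i}^{1-\alpha}),
\end{align*}
followed by the generalised H\"older inequality with conjugate exponents $(\alpha^{-1},\infty,(1-\alpha)^{-1})$, exactly as in the proof of Lemma \ref{LEM.Ext_Mlt}. The resulting factors $\|p^{\beta}-p_{i}^{\beta}\|_{\beta^{-1}}$ tend to zero for each $\beta\in(0,1]$ by the functional calculus argument recorded in that proof, while the remaining factors $\|p^{\gamma}\|_{\gamma^{-1}}$ and $\|p_{i}^{\gamma}\|_{\gamma^{-1}}$ are uniformly bounded by $\tau(p)^{\gamma}=1$. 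The endpoints pose no additional difficulty: at $\alpha=1$ the argument reduces to $\|p_{i}-p\|_{1}\longrightarrow 0$, and at $\alpha=0$ the support projections $p^{0}$ and $p_{i}^{0}$ coincide since $p_{i}=\min\{p,C_{i}\}$ with $C_{i}>0$, so the integrand vanishes there. I expect the main subtlety to be precisely this bookkeeping of the endpoint behaviour of the H\"older exponents, but since the singular $\alpha$-factors are absorbed by the $p^{\alpha}$-norms being equal to $1$, the dominated convergence argument closes cleanly.
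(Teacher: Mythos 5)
Your proof is correct and follows essentially the same route as the paper, which simply defers both claims to Lemma \ref{LEM.Ext_Mlt} and the last part of its proof; you have merely made explicit the dominated-convergence step (with the uniform bound $2\|x_k\|_{\infty}$ coming from $\tau(p_i)\leq\tau(p)=1$) needed to pass from the fixed-$\alpha$ convergence established there to convergence of the Bochner integrals.
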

\begin{proof}
This immediately follows from Lemma \ref{LEM.Ext_Mlt} above, resp. the last part of its proof.
\end{proof}

Assuming existence of an extension algebra $\mathfrak{A}$, we define the norm of a tangent space in analogy to the bounded case. Indeed, each summand of $\partial a$ lies in $L^{\infty}(A,\tau)$. Hence we are able to apply $M_{p}$ by Proposition \ref{PRP.Mlt_Ctrc_Apprx}. Furthermore, we have

\begin{align*}
\sum_{k=1}^{m}\tau((M_{p}\partial a)^{*}y)=\int_{0}^{1}\sum_{k=1}^{m}\tau(p^{1-\alpha}\partial x^{*}p^{\alpha}y)d\alpha
\end{align*}

\noindent for each $y\in\bigoplus_{k=1}^{m}L^{\infty}(A,\tau)$ by boundedness of $\tau$ on $L^{1}(A,\tau)$, as well as continuity of multiplication by $y$ from the right viewed as a linear operator on $L^{1}(A,\tau)$. For bounded $p$ and $y=\partial x$, we recover $||a||_{p}$ by construction. Approximation by $M_{p_{i}}$ shows the formula above to define a semi-definite, positive bilinear form on $\mathfrak{A}$. 

\begin{dfn}
Let $\mathfrak{A}$ be an extension algebra. For all $p\in\mathcal{D}$ and $a,b\in\mathfrak{A}$, we define

\begin{align*}
\langle a,b\rangle_{p}:=\int_{0}^{1}\sum_{k=1}^{m}\tau(p^{1-\alpha}\partial a^{*}p^{\alpha}\partial b)d\alpha
\end{align*}

\noindent and let $T_{p}\mathcal{D}$ be the Hausdorff completion of $\mathfrak{A}$ w.r.t.~$\langle \ ,\hspace{0.05cm} \rangle_{p}$. The tangent bundle is defined as before by $T\mathcal{D}:=\underset{p\in\mathcal{D}}{\coprod}\ \{p\}\times T_{p}\mathcal{D}$.
\end{dfn}

\begin{rem}
Each $T_{p}\mathcal{D}$ is a Hilbert space by construction. 
\end{rem}

We extend the rest of our relevant notions, beginning with admissible paths on all densities. Compatibility with the bounded case has to be proved since we replace $A_{\partial}$ by a potentially smaller $^{*}$-subalgebra $\mathfrak{A}$.

\begin{dfn}
Let $\mu_{t}:[0,1]\longrightarrow T\mathcal{D}$ such that $t\longmapsto\tau(\rho_{t}a)$ is absolutely continuous for each $a\in\mathfrak{A}$. We say that $\mu_{t}$ satisfies the (noncommutative) continuity equation if 

\begin{align*}
\frac{d}{dt}\tau(\rho_{t}a)=\langle v_{t},a\rangle_{\rho_{t}}
\end{align*}

\noindent for each $a\in\mathfrak{A}$ and a.e.~$t\in [0,1]$. 
\end{dfn}

\begin{dfn}\label{DFN.Unbd_Adm}
Let $p,q\in\mathcal{D}$. An admissible path from $p$ to $q$ is a $\mu_{t}:[0,1]\longrightarrow T\mathcal{D}$ such that 

\begin{itemize}
\item[1)] $\mu_{t}$ satisfies the continuity equation,
\item[2)] $\rho_{0}=p$ and $\rho_{1}=q$,
\item[3)] $t\longmapsto ||v_{t}||_{\rho_{t}}^{2}\in L^{1}([0,1])$.
\end{itemize}

\noindent We denote the set of all admissible paths between $p$ and $q$ by $\mathcal{A}(p,q)$.
\end{dfn}

\begin{prp}\label{PRP.Ext_Cmptbl}
If $\mu_{t}:[0,1]\longrightarrow T\mathcal{D}_{b}$, then $\mu_{t}$ is an admissible path w.r.t.~Definition \ref{DFN.Bd_Adm} if and only if it is one w.r.t.~Definition \ref{DFN.Unbd_Adm}. 
\end{prp}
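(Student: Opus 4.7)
The plan is to show that, for $p\in\mathcal{D}_b$, the only substantive difference between Definitions \ref{DFN.Bd_Adm} and \ref{DFN.Unbd_Adm} lies in the $^*$-algebra of test elements appearing in the continuity equation, namely $A_\partial$ versus $\mathfrak{A}\subset A_\partial$, and that this difference collapses by density. First I would verify that for bounded $p$, Proposition \ref{PRP.dlog} identifies the two inner products $\langle M_p\partial a,\partial b\rangle_H$ and $\int_0^1\sum_k\tau(p^{1-\alpha}\partial a^*p^\alpha\partial b)\,d\alpha$ on $\mathfrak{A}$. Since $\mathfrak{A}$ is a core for $\partial$ and $M_p^{1/2}$ is a bounded operator on $H$ (by Proposition \ref{PRP.Nrm_Mlt_Op}), the image of $\mathfrak{A}$ under $a\mapsto M_p^{1/2}\partial a$ is dense in the image of $A_\partial$ inside $H$. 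Hence $T_p\mathcal{D}=T_p\mathcal{D}_b$ as closed subspaces of $H$, the $H$-representative $w_t$ of $v_t$ is the same object in both settings, the pairing $\langle v_t,a\rangle_{\rho_t}=\langle w_t,M_{\rho_t}^{1/2}\partial a\rangle_H$ is well-defined for all $a\in A_\partial$, and condition~3 of both definitions is literally the same.

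The forward implication is immediate: if $\mu_t$ is admissible in the sense of Definition \ref{DFN.Bd_Adm}, both the continuity equation and the absolute continuity of $t\mapsto\tau(\rho_t a)$ restrict from $a\in A_\partial$ to $a\in\mathfrak{A}\subset A_\partial$. For the reverse implication, fix $a\in A_\partial$ and use the core property of $\mathfrak{A}$ to choose $a_i\in\mathfrak{A}$ with $a_i\to a$ in $A$ and $\partial a_i\to\partial a$ in $H$. Applying unbounded admissibility to each $a_i$ and integrating yields
\begin{equation*}
\tau(\rho_t a_i)-\tau(\rho_0 a_i)=\int_0^t\langle v_s,a_i\rangle_{\rho_s}\,ds.
\end{equation*}
The left-hand side converges to $\tau(\rho_t a)-\tau(\rho_0 a)$ uniformly in $t$ via the estimate $|\tau(\rho_s(a_i-a))|\leq\|\rho_s\|_{L^1}\|a_i-a\|_A=\|a_i-a\|_A$. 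For each fixed $s$ the boundedness of $M_{\rho_s}^{1/2}$ gives $\langle v_s,a_i\rangle_{\rho_s}=\langle w_s,M_{\rho_s}^{1/2}\partial a_i\rangle_H\to\langle v_s,a\rangle_{\rho_s}$ pointwise, and passing this convergence under the integral sign yields both absolute continuity of $t\mapsto\tau(\rho_t a)$ and the continuity equation for $a\in A_\partial$.

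I expect the main obstacle to be the justification of passage to the limit inside the integral, since no a priori control on $s\mapsto\|\rho_s\|_M$ is available. The natural Cauchy--Schwarz bound $|\langle v_s,a_i\rangle_{\rho_s}|\leq\|v_s\|_{\rho_s}\|a_i\|_{\rho_s}$ together with $\|a_i\|_{\rho_s}^2\leq\|\rho_s\|_M\|\partial a_i\|_H^2$ only gives a crude pointwise dominator of the form $C\|v_s\|_{\rho_s}\|\rho_s\|_M^{1/2}$, which need not be $L^1$ on $[0,1]$. The anticipated resolution is to combine the pointwise convergence $\|a_i\|_{\rho_s}\to\|a\|_{\rho_s}$ (from the tangent-space identification above) with $\|v_\cdot\|_{\rho_\cdot}\in L^2([0,1])$ (condition~3), and to apply an equi-integrability argument in the spirit of Vitali's convergence theorem rather than straight dominated convergence. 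This will identify $s\mapsto\langle v_s,a\rangle_{\rho_s}$ as the $L^1$-derivative of $t\mapsto\tau(\rho_t a)$ and complete the verification of Definition \ref{DFN.Bd_Adm}.
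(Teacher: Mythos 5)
Your overall route is the same as the paper's: identify the two inner products on $\mathfrak{A}$ for bounded $p$ via the integral representation of $M_{p}$, use the core property of $\mathfrak{A}$ to conclude $T_{p}\mathcal{D}=T_{p}\mathcal{D}_{b}$ as closed subspaces of $H$, restrict from $A_{\partial}$ to $\mathfrak{A}$ for one implication, and approximate in the graph norm for the other. The paper's own proof is in fact terser than yours: after identifying the tangent spaces it disposes of the remaining direction with the single remark that absolute continuity with respect to $\mathfrak{A}$ passes to $A_{\partial}$ because $\tau(\rho_{t})=1$ and $\mathfrak{A}\subset A$ is dense, i.e.~via the uniform estimate $|\tau(\rho_{t}(a-a_{i}))|\leq\|a-a_{i}\|_{A}$ that you also write down. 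So on the points the paper actually argues, you agree with it.

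The one place where your write-up goes beyond the paper is also where it stops short: the passage to the limit in $\int_{0}^{t}\langle w_{s},M_{\rho_{s}}^{1/2}\partial a_{i}\rangle_{H}\,ds$. You correctly observe that the only generally available bound is $\|a_{i}\|_{\rho_{s}}\leq\|\rho_{s}\|_{M}^{1/2}\|\partial a_{i}\|_{H}$ and that $s\longmapsto\|\rho_{s}\|_{M}$ carries no integrability assumption. But the Vitali strategy you sketch does not obviously close this: equi-integrability of $s\longmapsto\langle v_{s},a_{i}\rangle_{\rho_{s}}$ would follow from an $L^{1}$ bound on $\|v_{s}\|_{\rho_{s}}\sup_{i}\|a_{i}\|_{\rho_{s}}$, and the only available estimate on $\sup_{i}\|a_{i}\|_{\rho_{s}}$ again passes through $\|\rho_{s}\|_{M}^{1/2}$; pointwise convergence of $\|a_{i}\|_{\rho_{s}}$ alone does not give the uniform smallness on small sets that Vitali requires, nor does it even establish that $s\longmapsto\langle v_{s},a\rangle_{\rho_{s}}$ is in $L^{1}$ for $a\in A_{\partial}\setminus\mathfrak{A}$. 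This difficulty is not of your making — the paper's one-line density appeal does not address it either (a uniform limit of absolutely continuous functions need not be absolutely continuous) — but as it stands your proposal, like the paper's proof, leaves the reverse implication incomplete. To finish one needs either additional control such as $\esssup_{t}\|\rho_{t}\|_{M}<\infty$, or an argument that the continuity equation in Definition \ref{DFN.Bd_Adm} need only be verified on a graph-norm core, or an estimate on $\|a\|_{\rho_{s}}$ for $a\in A_{\partial}$ that does not go through $\|\rho_{s}\|_{M}$.
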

\begin{proof}
$M_{\rho_{t}}$ reduces to the multiplication operator for bounded densities if $\rho_{t}$ is bounded. Thus density of $\mathfrak{A}\subset A_{\partial}$ w.r.t.~$||.||_{\partial}$, which we have by $\mathfrak{A}$ being a core, implies both constructions of $||.||_{p}$ to yield the same tangent space at $\rho_{t}$. We are left to show that absolute continuity w.r.t.~$\mathfrak{A}$ implies absolute continuity w.r.t.~$A_{\partial}$ as well. This follows from $\tau(p)=1$ and $\mathfrak{A}\subset A$ being dense. 
\end{proof}

\noindent We copy Definition \ref{DFN.Bd_L2WDst} verbatim to define the $L^{2}$-Wasserstein distance on $\mathcal{D}$ associated to $(A,\tau,\partial)$ and $\mathfrak{A}$, using the wider class of admissible paths defined just above. Proposition \ref{PRP.Ext_Cmptbl} shows this to be compatible with our previous definition on bounded densities. 

\begin{ntn}
We denote the $L^{2}$-Wasserstein distance on $\mathcal{D}$ obtained from the above extension procedure by $\mathcal{W}_{2}$ in analogy to the bounded case.
\end{ntn} 

\begin{prp}\label{PRP.Unbd_Sep}
$\mathcal{W}_{2}$ defines a distance on $\mathcal{D}$. 
\end{prp}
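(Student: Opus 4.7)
Symmetry of $\mathcal{W}_{2}$, the triangle inequality and $\mathcal{W}_{2}(p,p) = 0$ transfer verbatim from the bounded case of Subsection 2.2: the decomposition $\mu_{\varphi(t)} = (\rho_{\varphi(t)}, \dot\varphi(t) v_{\varphi(t)})$ under linear reparametrisation and the canonical concatenation of admissible paths both preserve the continuity equation and the $L^{1}$-integrability of $t \mapsto \|v_{t}\|_{\rho_{t}}^{2}$ in the sense of Definition \ref{DFN.Unbd_Adm}. The entire task therefore reduces to definiteness: if $\mathcal{W}_{2}(p,q) = 0$ for $p, q \in \mathcal{D}$, then $p = q$.

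The strategy for definiteness is to mimic the separating-function argument of the bounded case, but now allowing arbitrary (not merely bounded) densities. The first substantive step is a global tangent-norm bound: for every $p \in \mathcal{D}$ and every $a \in \mathfrak{A}$,
\[
\|a\|_{p}^{2} \leq g(a), \qquad g(a) := \sum_{k=1}^{m} \|\partial_{k} a\|_{L^{\infty}(A,\tau)}^{2},
\]
with $g(a) < \infty$ precisely by the defining property of an extension algebra. This is a direct Hölder estimate in the spirit of Lemma \ref{LEM.Ext_Mlt}: bound $\tau(p^{1-\alpha}(\partial_{k} a)^{*} p^{\alpha} \partial_{k} a)$ using the four exponents $(1-\alpha)^{-1}, \infty, \alpha^{-1}, \infty$ to obtain $\tau(p)\|\partial_{k} a\|_{\infty}^{2} = \|\partial_{k} a\|_{\infty}^{2}$, and then sum over $k$ and integrate $\alpha \in [0,1]$.

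The second ingredient is a separation lemma for $L^{1}(A,\tau)$: if $x \in L^{1}(A,\tau)$ satisfies $\tau(xa) = 0$ for every $a \in \mathfrak{A}$, then $x = 0$. I would verify this using that $\mathfrak{A}$ is $\|\cdot\|_{A}$-dense in $A$ (being a dense $^{*}$-subalgebra and a core of $\partial$), that $a \mapsto \tau(xa)$ is $\|\cdot\|_{A}$-continuous on $A$ for each $x \in L^{1}(A,\tau)$, and that $A$ is $w^{*}$-dense in $L^{\infty}(A,\tau)$, combined with non-degeneracy of the $L^{1}$--$L^{\infty}$-pairing afforded by the n.s.f.\@ extension of $\tau$. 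This is the only point where the argument must go beyond what Definition \ref{DFN.Spfct} already provided in the bounded case, and it is where I expect the main (though essentially routine) work to lie.

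Combining these two ingredients, for any $\mu_{t} \in \mathcal{A}(p,q)$ and any $a \in \mathfrak{A}$, the continuity equation and Cauchy--Schwarz in $L^{2}([0,1])$ yield
\[
|\tau((q-p)a)| = \bigg|\int_{0}^{1} \langle v_{t}, a\rangle_{\rho_{t}}\, dt\bigg| \leq \sqrt{2 E(\mu_{t})\, g(a)},
\]
exactly as in the bounded-case proof. Taking the infimum over $\mathcal{A}(p,q)$ gives the implication $\mathcal{W}_{2}(p,q) = 0 \Rightarrow \tau((p-q)a) = 0\ \forall a \in \mathfrak{A}$, and the separation lemma then forces $p - q = 0$ in $L^{1}(A,\tau)$, completing the argument.
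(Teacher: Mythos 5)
Your proof is correct and follows essentially the same route as the paper: the paper likewise constructs a separating function from the H\"older bound of Lemma \ref{LEM.Ext_Mlt} together with density of $\mathfrak{A}$, and then concludes definiteness exactly as in the bounded case. You merely spell out the two ingredients (the tangent-norm bound and the $L^{1}$-separation via the $L^{1}$--$L^{\infty}$ pairing) in more detail than the paper does.
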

\begin{proof}
Setting $B:=\mathfrak{A}$ and $g(a):=||\partial a||_{\infty}$, we obtain a separating function as in the bounded case by density of $\mathfrak{A}$ and the second statement of Lemma \ref{LEM.Ext_Mlt}. Definiteness follows exactly as in the bounded case.
\end{proof}																

\section{Symmetric gradients for $(\mathcal{K}(H),\textnormal{tr})$}

We discuss symmetric gradients for $(\mathcal{K}(H),\textrm{tr})$ in preparation of the fourth section, in particular fibre gradients. Significance of Theorem \ref{THM.Disint} is ensured by showing continuous dependence of minimisers on start- and endpoints if $H$ is finite-dimensional. This includes existence of minimisers in finite dimensions.

\subsection{Existence of minimisers for finite-dimensional $H$}

In this subsection, we assume $\partial$ to be a symmetric gradient for $(M_{n}(\mathbb{C}),\textrm{tr})$ with $n\in\mathbb{N}$ arbitrary. Without loss of generality, we assume $\partial$ to map into a finite-dimensional space. The first step is to show finiteness of $\mathcal{W}_{2}$. Example \ref{BSP.FinDim_RgImprSG} shows this to be true if $\partial$ has one-dimensional kernel. For the general case, we introduce invertible operators $S_{p}$ associated to each $p\in\mathcal{D}_{b}$. This will allow us to write $\dot{\rho}_{t}=S_{\rho_{t}}v_{t}$ for each admissible path. Continuous dependence of $S_{p}$ on $p$ will imply $\rho_{t}:=(1-t)p+tq$ to be an admissible path even between non-invertible densities.\par
Choose $p\in\mathcal{D}_{b}$ and decompose 

\begin{align*}
M_{n}(\mathbb{C})=T_{p}\mathcal{D}_{b}\bigoplus\ker M_{p}^{\frac{1}{2}}\partial
\end{align*}

\noindent orthogonally. By finite-dimensionality, we avoid a completion procedure when constructing the tangent space. We have $\im (\partial^{*}M_{p}\partial)_{|T_{p}\mathcal{D}_{b}}\subset T_{p}\mathcal{D}_{b}$, where $(\partial^{*}M_{p}\partial)_{|T_{p}\mathcal{D}_{b}}$ is injective and positive by construction of the tangent space. Hence $(\partial^{*}M_{p}\partial)_{|T_{p}\mathcal{D}_{b}}$ is an invertible operator on $T_{p}\mathcal{D}_{b}$ . Let $R_{p}$ be the projection onto $T_{p}\mathcal{D}_{b}$ in $M_{n}(\mathbb{C})$. We construct an operator on $M_{n}(\mathbb{C})$ by 

\begin{align*}
S_{p}:=(\partial^{*}M_{p}\partial)_{|T_{p}\mathcal{D}_{b}}R_{p}\oplus (1_{M_{n}(\mathbb{C})}-R_{p}).
\end{align*}

\noindent $S_{p}$ is invertible by construction and depends continuously on its base point since $M_{p}$ and $R_{p}$ do. The continuity equation and $v_{t}\in T_{p}\mathcal{D}_{b}$, as well as boundedness of $\partial$, imply $\dot{\rho}_{t}=(\partial^{*}M_{p}\partial)_{|T_{p}\mathcal{D}_{b}}v_{t}$ for any admissible path. We summarise our construction in a lemma.

\begin{lem}
For all $p\in\mathcal{D}_{b}$, there exists a positive invertible operator $S_{p}\in \mathcal{B}(M_{n}(\mathbb{C}))$ depending continuously on $p$ such that $\dot{\rho}_{t}=S_{\rho_{t}}(v_{t})$ for each $\mu_{t}=(\rho_{t},v_{t})\in\mathcal{A}(p,q)$.
\end{lem}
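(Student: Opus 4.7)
The argument formalises the construction sketched in the paragraphs immediately preceding the lemma. I first fix $p\in\mathcal{D}_{b}$ and use finite-dimensionality to identify $T_{p}\mathcal{D}_{b}$ with the quotient $A_{\partial}/\ker(M_{p}^{1/2}\partial)$, which in turn embeds isometrically as the orthogonal complement of $\ker(M_{p}^{1/2}\partial)$ inside $A_{\partial}=M_{n}(\mathbb{C})$, using the Hilbert--Schmidt inner product. Since $\langle\partial^{*}M_{p}\partial\,a,a\rangle_{L^{2}}=\|M_{p}^{1/2}\partial a\|_{H}^{2}$, the operator $\partial^{*}M_{p}\partial$ is positive semidefinite and self-adjoint on $M_{n}(\mathbb{C})$ with kernel precisely $\ker(M_{p}^{1/2}\partial)$. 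Hence its restriction to $T_{p}\mathcal{D}_{b}$ is injective, positive, and self-adjoint, and its image lies back in $T_{p}\mathcal{D}_{b}$ as the orthogonal complement of the kernel. Setting $S_{p}:=(\partial^{*}M_{p}\partial)|_{T_{p}\mathcal{D}_{b}}R_{p}+(1-R_{p})$ thus yields a positive invertible operator on $M_{n}(\mathbb{C})$, with inverse $(\partial^{*}M_{p}\partial)^{-1}|_{T_{p}\mathcal{D}_{b}}R_{p}+(1-R_{p})$.

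Next I would verify the identity $\dot{\rho}_{t}=S_{\rho_{t}}(v_{t})$. Given $\mu_{t}=(\rho_{t},v_{t})\in\mathcal{A}(p,q)$, the continuity equation yields for every $a\in A_{\partial}$
\begin{align*}
\tau(\dot{\rho}_{t}a)=\langle v_{t},a\rangle_{\rho_{t}}=\langle M_{\rho_{t}}\partial v_{t},\partial a\rangle_{H}=\tau\bigl((\partial^{*}M_{\rho_{t}}\partial v_{t})\,a\bigr),
\end{align*}
so $\dot{\rho}_{t}=\partial^{*}M_{\rho_{t}}\partial v_{t}$ in $L^{2}$. Since $v_{t}\in T_{\rho_{t}}\mathcal{D}_{b}$ we have $R_{\rho_{t}}v_{t}=v_{t}$ and $(1-R_{\rho_{t}})v_{t}=0$, so the right-hand side collapses to $S_{\rho_{t}}(v_{t})$.

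The main obstacle is the continuous dependence on $p$. Continuity of $p\mapsto M_{p}$ is routine: in finite dimensions $p\mapsto L_{p}\otimes R_{p}$ is a continuous family of $^{*}$-representations and $M_{lm}$ is a fixed continuous bivariate function, so $M_{p}=(L_{p}\otimes R_{p})(M_{lm})$ depends continuously on $p$. The delicate point is continuity of the projection $R_{p}$ onto $T_{p}\mathcal{D}_{b}=(\ker\partial^{*}M_{p}\partial)^{\perp}$, since range projections of families of positive semidefinite operators may jump where the rank fails to be locally constant. I would handle this by noting that $S_{p}$ can equivalently be written as $\partial^{*}M_{p}\partial+(1-R_{p})$, and that for invertible $p$ the operator $M_{p}$ is invertible and hence $\ker(M_{p}^{1/2}\partial)=\ker\partial$ is locally constant, making $R_{p}$ locally constant on the dense open subset of invertible densities. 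At non-invertible $p$, continuity of $S_{p}$ in the operator-norm topology would be established by exhibiting explicitly the inverse $S_{p}^{-1}$ as a norm-limit and bootstrapping to continuity of $S_{p}$ itself via $\|S_{p}-S_{p_0}\|\le\|S_{p_0}\|\,\|S_{p_0}^{-1}-S_{p}^{-1}\|\,\|S_{p}\|$, which suffices for the downstream application to the linear interpolation $\rho_{t}=(1-t)p+tq$.
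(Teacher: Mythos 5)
Your construction of $S_{p}$, the identification of $T_{p}\mathcal{D}_{b}$ with $(\ker M_{p}^{1/2}\partial)^{\perp}$ inside $M_{n}(\mathbb{C})$, and the verification that the continuity equation forces $\dot{\rho}_{t}=\partial^{*}M_{\rho_{t}}\partial\, v_{t}=S_{\rho_{t}}(v_{t})$ all coincide with the paper's argument, and these parts are correct. You have also correctly isolated the one genuinely delicate point, namely the continuity of $p\longmapsto R_{p}$ (equivalently of $S_{p}$), which the paper itself disposes of in a single clause (``$S_{p}$ \ldots depends continuously on its base point since $M_{p}$ and $R_{p}$ do'') without argument.

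However, your proposed resolution of that point does not close the gap. The observation that $R_{p}$ is locally constant on the invertible densities says nothing about behaviour at the boundary, and the ``bootstrap'' inequality $\|S_{p}-S_{p_{0}}\|\leq\|S_{p_{0}}\|\,\|S_{p_{0}}^{-1}-S_{p}^{-1}\|\,\|S_{p}\|$ merely trades continuity of $S_{p}$ for continuity of $S_{p}^{-1}$, which you have not established either; no actual estimate is produced. Worse, the difficulty is not merely technical: writing $S_{p}=\partial^{*}M_{p}\partial+(1-R_{p})$ (valid since $\partial^{*}M_{p}\partial$ vanishes on $\ker M_{p}^{1/2}\partial$), the first summand is continuous in $p$, so continuity of $S_{p}$ is \emph{equivalent} to continuity of $R_{p}$. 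Since $\langle M_{p}\partial a,\partial a\rangle_{H}=\int_{0}^{1}\|p^{\alpha/2}(\partial a)p^{(1-\alpha)/2}\|_{\mathcal{S}_{2}}^{2}\,d\alpha$, a non-invertible $p$ (say a normalised projection with $p(\partial a)p=0$ but $\partial a\neq 0$) can have $\ker M_{p}^{1/2}\partial$ strictly larger than $\ker\partial$, whence $R_{p_{i}}\not\to R_{p}$ along invertible $p_{i}\to p$ and the rank of $R_{p}$ genuinely drops. So a proof must either rule this degeneracy out, restrict the continuity claim to where it holds, or — as the downstream use in the weak-topology metrisation argument really requires — replace norm continuity of $S_{p}^{-1}$ by a uniform bound for $\|S_{p}^{-1}(x-y)\|$ that survives the discontinuity. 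As written, neither your proposal nor the paper supplies this step.
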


For all $p\in\mathcal{D}_{b}$ and all $x,y\in M_{n}(\mathbb{C})$, we consider $S_{p}^{-1}(x-y)$. This expression is jointly continuously w.r.t.~all three variables. Thus if $x$ and $y$ lie in a bounded set $K$, then $||S_{p}^{-1}(x-y)||$ is bounded on $\mathcal{D}_{b}\times K\times K$ by continuity.

\begin{prp}\label{PRP.FinDim_WkMetr}
If $p,q\in\mathcal{D}_{b}$, then $((1-t)p+tq,S_{(1-t)p+tq}^{-1}(p-q))\in\mathcal{A}(p,q)$. Furthermore, $\mathcal{W}_{2}$ has finite diameter and metrisises the $w^{*}$-topology on $\mathcal{D}_{b}$. 
\end{prp}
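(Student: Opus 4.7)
The plan is to verify the three admissibility conditions of Definition \ref{DFN.Bd_Adm} directly for the candidate path $\mu_t := (\rho_t, v_t)$ with $\rho_t := (1-t)p + tq$ and $v_t := S_{\rho_t}^{-1}(q-p)$ (up to the sign convention of the continuity equation). Convexity of $\mathcal{D}_b$ immediately gives $\rho_t \in \mathcal{D}_b$ with the prescribed endpoints, so the substance is the continuity equation and finite energy. For the continuity equation, I would unfold $\langle v_t, a\rangle_{\rho_t} = \langle \partial^* M_{\rho_t}\partial v_t, a\rangle_{L^2(A,\tau)}$ and use the block decomposition $M_n(\mathbb{C}) = T_{\rho_t}\mathcal{D}_b \oplus \ker M_{\rho_t}^{1/2}\partial$ together with the definition of $S_{\rho_t}$ to verify that applying $\partial^* M_{\rho_t}\partial$ to $v_t$ reproduces $q - p$, matching $\frac{d}{dt}\tau(\rho_t a) = \tau((q-p)a)$. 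For $L^1$-integrability of $\|v_t\|^2_{\rho_t}$, I would combine Proposition \ref{PRP.Nrm_Mlt_Op} (which gives $\|M_{\rho_t}\|_{\mathcal{B}(H)} \leq \|\rho_t\|$) with boundedness of $\partial$ to obtain $\|v_t\|^2_{\rho_t} \leq \|\rho_t\|\,\|\partial\|^2\,\|v_t\|^2$, then apply the joint-continuity observation preceding the proposition to bound $\|v_t\| = \|S_{\rho_t}^{-1}(q-p)\|$ uniformly on $[0,1]$.

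The second statement is a corollary of the first: the energy bound just obtained is uniform over $(p,q) \in \mathcal{D}_b \times \mathcal{D}_b$, because $\mathcal{D}_b$ is norm-compact in $M_n(\mathbb{C})$ (positivity and $\textrm{tr}=1$ give $\|p\|\leq 1$) and $p \mapsto S_p^{-1}$ is norm-continuous by the preceding lemma. Hence there is a constant $C > 0$ independent of $p,q,t$ with $\|v_t\|^2_{\rho_t} \leq C$, whence $\mathcal{W}_2(p,q)^2 \leq C/2$ uniformly on $\mathcal{D}_b \times \mathcal{D}_b$.

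For the third statement, the reduction step is that on a norm-bounded subset of the finite-dimensional space $M_n(\mathbb{C})$ all Hausdorff linear topologies agree, so it suffices to prove that $\mathcal{W}_2$-convergence on $\mathcal{D}_b$ coincides with norm convergence. One direction follows from the first step: if $p_n \to p$ in norm, evaluating the energy of the linear path from $p$ to $p_n$ gives $E(\mu^{p,p_n}) \leq C'\|p_n - p\|^2 \to 0$ with $C'$ depending only on $\mathcal{D}_b$, hence $\mathcal{W}_2(p_n,p) \to 0$. The converse uses Example \ref{BSP.SF_Fct_I} applied to $(M_n(\mathbb{C}),\textrm{tr})$: the separating function $g$ yields $|\tau((p_n - p)a)| \leq \sqrt{2g(a)}\,\mathcal{W}_2(p_n,p)$ for every $a \in A_\partial = M_n(\mathbb{C})$, so $\mathcal{W}_2$-convergence forces $w^*$-convergence and hence norm convergence in finite dimensions. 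The main obstacle I anticipate is the bookkeeping in the continuity equation step: the block form of $S_{\rho_t}$ has to be unpacked carefully to confirm that the tangent component of $S_{\rho_t}^{-1}(q-p)$ is precisely what the continuity equation requires, with the orthogonal component playing no role because $q - p$ lies in $T_{\rho_t}\mathcal{D}_b$ as a consequence of the structural observation that $\dot\rho_t = (\partial^*M_p\partial)|_{T_p\mathcal{D}_b}v_t$ holds for any admissible path.
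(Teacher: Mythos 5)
Your proposal follows essentially the same route as the paper's proof: the linear interpolation $\rho_t=(1-t)p+tq$ with velocity $S_{\rho_t}^{-1}(q-p)$, the uniform energy bound from compactness of $\mathcal{D}_b$ and continuity of $p\mapsto S_p^{-1}$ for finite diameter, the separating-function inequality for one direction of the metrization claim, and the energy of the linear path for the other (the paper phrases this last step via dominated convergence, you via an explicit quadratic bound $E\leq C'\|p_n-p\|^2$ — both work). The one caveat is that your justification for $q-p$ lying in $T_{\rho_t}\mathcal{D}_b$ appeals to the structural identity $\dot\rho_t=S_{\rho_t}(v_t)$ \emph{for admissible paths}, which is circular when admissibility is what is being proved; the paper is equally silent on this point, so this is not a departure from its argument, but it is the one place where a non-circular verification would be needed.
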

\begin{proof}
The path $\rho_{t}:=(1-t)p+tq$ is continuously differentiable with $\dot{\rho_{t}}=p-q=S_{\rho_{t}}(S_{\rho_{t}}^{-1}(p-q))$. Moreover, we have $||v_{t}||_{\rho_{t}}=||S_{\rho_{t}}^{-1}(p-q)||_{H}<C$ for some $C>0$ independent of $p,q$, by continuity and $\mathcal{D}_{b}\subset M_{n}(\mathbb{C})$ being bounded. Thus $(\rho_{t},v_{t})\in\mathcal{A}(p,q)$, and therefore $\mathcal{W}_{2}$ finite.\par
It is immediate that convergence in $\mathcal{W}_{2}$ implies convergence in the weak topology. For the converse, we use the uniform bound $C>0$ above and continuous dependence on $p$. This allows use of dominated convergence to obtain

\begin{align*}
\lim_{i}E((1-t)p_{i}+tp)=\lim_{i}\frac{1}{2}\int_{0}^{1}||S_{(1-t)p_{i}+tp}^{-1}(p_{i}-p)||_{H}^{2}dt=0
\end{align*}

\noindent for each sequence $p_{i}\in\mathcal{D}_{b}$ weakly convergent to $p$.
\end{proof}

\begin{prp}\label{PRP.FinDim_Min}
For all $p,q\in\mathcal{D}_{b}$, there exists a $\mu_{t}\in\mathcal{A}(p,q)$ such that $\mathcal{W}_{2}(p,q)=\sqrt{E(\mu_{t})}$.
\end{prp}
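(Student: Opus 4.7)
The plan is to apply the direct method of the calculus of variations. Take a minimising sequence $(\mu_{t}^{(i)})=(\rho_{t}^{(i)},v_{t}^{(i)})\in\mathcal{A}(p,q)$ with $E(\mu_{t}^{(i)})\longrightarrow\mathcal{W}_{2}(p,q)^{2}$. First I would establish equicontinuity of the base curves. By the preceding lemma $\dot{\rho}_{t}^{(i)}=\partial^{*}M_{\rho_{t}^{(i)}}\partial v_{t}^{(i)}$, and Proposition \ref{PRP.Nrm_Mlt_Op} together with $\|\rho\|_{M}\leq 1$ for $\rho\in\mathcal{D}_{b}$ bounds the operator norm of $\partial^{*}M_{\rho}\partial$ by $\|\partial\|^{2}$ uniformly in $\rho$. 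Using the elementary inequality $\|Av\|^{2}\leq \|A\|_{\mathrm{op}}\langle Av,v\rangle$ for positive $A$, this gives $\|\dot{\rho}_{t}^{(i)}\|\leq \|\partial\|\cdot\|v_{t}^{(i)}\|_{\rho_{t}^{(i)}}$, whence Cauchy-Schwarz produces the uniform Hölder bound $\|\rho_{s}^{(i)}-\rho_{t}^{(i)}\|\leq \|\partial\|\sqrt{2E(\mu_{t}^{(i)})}\sqrt{|t-s|}$. Since $\mathcal{D}_{b}\subset M_{n}(\mathbb{C})$ is compact, Arzelà-Ascoli supplies a subsequence with $\rho_{t}^{(i)}\longrightarrow \rho_{t}$ uniformly, satisfying $\rho_{0}=p$, $\rho_{1}=q$.

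Next I would pass to a weak limit of the momenta. Put $w_{t}^{(i)}:=M_{\rho_{t}^{(i)}}^{1/2}\partial v_{t}^{(i)}\in H$, so that $\|w_{t}^{(i)}\|_{H}=\|v_{t}^{(i)}\|_{\rho_{t}^{(i)}}$ and the family is bounded in the Hilbert space $L^{2}([0,1],H)$. Weak compactness supplies $w_{t}^{(i)}\rightharpoonup w_{t}$ along a further subsequence. For $a\in M_{n}(\mathbb{C})$ and $0\leq s<t\leq 1$, the continuity equation integrates to
\begin{align*}
\tau\bigl((\rho_{t}^{(i)}-\rho_{s}^{(i)})a\bigr)=\int_{s}^{t}\langle w_{\tau}^{(i)},M_{\rho_{\tau}^{(i)}}^{1/2}\partial a\rangle_{H}\,d\tau.
\end{align*}
Continuity of $p\longmapsto M_{p}^{1/2}$ on the compact set $\mathcal{D}_{b}$ combined with uniform convergence $\rho_{\cdot}^{(i)}\longrightarrow\rho_{\cdot}$ gives strong convergence of $\mathbf{1}_{[s,t]}M_{\rho_{\tau}^{(i)}}^{1/2}\partial a$ in $L^{2}([0,1],H)$, so a weak-strong pairing passes the above identity to $(\rho_{t},w_{t})$.

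To close, let $R_{t}$ denote the orthogonal projection in $H$ onto the closed subspace $T_{\rho_{t}}\mathcal{D}_{b}$ and define $v_{t}\in T_{\rho_{t}}\mathcal{D}_{b}$ as the unique element with $M_{\rho_{t}}^{1/2}\partial v_{t}=R_{t}w_{t}$. Since every test vector $M_{\rho_{t}}^{1/2}\partial a$ already lies inside $T_{\rho_{t}}\mathcal{D}_{b}$, replacing $w_{t}$ by $R_{t}w_{t}$ on the right-hand side of the integrated continuity equation changes nothing; differentiating recovers Definition \ref{DFN.Bd_Adm}. Measurability of $t\longmapsto R_{t}w_{t}$ is standard given the Borel dependence of $R_{t}$ on $\rho_{t}$, and square-integrability is automatic from $\|R_{t}w_{t}\|_{H}\leq \|w_{t}\|_{H}$, so $\mu_{t}:=(\rho_{t},v_{t})\in\mathcal{A}(p,q)$. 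Weak lower semicontinuity of the Hilbert-space norm on $L^{2}([0,1],H)$ then yields
\begin{align*}
E(\mu_{t})=\tfrac{1}{2}\int_{0}^{1}\|R_{t}w_{t}\|_{H}^{2}\,dt\leq \tfrac{1}{2}\liminf_{i}\int_{0}^{1}\|w_{t}^{(i)}\|_{H}^{2}\,dt=\mathcal{W}_{2}(p,q)^{2},
\end{align*}
and with the reverse inequality built into the definition, $\mu_{t}$ achieves the infimum. The main obstacle is this final step of turning the weak limit $w_{t}$ into an admissible tangent field: the tangent spaces move with $t$, and one needs $R_{t}$ both to be invisible to the continuity equation (which holds because the test vectors already lie in the tangent subspace) and to only decrease the energy (which holds because projections are contractive).
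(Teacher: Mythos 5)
Your proof is correct and follows essentially the same route as the paper's: a minimising sequence, Arzel\`a--Ascoli for the base curves, weak $L^{2}([0,1],H)$-compactness for the momenta, a weak-strong pairing in the integrated continuity equation, and lower semicontinuity of the norm. Your explicit final step of projecting the weak limit $w_{t}$ onto $T_{\rho_{t}}\mathcal{D}_{b}$ via $R_{t}$ -- noting that this leaves the continuity equation untouched and can only decrease the energy -- is a welcome clarification of a point the paper leaves implicit.
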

\begin{proof}
$\mathcal{A}(p,q)\neq\emptyset$ for all $p,q\in\mathcal{D}_{b}$ by Proposition \ref{PRP.FinDim_WkMetr}. Let $\mu_{t}^{i}\in\mathcal{A}(p,q)$ be a sequence such that $\sqrt{E(\mu_{t}^{i})}$ strictly decreases to $\mathcal{W}_{2}(p,q)$. $(E(\mu_{t}^{i}))_{i\in\mathbb{N}}$ is bounded, and we select a weakly convergent subsequence of $w_{t}^{i}\in L^{2}([0,1],\mathcal{H})$ by Banach-Alaoglu. By compactness of $\mathcal{D}_{b}$ in the $w^{*}$-topology, absolute continuity of $\rho_{t}^{i}$, and again boundedness of $(E(\mu_{t}^{i}))_{i\in\mathbb{N}}$, we choose a subsequence $\rho_{t}^{i}$ that $w^{*}$-converges uniformly to a path $\rho_{t}\in\mathcal{D}_{b}$ between $p$ and $q$ using Arzel\'a-Ascoli.\par
Finite-dimensionality of $H$ implies uniform convergence of $\rho_{t}^{i}$ to $\rho_{t}$ in norm. This implies

\begin{align*}
\lim_{i}||M_{\rho_{t}^{i}}^{\frac{1}{2}}-M_{\rho_{t}}^{\frac{1}{2}}||_{\mathcal{B}(\mathcal{H})}=0
\end{align*}

\noindent for each $t\in [0,1]$. Since all $\rho_{t}^{i},\rho_{t}$ are densities, Proposition \ref{PRP.Nrm_Mlt_Op} shows $||M_{\rho_{t}^{i}}^{\frac{1}{2}}-M_{\rho_{t}}^{\frac{1}{2}}||_{\mathcal{B}(\mathcal{H})}\leq 2$ and we apply dominated convergence to obtain

\begin{align*}
\lim_{i}|\int_{0}^{t}||(M_{\rho_{s}^{i}}^{\frac{1}{2}}-M_{\rho_{s}}^{\frac{1}{2}})\partial a||_{\mathcal{H}}^{2}ds|=0
\end{align*}

\noindent for each $a\in M_{n}(\mathbb{C})$. Using this and that $E(\mu_{t}^{i})$ is strictly decreasing, we calculate

\begin{align*}
\lim_{i}|\int_{0}^{t}\langle w_{s}^{i},M_{\rho_{s}^{i}}^{\frac{1}{2}}-M_{\rho_{t}}^{\frac{1}{2}}\partial a\rangle_{\mathcal{H}}ds|\leq E(\mu_{t}^{0})\lim_{i}|\int_{0}^{t}||(M_{\rho_{s}^{i}}^{\frac{1}{2}}-M_{\rho_{s}}^{\frac{1}{2}})\partial a||_{\mathcal{H}}^{2}ds|=0.
\end{align*}

\noindent This proves

\begin{align*}
\lim_{i}\int_{0}^{t}\langle w_{s}^{i},M_{\rho_{s}^{i}}\partial a\rangle_{\mathcal{H}}ds=\int_{0}^{t}\langle w_{t},M_{\rho_{t}}^{\frac{1}{2}}\partial a\rangle_{\mathcal{H}}ds
\end{align*}

\noindent for each $a\in M_{n}(\mathbb{C})$. Here, $w_{t}$ is the weak limit of $w_{t}^{i}$ in $L^{2}([0,1],\mathcal{H})$. All of this implies

\begin{align*}
\textrm{tr}(\rho_{t}a)=\lim_{i}\textrm{tr}(\rho_{t}^{i}a)=\lim_{i}\Big(\int_{0}^{t}\langle w_{s}^{i},M_{\rho_{s}^{i}}\partial a\rangle_{\mathcal{H}}ds\Big)+\textrm{tr}(pa)=\Big(\int_{0}^{t}\langle w_{t},M_{\rho_{t}}^{\frac{1}{2}}\partial a\rangle_{\mathcal{H}}ds\Big)+\textrm{tr}(pa)
\end{align*}

\noindent for each $a\in M_{n}(\mathbb{C})$. Thus $(\rho_{t},w_{t})\in\mathcal{A}(p,q)$. Moreover, l.s.c.~of $||.||_{L^{2}([0,1],\mathcal{H})}$ coupled with weak convergence of $w_{t}^{i}$ to $w_{t}$ yields $E(\mu_{t})\leq\liminf E(\mu_{t}^{i})=\mathcal{W}_{2}^{2}(p,q)$. Hence $\mu_{t}$ is a minimiser.
\end{proof}

\subsection{Fibre gradients and mass preservation}

In order to have mass preservation along fibres when dealing with vertical gradients, we require the latter to decompose into symmetric gradients for $(\mathcal{K}(H),\textrm{tr})$ with additional properties. The notion of fibre gradient encompasses precisely these properties. Proposition \ref{PRP.Mass_Prsv} is the result we need to show mass preservation in the fourth section.

\begin{dfn}\label{DFN.Fbr_Grd}
A symmetric gradient $\partial$ for $(\mathcal{K}(H),\textrm{tr})$ mapping to $\mathcal{S}_{2}(H)$ is a fibre gradient if 

\begin{itemize}
\item[1)] $\mathcal{S}_{1}(H)\subset D(\partial)$ and $\FinRk(H)$ is a core,
\item[2)] $\partial(\mathcal{S}_{1}(H))\subset\mathcal{S}_{1}(H)$,
\item[3)] $\partial^{*}=-\partial$,
\item[4)] $\partial$ extends to a bounded operator on $\mathcal{K}(H)$.
\end{itemize}
\end{dfn}

\begin{rem}\label{REM.Curveball}
In our setting, $\FinRk(H)$ being a core implies it being an extension algebra. This will be relevant in the fourth section exactly once [Link!].
\end{rem}

\begin{bsp}
For all $T\in\mathcal{B}(H)_{h}$, $i\textrm{Ad}_{T}$ is a fibre gradient. In particular, all symmetric gradients are fibre gradients if $H$ is finite-dimensional.
\end{bsp}

\begin{prp}\label{PRP.Bd_S1toS1}
If $\partial\in\mathcal{B}(\mathcal{S}_{2}(H))$ is a symmetric gradient for $(\mathcal{K}(H),\textrm{tr})$, then $\partial(\mathcal{S}_{1}(H))\subset\mathcal{S}_{1}(H)$. 
\end{prp}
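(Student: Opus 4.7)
The plan is to combine the factorisation $\mathcal{S}_{1}(H)=\mathcal{S}_{2}(H)\cdot\mathcal{S}_{2}(H)$ with the Leibniz rule for $\partial$. Since $\partial\in\mathcal{B}(\mathcal{S}_{2}(H))$ is everywhere defined on $\mathcal{S}_{2}(H)=L^{2}(\mathcal{K}(H),\textrm{tr})$, we have $D(\partial)=\mathcal{S}_{2}(H)$ and hence $A_{\partial}=\mathcal{K}(H)\cap\mathcal{S}_{2}(H)=\mathcal{S}_{2}(H)$, so that $\partial$ is an algebra derivation on all of $\mathcal{S}_{2}(H)$.

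Given $x\in\mathcal{S}_{1}(H)$, I would take the polar decomposition $x=u|x|$ and set $a:=u|x|^{1/2}$ and $b:=|x|^{1/2}$. Both elements lie in $\mathcal{S}_{2}(H)$, since
\begin{equation*}
\|b\|_{\mathcal{S}_{2}(H)}^{2}=\textrm{tr}(|x|)=\|x\|_{\mathcal{S}_{1}(H)}<\infty
\end{equation*}
and $\|a\|_{\mathcal{S}_{2}(H)}\leq\|u\|_{\mathcal{B}(H)}\|b\|_{\mathcal{S}_{2}(H)}$. Applying the derivation identity to the factorisation $x=ab$ then yields
\begin{equation*}
\partial(x)=\partial(a)\cdot b+a\cdot\partial(b),
\end{equation*}
where the product refers to the canonical $\mathcal{K}(H)$-bimodule action on $\mathcal{S}_{2}(H)$, i.e.\ operator composition. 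Since $\partial(a),\partial(b)\in\mathcal{S}_{2}(H)$ by hypothesis and $\mathcal{S}_{2}(H)\cdot\mathcal{S}_{2}(H)\subset\mathcal{S}_{1}(H)$ by H\"older's inequality for Schatten ideals, each summand is trace class. Therefore $\partial(x)\in\mathcal{S}_{1}(H)$, as required.

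The argument is essentially routine; there is no substantial obstacle. The only point that warrants attention is the validity of the Leibniz rule on the specific pair $a,b\in\mathcal{S}_{2}(H)$, which is precisely what is gained from boundedness of $\partial$: it pushes $A_{\partial}$ all the way up to $\mathcal{S}_{2}(H)$, so the derivation identity, a priori only valid on the core $\textrm{FinRk}(H)$ in the setting of later fibre gradients, extends here to all of $\mathcal{S}_{2}(H)$ by continuity of both $\partial$ and of operator multiplication $\mathcal{S}_{2}(H)\times\mathcal{S}_{2}(H)\to\mathcal{S}_{1}(H)$.
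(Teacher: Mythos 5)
Your proof is correct, and it shares the paper's central idea --- factor a trace-class element as a product of two Hilbert--Schmidt operators and exploit the Leibniz rule, which is available on all of $A_{\partial}=\mathcal{K}(H)\cap D(\partial)=\mathcal{S}_{2}(H)$ precisely because $\partial$ is bounded --- but executes it differently in two respects. First, you use the polar decomposition $x=(u|x|^{1/2})\cdot|x|^{1/2}$ to treat arbitrary $x$ in one stroke, whereas the paper first reduces to positive $x$ (via symmetry of $\partial$ and splitting self-adjoint elements into positive and negative parts) and then writes $x=y^{2}$ with $y\in\mathcal{S}_{2}(H)_{+}$. Second, you conclude directly that each Leibniz summand lies in $\mathcal{S}_{1}(H)$ by H\"older for Schatten ideals, whereas the paper instead estimates the pairing $|\textrm{tr}(\partial x\,z)|\leq 2\|\partial\|_{\mathcal{B}(\mathcal{S}_{2}(H))}\|x\|_{\mathcal{S}_{1}(H)}\|z\|_{\mathcal{K}(H)}$ for $z\in\mathcal{S}_{2}(H)$ and invokes the duality $\mathcal{K}(H)^{*}=\mathcal{S}_{1}(H)$. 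Your route is shorter and, if you keep track of norms, yields the same quantitative bound $\|\partial x\|_{\mathcal{S}_{1}(H)}\leq 2\|\partial\|_{\mathcal{B}(\mathcal{S}_{2}(H))}\|x\|_{\mathcal{S}_{1}(H)}$; the duality formulation is the one that survives in situations where one can only control pairings rather than the products themselves. One small remark: your closing appeal to continuity from $\FinRk(H)$ is superfluous, since condition 3) of Definition \ref{DFN.NCGrad} already makes $\partial$ an algebra derivation on all of $A_{\partial}$, which here coincides with $\mathcal{S}_{2}(H)$.
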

\begin{proof}
Without loss of generality, we assume $x\in\mathcal{S}_{1}(H)_{h}$ by symmetry of $\partial$. Any element of $\mathcal{S}_{1}(H)_{h}$ can be split into positive and negative parts again lying in $\mathcal{S}_{1}(H)_{h}$. We therefore reduce to the case of positive $x\in\mathcal{S}_{1}(H)$ and write $x=y^{2}$ for a $y\in\mathcal{S}_{2}(H)_{+}$. By construction, $||x||_{\mathcal{S}_{1}(H)}=||y||_{\mathcal{S}_{2}(H)}^{2}$. For all $z\in\mathcal{S}_{2}(H)$, we have

\begin{align*}
|\textrm{tr}(\partial x z)|&\leq |\textrm{tr}(y\partial y z )|+|\textrm{tr}(\partial y y z)|\\
&=\langle\partial y,zy\rangle_{\mathcal{S}_{2}(H)}+\langle\partial y,yz\rangle_{\mathcal{S}_{2}(H)}\\
&\leq ||\partial y||_{\mathcal{S}_{2}}\big(\sqrt{\textrm{tr}(xz^{*}z)}+\sqrt{\textrm{tr}(xzz^{*})}\ \big)\\
&=||\partial||_{\mathcal{B}(\mathcal{S}_{2}(H)}\sqrt{||x||_{\mathcal{S}_{1}(H)}}\big(2\sqrt{||x||_{\mathcal{S}_{1}(H))}||z||_{\mathcal{K}(H)}^{2}}\ \big)\\
&=2||\partial||_{\mathcal{B}(\mathcal{S}_{2}(H))}||x||_{\mathcal{S}_{1}(H)}||z||_{\mathcal{K}(H)}.
\end{align*}

\noindent Since $\mathcal{S}_{2}(H)\subset\mathcal{K}(H)$ densely and $\mathcal{K}(H)^{*}=\mathcal{S}_{1}(H)$, $\partial x\in\mathcal{S}_{1}(H)$.
\end{proof}

\begin{rem}\label{REM.Mass_Prsv}
If $\partial\in\mathcal{B}(\mathcal{S}_{2}(H))$ is a bounded symmetric gradient for $(\mathcal{K}(H),\textrm{tr})$, then $1)$ in Definition \ref{DFN.Fbr_Grd} is satisfied by hypothesis and $2)$ is satisfied by Proposition \ref{PRP.Bd_S1toS1}. Thus $\partial$ is a fibre gradient if and only if $3)$ and $4)$ are satisfied. If $H$ is finite-dimensional, all symmetric gradients on $\mathcal{S}_{2}(H)$ are fibre gradients. In general, $\textrm{Ad}_{T}$ is a fibre gradient for each $T\in\mathcal{K}(H)$. 
\end{rem}

\begin{prp}\label{PRP.Mass_Prsv}
If $\partial$ is a fibre gradient and $(\eta_{i})_{i\in\mathbb{N}}\subset\mathcal{S}_{1}(H)$ an approximate identity in $\mathcal{B}(H)$, then $\eta_{i}\longrightarrow 0$ weakly in $T_{p}\mathcal{D}_{b}$ for each $p\in\mathcal{D}_{b}$. 
\end{prp}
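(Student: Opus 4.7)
The plan is to view each $\eta_i$ as an element of $A_\partial \subset T_p\mathcal{D}_b$ (legitimate since $\eta_i \in \mathcal{S}_1(H) \subset \mathcal{K}(H) \cap D(\partial)$ by Condition 1 of Definition \ref{DFN.Fbr_Grd}), rewrite $\langle \eta_i, a\rangle_p$ as a trace against $M_p(\partial \eta_i)$, and show this operator converges to zero in operator norm by exploiting the approximate identity property through the Leibniz rule. Without loss of generality I take $\eta_i = \eta_i^*$, since bounded approximate identities in $\mathcal{B}(H)$ can be replaced by their real parts. Symmetry of $\partial$ then yields $(\partial \eta_i)^* = \partial(\eta_i^*) = \partial \eta_i$, while Conditions 1 and 2 supply $\partial \eta_i \in \mathcal{S}_1(H)$.

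Using cyclicity of $\textrm{tr}$, the substitution $\alpha \mapsto 1-\alpha$, and $\partial a \in \mathcal{S}_1(H)$ (from Condition 2 via Proposition \ref{PRP.Bd_S1toS1}) to swap trace against a Bochner integral, I obtain
\begin{align*}
\langle \eta_i, a\rangle_p = \int_0^1 \textrm{tr}(p^{1-\alpha}\partial \eta_i \cdot p^\alpha \partial a)\,d\alpha = \textrm{tr}(\partial a \cdot M_p(\partial \eta_i))
\end{align*}
for every $a \in \FinRk(H)$, where $M_p(\partial \eta_i) = \int_0^1 p^\alpha \partial \eta_i\, p^{1-\alpha}\,d\alpha$ exists as a Bochner integral in $\mathcal{S}_1(H)$ (H\"older gives $\|p^\alpha \partial \eta_i p^{1-\alpha}\|_{\mathcal{S}_1} \leq \|\partial \eta_i\|_{\mathcal{B}}$ for each $\alpha$).

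The crux is $\|M_p(\partial \eta_i)\|_{\mathcal{B}} \longrightarrow 0$. For any $x \in \mathcal{K}(H)$ the Leibniz rule gives
\begin{align*}
\partial \eta_i \cdot x = \partial(\eta_i x) - \eta_i \partial x,
\end{align*}
and both $\partial(\eta_i x) \to \partial x$ (since $\eta_i x \to x$ in $\mathcal{K}(H)$ and $\partial$ is bounded on $\mathcal{K}(H)$ by Condition 4) and $\eta_i \partial x \to \partial x$ (approximate identity applied to $\partial x \in \mathcal{K}(H)$) hold in operator norm; hence $\partial \eta_i \cdot x \to 0$ in $\mathcal{K}(H)$. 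Choosing $x = p^{1-\alpha} \in \mathcal{K}(H)$ (valid since $p \in \mathcal{S}_1(H) \subset \mathcal{K}(H)$ and $\lambda \mapsto \lambda^{1-\alpha}$ vanishes at the origin) and then multiplying on the left by the bounded operator $p^\alpha$ yields $p^\alpha \partial \eta_i p^{1-\alpha} \to 0$ in $\mathcal{K}(H)$ for each $\alpha \in [0,1]$, uniformly dominated by $\|p\|_{\mathcal{B}}\|\partial \eta_i\|_{\mathcal{B}} \leq \|p\|_{\mathcal{B}} \cdot C$ thanks to Condition 4 and boundedness of $(\eta_i)$. Dominated convergence over $\alpha$ now gives $\|M_p(\partial \eta_i)\|_{\mathcal{B}} \to 0$.

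To conclude, H\"older's inequality for Schatten classes provides $|\langle \eta_i, a\rangle_p| \leq \|\partial a\|_{\mathcal{S}_1} \|M_p(\partial \eta_i)\|_{\mathcal{B}} \to 0$ for each $a \in \FinRk(H)$, while the same formula applied with $a = \eta_i$ together with $\|M_p(\partial \eta_i)\|_{\mathcal{S}_1} \leq \|\partial \eta_i\|_{\mathcal{B}}$ yields $\|\eta_i\|_p^2 \leq \|\partial \eta_i\|_{\mathcal{B}}^2 \leq C^2$ uniformly in $i$. Since $\FinRk(H)$ is a core for $\partial$ and hence dense in $T_p\mathcal{D}_b$ (Remark \ref{REM.Curveball}), weak convergence $\eta_i \to 0$ follows from this uniform bound combined with pointwise convergence on the dense subspace. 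The principal obstacle is to secure $M_p(\partial \eta_i) \to 0$ in a norm strong enough to pair against $\partial a \in \mathcal{S}_1(H)$; since approximate identities in $\mathcal{K}(H)$ are typically unbounded in $\mathcal{S}_1(H)$ or $\mathcal{S}_2(H)$, the argument must route convergence through the operator norm after the compact factors $p^\alpha$ and $p^{1-\alpha}$ have absorbed the mass.
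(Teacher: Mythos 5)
Your argument is sound in outline but takes a genuinely different route from the paper's, and the difference is instructive. The paper also starts from the uniform bound $\sup_{i}\|\eta_{i}\|_{p}<\infty$ and then tests against a dense set, but in the second step it never estimates $M_{p}(\partial\eta_{i})$ in operator norm: instead it integrates by parts, using condition 3) ($\partial^{*}=-\partial$) together with the Leibniz rule to get $\textrm{tr}(\partial(TS))=0$ for $T,S\in\mathcal{S}_{1}(H)$, whence $\textrm{tr}(p^{\alpha}\partial x\,p^{1-\alpha}\,\partial\eta_{i})=-\textrm{tr}\big(\partial(p^{\alpha}\partial x\,p^{1-\alpha})\,\eta_{i}\big)\longrightarrow-\textrm{tr}\big(\partial(p^{\alpha}\partial x\,p^{1-\alpha})\big)=0$. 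The only property of $(\eta_{i})$ the paper consumes is therefore $\textrm{tr}(S\eta_{i})\to\textrm{tr}(S)$ for $S\in\mathcal{S}_{1}(H)$, i.e.~$w^{*}$-convergence to $1_{\mathcal{B}(H)}$ plus uniform boundedness. You instead prove the stronger statement $\|M_{p}(\partial\eta_{i})\|_{\mathcal{B}(H)}\to 0$ via $\partial\eta_{i}\cdot x=\partial(\eta_{i}x)-\eta_{i}\partial x$, which uses conditions 1), 2) and 4) but nowhere condition 3). What your route buys is the observation that anti-self-adjointness of $\partial$ is not needed for this proposition; what the paper's route buys is insensitivity to the mode of convergence of the approximate identity.

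That trade-off is also where you must be careful. Your two limits $\eta_{i}x\to x$ and $\eta_{i}\,\partial x\to\partial x$ are \emph{norm} limits in $\mathcal{K}(H)$, and these do not follow from what the paper actually extracts from the hypothesis, namely uniform boundedness plus $w^{*}$-convergence $\eta_{i}\to 1$: for instance $\eta_{i}:=P_{i}+e_{i+1}\otimes\overline{e}_{1}$ (with $P_{i}$ finite-rank projections increasing to $1$) lies in $\FinRk(H)$, is uniformly bounded and $w^{*}$-converges to $1$, yet $\eta_{i}(e_{1}\otimes\overline{e}_{1})$ does not converge in operator norm. If ``approximate identity'' is read in the standard $C^{*}$-sense (positive, increasing, contractive), then $w^{*}$-convergence to $1$ upgrades to strong convergence, your norm limits on compact operators are justified, and your ``WLOG self-adjoint'' step is vacuous; under the weaker reading your key step fails. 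You should therefore either state explicitly that you use the standard notion, or justify strong convergence of $\eta_{i}$. As written, the reduction to real parts is also incomplete: $\mathrm{Im}(\eta_{i})$ is not an approximate identity but a sequence tending to $0$, so it needs a separate (if entirely parallel) treatment.
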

\begin{proof}
Let $p\in\mathcal{D}_{b}$ be fix but arbitrary. From Lemma \ref{LEM.Ext_Mlt}, we know $p^{\alpha}\partial xp^{1-\alpha}\in\mathcal{S}_{1}(H)$ for each $x\in A_{\partial}$ and each $\alpha\in [0,1]$. Using the integral representation of $M_{p}$ we have by $\partial$ mapping into $L^{2}(\mathcal{K}(H),\textrm{tr})=\mathcal{S}_{2}(H)$, we obtain

\begin{align*}
\langle \eta_{i},\eta_{i}\rangle_{p}=\int_{0}^{1}||p||_{\mathcal{S}_{1}}||\partial\eta_{i}||_{\mathcal{B}(H)}^{2}d\alpha\leq(\sup_{i} ||\partial\eta_{i}||_{\mathcal{B}(H)})^{2}.
\end{align*}

\noindent As $\eta_{i}$ is an approximate identity, it $w^{*}$-converges in $\mathcal{B}(H)$. Thus $\sup_{i}||\eta_{i}||_{\mathcal{K}(H)}$ is finite. Hence $||\partial\eta_{i}||_{\mathcal{K}(H)}\leq||\partial||_{\mathcal{B}(\mathcal{K}(H))}\sup_{i}||\eta_{i}||_{\mathcal{K}(H)}$ is, where we used $4)$. Using our estimate just above, we see that $\sup_{i}||\eta_{i}||_{p}$.\par
We know $-\partial(p^{\alpha}\partial xp^{1-\alpha})\in\mathcal{S}_{1}(H)$ by $2)$, implying $\lim\textrm{tr}(p^{\alpha}\partial x p^{1-\alpha}\partial\eta_{i})=-\textrm{tr}(\partial(p^{\alpha}\partial xp^{1-\alpha})\eta_{i})=-\textrm{tr}(\partial(p^{\alpha}\partial xp^{1-\alpha}))$ since $\eta_{i}$ is an approximate identity. We claim the last term vanishes. By the Leibniz rule and $3)$, $\textrm{tr}(\partial(TS))=0$ for each $T,S\in\mathcal{S}_{1}(H)$. Since $\mathcal{S}_{1}(H)_{h}=\mathcal{S}_{1}(H)_{+}-\mathcal{S}_{1}(H)_{+}$, the claim follows. We apply dominated convergence to obtain

\begin{align*}
\langle x,\eta_{i}\rangle_{p}=\int_{0}^{1}\textrm{tr}(p^{\alpha}\partial x p^{1-\alpha}\partial\eta_{i})d\alpha\longrightarrow 0
\end{align*}

\noindent for each $x\in D(\partial)$. The latter is a dense subset of $T_{p}\mathcal{D}_{b}$ and $\sup_{i\in\mathbb{N}}||\eta_{i}||_{p}$ is finite. Together, this implies the statement.
\end{proof}

\subsection{Continuous dependence of minimisers on start- and endpoints}

We introduce the notion of continuous dependence of minimisers on start- and endpoints, a property we require of almost every fibre in order to apply a measurable selection theorem in our proof of Theorem \ref{THM.Disint}. For the remainder of this section, we assume $H$ to be separable, identify $H=\mathcal{S}_{2}(H)$ and let $\partial$ be a symmetric gradient for $(\mathcal{K}(H),\textrm{tr})$.\par
Choose countable $T_{k}\in\FinRk(H)\cap B_{\leq 1}(\mathcal{K}(H))$ lying densely in $B_{\leq 1}(\mathcal{K}(H))$. Then

\begin{align*}
d(S,R):=\sum_{k=0}^{\infty}\frac{1}{2^{k+1}}|\textrm{tr}((S-R)T_{k})|
\end{align*}

\noindent metrisises the $w^{*}$-topology on $\mathcal{S}_{cl}(\mathcal{K}(H)):=\overline{\mathcal{S}(\mathcal{K}(H))}=B_{\leq 1}(\mathcal{S}_{1}(H),||.||_{\mathcal{S}_{1}(H)})$. The latter is compact in $(\mathcal{S}_{1}(H),w^{*})$ by Banach-Alaoglu, hence $(\mathcal{S}_{cl}(\mathcal{K}(H)),d)$ is a compact metric space. For finite-dimensional $H$, $\mathcal{S}_{cl}(\mathcal{K}(H))$ is the unit sphere. 

\begin{ntn}
$\mathcal{S}_{cl}(\mathcal{K}(H)):=(\mathcal{S}_{cl}(\mathcal{K}(H)),d)$
\end{ntn}

\noindent We define a distance on $C([0,1],\mathcal{S}_{cl}(\mathcal{K}(H)))$ by setting

\begin{align*}
D(f,g):=\sup_{t\in [0,1]}d(f(t),g(t))
\end{align*} 

\noindent turning $(C([0,1],\mathcal{S}_{cl}(\mathcal{K}(H))),D)$ into a complete, separable metric space.

\begin{ntn}
$C([0,1],\mathcal{S}_{cl}(\mathcal{K}(H))):=(C([0,1],\mathcal{S}_{cl}(\mathcal{K}(H))),D)$.
\end{ntn}

\begin{ntn}
Let $\otimes_{\varepsilon}$ denote the injective tensor product of locally convex topological vector spaces.
\end{ntn}	

\begin{rem}
To see separability, first note that $[0,1]$ is a Kelly space and $(\mathcal{S}_{1}(H),w^{*})$ a locally convex Hausdorff space. Thus $C([0,1],(\mathcal{S}_{1}(H),w^{*}))\cong C([0,1])\otimes_{\varepsilon} (\mathcal{S}_{1}(H),w^{*})$ w.r.t.~the topology of uniform convergence. The latter space is immediately seen to be separable by separability of $C([0,1])$ and $(\mathcal{S}_{1}(H),w^{*})$. We have

\begin{align*}
C([0,1],\mathcal{S}_{cl}(\mathcal{K}(H)))\subset C([0,1],(\mathcal{S}_{1}(H),w^{*}))
\end{align*}

\noindent and uniform convergence is equivalent to convergence w.r.t.~the distance $D$. As subspace of a separable space, we thereby know $C([0,1],\mathcal{S}_{cl}(\mathcal{K}(H)))$ to be separable itself.
\end{rem}

\begin{dfn}\label{DFN.MinSet}
For all $p,q\in\mathcal{D}_{b}$, $\mathcal{M}(p,q):=\{\mu_{t}\in\mathcal{A}(p,q)\ |\ \mathcal{W}_{2}(p,q)=\sqrt{E(\mu_{t})}\}$ is the set of minimisers between $p$ and $q$.
\end{dfn}

\begin{lem}\label{LEM.FinDim_CntDpd_I}
If $H$ is finite-dimensional, then $\mathcal{M}(p,q)\neq\emptyset$ and $\mathcal{M}(p,q)\subset C([0,1],\mathcal{S}_{cl}(\mathcal{K}(H)))$ is closed for each $p,q\in\mathcal{D}_{b}$.
\end{lem}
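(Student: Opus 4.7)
The nonemptiness assertion follows immediately from Proposition \ref{PRP.FinDim_Min} once one observes that the density component $\rho_t$ of any admissible path is weakly continuous, so that each minimizer, viewed via its density projection, lies in $C([0,1],\mathcal{S}_{cl}(\mathcal{K}(H)))$. The inclusion $\mathcal{M}(p,q)\subset C([0,1],\mathcal{S}_{cl}(\mathcal{K}(H)))$ should be read through this projection, and it is closedness of the resulting set that I aim to establish.

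The plan for closedness is to reuse, in relative form, the compactness-plus-lower-semicontinuity scheme from Proposition \ref{PRP.FinDim_Min}. Take $\rho^{i}_{t}\in\mathcal{M}(p,q)$ converging to some $\rho_{t}$ with respect to $D$, and lift each $\rho^{i}_{t}$ to a minimizer $\mu^{i}_{t}=(\rho^{i}_{t},v^{i}_{t})\in\mathcal{M}(p,q)$ with representative $w^{i}_{t}\in L^{2}([0,1],\mathcal{H})$. Since $\|w^{i}_{t}\|_{L^{2}}^{2}=2E(\mu^{i}_{t})=\mathcal{W}_{2}^{2}(p,q)$ is constant in $i$, Banach-Alaoglu supplies a weak limit $w_{t}$ along a subsequence. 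Uniform convergence of $\rho^{i}_{t}$ to $\rho_{t}$ already forces $\rho_{0}=p$ and $\rho_{1}=q$.

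The key step is to pass the continuity equation to the limit. Uniform $w^{*}$-convergence of $\rho^{i}_{t}$ to $\rho_{t}$ upgrades, by finite-dimensionality of $H$, to uniform norm convergence, hence to $\|M_{\rho^{i}_{t}}^{1/2}-M_{\rho_{t}}^{1/2}\|_{\mathcal{B}(\mathcal{H})}\to 0$ uniformly in $t$. Combined with the uniform bound on $\|M_{\rho^{i}_{t}}^{1/2}\|_{\mathcal{B}(\mathcal{H})}$ from Proposition \ref{PRP.Nrm_Mlt_Op} and dominated convergence, this reproduces verbatim the estimate used in the final third of Proposition \ref{PRP.FinDim_Min}, so that $(\rho_{t},w_{t})$ satisfies the continuity equation and $\mu_{t}:=(\rho_{t},v_{t})\in\mathcal{A}(p,q)$.

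Lower semicontinuity of the $L^{2}$-norm under weak convergence then yields $E(\mu_{t})\leq\liminf_{i}E(\mu^{i}_{t})=\tfrac{1}{2}\mathcal{W}_{2}^{2}(p,q)$, hence equality, so $\mu_{t}\in\mathcal{M}(p,q)$ and the limiting density $\rho_{t}$ is in the projected image of $\mathcal{M}(p,q)$. The only genuine subtlety here is bookkeeping concerning the density/velocity projection; the analytic core—weak compactness of $w^{i}_{t}$, norm convergence of $M_{\rho^{i}_{t}}^{1/2}$, and lower semicontinuity of the energy—has already been carried out in Proposition \ref{PRP.FinDim_Min}, so I expect no new obstacle.
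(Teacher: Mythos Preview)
Your proof is correct and follows essentially the same approach as the paper: non-emptiness via Proposition~\ref{PRP.FinDim_Min}, and closedness by rerunning the weak-compactness/lower-semicontinuity extraction of that proposition on a sequence of minimisers with constant energy $E(\mu_t^i)=\mathcal{W}_2^2(p,q)$. The only slips are bookkeeping in the constants (you want $\|w_t^i\|_{L^2}^2=2E(\mu_t^i)=2\mathcal{W}_2^2(p,q)$ and $\liminf_i E(\mu_t^i)=\mathcal{W}_2^2(p,q)$, not $\tfrac12\mathcal{W}_2^2(p,q)$); the paper additionally phrases the extraction with varying endpoints $p_i\to p$, $q_i\to q$ so as to recycle it verbatim in Lemma~\ref{LEM.FinDim_CntDpd_II}, but for the present lemma your direct version is entirely adequate.
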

\begin{proof}
Let $p_{i}\in\mathcal{D}_{b}$ be a sequence $||.||_{\mathcal{S}_{1}(H)}$-approximating $p$, resp.~$q_{i}\in\mathcal{D}_{b}$ a sequence $||.||_{\mathcal{S}_{1}(H)}$-approximating $q$. Choose minimisers $\mu_{t}^{i}\in\mathcal{A}(p_{i},q_{i})$, which exist by Proposition \ref{PRP.FinDim_Min}. We know that $\mathcal{W}_{2}$ metrisises the $w^{*}$-topology by Proposition \ref{PRP.FinDim_WkMetr}, hence $\lim_{i}\sqrt{E(\mu_{t}^{i})}=\mathcal{W}_{2}(p,q)$. In particular, we obtain boundedness of $(E(\mu_{t}^{i}))_{i\in\mathbb{N}}\subset\mathbb{R}$. Then the argument used in our proof of Proposition \ref{PRP.FinDim_Min} for extracting a minimiser works the same for varying but $||.||_{\mathcal{S}_{1}(H)}$-converging start- and endpoints, modulo obvious minor modifications. We thus extract a subsequence $\mu_{t}^{i}$ of minimisers in order to obtain a minimisers from $p$ to $q$. This shows $\mathcal{M}(p,q)$ to be non-empty.\par
Given a converging sequence $\mu_{t}^{i}\in\mathcal{M}(p,q)$ in $C([0,1],\mathcal{S}_{cl}(\mathcal{K}(H)))$, $E(\mu_{t}^{i})=\mathcal{W}_{2}(p,q)$ allows us to extract a subsequence converging to a minimiser $\mu_{t}\in\mathcal{M}(p,q)$ as before. As $\mu_{t}^{i}$ converges by hypothesis, $\mu_{t}$ must be the limit of the whole sequence. 
\end{proof}

\begin{dfn}\label{DFN.Cnt_Dpd}
A symmetric gradient $\partial$ for $(\mathcal{K}(H),\textrm{tr})$ has continuous dependence of minimisers on start- and endpoints if for all $p,q\in\mathcal{D}_{b}$ and all $(p_{i})_{i\in\mathbb{N}},(q_{i})_{i\in\mathbb{N}}\subset\mathcal{D}_{b}$ with $p_{i}\longrightarrow p$, resp.~$q_{i}\longrightarrow q$ in the $||.||_{\mathcal{S}_{1}(H)}$-topology, we know that

\begin{itemize}
\item[1)] there exist $\mu_{t}\in\mathcal{M}(p,q)$ and $\mu_{t}^{i_{k}}\in\mathcal{M}(p_{i_{k}},q_{i_{k}})$ with $\lim_{k\in\mathbb{N}}D(\mu_{t}^{i_{k}},\mu_{t})=0$,
\item[2)] the limit of each $D$-converging sequence of $\mu_{t}^{i}\in\mathcal{M}(p_{i},q_{i})$ lies in $\mathcal{M}(p,q)$. 
\end{itemize}
\end{dfn}

\begin{rem}
We expect continuous dependence of minimisers on start- and endpoints if $E$ is lower semi-continuous. Moreover, if we know $2)$ and have existence of a $D$-converging sequence of $\mu_{t}^{i}$, $1)$ follows immediately. 
\end{rem}

\begin{prp}\label{PRP.Cnt_Dpd}
If $\partial$ has continuous dependence of minimisers on start- and endpoints, then $\mathcal{M}(p,q)$ is non-empty and closed w.r.t.~$D$ for each $p,q\in\mathcal{D}_{b}$.
\end{prp}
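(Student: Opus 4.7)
The plan is to reduce both assertions to the two clauses of Definition \ref{DFN.Cnt_Dpd} by specialising to the constant sequences $p_{i} := p$ and $q_{i} := q$. Since $\partial$ is assumed to have continuous dependence of minimisers on start- and endpoints, this reduction is the whole of the argument; no further analytic estimates are required beyond checking that the constant choice is admissible in the definition.

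First I would prove non-emptiness. Setting $p_{i} := p$ and $q_{i} := q$ for all $i\in\mathbb{N}$, both sequences trivially converge in $||.||_{\mathcal{S}_{1}(H)}$ to $p$, resp.~$q$. Clause $1)$ of Definition \ref{DFN.Cnt_Dpd} therefore supplies a $\mu_{t}\in\mathcal{M}(p,q)$ (together with a sequence $\mu_{t}^{i_{k}}\in\mathcal{M}(p_{i_{k}},q_{i_{k}})=\mathcal{M}(p,q)$ $D$-converging to it), which in particular shows $\mathcal{M}(p,q)\neq\emptyset$.

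Next I would show closedness. Let $\mu_{t}^{i}\in\mathcal{M}(p,q)$ be an arbitrary sequence that $D$-converges to some $\mu_{t}\in C([0,1],\mathcal{S}_{cl}(\mathcal{K}(H)))$. Choosing once more $p_{i}:=p$ and $q_{i}:=q$, the sequence $\mu_{t}^{i}$ is a $D$-converging sequence of minimisers in the sense of clause $2)$ of Definition \ref{DFN.Cnt_Dpd}. That clause then forces $\mu_{t}\in\mathcal{M}(p,q)$, so $\mathcal{M}(p,q)$ is $D$-closed.

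The main obstacle in this argument is essentially absent: the property we need has been built into the definition itself, so the only genuine check is that constant sequences qualify as admissible $||.||_{\mathcal{S}_{1}(H)}$-approximations of $p$ and $q$, which is immediate. The conceptual work is already encapsulated by Lemma \ref{LEM.FinDim_CntDpd_I} in the finite-dimensional case, where non-emptiness and closedness were extracted by passage to subsequences together with lower semicontinuity of the energy; the present proposition simply abstracts that behaviour into a hypothesis and reads off the desired consequences.
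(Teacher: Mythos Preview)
Your proposal is correct and mirrors the paper's own proof exactly: both specialise to the constant sequences $p_{i}=p$, $q_{i}=q$ and read off non-emptiness from clause~$1)$ and closedness from clause~$2)$ of Definition~\ref{DFN.Cnt_Dpd}. The only difference is that you spell out the verification in more detail than the paper's one-line argument.
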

\begin{proof}
Set $p_{i}=p, q_{i}=q$ and apply $1)$ to see non-emptiness, $2)$ for closedness.
\end{proof}

\begin{lem}\label{LEM.FinDim_CntDpd_II}
If $H$ is finite-dimensional, all symmetric gradients have continuous dependence of minimisers on starting- and endpoints.
\end{lem}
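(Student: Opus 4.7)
The plan is to verify both conditions 1) and 2) of Definition \ref{DFN.Cnt_Dpd} by re-running the compactness/lower-semicontinuity argument already developed in the proof of Proposition \ref{PRP.FinDim_Min} and Lemma \ref{LEM.FinDim_CntDpd_I}, with the key additional ingredient that $\mathcal{W}_{2}$ metrisises the $w^{*}$-topology on $\mathcal{D}_{b}$ by Proposition \ref{PRP.FinDim_WkMetr}. Since $\|\cdot\|_{\mathcal{S}_{1}(H)}$-convergence implies $w^{*}$-convergence on the bounded set $\mathcal{D}_{b}$, this gives $\mathcal{W}_{2}(p_{i},q_{i})\longrightarrow \mathcal{W}_{2}(p,q)$ whenever $p_{i}\longrightarrow p$ and $q_{i}\longrightarrow q$ in $\|\cdot\|_{\mathcal{S}_{1}(H)}$, which pins down the correct target value of the energies along any sequence of minimisers.

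For condition 1), I would start by picking any $\mu_{t}^{i}\in \mathcal{M}(p_{i},q_{i})$, which is non-empty by Proposition \ref{PRP.FinDim_Min}. Then $E(\mu_{t}^{i})=\mathcal{W}_{2}^{2}(p_{i},q_{i})\longrightarrow \mathcal{W}_{2}^{2}(p,q)$, so in particular $(E(\mu_{t}^{i}))_{i\in\mathbb{N}}$ is bounded. Apply Banach--Alaoglu to the tangent vectors $w_{t}^{i}$ in $L^{2}([0,1],\mathcal{S}_{2}(H))$ to extract a weakly convergent subsequence with limit $w_{t}$, and use Arzel\`a--Ascoli on $\rho_{t}^{i}$ (absolute continuity plus boundedness of energies gives equicontinuity, $w^{*}$-compactness of $\mathcal{D}_{b}$ gives pointwise precompactness) to further extract a subsequence $\rho_{t}^{i_{k}}$ converging uniformly in $w^{*}$, which in finite dimensions is norm convergence. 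Passing to the limit in the continuity equation exactly as in the proof of Proposition \ref{PRP.FinDim_Min} yields $\mu_{t}=(\rho_{t},w_{t})\in\mathcal{A}(p,q)$, and weak lower semicontinuity of $\|\cdot\|_{L^{2}([0,1],\mathcal{S}_{2}(H))}$ gives $E(\mu_{t})\leq \liminf_{k}E(\mu_{t}^{i_{k}})=\mathcal{W}_{2}^{2}(p,q)$. Hence $\mu_{t}\in\mathcal{M}(p,q)$ and $D(\mu_{t}^{i_{k}},\mu_{t})\longrightarrow 0$ by uniform norm convergence of the density parts.

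For condition 2), suppose $\mu_{t}^{i}\in\mathcal{M}(p_{i},q_{i})$ satisfies $D(\mu_{t}^{i},\tilde{\mu}_{t})\longrightarrow 0$ for some $\tilde{\mu}_{t}\in C([0,1],\mathcal{S}_{cl}(\mathcal{K}(H)))$. Applying the extraction procedure of condition 1) to the full sequence $\mu_{t}^{i}$ yields a subsequence $\mu_{t}^{i_{k}}$ converging in $D$ to some genuine minimiser $\mu_{t}\in\mathcal{M}(p,q)$. By uniqueness of limits in $(C([0,1],\mathcal{S}_{cl}(\mathcal{K}(H))),D)$, the density part of $\mu_{t}$ agrees with $\tilde{\mu}_{t}$; since in finite dimensions the tangent vector at each time is uniquely determined by the projection $R_{t}$ through the continuity equation, $\tilde{\mu}_{t}$ is identified with $\mu_{t}\in\mathcal{M}(p,q)$ under the embedding $\mathcal{M}(p,q)\hookrightarrow C([0,1],\mathcal{S}_{cl}(\mathcal{K}(H)))$ used in Lemma \ref{LEM.FinDim_CntDpd_I}.

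The main obstacle, such as it is, is really just bookkeeping: ensuring that the energies along sequences of minimisers converge to the right target (which is where Proposition \ref{PRP.FinDim_WkMetr} is essential rather than merely convenient) and being careful that $D$-convergence of density paths genuinely identifies the minimiser via its unique representation $w_{t}\in T_{\rho_{t}}\mathcal{D}_{b}\subset \mathcal{S}_{2}(H)$. No new analytic technique beyond those already deployed in Proposition \ref{PRP.FinDim_Min} and Lemma \ref{LEM.FinDim_CntDpd_I} is required.
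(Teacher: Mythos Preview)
Your proposal is correct and follows essentially the same route as the paper's proof: both re-run the compactness/lower-semicontinuity extraction from Proposition \ref{PRP.FinDim_Min} and Lemma \ref{LEM.FinDim_CntDpd_I} with varying endpoints, using Proposition \ref{PRP.FinDim_WkMetr} to ensure $E(\mu_{t}^{i})\to\mathcal{W}_{2}^{2}(p,q)$, and for condition 2) observe that $D$-convergence renders the Arzel\`a--Ascoli step unnecessary. Your write-up is simply more explicit about the bookkeeping (in particular the identification of the limit path as a genuine minimiser via the unique tangent vector in $T_{\rho_{t}}\mathcal{D}_{b}$), but no idea differs from the paper.
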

\begin{proof}
Our argument proving non-emptiness of $\mathcal{M}(p,q)$ in Lemma \ref{LEM.FinDim_CntDpd_I} makes no assumption on the sequences $(p_{i})_{i\in\mathbb{N}},(q_{i})_{i\in\mathbb{N}}\in\mathcal{D}_{b}$ used. As we extract a minimising subsequence by Arzel\'a-Ascoli, $\lim D(\mu_{t}^{i_{k}},\mu_{t})=0$ follows. We thus have $1)$. If we already have $D$-convergence, we argue as in the proof of Lemma \ref{LEM.FinDim_CntDpd_I} \textit{after} having applied Arzel\'a-Ascoli to show $2)$.
\end{proof}

\section{Vertical gradients for trivial $\mathcal{K}(H)$-bundles}

We establish our setting, prove the disintegration theorem and consider mean entropic curvature bounds as an application. For the remainder of this section, let $X$ be a locally compact Hausdorff space, $\mathcal{B}(X)$ its Borel $\sigma$-algebra, $(X,\mathcal{B}(X))$ a separable measure space and $H$ a separable Hilbert space. Separability of $(X,\mathcal{B}(X))$ ensures $L^{p}(X.\nu)$ to be separable as Banach space for each Radon measure $\nu$.

\subsection{Product traces, their $L^{p}$-spaces and vertical gradients}

As before, $\otimes_{\varepsilon}$ denotes the injective tensor product. We have $C_{c}(X,E)=C_{c}(X)\otimes_{\varepsilon}E$ for each Banach space $E$ since $X$ is a Kelly space by local compactness. Thus $C_{c}\odot E\subset C_{c}(X,E)$ densely, while $C_{c}(X,E)\subset C_{0}(X,E)$ holds in any case. 
	
\begin{dfn}
If $\tau$ is a trace on $C_{0}(X,\mathcal{K}(H))$ such that

\begin{itemize}
\item[1)]$C_{c}(X,\mathcal{S}_{1}(H))\subset D(\tau)$,
\item[2)]$T\longmapsto \tau(f\odot T)=:\tau_{f}(T)$ is a bounded linear functional on $\mathcal{S}_{1}(H)$ for each $f\in C_{c}(X)$,
\end{itemize}

\noindent then $\tau$ is called a product trace.
\end{dfn}

\begin{prp}\label{PRP.Prd_Tr}
If $\nu$ is a Radon measure on $X$, the functional $\nu\odot\textrm{tr}$ on $C_{c}(X)\odot\mathcal{S}_{1}(H)$ induces a unique product trace denoted by $\nu\otimes\textrm{tr}$. Moreover and for each product trace $\tau$, there exists a unique Radon measure $\nu$ on $X$ such that $\tau=\nu\otimes\textrm{tr}$. 
\end{prp}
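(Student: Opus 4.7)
The plan is to construct $\nu\otimes\textrm{tr}$ explicitly by fibrewise integration against the standard operator trace, and to recover $\nu$ from an arbitrary product trace by exploiting uniqueness of trace on $\mathcal{S}_{1}(H)$. The two directions then share a common density/uniqueness step.

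For the forward direction I would set $\tau(F):=\int_{X}\textrm{tr}(F(x))\,d\nu\in[0,\infty]$ for $F\in C_{0}(X,\mathcal{K}(H))_{+}$, noting that the integrand is lower semi-continuous (hence Borel) because $F$ is continuous and $\textrm{tr}$ is lower semi-continuous on $\mathcal{K}(H)_{+}$. Fatou's lemma yields lower semi-continuity of $\tau$ in all relevant operator topologies, and the trace property descends pointwise from $\textrm{tr}$. Semi-finiteness reduces to approximating positive $F$ of finite $\tau$-value by linear combinations of elementary tensors $f\odot P$ with $f\in C_{c}(X)_{+}$ and $P\in\FinRk(H)$, each of which lies in $D(\tau)$ by the two defining properties of a product trace; Fubini then gives $\tau(f\odot T)=\int_{X}f\,d\nu\cdot\textrm{tr}(T)$, so $\tau$ extends $\nu\odot\textrm{tr}$ and is visibly a product trace.

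For the backward direction, fix a product trace $\tau$. For each $f\in C_{c}(X)_{+}$ the functional $\tau_{f}(T):=\tau(f\odot T)$ is positive and bounded on $\mathcal{S}_{1}(H)$ by hypothesis, and the trace property of $\tau$ applied to $(f\odot S)(1\odot T)=f\odot ST$ forces $\tau_{f}(ST)=\tau_{f}(TS)$. Since every bounded tracial functional on $\mathcal{S}_{1}(H)$ is a scalar multiple of $\textrm{tr}$ (any $A\in\mathcal{B}(H)$ representing $\tau_{f}$ via $\tau_{f}(T)=\textrm{tr}(AT)$ must commute with all of $\mathcal{S}_{1}(H)$, hence be scalar), we obtain $\tau_{f}=\lambda(f)\textrm{tr}$ for a unique $\lambda(f)\geq 0$. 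Linearity in $f$ is inherited from $\tau$, so $\lambda$ is a positive linear functional on $C_{c}(X)$, and Riesz-Markov yields a unique Radon measure $\nu$ with $\lambda(f)=\int_{X}f\,d\nu$. By construction $\tau$ agrees with $\nu\otimes\textrm{tr}$ on $C_{c}(X)\odot\mathcal{S}_{1}(H)$.

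The common main obstacle is the uniqueness step: forcing any two product traces that agree on $C_{c}(X)\odot\mathcal{S}_{1}(H)$ to coincide on their full domains. My plan is a two-step density argument. First, using the identification $C_{c}(X,E)=C_{c}(X)\otimes_{\varepsilon}E$ for a Banach space $E$ together with the boundedness of each $\tau_{f}$ on $\mathcal{S}_{1}(H)$, push agreement from the algebraic tensor product to all of $C_{c}(X,\mathcal{S}_{1}(H))$. Second, combine lower semi-continuity and semi-finiteness of both traces to pass from $C_{c}(X,\mathcal{S}_{1}(H))$ to all of $D(\tau)$ by monotone approximation by elementary tensors supported on compacta. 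This is where both defining properties of a product trace are used essentially, and keeping track of the various topologies (norm on $\mathcal{S}_{1}(H)$, inductive limit on $C_{c}(X)$, and the $C^{*}$-topology on $C_{0}(X,\mathcal{K}(H))$) will be the most delicate bookkeeping.
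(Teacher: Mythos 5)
Your proposal is correct and follows essentially the same route as the paper: existence by fibrewise integration of $\textrm{tr}$ against $\nu$, the converse by observing that each $\tau_{f}$ is a positive bounded tracial functional on $\mathcal{S}_{1}(H)$ and hence a scalar multiple of $\textrm{tr}$, followed by Riesz--Markov, and uniqueness via density of $C_{c}(X)\odot\mathcal{S}_{1}(H)$ in the relevant $L^{1}$-spaces. The only cosmetic slip is the use of $1\odot T$ (not an element of $C_{0}(X,\mathcal{K}(H))$ for non-compact $X$) to derive traciality of $\tau_{f}$; writing $f\odot ST=(\sqrt{f}\odot S)(\sqrt{f}\odot T)$ repairs this immediately.
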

\begin{proof}
Consider $L^{1}(X,\mathcal{S}_{1}(H),d\nu)$ and define the subspace

\begin{align*}
D(\nu\otimes\textrm{tr}):=C_{0}(X,\mathcal{S}_{1}(H))_{+}\cap L^{1}(X,\mathcal{S}_{1}(H),d\nu).
\end{align*}

\noindent We set 

\begin{align*}
(\nu\otimes\textrm{tr})(F) := \begin{cases} 
\int_{X}\textrm{tr}(F(x))d\nu & \textrm{if}\ F\in D(\tau) \\
\infty & \textrm{else}
\end{cases}
\end{align*}

\noindent yielding a trace $\nu\otimes\textrm{tr}$ on $C_{0}(X,\mathcal{K}(H))$. As $C_{c}(X,\mathcal{S}_{1}(H))\subset D(\nu\otimes\textrm{tr})$ holds by construction, $\nu\otimes\textrm{tr}$ is a product trace. Furthermore, we have $(\nu\otimes\textrm{tr})(|F|)=\int_{X}\textrm{tr}(|F(x)|)d\nu$ for each $F\in D(\tau)$ because $|F|(x)=|F(x)|$ for all $x\in X$. $C_{c}(X)\odot\mathcal{S}_{1}(H)$ lies dense in both $L^{1}(A,\tau)$ and $L^{1}(X,\mathcal{S}_{1}(H))$ since it lies dense in $D(v\otimes\textrm{tr})$ w.r.t.~either topology. Thus the $L^{1}$-space defined by $\nu\otimes\textrm{tr}$ is $L^{1}(X,\mathcal{S}_{1}(H),d\nu)$ by construction, and $\nu\odot\textrm{tr}$ uniquely determines the product trace $\nu\otimes\textrm{tr}$.\par
Let $\tau$ be a product trace. For all positive $f\in C_{c}(X)$, set $S_{f}\in B(H)=\mathcal{S}_{1}(H)^{*}$ for the unique element such that $\tau_{f}=\textrm{tr}(S_{f}\hspace{0.05cm}.\hspace{0.05cm})$. Positivity and traciality of $\tau$ imply the same to hold for each $\tau_{f}$. Thus $S_{f}=L(f)1_{\mathcal{B}(H)}$ for a unique $L(f)\in [0,\infty)$. Here, positivity of $L(f)$ follows from positivity of $\tau_{f}$. A similar argument applies for negative $f$, with $L(f)\in(-\infty,0]$. We decompose $f=f_{+}+f_{-}$, for $f_{+}=\max\{f,0\}$ and $f_{-}=\min\{f,0\}$. Linearity of $\tau$ implies $\tau_{f}=((L(f_{+})+L(f_{-}))\textrm{tr}$ and $\tau_{f+g}=((L(f)+L(g))\textrm{tr}$. We obtain $L(f)=L(f_{+})+L(f_{-})$ and $L(f+g)=L(f)+L(g)$. Hence $L$ is a positive linear functional on $C_{c}(X)$, and there exists a unique Radon measure $\nu$ on $X$ representing $L$. We have $\tau_{|C_{c}(X)\odot\mathcal{S}_{1}(H)}=\nu\odot\textrm{tr}$ by construction of $L$. The second statement follows by uniqueness of $\nu\otimes\textrm{tr}$. 
\end{proof}

\begin{cor}\label{COR.Prd_Tr}
Let $H$ be finite-dimensional. If $\tau$ is a finite trace on $C_{0}(X,\mathcal{K}(H))$, then it is a product trace with finite Radon measure.
\end{cor}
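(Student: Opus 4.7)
The plan is to verify the two defining conditions of a product trace, invoke Proposition \ref{PRP.Prd_Tr} to obtain $\tau = \nu \otimes \textrm{tr}$ for a unique Radon measure $\nu$, and then to bound $\nu(X)$ directly using a compact exhaustion argument.

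First I would note that, since $H$ is finite-dimensional, $\mathcal{S}_{1}(H) = \mathcal{K}(H) = \mathcal{B}(H) \cong M_{n}(\mathbb{C})$. Hence condition $1)$ in the definition of a product trace reduces to $C_{c}(X,\mathcal{K}(H)) \subset D(\tau)$, which holds trivially because $\tau$ is finite, i.e.~$D(\tau) = C_{0}(X,\mathcal{K}(H))$. Condition $2)$ is also automatic: for fixed $f \in C_{c}(X)$ the map $T \longmapsto \tau(f \odot T)$ is a linear functional on the finite-dimensional vector space $\mathcal{S}_{1}(H)$, hence bounded. Proposition \ref{PRP.Prd_Tr} then supplies a unique Radon measure $\nu$ on $X$ with $\tau = \nu \otimes \textrm{tr}$.

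To see $\nu(X) < \infty$, I would use that a finite positive trace on a $C^{*}$-algebra is automatically norm-bounded (positive linear functionals on $C^{*}$-algebras are continuous). Set $C := \|\tau\|_{C_{0}(X,\mathcal{K}(H))^{*}}$. Fix a rank-one projection $p \in M_{n}(\mathbb{C})$, so that $\textrm{tr}(p) = 1$ and $\|p\|_{\mathcal{K}(H)} = 1$. Given any compact $K \subset X$, Urysohn's lemma (applicable because $X$ is locally compact Hausdorff) yields $f \in C_{c}(X)$ with $\mathbf{1}_{K} \leq f \leq 1$. Then
\[
\nu(K) \;\leq\; \int_{X} f\, d\nu \;=\; \textrm{tr}(p)\cdot\int_{X} f\, d\nu \;=\; \tau(f \odot p) \;\leq\; C\,\|f \odot p\|_{C_{0}(X,\mathcal{K}(H))} \;\leq\; C.
\]
Inner regularity of the Radon measure $\nu$ gives $\nu(X) = \sup\{\nu(K) : K \subset X \text{ compact}\} \leq C < \infty$.

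The only mildly non-routine point is the observation that finiteness in the paper's sense, $D(\tau) = C_{0}(X,\mathcal{K}(H))$, upgrades to boundedness of $\tau$ as a positive functional; everything else is formal consequence of Proposition \ref{PRP.Prd_Tr}, Urysohn, and inner regularity. Finite-dimensionality of $H$ enters only to collapse $\mathcal{K}(H)$, $\mathcal{S}_{1}(H)$ and $\mathcal{B}(H)$, and to make condition $2)$ automatic.
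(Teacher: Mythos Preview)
Your proof is correct and follows essentially the same route as the paper: verify the two conditions defining a product trace (both immediate from finite-dimensionality of $H$ and finiteness of $\tau$), invoke Proposition~\ref{PRP.Prd_Tr}, and deduce that $\nu$ is finite from finiteness of $\tau$. The only difference is that the paper dispatches the last step in a single sentence (``Finiteness of $\nu$ is implied by finiteness of $\tau=\nu\otimes\textrm{tr}$''), whereas you spell it out via Urysohn and inner regularity; this is extra detail rather than a different argument.
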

\begin{proof}
Since $H$ is finite-dimensional, we only need to show $C_{c}(X,\mathcal{S}_{1}(H))\subset D(\tau)$. This is true by hypothesis, as $\tau$ is defined on all of $C_{0}(X,\mathcal{K}(H))$. Finiteness of $\nu$ is implied by finiteness of $\tau=\nu\otimes\textrm{tr}$.
\end{proof}

Our proof of Proposition \ref{PRP.Prd_Tr} shows each product trace $\tau=\nu\otimes\textrm{tr}$ to have $C_{0}(X,\mathcal{S}_{1}(H))\cap L^{1}(X,\mathcal{S}_{1}(H),d\nu)$ as domain in $C_{0}(X,\mathcal{K}(H))$. Furthermore, we saw the $L^{1}$-space of $\tau$ to equal $L^{1}(X,\mathcal{S}_{1}(H),d\nu)$. We generalise this relation to arbitrary $p\in [1,\infty]$.

\begin{ntn}
We fix a product trace $\tau=\nu\otimes\textrm{tr}$ for the remainder of this section and drop all references to $\nu$ from our $L^{p}$-space notation in the future. 
\end{ntn}

\begin{prp}\label{PRP.Prd_Tr_LP}
If $\tau$ is a product trace, then $L^{p}(C_{0}(X,K(H)),\tau)=L^{p}(X,\mathcal{S}_{p}(H))$ for each $p\in [1,\infty]$.
\end{prp}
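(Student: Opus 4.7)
The $p=1$ identification is already contained in the proof of Proposition \ref{PRP.Prd_Tr}. The plan is to bootstrap from this case and from the $W^{*}$-structure at $p=\infty$ by matching norms on a common dense subalgebra, so that both sides arise as completions of the same normed space.

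For $p \in [1, \infty)$, I would work with the common subspace $\mathfrak{D} := C_{c}(X) \odot \FinRk(H)$. Local compactness of $X$ yields $C_{c}(X, E) = C_{c}(X) \otimes_{\varepsilon} E$ for any Banach space $E$, and density of $\FinRk(H)$ in $\mathcal{S}_{p}(H)$ makes $\mathfrak{D}$ dense in $L^{p}(X, \mathcal{S}_{p}(H))$. For $F \in \mathfrak{D}$, each $F(x)$ is finite-rank, so continuous functional calculus gives the pointwise identity $|F|^{p}(x) = |F(x)|^{p}$, and $|F|^{p} \in C_{c}(X, \mathcal{S}_{1}(H)) \subset D(\tau)$. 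Applying Proposition \ref{PRP.Prd_Tr} then yields
\[
\tau(|F|^{p}) = \int_{X} \textrm{tr}(|F(x)|^{p})\, d\nu = \|F\|_{L^{p}(X, \mathcal{S}_{p}(H))}^{p},
\]
so the abstract $L^{p}(A, \tau)$-norm coincides with the Bochner $L^{p}$-norm on $\mathfrak{D}$. To conclude equality of the two completions I would then show density of $\mathfrak{D}$ inside $L^{p}(A, \tau)$ by first approximating a general element by a $C_{c}(X, \mathcal{S}_{1}(H))$-element using the $p=1$ case and inner regularity of $\nu$, then applying pointwise spectral truncation to replace each fibre value by a finite-rank operator, controlling the $\|\cdot\|_{p}$-error via Proposition \ref{PRP.Prd_Tr} at each step.

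For $p = \infty$, I would identify $L^{\infty}(A, \tau)$ with the bicommutant of $A$ acting on $L^{2}(A, \tau)$ via the trace GNS construction. The $p=2$ case just proved gives $L^{2}(A, \tau) = L^{2}(X, \mathcal{S}_{2}(H))$, on which $A = C_{0}(X, \mathcal{K}(H))$ acts by pointwise left multiplication. Viewing the representation as the spatial tensor product of $C_{0}(X)$ on $L^{2}(X)$ with $\mathcal{K}(H)$ on $\mathcal{S}_{2}(H)$, the standard identifications $\mathcal{K}(H)'' = \mathcal{B}(H)$ and $L^{\infty}(X) \,\bar{\otimes}\, \mathcal{B}(H) = L^{\infty}(X, \mathcal{B}(H))$ (the latter realized as algebras of decomposable operators) give the claim.

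The hard part will be verifying density of $\mathfrak{D}$ in the abstract $L^{p}(A, \tau)$ for $p \in (1, \infty)$: on the Bochner side this is classical, but on the noncommutative side one must combine the fibre-wise spectral truncation with the established $p = 1$ density, propagating norm control through the interplay between functional calculus in $\mathcal{K}(H)$ and Bochner integration against $\nu$. Once this density is in hand, the $p = \infty$ step is conceptually clean modulo the measurability bookkeeping for decomposable operators on $L^{2}(X, \mathcal{S}_{2}(H))$.
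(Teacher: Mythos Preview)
Your proposal is correct and follows essentially the same strategy as the paper: for $p\in[1,\infty)$ match the two $p$-norms on a common dense set and complete, and for $p=\infty$ identify the von Neumann algebra via a bicommutant computation. The only differences are implementation details. The paper works directly with $D(\tau)=C_{0}(X,\mathcal{S}_{1}(H))\cap L^{1}(X,\mathcal{S}_{1}(H))$ rather than $C_{c}(X)\odot\FinRk(H)$, which makes the density step in $L^{p}(A,\tau)$ automatic and removes what you flagged as ``the hard part''; for $p=\infty$ the paper goes through $L^{1}$-duality first and then computes the commutant of $L^{\infty}(X,\mathcal{B}(H))$ explicitly (invoking Takesaki IV.7.10 for $L^{\infty}(X)'=L^{\infty}(X,\mathcal{B}(H))$), whereas you phrase the same computation via the spatial tensor product on $L^{2}(X)\otimes\mathcal{S}_{2}(H)$, which is slightly cleaner but amounts to the same commutant identity.
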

\begin{proof}
Let $p\in [1,\infty)$. For all $F\in D(\tau)$, we know $|F|^{p}\in D(\tau)$. Then $\tau=\nu\otimes\textrm{tr}$ implies 

\begin{align*}
\tau(|F|^{p})=\int_{X}\textrm{tr}(|F(x)|^{p})d\nu
\end{align*}

\noindent for each $F\in D(\tau)$. We argue by density to obtain our statement for general $p\in [1,\infty)$ in direct analogy to Proposition \ref{PRP.Prd_Tr}.\par
Let $p=\infty$. We have $L^{\infty}(X,\mathcal{B}(H))\subset L^{1}(X,\mathcal{S}_{1}(H))^{*}$ isometrically via the map

\begin{align*}
F\longmapsto\Big(G\longmapsto\int_{X}\textrm{tr}(F(x)G(x))d\nu\hspace{0.05cm}\Big).
\end{align*}

\noindent We already saw $L^{1}(X,\mathcal{S}_{1}(H))=L^{1}(C_{0}(X,\mathcal{K}(H),\tau)$, thus 

\begin{align*}
C_{0}(X,\mathcal{K}(H))\subset L^{\infty}(X,\mathcal{B}(H))\subset L^{1}(C_{0}(X,\mathcal{K}(H)),\tau)^{*}=L^{\infty}(C_{0}(X,\mathcal{K}(H)),\tau)
\end{align*}

\noindent where the last object is the $W^{*}$-algebra generated by $C_{0}(X,\mathcal{K}(H)$ represented over $L^{2}(X,\mathcal{S}_{2}(H))$. Moreover, we used $L^{1}(A,\omega)^{*}=L^{\infty}(A,\omega)$ for each trace $\omega$ on any $C^{*}$-algebra $A$. Our statement follows from $C_{0}(X,\mathcal{K}(H))\subset L^{\infty}(C_{0}(X,\mathcal{K}(H)),\tau)$ densely in the strong operator-topology should $L^{\infty}(X,\mathcal{B}(H))$ be closed w.r.t.~the strong operator-topology.\par
Choose a countable subset $N\subset\mathcal{B}(H)$ dense in the $w^{*}$-topology and let $T\in L^{\infty}(X,\mathcal{B}(H))^{\prime}$. $T$ commutes with every $1_{X}\otimes S\in L^{\infty}(X,\mathcal{B}(H))$, $S\in N$ arbitrary. As $N$ is countable, we have $T(x)\in N^{\prime}$ for a.e.~$x\in X$. By density of $N$ and $\mathcal{B}(H)$ being a factor, $T\in L^{\infty}(X)$. Hence $L^{\infty}(X,\mathcal{B}(H))^{\prime}=L^{\infty}(X)$. Theorem IV.7.10 in \cite{TakTOAI} states that $L^{\infty}(X)^{\prime}=L^{\infty}(X,\mathcal{B}(H))$, and $L^{\infty}(X,\mathcal{B}(H))^{\prime\prime}=L^{\infty}(X,\mathcal{B}(H))$ is a $W^{*}$-algebra. In particular, it is closed in the strong operator-topology.
\end{proof}

\begin{rem}\label{REM.Prd_Tr_LP}
We have $L^{1}(X,\mathcal{S}_{1}(H))\cong L^{1}(X)\otimes_{\pi}\mathcal{S}_{1}(H)$. Furthermore, $L^{1}(X)$ and $\mathcal{S}_{1}(H)$ are separable. Thus $L^{1}(X,\mathcal{S}_{1}(H))$ is separable. The same holds true for $L^{2}(X,\mathcal{S}_{2}(H))$, where we use the tensor product of Hilbert spaces rather than the projective tensor product $\otimes_{\pi}$ above.
\end{rem}

\begin{cor}\label{COR.L1_Pstv}
Each class in $L_{+}^{1}(X,\mathcal{S}_{1}(H))$ can be represented by an integrable function $F$ such that $F(x)\geq 0$ for every $x\in X$. 
\end{cor}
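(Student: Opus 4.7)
The plan is to combine the identification $L^{1}(C_{0}(X,\mathcal{K}(H)),\tau)=L^{1}(X,\mathcal{S}_{1}(H))$ from Proposition \ref{PRP.Prd_Tr_LP} with the factorisation description of the positive cone in a noncommutative $L^{1}$-space: an element $[F]\in L^{1}(C_{0}(X,\mathcal{K}(H)),\tau)$ lies in $L^{1}_{+}$ if and only if $[F]=[G]^{*}[G]$ for some $[G]\in L^{2}(C_{0}(X,\mathcal{K}(H)),\tau)=L^{2}(X,\mathcal{S}_{2}(H))$. This is the standard $C^{*}$-algebraic characterisation of positivity and is discussed in the noncommutative integration references cited at the start of the paper.

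Given $[F]\in L^{1}_{+}(X,\mathcal{S}_{1}(H))$, I would first invoke this factorisation to write $[F]=[G]^{*}[G]$ for some $[G]\in L^{2}(X,\mathcal{S}_{2}(H))$, then choose an arbitrary strongly measurable representative $g:X\longrightarrow\mathcal{S}_{2}(H)$ of $[G]$, and finally set $F(x):=g(x)^{*}g(x)$. By construction, $F(x)\in\mathcal{S}_{1}(H)_{+}$ for every $x\in X$. Strong measurability follows from continuity of adjunction and of composition $\mathcal{S}_{2}(H)\times\mathcal{S}_{2}(H)\longrightarrow\mathcal{S}_{1}(H)$, and integrability from
\begin{align*}
\int_{X}||F(x)||_{\mathcal{S}_{1}(H)}d\nu=\int_{X}||g(x)||_{\mathcal{S}_{2}(H)}^{2}d\nu=||G||_{L^{2}(X,\mathcal{S}_{2}(H))}^{2}<\infty.
\end{align*}

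The remaining point is to verify that this pointwise-defined $F$ represents the original class $[G]^{*}[G]$ in $L^{1}(X,\mathcal{S}_{1}(H))$. This is where the only substantive checking occurs: pointwise multiplication and the noncommutative multiplication induced by $(A,\tau)$ both define bounded bilinear maps $L^{2}(X,\mathcal{S}_{2}(H))\times L^{2}(X,\mathcal{S}_{2}(H))\longrightarrow L^{1}(X,\mathcal{S}_{1}(H))$. They agree on the dense subspace $C_{c}(X)\odot\mathcal{S}_{2}(H)$ by direct inspection, hence they agree everywhere by continuity and density. Consequently $[F]=[g]^{*}[g]=[G]^{*}[G]$ in $L^{1}(X,\mathcal{S}_{1}(H))$, and $F$ is the desired pointwise-positive representative.

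The main obstacle is bookkeeping rather than substance, namely verifying the compatibility of the two multiplications on $L^{2}$, and this becomes routine once Proposition \ref{PRP.Prd_Tr_LP} is in hand. An alternative route would bypass the factorisation entirely by showing directly that $[F]\in L^{1}_{+}$ forces $F(x)\geq 0$ for a.e.~$x\in X$ and then replacing $F$ by $F\cdot\mathbf{1}_{\{F(x)\geq 0\}}$; this however reduces to essentially the same pointwise-multiplication compatibility argument.
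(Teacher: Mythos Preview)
Your approach is essentially the same as the paper's: factor $F=G^{*}G$ with $G\in L^{2}(X,\mathcal{S}_{2}(H))$, pick any representative $g$ of $G$, and set $F(x):=g(x)^{*}g(x)$. The paper dispatches this in two lines, invoking unbounded Borel functional calculus for the factorisation and leaving implicit the compatibility of pointwise and abstract multiplication that you take care to verify; your added bookkeeping is correct but not strictly necessary for the level of detail the paper operates at.
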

\begin{proof}
Each $F\in L_{+}^{1}(X,\mathcal{S}_{1}(H))$ can be expressed as $F=G^{*}G$ by unbounded Borel functional calculus. Any representative in $G$ thus induces a representative in $F$ as required.
\end{proof}

We establish an appropriate setting for a symmetric gradient. To do so, we have to define an action of  $L^{\infty}(X,B(H))\otimes_{max} L^{\infty}(X,B(H))^{op}$ on a Hilbert space equipped with an appropriate involution $J$. In light of Definition \ref{DFN.Fbr_Grd} and Proposition \ref{PRP.Mass_Prsv}, we focus on $\bigoplus_{k=1}^{m}L^{2}(X,\mathcal{S}_{2}(H))=L^{2}(X,\bigoplus_{k=1}^{m}\mathcal{S}_{2}(H))$ as our Hilbert space. This will enable us to prove Proposition \ref{PRP.Mass_Prsv_Unbd}, i.e.~mass preservation in almost every fibre. 

\begin{rem}
Each $\bigoplus_{k=1}^{m}L^{2}(X,\mathcal{S}_{2}(H))=L^{2}(X,\bigoplus_{k=1}^{m}\mathcal{S}_{2}(H))$ is equipped with the canonical left and right action of $L^{\infty}(X,\mathcal{B}(H))$ induced by pointwise multiplication and pointwise adjoining. For each $F\in L^{\infty}(X,\mathcal{B}(H))$ and $G\in\bigoplus_{k=1}^{m}L^{2}(X,\mathcal{S}_{2}(H))$, we thus have $(F.G)_{k}(x)=F(x).G_{k}(x)$ and $(G.F)_{k}(x)=G_{k}(x).F(x)$. 
\end{rem}

We next discuss vertical gradients. Assume we are given $\mathcal{H}:=\bigoplus_{k=1}^{m}\mathcal{S}_{2}(H)$. We consider a family $(\partial_{x})_{x\in X}$ of symmetric gradients for $(\mathcal{K}(H),\textrm{tr})$ mapping to $\mathcal{H}$ such that

\begin{itemize}
\item[$\bullet$] $(\partial_{x})_{k}$ is a fibre gradient for all $k\in\{1,...,m\}$ for a.e.~$x\in X$,
\item[$\bullet$] $x\longmapsto\partial_{x}T$ is measurable for each $T\in\FinRk(H)$.
\end{itemize}

\noindent For $f\odot T\in C_{c}(X)\odot\FinRk(H)$, set $\partial(f\odot T)(x):=f(x)\partial_{x}T$ and consider

\begin{align*}
D_{cb}(\partial):=\{F\in C_{c}(X)\odot\FinRk(H)\ |\ ||\partial_{x}F(x)||_{\mathcal{H}}\in L^{2}(X)\cap L^{\infty}(X)\}
\end{align*}

\noindent to obtain a densely defined, unbounded operator $(\partial,D_{cb}(\partial))$ from $L^{2}(X,\mathcal{S}_{2}(H))$ to $L^{2}(X,\mathcal{H})$. The operator is closable since each $\partial_{x}$ is closable and $L^{1}$-convergence implies a.e.~pointwise convergence for a subsequence. We denote this closure by $\partial$ as well. Observe that $(\partial F)(x)=\partial_{x}F(x)$ a.e.~for each $F\in D(\partial)$ by construction as each $\partial_{x}$ is closed.

\begin{dfn}\label{DFN.Vrt_Grd}
Let $(\partial_{x})_{x\in X}$ be a family as above. We call $\partial$ the induced operator of $(\partial_{x})_{x\in X}$ and call it a vertical gradient if 

\begin{itemize}
\item[1)] $C_{c}(X)\odot\FinRk(H)=D_{cb}(\partial)$,
\item[2)] there exists an approximate identity $(\eta_{i})_{i\in\mathbb{N}}\subset\FinRk(H)$ such that $\sup_{x\in K,i\in\mathbb{N}}||\partial_{x}\eta_{i}||_{\mathcal{B}(H)}$ is finite for each compact set $K\subset X$.
\end{itemize}
\end{dfn}

\begin{prp}\label{PRP.Vrt_Grd_Prop}
If $\partial$ is a vertical gradient, it is a symmetric gradient for $(C_{0}(X,\mathcal{K}(H)),\tau)$ such that $(\partial F)(x)=\partial_{x}F(x)$ a.e.~for each $F\in D(\partial)$. Furthermore, $C_{c}(X)\odot\FinRk(H)$ is an extension algebra.
\end{prp}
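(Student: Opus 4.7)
My strategy is to first set up densely-defined-ness and closedness, then establish the a.e.~pointwise identity, and finally use this identity to inherit the symmetric-gradient axioms and extension-algebra conditions from the fibres. For density of the domain, $C_{c}(X)\odot\FinRk(H)$ lies dense in $L^{2}(A,\tau)=L^{2}(X,\mathcal{S}_{2}(H))$ (Proposition~\ref{PRP.Prd_Tr_LP}) by density of $\FinRk(H)\subset\mathcal{S}_{2}(H)$ and of $C_{c}(X)\odot\mathcal{S}_{2}(H)\subset L^{2}(X,\mathcal{S}_{2}(H))$; closedness is built into the definition as the $L^{2}$-closure. The codomain $L^{2}(X,\mathcal{H})=\bigoplus_{k=1}^{m}L^{2}(X,\mathcal{S}_{2}(H))$ is a symmetric $A$-bimodule via pointwise multiplication, with involution $J$ being componentwise adjoining; both actions extend boundedly to $L^{\infty}(X,\mathcal{B}(H))$.

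Next I would prove $(\partial F)(x)=\partial_{x}F(x)$ a.e.~for $F\in D(\partial)$. Choose approximants $F_{n}\in C_{c}(X)\odot\FinRk(H)$ converging to $F$ in graph norm, then pass to a subsequence along which $F_{n}(x)\to F(x)$ in $\mathcal{S}_{2}(H)$ and $(\partial F_{n})(x)=\partial_{x}F_{n}(x)\to(\partial F)(x)$ in $\mathcal{H}$ for a.e.~$x$. Closability of each $(\partial_{x})_{k}$ as a fibre gradient then yields both $F(x)\in D(\partial_{x})$ and the desired pointwise identity almost everywhere.

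With the pointwise identity in hand, I would verify the remaining axioms of Definition~\ref{DFN.NCGrad} together with symmetry. On the core $C_{c}(X)\odot\FinRk(H)$, the Leibniz rule, the $*$-closure, and the symmetry relation $\partial_{x}(T^{*})=J_{x}(\partial_{x}T)$ all hold pointwise, since each $\partial_{x}$ is a symmetric gradient on $(\mathcal{K}(H),\textrm{tr})$. Transferring these to $A_{\partial}:=A\cap D(\partial)$: for $F,G\in A_{\partial}$, fibre-wise Leibniz gives a candidate $T(x):=\partial_{x}F(x)\cdot G(x)+F(x)\cdot\partial_{x}G(x)$ a.e., which lies in $L^{2}(X,\mathcal{H})$ by the $L^{\infty}$-bounds on $F,G$ combined with the $L^{2}$-bounds on $\partial F,\partial G$ and the bounded bimodule action. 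Closedness of $\partial$, applied after regularizing $F$ and $G$ via compact cutoffs in $C_{c}(X)$ and the approximate identity from condition~2) of Definition~\ref{DFN.Vrt_Grd}, identifies $FG\in D(\partial)$ with $\partial(FG)=T$. Adjoint closure proceeds analogously using pointwise symmetry. Density of $A_{\partial}$ in $A$ and the core property are immediate from the fact that $C_{c}(X)\odot\FinRk(H)\subset A_{\partial}$ is already dense in $A$ (using $\FinRk(H)$ dense in $\mathcal{K}(H)$) and a core for $\partial$ by construction.

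For the extension-algebra claim, only $\partial a\in\bigoplus_{k=1}^{m}\bigl(L^{2}(A,\tau)\cap L^{\infty}(A,\tau)\bigr)$ remains. Condition~1) of Definition~\ref{DFN.Vrt_Grd} directly provides $\|\partial_{x}a(x)\|_{\mathcal{H}}\in L^{2}(X)\cap L^{\infty}(X)$; componentwise this, together with $\|(\partial a)_{k}(x)\|_{\mathcal{B}(H)}\leq\|(\partial a)_{k}(x)\|_{\mathcal{S}_{2}}$ and Proposition~\ref{PRP.Prd_Tr_LP}, delivers the required integrability and boundedness. The main obstacle I expect is the paragraph-3 transfer of Leibniz and $*$-closure from the core to all of $A_{\partial}$: graph-norm convergence $F_{n}\to F$ yields no $L^{\infty}$-control, so naive products $F_{n}G_{n}$ need not approximate $FG$ in $L^{2}$. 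The pointwise identity sidesteps this by producing the candidate $T$ directly, leaving only the verification of membership in $D(\partial)$, which I would handle by the localization argument above.
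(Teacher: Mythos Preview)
Your approach is correct and matches the paper's much briefer proof, which simply invokes the pointwise identity $(\partial F)(x)=\partial_x F(x)$ (recorded just before Definition~\ref{DFN.Vrt_Grd}) and asserts that the derivation and symmetry properties transfer from the fibres. The paper does not address the $L^\infty$-control issue you flag when passing from the core to all of $A_\partial$; your localization idea is the right fix, and noting that $\varphi\,\eta_i F\eta_i\in C_c(X)\odot\FinRk(H)$ (because $\eta_i\mathcal{K}(H)\eta_i$ is finite-dimensional, so $C_c(X,\eta_i\mathcal{K}(H)\eta_i)=C_c(X)\odot\eta_i\mathcal{K}(H)\eta_i$) is the key to making it work.
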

\begin{proof}
By construction, $\partial$ is a closed, densely defined, unbounded operator such that $(\partial F)(x)=\partial_{x}F(x)$ for each $F\in D(\partial)$. The latter implies $\partial$ to be a derivation since each $\partial_{x}$ is one. As $C_{c}(X)\odot\FinRk(H)\subset D(\partial)$, $\partial$ is a gradient. Symmetry follows from $(\partial F)(x)=\partial_{x}F(x)$ and $\partial_{x}$ being symmetric. We already used density of $C_{c}(X)\odot\FinRk(H)$ in $C_{0}(X,\mathcal{K}(H))$, while it is a core by construction of $\partial$. It therefore is an extension algebra by definition of $D_{cb}(\partial)$.
\end{proof}

\begin{bsp}
If $H$ is finite-dimensional such that $x\longmapsto||\partial_{x}||$ is locally bounded, then $\partial$ is a vertical gradient by Remark \ref{REM.Mass_Prsv}.
\end{bsp}

\begin{bsp}
Let $\partial_{0}$ be a symmetric gradient for $(\mathcal{K}(H),\textrm{tr})$ mapping to $\mathcal{H}$ such that each $\partial_{k}$ is a fibre gradient. For all $f\in L_{loc}^{\infty}(X,\mathcal{K}(H))$, the measurable family given by $\partial_{x}:=f(x)\partial_{0}$ induces a vertical gradient.
\end{bsp}

\begin{rem}
All derivations from $M_{n}(\mathbb{C})$ to itself are of form $\textrm{Ad}_{T}$. Hence if $H$ is finite-dimensional, then $\partial_{x}=(\textrm{Ad}_{T_{1}(x)},\hdots,\textrm{Ad}_{T_{m}(x)})$ for a measurable family with $(T_{1}(x),...,T_{m}(x))\in M_{n}(\mathbb{C})^{m}$. 
\end{rem}

For the remainder of this section, let $\partial$ be a vertical gradient. As described in Subsection 2.4, having an extension algebra allows us to extend to unbounded densities. It furthermore implies existence of a separating function suitable for our extension. Here, $\mathfrak{A}:=C_{c}(X)\odot\FinRk(H)$ is the extension algebra we consider. From what we have seen at the beginning of this subsection, a density is an element $P\in L_{+}^{1}(X,\mathcal{S}_{1}(H))$ such that $\int_{X}\textrm{tr}(P(x))d\nu=1$.

\begin{dfn}For all $P\in\mathcal{D}$, we define
\begin{align*}
\theta_{P}(x):= \begin{cases}\big(\textrm{tr}(P(x))\big)^{-\frac{1}{2}} & \textrm{if}\ P(x)\neq 0 \\
0 & \textrm{else}
\end{cases}
\end{align*}

\noindent and set $\nu_{P}:=\textrm{tr}(P(x))d\nu$.
\end{dfn}

\noindent If $A\in L^{1}([0,1],L^{1}(X,\mathcal{S}_{1}(H)))$, then $(\int_{0}^{1}A_{t}dt)(x)=\int_{0}^{1}A_{t}(x)dt$. If $m=1$, we have

\begin{align*}
M_{P}(F)(x)=\Big(\int_{0}^{1}P^{\alpha}FP^{1-\alpha}d\alpha\Big)(x)=\int_{0}^{1}P(x)^{\alpha}F(x)P(x)^{1-\alpha}d\alpha=M_{P(x)}(F(x))
\end{align*}

\noindent for each $P\in\mathcal{D}$ and $F\in L^{\infty}(X,\mathcal{B}(H))$. $G^{\alpha}(x)=G(x)^{\alpha}$ is immediate if $G\in L_{+}^{1}(X,\mathcal{S}_{1}(H))$ is already bounded. The unbounded case follows by approximating $G$ in $L^{\alpha}(X,\mathcal{S}_{1}(H))$ with $\min\{G,C_{i}\}(x):=\min\{G(x),C_{i}\}$, where $C_{i}\geq 0$ is a strictly increasing sequence. The tangent space inner product at $P$ is thus given by

\begin{align*}
\langle F,G\rangle_{P}&=\int_{X}\textrm{tr}(M_{P(x)}(\partial_{x}F(x)^{*})\partial_{x}G(x))d\nu\\
&=\int_{X}\textrm{tr}(M_{P(x)}^{\frac{1}{2}}(\partial_{x}F(x)^{*})M_{P(x)}^{\frac{1}{2}}(\partial_{x}G(x)))d\nu\\
&=\int_{X}\langle F(x),G(x)\rangle_{\theta_{P}^{2}(x)P(x)}d\nu_{P}.
\end{align*}

\noindent for each $F,G\in\mathfrak{A}$, where used $P(x)\in\mathcal{B}_{+}(H)$ a.e.~to ensure that $M_{P(x)}^{\frac{1}{2}}$ is defined. The case of general $m\in\mathbb{N}$ follows at once as the above describes the situation on each summand.\par
For all $P\in\mathcal{D}$ and all $F\in\mathfrak{A}$, set $(M_{P}^{\frac{1}{2}}\partial F)_{k}(x):=M_{P(x)}^{\frac{1}{2}}(\partial_{x})_{k}F(x)$. Then $M_{P}^{\frac{1}{2}}\partial F$ is strongly measurable by Lemma 7.5 in \cite{TakTOAI}, and lies in $L^{2}(X,\mathcal{H})$ by what we showed just above. Furthermore, we have 

\begin{align*}
||F||_{P}=||M_{P}^{\frac{1}{2}}\partial F||_{L^{2}(X,\mathcal{H})}
\end{align*}

\noindent and are therefore able to identify $T_{P}\mathcal{D}$ isometrically with a subspace of $L^{2}(X,\mathcal{H})$ in direct analogy to the bounded case. 

\begin{ntn}
For an admissible path in the setting of vertical gradients, we write $\mu_{t}=(P_{t},V_{t})=(P_{t},W_{t})$ instead of $\mu_{t}=(\rho_{t},v_{t})=(\rho_{t},w_{t})$. Here, $W_{t}$ is the unique vector in $L^{2}(X,\mathcal{H})$ associated to $V_{t}$ such that $||V_{t}||_{P_{t}}=||W_{t}||_{L^{2}(X,\mathcal{H})}$. 
\end{ntn}

\begin{prp}\label{PRP.Mass_Prsv_Unbd}
If $\mathcal{A}(P,Q)\neq\emptyset$, then $\textrm{tr}(P(x))=\textrm{tr}(P_{t}(x))$ for a.e.~$x\in X$ for each $t\in [0,1]$.
\end{prp}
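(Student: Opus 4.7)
The plan is to reduce the statement to fibrewise mass preservation (Proposition \ref{PRP.Mass_Prsv}) via test elements $f \otimes \eta_i$ with $f \in C_c(X)$ and $(\eta_i)_{i\in\mathbb{N}} \subset \FinRk(H)$ an approximate identity satisfying Definition \ref{DFN.Vrt_Grd} $2)$. Writing $\mu_t = (P_t, V_t)$ and denoting by $W_t \in L^2(X,\mathcal{H})$ the representative of $V_t$, the continuity equation applied to $a = f \otimes \eta_i \in \mathfrak{A}$ and integrated over $[0,t]$ gives
\begin{align*}
\int_X f(x)\bigl[\textrm{tr}(P_t(x)\eta_i) - \textrm{tr}(P(x)\eta_i)\bigr] d\nu = \int_0^t \langle V_s, f \otimes \eta_i \rangle_{P_s}\, ds.
\end{align*}
First I would pass $i \to \infty$ and show that the left side converges to $\int_X f(x)[\textrm{tr}(P_t(x)) - \textrm{tr}(P(x))]\, d\nu$ while the right side vanishes. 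Since this would hold for every $f \in C_c(X)$, uniqueness of Radon measures then forces $\textrm{tr}(P_t(x)) = \textrm{tr}(P(x))$ for a.e. $x$.

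The limit on the left is straightforward: $P_s(x) \in \mathcal{S}_1(H)$ a.e. and $\eta_i$ approximates the identity in $\mathcal{B}(H)$, hence $\textrm{tr}(P_s(x)\eta_i) \to \textrm{tr}(P_s(x))$ pointwise, while $\|f\|_\infty \sup_i \|\eta_i\|_{\mathcal{B}(H)} \textrm{tr}(P_s(x))$ supplies an $L^1(X)$ majorant for dominated convergence.

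For the right side, expanding into fibre inner products gives
\begin{align*}
\langle V_s, f \otimes \eta_i \rangle_{P_s} = \sum_{k=1}^m \int_X f(x) \bigl\langle W_{s,k}(x),\, M_{P_s(x)}^{1/2}(\partial_x \eta_i)_k \bigr\rangle_{\mathcal{S}_2(H)}\, d\nu.
\end{align*}
For a.e. $x$ with $P_s(x) \neq 0$, Proposition \ref{PRP.Mass_Prsv} applied to the fibre gradient $(\partial_x)_k$ at the bounded density $\theta_{P_s}(x)^2 P_s(x) \in \mathcal{D}_b$ (for the $\mathcal{K}(H)$-fibre geometry), combined with the scaling $M_{\lambda p}^{1/2} = \sqrt{\lambda}\, M_p^{1/2}$ for $\lambda > 0$, yields $\langle W_{s,k}(x), M_{P_s(x)}^{1/2}(\partial_x \eta_i)_k \rangle \to 0$ pointwise in $x$; on $\{P_s(x) = 0\}$ this is trivial since $M_{P_s(x)}^{1/2} = 0$. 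The estimate $\|M_{P_s(x)}^{1/2}\|_{\mathcal{B}(\mathcal{S}_2(H))} \leq \sqrt{\|P_s(x)\|_{\mathcal{B}(H)}} \leq \sqrt{\textrm{tr}(P_s(x))}$ from Proposition \ref{PRP.Nrm_Mlt_Op} together with the local uniform bound $C_K := \sup_{x \in K, i} \|\partial_x \eta_i\|_{\mathcal{B}(H)}$ for $K = \textrm{supp}(f)$ gives the pointwise majorant $\|f\|_\infty C_K \|W_{s,k}(x)\| \sqrt{\textrm{tr}(P_s(x))}$. Cauchy--Schwarz combined with $\int_X \textrm{tr}(P_s)\, d\nu = 1$ then makes this integrable over $X$ and produces the $s$-uniform bound $m\|f\|_\infty C_K \|W_s\|_{L^2(X,\mathcal{H})}$, which lies in $L^2([0,1]) \subset L^1([0,1])$ by admissibility of $\mu_t$. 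Two successive applications of dominated convergence finish the argument.

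The main obstacle is orchestrating this two-stage limit: one must secure fibrewise pointwise weak convergence almost everywhere on $X$ and simultaneously produce majorants integrable in both $x$ and $s$, all uniformly in $i$. Condition $2)$ of Definition \ref{DFN.Vrt_Grd} is precisely what supplies the compact-uniform bound $C_K$, and the $\mathcal{S}_1(H)$-regularity of the fibres of $P_s$ both activates Proposition \ref{PRP.Mass_Prsv} fibrewise and renders the majorants integrable through the bound $\|P_s(x)\|_{\mathcal{B}(H)} \leq \textrm{tr}(P_s(x))$.
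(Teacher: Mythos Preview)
Your proof is correct and follows essentially the same strategy as the paper: test the continuity equation against $f\otimes\eta_i$, invoke Proposition~\ref{PRP.Mass_Prsv} fibrewise together with the local uniform bound from Definition~\ref{DFN.Vrt_Grd}~2), and conclude by dominated convergence first in $x$ and then in $s$. The paper organises the argument slightly differently, first establishing that $f\odot\eta_i\to 0$ weakly in every tangent space $T_P\mathcal{D}$ (by testing against $\mathfrak{A}$-elements and using their density together with a uniform bound on $\|f\odot\eta_i\|_P$), and only then pairing with $V_s$; your version works directly with the $L^2(X,\mathcal{H})$-representative $W_s$ and is equally valid.

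One small correction to your majorant: Proposition~\ref{PRP.Nrm_Mlt_Op} gives only the operator-norm bound $\|M_{P_s(x)}^{1/2}\|_{\mathcal{B}(\mathcal{S}_2)}\leq\sqrt{\|P_s(x)\|_{\mathcal{B}(H)}}$, which paired with the $\mathcal{S}_2$-norm of $(\partial_x\eta_i)_k$ does \emph{not} produce a bound in terms of $\|\partial_x\eta_i\|_{\mathcal{B}(H)}$. What you actually need (and what the paper uses) is the direct estimate
\[
\|M_p^{1/2}h\|_{\mathcal{S}_2}^2=\langle M_p h,h\rangle=\int_0^1\textrm{tr}(p^\alpha h\,p^{1-\alpha}h^*)\,d\alpha\leq\textrm{tr}(p)\,\|h\|_{\mathcal{B}(H)}^2
\]
via Lemma~\ref{LEM.Ext_Mlt}; this yields precisely your claimed majorant $\|f\|_\infty C_K\|W_{s,k}(x)\|\sqrt{\textrm{tr}(P_s(x))}$.
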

\begin{proof}
Let $\eta_{i}\in\FinRk(H)$ be an approximate identity. Consider $f\odot\eta_{i}\in C_{c}(X)\odot\FinRk(H)$. Then for all $P\in\mathcal{D}$ and all $F\in\mathfrak{A}$, we have

\begin{align*}
\langle g\odot T,f\odot\eta_{i}\rangle_{P}=\sum_{k=1}^{m}\int_{X}f(x)(\langle F(x),\eta_{i}\rangle_{\theta_{P}^{2}(x)P(x)})_{k}d\nu_{P}
\end{align*}

\noindent and furthermore estimate

\begin{align*}
|(\langle F(x),\eta_{i}\rangle_{\theta_{P}^{2}(x)P(x)})_{k}|\leq ||\partial_{x}F(x)||_{\mathcal{B}(H)}||\partial_{x}\eta_{i}||_{\mathcal{B}(H)}\leq ||\partial_{x}F(x)||_{\mathcal{S}_{2}(H)}||\partial_{x}\eta_{i}||_{\mathcal{B}(H)}.
\end{align*}

\noindent We have $\sup_{x\in X}||\partial_{x}F(x)||_{\mathcal{S}_{2}(H)}<\infty$ by $1)$ and $\sup_{x\in\textrm{supp}\hspace{0.05cm}f,i\in\mathbb{N}}||\partial_{x}\eta_{i}||_{\mathcal{B}(H)}<\infty$ by $2)$ in Definition \ref{DFN.Vrt_Grd}. Since $P$ was fixed and Proposition \ref{PRP.Mass_Prsv} shows pointwise convergence to zero, we are able to apply dominated convergence to show convergence to zero of the integral above. Setting $g=f$ and $T=\eta_{i}$, we see $||f\odot\eta_{i}||_{P}$ to be bounded uniformly in $P$ and $i\in\mathbb{N}$. In particular, $f\odot\eta_{i}$ converges weakly to zero in each $T_{P}\mathcal{D}$ by density of $\mathfrak{A}$.\par
Let $\mu_{t}\in\mathcal{A}(P,Q)$. Observe that since $\eta_{i}$ converges to the identity in the $w^{*}$-topology, $||\eta_{i}||_{\mathcal{B}(H)}$ must be uniformly bounded. For all $t\in [0,1]$ and all $f\in C_{c}(X)$, we therefore have 

\begin{align*}
\int_{X}f(x)\textrm{tr}(P-P_{t})d\nu=\lim\int_{X}f(x)\textrm{tr}((P-P_{t})\eta_{i})d\nu=\lim\int_{0}^{t}\langle V_{s},f\odot\eta_{i}\rangle_{P_{s}}ds.
\end{align*}

\noindent The right-hand side is zero since $\sup_{i,P} ||\eta_{i}||_{P}<\infty$ and $||V_{t}||_{\rho_{t}}^{2}\in L^{1}([0,1])$. Since $f$ was arbitrary, $\textrm{tr}(P(x))=\textrm{tr}(P_{t}(x))$ almost everywhere.
\end{proof}

\begin{dfn}
Let $\mathcal{D}(X,\nu)$ be the set of densities on $(X,\nu)$. For $f\in\mathcal{D}(X,\nu)$, let $\mathcal{D}_{f}$ be the set of all $P\in\mathcal{D}$ with $\textrm{tr}(P(x))=f(x)$ almost everywhere. 
\end{dfn}

\begin{cor}\label{COR.Mass_Prsv_Unbd}\hspace{1cm}
\begin{itemize}
\item[1)] $(\mathcal{D},\mathcal{W}_{2})=\underset{f\in\mathcal{D}(X,\nu)}{\coprod}\ (\mathcal{D}_{f},\mathcal{W}_{2})$.
\item[2)] Admissible paths starting at an (un-)bounded density remain (un-)bounded.
\item[3)] $\mathcal{W}_{2}$ does not metrisise the $w^{*}$-topology.
\end{itemize}
\end{cor}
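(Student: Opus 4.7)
Parts (1) and (2) reduce to Proposition \ref{PRP.Mass_Prsv_Unbd}, while (3) is established by exhibiting an explicit counterexample. For (1), the family $\{\mathcal{D}_{f}\}_{f\in\mathcal{D}(X,\nu)}$ partitions $\mathcal{D}$ by construction according to the almost-everywhere class of the fibrewise trace $x\mapsto\textrm{tr}(P(x))$. Proposition \ref{PRP.Mass_Prsv_Unbd} states that this trace profile is invariant in $t$ along any admissible path, so $\mathcal{A}(P,Q)=\emptyset$ whenever $P$ and $Q$ lie in distinct fibres; hence $\mathcal{W}_{2}(P,Q)=\infty$ between different summands, giving the coproduct decomposition.

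For (2), let $\mu_{t}\in\mathcal{A}(P,Q)$. Proposition \ref{PRP.Mass_Prsv_Unbd} yields $\textrm{tr}(P_{t}(x))=\textrm{tr}(P(x))$ almost everywhere for each $t$, and the pointwise inequality $||R||_{\mathcal{B}(H)}\leq\textrm{tr}(R)$ for $R\in\mathcal{S}_{1}(H)_{+}$ transfers essential boundedness of the trace profile along admissible paths, giving the bounded direction. The unbounded direction comes by time-reversing the admissible path and applying the contrapositive, using that admissible paths are stable under the linear reparametrisation $t\mapsto 1-t$ as noted just before Definition \ref{DFN.Bd_L2WDst}.

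For (3), take $X=[0,1]$ with Lebesgue measure and $H$ finite-dimensional, so that $A=C(X,M_{n}(\mathbb{C}))$ is unital. Fix $\rho_{0}\in M_{n}(\mathbb{C})_{+}$ with $\textrm{tr}(\rho_{0})=1$ and set $f_{k}(x):=1+\tfrac{1}{2}\sin(2\pi kx)$, $P_{k}(x):=f_{k}(x)\rho_{0}$, $P(x):=\rho_{0}$. Then $f_{k}>0$, $\int f_{k}\,d\nu=1$, and Riemann--Lebesgue applied to $x\mapsto\textrm{tr}(\rho_{0}G(x))\in L^{1}(X)$ for test elements $G\in L^{1}(X,M_{n}(\mathbb{C}))$ yields $P_{k}\to P$ in the $w^{*}$-topology on $L^{\infty}(X,\mathcal{B}(H))$. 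Since $P_{k}\in\mathcal{D}_{f_{k}}$ and $P\in\mathcal{D}_{1}$ with $f_{k}\neq 1$, part (1) forces $\mathcal{W}_{2}(P_{k},P)=\infty$ for every $k$, precluding $w^{*}$-metrisation even though $A$ is unital.

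The principal subtlety lies in (2), where the $L^{\infty}(A,\tau)$-notion of boundedness defining $\mathcal{D}_{b}$ must be reconciled with essential boundedness of the trace profile: in finite dimensions these agree up to a dimensional constant, while for infinite-dimensional $H$ the operator-norm versus trace bound transfers the bounded direction cleanly, after which time-reversal handles the unbounded direction. The remainder of both (1) and (2) is bookkeeping around Proposition \ref{PRP.Mass_Prsv_Unbd}, and (3) is a concrete verification.
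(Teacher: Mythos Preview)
Your argument for (1) is correct and matches the paper's approach.

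For (2), there is a genuine gap in the infinite-dimensional case. Your inequality $\|R\|_{\mathcal{B}(H)}\leq\textrm{tr}(R)$ only shows that essential boundedness of the trace profile $x\mapsto\textrm{tr}(P(x))$ implies $P_{t}\in L^{\infty}(X,\mathcal{B}(H))$; it does \emph{not} show that $P\in L^{\infty}(X,\mathcal{B}(H))$ forces the trace profile to be essentially bounded. In infinite dimensions this implication is simply false: a positive $R\in\mathcal{S}_{1}(H)$ can have $\|R\|_{\mathcal{B}(H)}\leq 1$ yet arbitrarily large trace. So your sentence ``for infinite-dimensional $H$ the operator-norm versus trace bound transfers the bounded direction cleanly'' asserts exactly the step you have not justified. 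In finite dimensions your argument is fine, since then $\textrm{tr}(R)\leq n\|R\|_{\mathcal{B}(H)}$ closes the loop; the paper's own proof of (2) is extremely terse and does not spell out anything beyond invoking Proposition~\ref{PRP.Mass_Prsv_Unbd}.

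For (3), your approach differs from the paper's and, as written, does not prove the statement. The space $X$, the measure $\nu$, and the gradient $\partial$ are fixed throughout the section; you are not free to take $X=[0,1]$ with Lebesgue measure. The paper instead argues directly from (1): the set $\mathcal{D}$ is convex, hence $(\mathcal{D},w^{*})$ is connected, whereas by (1) the space $(\mathcal{D},\mathcal{W}_{2})$ is a disjoint union of pieces $\mathcal{D}_{f}$ at infinite distance from one another. As soon as $\mathcal{D}(X,\nu)$ has more than one element (and each $\mathcal{D}_{f}$ is nonempty, which the paper checks by setting $P(x)=f(x)p$ for a fixed density matrix $p$), no metric inducing the connected $w^{*}$-topology can agree with $\mathcal{W}_{2}$. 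Your Riemann--Lebesgue construction captures the right phenomenon but only for one particular example; the connectedness argument is both simpler and covers the general setting of the section.
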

\begin{proof}
Any $P\in\mathcal{D}$ induces a density on $(X,\nu)$ by $f(x):=\textrm{tr}(P(x))$. Given $f\in \mathcal{D}(X,\nu)$, choose some density matrix $p\in\mathcal{S}_{1}(H)$ and set $P(x):=f(x)p$ to obtain a $P\in\mathcal{D}$ such that $\textrm{tr}(P(x))=f(x)$. This and \ref{PRP.Mass_Prsv_Unbd} imply the first and second statement. The third statement follows from the first, as $(\mathcal{D},w^{*})$ is connected.
\end{proof}

\begin{rem}\label{REM.Mass_Prsv_Unbd}
If $X$ is compact and $H$ finite-dimensional, any vertical gradient induces a $\mathcal{W}_{2}$ \textit{not} metrisising the $w^{*}$-topology on $\mathcal{D}$ even as its underlying $C^{*}$-algebra is unital. This is a departure from the commutative case.
\end{rem}

\begin{lem}\label{LEM.Msrbl_Unbd}
If $\mu_{t}$ is an admissible path, then $P_{t}\in L^{1}([0,1],L^{1}(X,\mathcal{S}_{1}(H)))$.
\end{lem}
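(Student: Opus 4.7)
The plan is to follow the template of Lemma \ref{LEM.Msrbl_Bd}, replacing $A_{\partial}$ by our extension algebra $\mathfrak{A}=C_{c}(X)\odot\FinRk(H)$. By Proposition \ref{PRP.Vrt_Grd_Prop} this $\mathfrak{A}$ is norm-dense in $A=C_{0}(X,\mathcal{K}(H))$, and Proposition \ref{PRP.Prd_Tr_LP} identifies $L^{1}(A,\tau)=L^{1}(X,\mathcal{S}_{1}(H))$, which is separable by Remark \ref{REM.Prd_Tr_LP}. The target is to invoke Pettis' theorem: since $L^{1}(X,\mathcal{S}_{1}(H))$ is separable, weak measurability of $t\longmapsto P_{t}$ is equivalent to strong measurability, and $||P_{t}||_{L^{1}(X,\mathcal{S}_{1}(H))}=1$ for $P_{t}\in\mathcal{D}$ then yields Bochner-integrability.

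First, because $\mu_{t}$ is admissible, the continuity equation guarantees that $t\longmapsto\tau(P_{t}F)$ is absolutely continuous, hence measurable, for each $F\in\mathfrak{A}$. Next, I want to extend this measurability to arbitrary $G\in L^{\infty}(X,\mathcal{B}(H))=L^{1}(X,\mathcal{S}_{1}(H))^{*}$. Since $\mathfrak{A}$ is norm-dense in $A$ and $A$ is $w^{*}$-dense in $L^{\infty}(A,\tau)=L^{\infty}(X,\mathcal{B}(H))$ by the conventions fixed in the Introduction, $\mathfrak{A}$ is $w^{*}$-dense in $L^{\infty}(X,\mathcal{B}(H))$. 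Separability of the predual $L^{1}(X,\mathcal{S}_{1}(H))$ makes the $w^{*}$-topology metrizable on norm-bounded sets, so by a Kaplansky-density argument every $G\in L^{\infty}(X,\mathcal{B}(H))$ is the $w^{*}$-limit of a bounded sequence $F_{i}\in\mathfrak{A}$.

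For any such sequence, $P_{t}\in L^{1}(X,\mathcal{S}_{1}(H))$ gives $\tau(P_{t}F_{i})\longrightarrow\tau(P_{t}G)$ for every fixed $t\in [0,1]$, so $t\longmapsto\tau(P_{t}G)$ is a pointwise limit of measurable functions and therefore measurable. This is precisely weak measurability of $t\longmapsto P_{t}$ in $L^{1}(X,\mathcal{S}_{1}(H))$, and Pettis' theorem together with the separability of the target gives strong measurability. Boundedness $||P_{t}||_{L^{1}(X,\mathcal{S}_{1}(H))}=1$ then yields $P_{t}\in L^{1}([0,1],L^{1}(X,\mathcal{S}_{1}(H)))$.

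The only subtlety I expect is the $w^{*}$-approximation of elements of $L^{\infty}(X,\mathcal{B}(H))$ by sequences from $\mathfrak{A}$, rather than by nets. This is handled by the metrizability of the $w^{*}$-topology on norm-bounded subsets given separability of the predual, together with Kaplansky's density theorem to stay within a bounded neighbourhood of $\mathfrak{A}$; no new ingredient beyond what appears implicitly in the proof of Lemma \ref{LEM.Msrbl_Bd} is needed.
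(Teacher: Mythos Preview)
Your proposal is correct and follows essentially the same approach as the paper, which simply observes that the proof of Lemma~\ref{LEM.Msrbl_Bd} never used boundedness of $\rho_{t}$ and hence applies verbatim with $\mathfrak{A}$ in place of $A_{\partial}$. If anything, you are more careful than the paper about justifying the passage from nets to sequences in the $w^{*}$-approximation step via Kaplansky density and metrizability on bounded sets.
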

\begin{proof}
When proving Lemma \ref{LEM.Msrbl_Bd}, we did not require boundedness of $\rho_{t}$. Hence our argument remains applicable to $P_{t}$ since $L^{1}(A,\tau)=L^{1}(X,\mathcal{S}_{1}(H))$ in our current setting.
\end{proof}

\subsection{Proving the disintegration theorem}

We first show all $\mu_{t}=(P_{t},W_{t})\in\mathcal{A}(P,Q)$ to be rectifiable, by which we mean the following. We fix a representative of $W_{t}\in L^{2}(X,\mathcal{H})$ for each $t\in [0,1]$, which we again denote by $W_{t}$. Then there exists a representative $P_{t}^{rct}$ of $P_{t}\in L^{1}([0,t]\times X,\mathcal{S}_{1}(H))$ such that 

\begin{align*}
\mu_{t}^{rct}(x):=(\theta_{P}^{2}(x)P_{t}^{rct}(x),\theta_{P}(x)W_{t}(x))\in\mathcal{A}(\theta_{P}^{2}(x)P(x),\theta_{P}^{2}(x)Q(x))
\end{align*}

\noindent for a.e.~$x\in X$. By construction, we have $\mu_{t}=(P_{t}^{rct},W_{t})$. This implies a mean energy representation of the energy functional.\par
Assuming continuous dependence of minimisers on start- and endpoints for a.e.~fibre, the second step is application of a measurable selection theorem. The latter is used to find an integrable collection of fibre-wise minimisers $(\xi_{t}(x))_{x\in X}\in\mathcal{A}(\theta_{P}^{2}(x)P(x),\theta_{P}^{2}(x)Q)$ integrating to an element of $\mathcal{A}(P,Q)$ after norming with $\theta_{P}^{-2}$. This yields a minimiser by the mean energy representation.

\begin{rem}
$L^{1}(X,\mathcal{S}_{1}(H))$ and $L^{2}(X,\mathcal{S}_{2}(H))$ are separable, see Remark \ref{REM.Prd_Tr_LP}.
\end{rem}

\begin{lem}\label{LEM.Rctf_NS}
Let $(X,\nu)$ and $(Y,\eta)$ be locally compact Hausdorff spaces equipped with Radon measures. Moreover, let $E$ be a Banach space. If $F$ is a representative of $0\in L^{1}(X\times Y,E)$, then $N_{x}:=\{y\in Y\ |\ F(x,y)\neq 0\}$ is a nullset for a.e~$x\in X$.
\end{lem}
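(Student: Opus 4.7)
The plan is to reduce the statement to classical Fubini--Tonelli applied to the scalar non-negative function $\varphi(x,y):=\|F(x,y)\|_{E}$. Since $F$ is a representative of $0\in L^{1}(X\times Y,E)$, we have $F$ strongly measurable with respect to the Radon product measure $\nu\otimes\eta$ on $X\times Y$ and
\begin{align*}
\int_{X\times Y}\|F(x,y)\|_{E}\,d(\nu\otimes\eta)=\|F\|_{L^{1}(X\times Y,E)}=0.
\end{align*}
In particular $\varphi\colon X\times Y\longrightarrow[0,\infty)$ is a $(\nu\otimes\eta)$-measurable real-valued function with vanishing integral.

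Next, I would invoke the classical Fubini--Tonelli theorem for the Radon product measure to obtain
\begin{align*}
\int_{X}\Big(\int_{Y}\|F(x,y)\|_{E}\,d\eta\Big)\,d\nu=\int_{X\times Y}\|F(x,y)\|_{E}\,d(\nu\otimes\eta)=0,
\end{align*}
so that the non-negative function $x\longmapsto\int_{Y}\|F(x,y)\|_{E}\,d\eta$ vanishes for $\nu$-a.e.~$x\in X$. For each such $x$, vanishing of the inner integral of a non-negative integrand forces $\|F(x,y)\|_{E}=0$ for $\eta$-a.e.~$y\in Y$, which is precisely the statement that $N_{x}$ is an $\eta$-nullset.

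The only slightly delicate point is verifying that for $\nu$-a.e.~$x\in X$ the slice $y\longmapsto F(x,y)$ is itself strongly measurable, so that the inner integral makes sense; this is however the standard slice-measurability statement obtained by approximating $F$ by simple $E$-valued functions on $X\times Y$ and applying Fubini slicewise to the scalar indicator functions entering these simple approximations. Once slice measurability is in hand, the scalar Fubini argument above carries through without further modification.
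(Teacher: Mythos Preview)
Your proof is correct and follows essentially the same approach as the paper: reduce to the scalar non-negative function $\varphi(x,y)=\|F(x,y)\|_{E}$, apply Fubini--Tonelli to conclude that the inner integral vanishes for $\nu$-a.e.\ $x$, and then use definiteness of the norm. The paper's version is terser and does not spell out the slice-measurability point you raise, but the argument is the same.
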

\begin{proof}
For $x\in X$ fix, set $g_{x}(y):=||F(x,y)||_{E}$. Then $N_{x}=g_{x}^{-1}(\mathbb{R}\setminus {0})$, hence $N_{x}$ is measurable. We know $\int_{X}\int_{Y}||F(x,y)||_{E}\hspace{0.025cm}d\eta\hspace{0.025cm}d\nu=0$, hence $\int_{Y}g_{x}(y)=0$ for a.e.~$x\in X$. The claim follows from definiteness of the norm. 
\end{proof}

\begin{ntn}
If we fix a representative of $W_{t}\in L^{2}(X,\mathcal{H})$, we again denote it by $W_{t}$.
\end{ntn}

\begin{lem}\label{LEM.Rcft_Msrbl}
If $\mu_{t}$ is an admissible path and we fix a representative of $W_{t}\in L^{2}(X,\mathcal{H})$ for each $t\in [0,1]$, then 
\begin{itemize}
\item[1)] $\langle W_{s}(x),M_{P_{s}(x)}^{\frac{1}{2}}\partial_{x}F(x)\rangle_{\mathcal{H}}$ is measurable on $[0,1]\times X$ for each $F\in\mathfrak{A}$,
\item[2)] $T\longmapsto\int_{0}^{t}\langle W_{s}(x),M_{P_{s}(x)}^{\frac{1}{2}}\partial_{x}T\rangle_{\mathcal{H}}ds$ defines a unique $\tilde{P}_{t}(x)\in\mathcal{S}_{1}(H)$ for each $t\in [0,1]$ for a.e.~$x\in X$,
\item[3)] $t\longmapsto\tilde{P}_{t}(x)$ is $w^{*}$-continuous on $[0,1]$ for a.e.~$x\in X$,
\item[4)] $x\longmapsto\tilde{P}_{t}(x)\in\mathcal{S}_{1}(H)$ is strongly measurable w.r.t.~the $||.||_{\mathcal{S}_{1}(H)}$-topology for each $t\in [0,1]$. 
\end{itemize}
\end{lem}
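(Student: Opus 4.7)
The four claims are measurability and boundedness statements; I would treat them in order, with (1) carrying the technical weight.

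For (1), first invoke Lemma \ref{LEM.Msrbl_Unbd} to select a representative $(s,x)\mapsto P_{s}(x)$ that is jointly measurable as a map $[0,1]\times X\to\mathcal{S}_{1}(H)$. The admissibility condition $\|W_{s}\|_{L^{2}(X,\mathcal{H})}^{2}\in L^{1}([0,1])$ places $W$ in $L^{2}([0,1]\times X,\mathcal{H})$, so that one may also choose $(s,x)\mapsto W_{s}(x)$ jointly measurable. For $F\in\mathfrak{A}=C_{c}(X)\odot\FinRk(H)$, the map $x\mapsto\partial_{x}F(x)$ is measurable by Definition \ref{DFN.Vrt_Grd}. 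To reach measurability of $M_{P_{s}(x)}^{1/2}\partial_{x}F(x)$, I would first argue that $(s,x,\alpha)\mapsto P_{s}(x)^{\alpha}\partial_{x}F(x)P_{s}(x)^{1-\alpha}$ is measurable into $\mathcal{S}_{2}(H)$, using continuity of $p\mapsto p^{\alpha}$ from $\mathcal{S}_{1}(H)_{+}$ into $\mathcal{S}_{1/\alpha}(H)$, continuity of the threefold product $\mathcal{S}_{1/\alpha}\times\mathcal{S}_{2}\times\mathcal{S}_{1/(1-\alpha)}\to\mathcal{S}_{2}$ via generalised H\"older, and joint continuity in $\alpha$ on the spectrum of $p$. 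Fubini for Bochner integrals would then yield joint measurability of $(s,x)\mapsto M_{P_{s}(x)}(\partial_{x}F(x))$ in $\mathcal{S}_{2}(H)$, and continuous functional calculus on the positive operator $M_{p}$ transfers this to the square root. Pairing with $W_{s}(x)$ delivers the claim.

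For (2), use the estimate in Lemma \ref{LEM.Ext_Mlt} to obtain $\|M_{p}^{1/2}h\|_{\mathcal{H}}\leq\sqrt{\textrm{tr}(p)}\|h\|_{\bigoplus\mathcal{B}(H)}$ for $p\in\mathcal{S}_{1}(H)_{+}$, while condition 4) of Definition \ref{DFN.Fbr_Grd} supplies $C_{x}$ with $\|\partial_{x}T\|_{\bigoplus\mathcal{K}(H)}\leq C_{x}\|T\|_{\mathcal{K}(H)}$ for a.e.~$x\in X$. Cauchy--Schwarz then gives
\begin{align*}
\Big|\int_{0}^{t}\langle W_{s}(x),M_{P_{s}(x)}^{1/2}\partial_{x}T\rangle_{\mathcal{H}}ds\Big|\leq C_{x}\|T\|_{\mathcal{K}(H)}\Big(\int_{0}^{t}\|W_{s}(x)\|_{\mathcal{H}}^{2}ds\Big)^{\frac{1}{2}}\Big(\int_{0}^{t}\textrm{tr}(P_{s}(x))ds\Big)^{\frac{1}{2}}.
\end{align*}
Both bracketed factors are finite for a.e.~$x$ by Fubini--Tonelli applied to $\|W_{s}\|_{L^{2}(X,\mathcal{H})}^{2}\in L^{1}([0,1])$ and to Lemma \ref{LEM.Msrbl_Unbd}. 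Since $\FinRk(H)$ is norm-dense in $\mathcal{K}(H)$, the functional extends uniquely to a bounded linear functional on $\mathcal{K}(H)$, i.e.~to an element $\tilde{P}_{t}(x)\in\mathcal{K}(H)^{*}=\mathcal{S}_{1}(H)$. The bound is uniform in $t\in[0,1]$, so $\sup_{t}\|\tilde{P}_{t}(x)\|_{\mathcal{S}_{1}(H)}<\infty$ for a.e.~$x$.

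For (3), the $L^{1}$-integrability of the integrand established in (2) makes $t\mapsto\textrm{tr}(\tilde{P}_{t}(x)T)$ absolutely continuous on $[0,1]$ for every $T\in\FinRk(H)$, and the uniform bound from (2) combined with density of $\FinRk(H)$ in $\mathcal{K}(H)$ upgrades continuity to all $T\in\mathcal{K}(H)$, which is $w^{*}$-continuity. For (4), the topology of $\mathcal{S}_{1}(H)$ is generated by the countable family of matrix-unit functionals $\rho\mapsto\textrm{tr}(\rho|e_{i}\rangle\langle e_{j}|)$ for a basis $(e_{i})\subset H$, which already lie in $\FinRk(H)$; separability of $\mathcal{S}_{1}(H)$ and Pettis' theorem therefore reduce strong measurability of $x\mapsto\tilde{P}_{t}(x)$ to measurability of $x\mapsto\textrm{tr}(\tilde{P}_{t}(x)T)$ for $T\in\FinRk(H)$, which follows from (1) and Fubini. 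The principal obstacle is (1): joint measurability of the composition $M_{P_{s}(x)}^{1/2}\partial_{x}F(x)$ requires combining continuity of the Bochner integrand in $(p,h,\alpha)$ with a Fubini argument and then invoking functional calculus for the square root; the remaining parts are routine Cauchy--Schwarz and Pettis' theorem applications.
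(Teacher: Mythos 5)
Your parts 2)--4) run essentially parallel to the paper's argument: the uniform bound $\|M_{P_{s}(x)}^{1/2}\partial_{x}T\|_{\mathcal{H}}\leq\sqrt{\textrm{tr}(P_{s}(x))}\,\|\partial_{x}\|\,\|T\|_{\mathcal{K}(H)}$ together with $\mathcal{K}(H)^{*}=\mathcal{S}_{1}(H)$ and density of $\FinRk(H)$ gives 2) and 3), and Pettis' theorem plus a countable total family of finite-rank functionals gives 4); the paper does the same, except that it controls $\textrm{tr}(P_{s}(x))$ by mass preservation ($\textrm{tr}(P_{s}(x))=\textrm{tr}(P_{0}(x))$ a.e.) where you integrate it in $s$ --- an immaterial difference.

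Part 1) is where you diverge, and there is a genuine gap. You assert that admissibility ``places $W$ in $L^{2}([0,1]\times X,\mathcal{H})$'', so that a jointly measurable representative $(s,x)\mapsto W_{s}(x)$ may be chosen and then paired with a jointly measurable version of $M_{P_{s}(x)}^{1/2}\partial_{x}F(x)$. But Definition \ref{DFN.Unbd_Adm} only requires the \emph{scalar} function $t\mapsto\|V_{t}\|_{P_{t}}^{2}=\|W_{t}\|_{L^{2}(X,\mathcal{H})}^{2}$ to lie in $L^{1}([0,1])$; it does not require $t\mapsto W_{t}$ to be strongly measurable as an $L^{2}(X,\mathcal{H})$-valued map. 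Without that, $W$ is not an element of $L^{2}([0,1],L^{2}(X,\mathcal{H}))$ and there is no jointly measurable version available to select. The only $t$-regularity the hypotheses provide for the vector field is channelled through the continuity equation, and that is exactly how the paper proves 1): for fixed $s$ the function $x\mapsto\langle W_{s}(x),M_{P_{s}(x)}^{1/2}\partial_{x}F(x)\rangle_{\mathcal{H}}$ lies in $L^{1}(X)$, and for $g\in C_{b}(X)$ one has
\begin{align*}
\int_{X}g(x)\langle W_{s}(x),M_{P_{s}(x)}^{1/2}\partial_{x}F(x)\rangle_{\mathcal{H}}\,d\nu=\langle W_{s},M_{P_{s}}^{1/2}\partial(gF)\rangle_{L^{2}(X,\mathcal{H})}=\frac{d}{dt}\Big|_{t=s}\tau(P_{t}gF),
\end{align*}
the a.e.-derivative of an absolutely continuous function, hence measurable in $s$; extending from $C_{b}(X)$ to $L^{\infty}(X)$ by pointwise approximation and applying Pettis' theorem in the separable space $L^{1}(X)$ yields strong measurability of the $L^{1}(X)$-valued path and thus joint measurability. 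Your proof of 1) never invokes the continuity equation, which is the warning sign: to repair it you must either add strong $t$-measurability of $W_{t}$ to the notion of admissibility (which the paper deliberately does not assume) or reroute through the duality argument above.
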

\begin{proof}
From $W_{s}\in L^{2}(X,\mathcal{H})$, $P_{s}\in\mathcal{D}$ and existence of a separable function, we know 
\begin{align*}
x\longmapsto\langle W_{s}(x),M_{P_{s}(x)}^{\frac{1}{2}}\partial_{x}F(x)\rangle_{\mathcal{H}}
\end{align*}

\noindent to lie in $L^{1}(X)$ for each $s\in [0,1]$. Hence we need $s\longmapsto \langle W_{s}(\hspace{0.05cm}.\hspace{0.05cm}),M_{P_{s}(\hspace{0.05cm}.\hspace{0.05cm})}^{\frac{1}{2}}\partial_{\hspace{0.05cm}.\hspace{0.05cm}}F(\hspace{0.05cm}.\hspace{0.05cm})\rangle_{\mathcal{H}}$ to be strongly measurable w.r.t.~the $||.||_{L^{1}(X)}$-topology to obtain the first statement. To see this, we test on continuous bounded functions and extend to $L^{\infty}$-functions. For $g\in C_{b}(X)$ and $F\in\mathfrak{A}$, $gF\in\mathfrak{A}$. We thus have

\begin{align*}
\int_{X}g(x)\langle W_{s}(x),M_{P_{s}(x)}^{\frac{1}{2}}\partial_{x}F(x)\rangle_{\mathcal{H}}d\nu=\langle W_{s},M_{P_{s}}^{\frac{1}{2}}\partial (gF)\rangle_{L^{2}(X,\mathcal{H})}=\frac{d}{dt}_{|t=s}\tau(P_{t}gF)
\end{align*}

\noindent which is a measurable map on $[0,1]$. For $g\in L^{\infty}(X)$, we use density of $C_{b}(X)\subset L^{\infty}(X)$ to approximate pointwise by measurable maps. This proves the first statement.\par
For the second one, we assume $P_{t}(x)\geq 0$ for each $(t,x)\in [0,1]\times X$ without loss of generality by Corollary \ref{COR.L1_Pstv} and Lemma \ref{LEM.Msrbl_Unbd}. Using Proposition \ref{PRP.Mass_Prsv_Unbd} and letting $E=\mathbb{C}$ in Lemma \ref{LEM.Rctf_NS}, we know $\textrm{tr}(P_{t}(x))=\textrm{tr}(P_{0}(x))$ to hold for each $t\in [0,1]$ for a.e.~$x\in X$. Given arbitrary $T\in\FinRk(H)$ and $x\in X$, choose $f\in C_{c}(X)$ with $f(x)=1$. Then

\begin{align*}
s\longmapsto\langle W_{s}(x),M_{P_{s}(x)}^{\frac{1}{2}}\partial_{x}T\rangle_{\mathcal{H}}=\langle W_{s}(x),M_{P_{s}(x)}^{\frac{1}{2}}\partial_{x}(f\odot T)(x)\rangle_{\mathcal{H}}
\end{align*}

\noindent is measurable on $[0,1]$. Furthermore, we estimate

\begin{align*}
|\langle W_{s}(x),M_{P_{s}(x)}^{\frac{1}{2}}\partial_{x}T\rangle_{\mathcal{H}}|\leq ||W_{s}(x)||_{\mathcal{H}}\textrm{tr}(P_{0}(x))||\partial_{x}||\hspace{0.025cm}||T||_{\mathcal{K}(H)}
\end{align*}

\noindent for each $s\in [0,1]$. Furthermore, we have $||W_{\hspace{0.025cm}.\hspace{0.025cm}}(\hspace{0.025cm}.\hspace{0.025cm})||_{\mathcal{H}}\in L^{2}([0,1],L^{2}(X))=L^{2}([0,1]\times X)$ since $\mu_{t}$ is admissible. Thus $W_{t}(x)$ is measurable in $t$ on $[0,1]$ for a.e.~$x\in X$. Using this and our previous estimate, we see that

\begin{align*}
T\longmapsto\int_{0}^{t}\langle W_{s}(x),M_{P_{s}(x)}^{\frac{1}{2}}\partial_{x}T\rangle_{\mathcal{H}}ds
\end{align*}

\noindent defines a unique element in $\mathcal{K}(H)^{*}=\mathcal{S}_{1}(H)$ by density of $\FinRk(H)\subset\mathcal{K}(H)$. Continuity when tested on finite rank operators holds by construction, while the estimate just above shows $\sup_{t\in [0,1]}||P_{t}(x)||_{\mathcal{S}_{1}(H)}$ to be finite. From this, the third statement follows.\par
We turn to the last statement. Let $f_{i}\in C_{c}(X)$ be an approximation of $1_{X}$ in $C_{b}(X)$ and choose arbitrary $T\in\FinRk(H)$. Then $\textrm{tr}(\tilde{P}_{t}(x)T)=\lim_{i}\textrm{tr}(\tilde{P}_{t}(x)f_{i}(x)T)$ for each $x\in X$. This lemma's first two statements show 

\begin{align*}
x\longmapsto\textrm{tr}(\tilde{P}_{t}(x)f_{i}(x)T)=f_{i}(x)\int_{0}^{t}\langle W_{s}(x),M_{P_{s}(x)}^{\frac{1}{2}}\partial_{x}T\rangle_{\mathcal{H}}ds.
\end{align*}

\noindent In particular, the map above is measurable. Using this and density of $\FinRk(H)$ w.r.t.~the strong operator-topology, our last claim follows from pointwise approximation by measurable maps.
\end{proof}

\begin{lem}\label{LEM.Rctf}
If $\mu_{t}\in\mathcal{A}(P,Q)$ and we fix a representative of $W_{t}\in L^{2}(X,\mathcal{H})$ for each $t\in [0,1]$, then there exists a representative $P_{t}^{rct}$ in $P_{t}\in L^{1}([0,t]\times X,\mathcal{S}_{1}(H))$ such that																																					
\begin{itemize}
\item[1)]$P_{t}=\tilde{P}_{t}+P=:P_{t}^{rct}\in L^{1}([0,1]\times X,\mathcal{S}_{1}(H))$,
\item[2)]$\mu_{t}=(P_{t}^{rct},W_{t})$,
\item[3)]$\mu_{t}^{rct}(x):=(\theta_{P}^{2}(x)P_{t}^{rct}(x),\theta_{P}(x)W_{t}(x))\in\mathcal{A}(\theta_{P}^{2}(x)P(x),\theta_{P}^{2}(x)Q(x))$ for a.e.~$x\in X$.
\end{itemize}
\end{lem}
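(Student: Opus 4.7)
The plan is to define the rectification explicitly by $P_t^{rct}(x) := P(x) + \tilde{P}_t(x)$, using the pointwise object produced by Lemma \ref{LEM.Rcft_Msrbl}, and to verify the three conclusions in order. Joint measurability of $P_t^{rct}$ on $[0,1]\times X$ as an $\mathcal{S}_1(H)$-valued map is immediate from parts 3) and 4) of that lemma together with separability of $\mathcal{S}_1(H)$ and Pettis' theorem.

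For statement 1), I would test the global continuity equation satisfied by $\mu_t$ against an elementary tensor $F = g\odot T \in C_c(X)\odot\FinRk(H) = \mathfrak{A}$ and exploit $(\partial F)(x) = g(x)\partial_x T$. After applying Fubini (justified by admissibility of $\mu_t$ together with Lemma \ref{LEM.Rcft_Msrbl}) to interchange the $s$- and $x$-integrals on the right-hand side, the defining identity for $\tilde{P}_t(x)$ in Lemma \ref{LEM.Rcft_Msrbl}(2) yields
\[
\tau\big((P_t-P)F\big) \,=\, \int_X g(x)\,\textrm{tr}(\tilde{P}_t(x) T)\,d\nu \,=\, \tau(\tilde{P}_t F).
\]
Norm-density of $\mathfrak{A}\subset C_0(X,\mathcal{K}(H))$ and $w^{*}$-density of $C_0(X,\mathcal{K}(H))\subset L^{\infty}(X,\mathcal{B}(H))$ from the proof of Proposition \ref{PRP.Prd_Tr_LP} then force $P_t(x) = P(x) + \tilde{P}_t(x)$ in $L^1(X,\mathcal{S}_1(H))$ for each $t\in[0,1]$; combined with joint measurability, this promotes $P_t^{rct}$ to a representative of $P_t$ in $L^1([0,1]\times X,\mathcal{S}_1(H))$ and makes statement 2) tautological.

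For statement 3), I would fix $x$ in a conull set on which $\tilde{P}_t(x)$ is defined and $\textrm{tr}(P_t^{rct}(x)) = \textrm{tr}(P(x))$ (Proposition \ref{PRP.Mass_Prsv_Unbd}), so that $\theta_P(x)^2 P_t^{rct}(x)$ is a density in the fiber with endpoints $\theta_P(x)^2 P(x)$ and $\theta_P(x)^2 Q(x)$. The fiber continuity equation would follow by differentiating the integral representation in Lemma \ref{LEM.Rcft_Msrbl}(2) in $t$ and using $M_{cA}^{1/2} = \sqrt{c}\,M_A^{1/2}$ for $c\geq 0$ to absorb the factor $\theta_P(x)^2$ as $\theta_P(x)$ on each side of the inner product. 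Integrability $\int_0^1 \|\theta_P(x) W_t(x)\|_\mathcal{H}^2\,dt < \infty$ for a.e.~$x$ follows from $\int_0^1 \|W_t\|_{L^2(X,\mathcal{H})}^2\,dt < \infty$ and Fubini, together with a.e.~finiteness of $\theta_P$.

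The hard part will be the quantifier swap in the fiber continuity equation, which must hold for every $T \in \FinRk(H)$ on a single conull set of $x$, rather than for each $T$ outside a $T$-dependent nullset. I plan to resolve this by selecting a countable subset $\{T_k\} \subset \FinRk(H)$ dense in operator norm, verifying the identity for each $T_k$ outside a common nullset via Lemma \ref{LEM.Rcft_Msrbl}, and extending to all of $\FinRk(H)$ by density; both sides of the fiber continuity equation are continuous in $T$ with respect to the operator norm thanks to boundedness of $\partial_x$ on $\mathcal{K}(H)$ from Definition \ref{DFN.Fbr_Grd}(4) and the contractive estimate on $M^{1/2}_{P_t^{rct}(x)}$ from Proposition \ref{PRP.Nrm_Mlt_Op}.
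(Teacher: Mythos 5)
Your proposal is correct and follows essentially the same route as the paper: test the global continuity equation against elementary tensors of $\mathfrak{A}$, apply Fubini to identify $P_{t}-P$ with $\tilde{P}_{t}$ weakly, upgrade to a.e.\ pointwise equality via a countable dense family in $\FinRk(H)$, and read off the fibre continuity equation from the construction in Lemma \ref{LEM.Rcft_Msrbl}. The only point you leave implicit is positivity of $P_{t}^{rct}(x)$ for \emph{all} $t$ on a single conull set (needed for $\theta_{P}(x)^{2}P_{t}^{rct}(x)$ to be a density), which follows, as in the paper, from a.e.\ equality with $P_{t}(x)\geq 0$ together with the $w^{*}$-continuity in $t$ from Lemma \ref{LEM.Rcft_Msrbl}.
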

\begin{proof}
Let $\tilde{P}_{t}$ be as in Lemma \ref{LEM.Rcft_Msrbl}. For all $F\in\mathfrak{A}$, the same lemma implies

\begin{align*}
\tau((P_{t}-P)F)&=\int_{X}\textrm{tr}((P_{t}-P_{0})(x)F(x))d\nu\\ &=\int_{0}^{t}\int_{X}\langle W_{s}(x),M_{P_{s}(x)}^{\frac{1}{2}}\partial_{x}F(x)\rangle_{\mathcal{H}}d\nu\hspace{0.025cm}ds\\
&=\int_{X}\int_{0}^{t}\langle W_{s}(x),M_{P_{s}(x)}^{\frac{1}{2}}\partial_{x}F(x)\rangle_{\mathcal{H}}ds\hspace{0.02cm}d\nu\\
&=\int_{X}\textrm{tr}(\tilde{P}_{t}(x)F(x))d\nu.
\end{align*}

\noindent We were able to use Fubini-Tonelli in the third equality since the integrated function was shown to be measurable and $\mu_{t}$ has finite energy. By definition, $F=f\odot T$ for some $f\in C_{c}(X)$ and some $T\in\FinRk(H)$. We know $\textrm{tr}(\tilde{P}_{t}(x)T)\in L^{1}(X,d\nu)$ and $\int_{X}f(x)\textrm{tr}((P_{t}-P)(x)T)d\nu=\int_{X}f(x)\textrm{tr}(\tilde{P}_{t}(x)T)d\nu$. If we fix $t\in [0,1]$, this shows $\textrm{tr}((P_{t}-P)(x)T)=\textrm{tr}(\tilde{P}_{t}(x)T)$ for a.e.~$x\in X$. By density and countability of $\FinRk(H)$, we thus have $(P_{t}-P)(x)=\tilde{P}_{t}(x)$ for a.e.~$x\in X$ once we fixed a $t\in [0,1]$.\par
Using this, we know $P_{t}=\tilde{P}_{t}+P$ as measurable maps on $X$ modulo nullsets for each $t\in [0,1]$. By $4)$ in \ref{LEM.Rcft_Msrbl}
and $P_{t}\in\mathcal{D}$, $\tilde{P}_{t}\in L^{1}([0,1]\times X,\mathcal{S}_{1}(H))$. Applying Lemma \ref{LEM.Rctf_NS} with $E=\mathbb{C}$, we have $\tilde{P}_{t}(x)=(P_{t}-P)(x)$ for each $t\in [0,1]$ for a.e.~$x\in X$.  This and $3)$ in Lemma \ref{LEM.Rcft_Msrbl} shows $\textrm{tr}(\tilde{P}_{t})=0$ for each $t\in [0,1]$ for a.e.~$x\in X$. We set $P_{t}^{rct}(x):=\tilde{P}_{t}(x)+P(x)$, which is positive for all $t\in [0,1]$ for a.e.~$x\in X$ by $3)$ in Lemma \ref{LEM.Rcft_Msrbl} and $P_{t}^{rct}=P_{t}\in L^{1}([0,1]\times X,\mathcal{S}_{1}(H))$. Our remaining claims follow by construction of $\tilde{P}_{t}$. 
\end{proof}

\begin{cor}\label{COR.Rctf_DblInt}
If $\mu_{t}\in\mathcal{A}(P,Q)$, then $E(\mu_{t})=\int_{X}E(\mu_{t}^{rct}(x))dsd\nu_{P}$.
\end{cor}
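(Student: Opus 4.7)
The plan is to unwind the definitions and reduce the identity to a direct application of Tonelli's theorem, using the rectified representative supplied by Lemma \ref{LEM.Rctf}.

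First I would recall from Lemma \ref{LEM.Rctf} that $\mu_{t}=(P_{t}^{rct},W_{t})$ and that the fibrewise path $\mu_{t}^{rct}(x)=(\theta_{P}(x)^{2}P_{t}^{rct}(x),\theta_{P}(x)W_{t}(x))$ lies in $\mathcal{A}(\theta_{P}(x)^{2}P(x),\theta_{P}(x)^{2}Q(x))$ for a.e.~$x\in X$. By the isometric identification of the tangent space with a subspace of $\mathcal{H}$, the energy of $\mu_{t}^{rct}(x)$ is
\begin{align*}
E(\mu_{t}^{rct}(x))=\frac{1}{2}\int_{0}^{1}\|\theta_{P}(x)W_{t}(x)\|_{\mathcal{H}}^{2}dt=\frac{\theta_{P}(x)^{2}}{2}\int_{0}^{1}\|W_{t}(x)\|_{\mathcal{H}}^{2}dt,
\end{align*}
valid on $\{x\in X\,|\,P(x)\neq 0\}$. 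On the complementary set, mass preservation (Proposition \ref{PRP.Mass_Prsv_Unbd}) combined with $P_{t}^{rct}(x)\geq 0$ forces $P_{t}^{rct}(x)=0$ for every $t\in[0,1]$, so the fibrewise path is trivial with zero energy and $\theta_{P}(x)W_{t}(x)=0$, hence both sides of the identity receive no contribution from this set.

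Next I would integrate against $d\nu_{P}=\textrm{tr}(P(x))d\nu$. On $\{P(x)\neq 0\}$ we have $\theta_{P}(x)^{2}\textrm{tr}(P(x))=1$, so
\begin{align*}
\int_{X}E(\mu_{t}^{rct}(x))d\nu_{P}=\frac{1}{2}\int_{X}\int_{0}^{1}\|W_{t}(x)\|_{\mathcal{H}}^{2}dt\,d\nu.
\end{align*}
The integrand $(t,x)\mapsto\|W_{t}(x)\|_{\mathcal{H}}^{2}$ is jointly measurable: strong measurability of $W_{t}$ in $L^{2}(X,\mathcal{H})$ together with the argument used in Lemma \ref{LEM.Rcft_Msrbl} shows $(t,x)\mapsto W_{t}(x)$ admits a jointly measurable representative, and the norm map is continuous. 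Admissibility of $\mu_{t}$ yields finiteness of $\int_{0}^{1}\|W_{t}\|_{L^{2}(X,\mathcal{H})}^{2}dt$, so Tonelli's theorem applies and the order of integration may be exchanged.

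Interchanging the integrals gives
\begin{align*}
\int_{X}E(\mu_{t}^{rct}(x))d\nu_{P}=\frac{1}{2}\int_{0}^{1}\|W_{t}\|_{L^{2}(X,\mathcal{H})}^{2}dt=\frac{1}{2}\int_{0}^{1}\|V_{t}\|_{P_{t}}^{2}dt=E(\mu_{t}),
\end{align*}
which is the claimed identity. The only subtle point is verifying joint measurability of $(t,x)\mapsto\|W_{t}(x)\|_{\mathcal{H}}^{2}$ and handling the null set $\{P(x)=0\}$ consistently, both of which are resolved by invoking the mass preservation and measurability results already proved in the previous lemmas.
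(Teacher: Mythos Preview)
Your proposal is correct and follows essentially the same route as the paper: both arguments unwind the definitions, use $d\nu_{P}=\textrm{tr}(P(x))d\nu$ together with $\theta_{P}(x)^{2}\textrm{tr}(P(x))=1$ on $\{P(x)\neq 0\}$, and swap the order of integration via Tonelli to pass between $E(\mu_{t})$ and $\int_{X}E(\mu_{t}^{rct}(x))\,d\nu_{P}$. The paper compresses this into a single displayed chain of equalities, while you spell out the measurability justification and the treatment of the null set $\{P(x)=0\}$ more explicitly; the underlying computation is the same.
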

\begin{proof}
\begin{align*}
E(\mu)=\frac{1}{2}\int_{0}^{1}\int_{X}||W_{t}(x)||^{2}_{\mathcal{H}}d\nu\hspace{0.025cm}dt=\int_{X}\frac{1}{2}\int_{0}^{1}||\theta_{P}(x)W_{t}(x)||_{\mathcal{H}}^{2}dt\hspace{0.025cm}d\nu_{P}=\int_{X}E(\mu_{t}^{rct}(x))dsd\nu_{P}.
\end{align*}
\end{proof}

We next focus on conditions allowing us to integrate a measurable selection of minimisers. Corollary \ref{COR.Rctf_DblInt}, i.e.~
the mean energy representation, ensures the resulting path to be a minimiser itself.

\begin{lem}\label{LEM.Pos_Appr}
Let $F:X\longrightarrow\mathcal{S}_{2}(H)_{+}$ be a strongly measurable function with $\textrm{tr}(F^{2}(x))=1$ for a.e.~$x\in X$. Then there exist simple functions $(S_{i})_{i\in\mathbb{N}}$ converging to $F^{2}$ pointwise a.e.~in the $||.||_{\mathcal{S}_{1}(H)}$-topology such that for all $i\in\mathbb{N}$ and a.e.~$x\in X$, $S_{i}(x)\in\mathcal{S}_{1}(H)_{+}$ and $\textrm{tr}(S_{i}(x))=1$. 
\end{lem}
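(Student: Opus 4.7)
The plan is to approximate $F^{2}$ pointwise a.e.~by simple functions obtained from a nearest-neighbour rule applied to a countable dense subset of the density matrix space $\mathcal{D}(H):=\{p\in\mathcal{S}_{1}(H)_{+}\ |\ \textrm{tr}(p)=1\}$. First I would observe that $F^{2}(x)\in\mathcal{D}(H)$ for a.e.~$x$: the hypothesis $F(x)\in\mathcal{S}_{2}(H)_{+}$ gives $F^{2}(x)\in\mathcal{S}_{1}(H)_{+}$, and $\textrm{tr}(F(x)^{2})=||F(x)||_{\mathcal{S}_{2}(H)}^{2}=1$. Next, I would verify that $F^{2}$ is strongly measurable as a map into $\mathcal{S}_{1}(H)$: choosing simple $F_{n}\longrightarrow F$ a.e.~in $||.||_{\mathcal{S}_{2}(H)}$, the squares $F_{n}^{2}$ are again simple (since a step function $\sum\chi_{A_{i}}T_{i}$ with disjoint supports squares to $\sum\chi_{A_{i}}T_{i}^{2}$) and converge to $F^{2}$ a.e.~in $||.||_{\mathcal{S}_{1}(H)}$ by continuity of the multiplication $\mathcal{S}_{2}(H)\times\mathcal{S}_{2}(H)\longrightarrow\mathcal{S}_{1}(H)$.

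The second step uses separability. Since $H$ is separable, so is $\mathcal{S}_{1}(H)$, and thus so is $\mathcal{D}(H)$ in the $||.||_{\mathcal{S}_{1}(H)}$-topology; fix a countable $||.||_{\mathcal{S}_{1}(H)}$-dense sequence $(D_{k})_{k\in\mathbb{N}}\subset\mathcal{D}(H)$. Each scalar map $x\longmapsto ||F^{2}(x)-D_{k}||_{\mathcal{S}_{1}(H)}$ is then measurable as a continuous function of the strongly measurable $F^{2}$. For each $i\in\mathbb{N}$ I define
\begin{align*}
k_{i}(x):=\min\{k\leq i\ |\ ||F^{2}(x)-D_{k}||_{\mathcal{S}_{1}(H)}=\min_{1\leq j\leq i}||F^{2}(x)-D_{j}||_{\mathcal{S}_{1}(H)}\}
\end{align*}
and set $S_{i}(x):=D_{k_{i}(x)}$, redefining $S_{i}$ arbitrarily within $\mathcal{D}(H)$ on the nullset where $F^{2}(x)\notin\mathcal{D}(H)$. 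Each $S_{i}$ is simple by construction, takes values in $\mathcal{D}(H)$, and for a.e.~$x$ density of $(D_{k})$ forces $\min_{k\leq i}||F^{2}(x)-D_{k}||_{\mathcal{S}_{1}(H)}\longrightarrow 0$, hence $||S_{i}(x)-F^{2}(x)||_{\mathcal{S}_{1}(H)}\longrightarrow 0$.

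The only mildly subtle point is measurability of the argmin selector $k_{i}$, which one handles by writing the preimage $\{k_{i}=k\}$ as an intersection of level sets of the measurable maps $x\longmapsto ||F^{2}(x)-D_{k}||_{\mathcal{S}_{1}(H)}-||F^{2}(x)-D_{j}||_{\mathcal{S}_{1}(H)}$ for $j\leq i$, minus the preimages already assigned to smaller indices. I do not expect any genuine obstacle beyond this standard bookkeeping; the essential content is the separability of $\mathcal{D}(H)$ in $||.||_{\mathcal{S}_{1}(H)}$ together with the observation that working inside $\mathcal{D}(H)$ from the outset automatically preserves positivity and trace-one.
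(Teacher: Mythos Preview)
Your argument is correct and takes a genuinely different route from the paper's. The paper approximates at the level of $\mathcal{S}_{2}(H)$: it picks simple functions $H_{i}\to F$ in $||\cdot||_{\mathcal{S}_{2}(H)}$, replaces them by their positive parts $(H_{i})_{+}$ via Lemma~\ref{LEM.L2_Pst_Prj} (metric projection onto the positive cone), patches zeros by adding a fixed density matrix on the zero-set, and finally squares and trace-normalises to land in $\mathcal{D}(H)$. You instead work directly in $\mathcal{D}(H)\subset\mathcal{S}_{1}(H)$ from the start, using only its separability and the nearest-neighbour rule over a countable dense subset. This is the standard proof that any strongly measurable map into a separable metric space is a pointwise limit of simple functions taking values in a prescribed dense sequence, specialised to the target $\mathcal{D}(H)$; positivity and unit trace come for free. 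Your approach is more elementary and avoids the detour through Lemma~\ref{LEM.L2_Pst_Prj} and the squaring-and-normalising step. The paper's approach has the minor advantage of starting from an arbitrary simple approximation of $F$ itself (which is the object one is actually given), but for the lemma as stated your argument is cleaner.
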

\begin{proof}
As $F$ is strongly measurable, there exist simple functions $H_{i}$ converging to $F$ pointwise a.e.~in the $||.||_{\mathcal{S}_{2}(H)}$-topology. Setting $(H_{i})_{+}(x):=(H_{i}(x))_{+}$, we again obtain a simple function. By Lemma \ref{LEM.L2_Pst_Prj}, we know that $(H_{i})_{+}(x)$ is the metric projection of $H_{i}(x)$ onto the positive cone in $\mathcal{S}_{2}(H)$. Since $F(x)\geq 0$ a.e., this shows $(H_{i})_{+}$ to converge pointwise a.e.~to $F$ in the $||.||_{\mathcal{S}_{2}(H)}$-topology. Thus assume $H_{i}(x)\geq 0$ for a.e.~$x\in X$ and let $p\in\mathcal{S}_{1}(H)$ be a density matrix. We set $G_{i}(x):=H_{i}(x)+1_{X\setminus H_{i}^{-1}(0)}(x)p$ to obtain a sequence of simple functions $G_{i}$ that is non-zero for all $x\in X$. As $H_{i}$ converges pointwise a.e.~to $F$ and $F(x)\neq 0$ a.e., $1_{X\setminus H_{i}^{-1}(0)}(x)$ converges to zero for a.e.~$x\in X$.\par
We norm $G_{i}^{2}$ to obtain the required approximation. Consider the sequence of positive simple functions given by $S_{i}(x):=\textrm{tr}(G_{i}^{2}(x))^{-1}G_{i}^{2}(x)$ for each $x\in X$. This is well-defined because each $G_{i}$ has full support by construction. Pointwise convergence of $G_{i}(x)$ to $F(x)$ in $\mathcal{S}_{2}(H)$ for a.e.~$x\in X$ yields pointwise convergence of $G_{i}^{2}(x)$ to $F^{2}(x)$ in $\mathcal{S}_{1}(H)$. To see this, apply H\"older. From this, we have a.e.~convergence of $\sqrt{\textrm{tr}(G_{i}^{2}(x))}=||G_{i}(x)||_{\mathcal{S}_{2}(H)}$ to $||F(x)||_{\mathcal{S}_{2}(H)}=1$ and thus obtain a sequence $S_{i}$ as required. 
\end{proof}

We establish an appropriate setting for applying the measurable selection theorem. Recall our construction of the distance $d$ on $\mathcal{S}_{cl}(\mathcal{K}(H))$ and $D$ on $C([0,1],\mathcal{S}_{cl}(\mathcal{K}(H)))$ in Subsection 3.3. For $0<C<\infty$, let $\textrm{Lip}_{C}:=\{f:[0,1]\longrightarrow \mathcal{S}_{cl}(\mathcal{K}(H))\ |\ f\ d\textrm{-Lipschitz}\ \textrm{with}\ ||f||_{\textrm{Lip}}\leq C\}$ and equip it with the restriction of $D$. Then $\textrm{Lip}_{C}\subset C([0,1],\mathcal{S}_{cl}(\mathcal{K}(H))$ isometrically by construction. Arzel\'a-Ascoli immediately shows $(\textrm{Lip}_{C},D)$ to be a compact metric space. In particular, $\textrm{Lip}_{C}$ is closed, complete and separable. If $p,q$ are two density matrices with finite distance, then $\mathcal{M}(p,q)\subset\bigcup_{n=1}^{\infty}\textrm{Lip}_{n}$.\par
Let $\partial$ be a vertical gradient such that $\partial_{x}$ has continuous dependence of minimisers on start- and endpoints for a.e.~$x\in X$, and $P,Q\in\mathcal{D}$ with $\textrm{tr}(P(x))=\textrm{tr}(Q(x))$ for a.e.~$x\in X$. Apply Lemma \ref{LEM.Pos_Appr} for $F=\theta_{P}\sqrt{P}$ to find a sequence $P_{i}$ of simple functions converging to $\theta_{P}^{2}P$, and do the same to have a sequence $Q_{i}$ converging to $\theta_{P}^{2}Q$. For $x\in X$ and $\mu_{t}\in\mathcal{A}(\theta_{P}(x)^{2}P(x),\theta_{P}(x)^{2}Q(x))$, set

\begin{align*}
N_{\mu_{t}}:=\{\ (\mu_{t}^{i_{k}})_{k\in\mathbb{N}}\ |\ \mu_{t}^{i_{k}}\in\mathcal{M}(P_{i_{k}}(x),Q_{i_{k}}(x))\ \textrm{s.t.}\ \lim_{k\in\mathbb{N}} D(\mu_{t}^{i_{k}},\mu_{t})=0\}
\end{align*}

\noindent and define a multifunction from $X$ to $(C([0,1],\mathcal{S}_{cl}(\mathcal{K}(H))),D)$ by 

\begin{align*}
\psi_{P,Q}(x):=\{\mu_{t}\in\mathcal{M}(\theta_{P}(x)^{2}P(x),\theta_{P}(x)^{2}Q(x))\ |\ N_{\mu_{t}}\neq\emptyset\}.
\end{align*}

\noindent Continuous dependence of minimisers on start- and endpoints ensures $\psi_{P,Q}(x)\neq\emptyset$ a.e., while closedness of $\psi_{P,Q}(x)$ in $(C([0,1],\mathcal{S}_{cl}(\mathcal{K}(H))),D)$ follows by closedness of $\mathcal{M}(\theta_{P}(x)^{2}P(x),\theta_{P}(x)^{2}Q(x))$ and construction of $\psi_{P,Q}(x)$. We are in the setting of Theorem 6.9.3 in \cite{BK.Bog_MsrThry}. We therefore obtain a measurable selection of minimisers if for all open $U\subset C([0,1],\mathcal{S}_{cl}(\mathcal{K}(H))$, the sets 

\begin{align*}
\hat{\psi}_{P,Q}(U):=\{x\in X\ |\ \psi_{P,Q}(x)\cap U\neq\emptyset\}
\end{align*}

\noindent are measurable. 

\begin{rem}
Each $\psi_{P,Q}$ depends not only on $P$ and $Q$, but also $P^{i}$ and $Q^{i}$. These dependencies do not matter as we seek \textit{some} measurable selection for fixed $P$ and $Q$.
\end{rem}

\begin{lem}\label{LEM.Msrbl_Slct}
Let $\partial$ be a vertical gradient such that $\partial_{x}$ has continuous dependence of minimisers on start- and endpoints for a.e.~$x\in X$. If $P,Q\in\mathcal{D}$ with $\textrm{tr}(P(x))=\textrm{tr}(Q(x))$ for a.e.~$x\in X$, then $\hat{\psi}(U)$ is measurable for each open $U\subset C([0,1],\mathcal{S}_{cl}(\mathcal{K}(H))$.
\end{lem}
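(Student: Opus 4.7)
The plan is to exploit the simple structure of the approximating sequences $P_i$ and $Q_i$. After passing to a common refinement, write $P_i=\sum_{j}p_{ij}1_{A_{ij}}$ and $Q_i=\sum_{j}q_{ij}1_{A_{ij}}$ for a finite measurable partition $X=\bigsqcup_{j}A_{ij}$. Then the multifunction $x\longmapsto M_i(x):=\mathcal{M}(P_i(x),Q_i(x))$ takes only the finitely many values $\mathcal{M}(p_{ij},q_{ij})$ and is constant on each $A_{ij}$. Hence for any set $V\subset C([0,1],\mathcal{S}_{cl}(\mathcal{K}(H)))$, the set $\{x\in X\,:\,M_i(x)\cap V\neq\emptyset\}$ is a finite union of some of the $A_{ij}$'s and therefore measurable.

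Next, we identify $\psi_{P,Q}(x)$ with the Kuratowski upper limit of $(M_i(x))_{i\in\mathbb{N}}$: by definition of $N_{\mu_t}$, $\psi_{P,Q}(x)$ consists of all $D$-limits of subsequences $\mu_t^{i_k}\in M_{i_k}(x)$, and part $2)$ of Definition \ref{DFN.Cnt_Dpd} ensures those limits automatically lie in $\mathcal{M}(\theta_P(x)^2P(x),\theta_P(x)^2Q(x))$. To handle the openness of $U$ cleanly, we exhaust $U$ by the closed sets $F_n:=\{f\in C([0,1],\mathcal{S}_{cl}(\mathcal{K}(H)))\,:\,D(f,U^c)\geq 1/n\}\subset U$, so that $\hat{\psi}_{P,Q}(U)=\bigcup_n\hat{\psi}_{P,Q}(F_n)$.

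For a closed $F\subset U$ and its closed $1/m$-neighbourhood $F^{1/m}:=\{f\,:\,D(f,F)\leq 1/m\}$, the key characterisation we then establish is
\begin{align*}
\psi_{P,Q}(x)\cap F\neq\emptyset\ \Longleftrightarrow\ M_i(x)\cap F^{1/m}\neq\emptyset\text{ for infinitely many }i,\text{ for every }m\in\mathbb{N}.
\end{align*}
The forward direction is immediate from convergence. For the reverse, we diagonally pick $\mu_t^{n}\in M_{i_n}(x)\cap F^{1/n}$ with $i_n\uparrow\infty$, observe that the collection lies in a common $\textrm{Lip}_{C(x)}$, and apply Arzel\'a-Ascoli to extract a $D$-convergent subsequence whose limit lies in $F$ by closedness and in $\psi_{P,Q}(x)$ by construction. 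Combining yields
\begin{align*}
\hat{\psi}_{P,Q}(U)=\bigcup_{n}\bigcap_{m}\bigcap_{N}\bigcup_{i\geq N}\{x\in X\,:\,M_i(x)\cap F_n^{1/m}\neq\emptyset\},
\end{align*}
which is measurable by the first paragraph.

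The main obstacle is verifying the common Lipschitz bound in the diagonal step: the sequence $(\mu_t^{n})$ must lie in some $\textrm{Lip}_{C(x)}$ uniformly in $n$. This reduces to showing $\sup_{i}\mathcal{W}_{2,x}(P_i(x),Q_i(x))<\infty$ for a.e.~$x$, which follows from part $1)$ of continuous dependence (producing a subsequence with convergent energies) combined with part $2)$ (lower semicontinuity of $E$), together with the Lipschitz-in-$d$ estimate for constant-speed minimisers underlying the inclusion $\mathcal{M}(p,q)\subset\bigcup_n\textrm{Lip}_n$ recorded before Definition \ref{DFN.MinSet}.
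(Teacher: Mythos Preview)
Your overall strategy closely parallels the paper's: exploit the simple-function structure of $P_i,Q_i$ so that $\{x:M_i(x)\cap V\neq\emptyset\}$ is a finite union of measurable pieces for any $V$, then express $\hat\psi_{P,Q}(U)$ as a countable Boolean combination of such sets, using Arzel\`a--Ascoli together with part~2) of continuous dependence to identify limits. Your Kuratowski upper-limit formulation and the quantifier structure $\bigcap_N\bigcup_{i\ge N}$ are natural and match the subsequence in the definition of $N_{\mu_t}$.

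The gap is precisely where you flag it, and your proposed resolution does not work. Continuous dependence, as formulated in Definition~\ref{DFN.Cnt_Dpd}, says nothing about energies: part~1) produces a subsequence of minimisers converging in the uniform $w^*$-metric $D$, not one with convergent energies, and part~2) asserts that $D$-limits of minimisers are again minimisers, which is not lower semicontinuity of $E$ with respect to $D$. Both of your parentheticals misread the definition. Since the metric $D$ is bounded irrespective of energies, nothing in the hypotheses rules out $\mathcal{W}_{2,x}(P_i(x),Q_i(x))\to\infty$ along a subsequence while minimisers still $D$-converge. Hence $\sup_i\mathcal{W}_{2,x}(P_i(x),Q_i(x))<\infty$ is unsupported, your diagonal sequence $\mu_t^n\in M_{i_n}(x)\cap F^{1/n}$ need not sit in any fixed $\mathrm{Lip}_{C(x)}$, and Arzel\`a--Ascoli does not apply in the reverse inclusion.

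The paper circumvents this obstacle by inserting a countable filtration by the \emph{compact} classes $\mathrm{Lip}_n$ \emph{before} passing to balls: since $\psi_{P,Q}(x)\subset\bigcup_n\mathrm{Lip}_n$, one has $\hat\psi_{P,Q}(U)=\bigcup_n\{x:\psi_{P,Q}(x)\cap U\cap\mathrm{Lip}_n\neq\emptyset\}$, and it suffices to work inside a fixed $\mathrm{Lip}_C$. The characterising identity is then written with balls $B_{\varepsilon-j^{-1}}(f)\subset\mathrm{Lip}_C$, so that in the reverse inclusion the approximating minimisers already lie in the compact set $\mathrm{Lip}_C$ and a $D$-convergent subsequence exists automatically---no uniform energy or Lipschitz bound is ever needed. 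Grafting this $\mathrm{Lip}_n$-decomposition onto your scheme (replace $F_n^{1/m}$ by $F_n^{1/m}\cap\mathrm{Lip}_C$ and take a further countable union over $C\in\mathbb{N}$) would repair your reverse direction.
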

\begin{proof}
Without loss of generality, we replace 'almost everywhere' with 'everywhere' in this lemma's assumptions. Let $U\subset C([0,1],\mathcal{S}_{cl}(\mathcal{K}(H))$ be open. Then 

\begin{align*}
\psi_{P,Q}(x)\cap U=\bigcup_{n=1}^{\infty}\Big(\psi_{P,Q}(x)\cap U\cap\textrm{Lip}_{n}\Big)
\end{align*}

\noindent since each admissible path lies in some $\textrm{Lip}_{C}$. However, $U\cap\textrm{Lip}_{n}$ is open in the relative topology of $\textrm{Lip}_{n}$ because $\textrm{Lip}_{n}\subset C([0,1],\mathcal{S}_{cl}(\mathcal{K}(H))$ is closed. We thus have

\begin{align*}
\hat{\psi}_{P,Q}(U)=\bigcup_{n=1}^{\infty}\{x\in X\ |\ \psi_{P,Q}(x)\cap U\cap\textrm{Lip}_{n}\}
\end{align*}

\noindent and are left to check measurability of the sets on the right-hand side.\par
Reducing notational overhead, we consider an open set $U\subset\textrm{Lip}_{C}$ for some $0<C<\infty$. Since $\textrm{Lip}_{C}$ is separable, there exists a countable set of open balls covering $U$. Our statement follows if for all $f\in\textrm{Lip}_{C}$ and all $\varepsilon>0$, the set

\begin{align*}
\hat{\psi}_{P,Q,C}(B_{\varepsilon}(f)):=\{x\in X\ |\ \psi_{P,Q}(x)\cap B_{\varepsilon}(f)\}
\end{align*}  

\noindent is measurable. We claim that

\begin{align*}
\hat{\psi}_{P,Q,C}(B_{\varepsilon}(f))=\bigcup_{j=1}^{\infty}\bigcup_{k=1}^{\infty}\bigcap_{i=k}^{\infty}\{x\in X\ |\ \mathcal{M}(P_{i}(x),Q_{i}(x))\cap B_{\varepsilon-j^{-1}}(f)\neq\emptyset\}.
\end{align*}

Of course, $B_{\varepsilon-j^{-1}}(f)=\emptyset$ if $\varepsilon\leq j^{-1}$ holds. Let $x\in\hat{\psi}_{P,Q,C}(B_{\varepsilon}(f))$ and choose a $\mu_{t}\in\psi_{P,Q}(x)\cap B_{\varepsilon}(f)$. Pick a $j\in\mathbb{N}$ such that $\mu_{t}\in B_{\varepsilon-j^{-1}}(f)$. By definition of $\psi_{P,Q}(x)$ and the triangle inequality, there exist some $j_{0}\geq j$ and $i_{0}\in\mathbb{N}$ such that $\mu_{t}^{i}\in B_{\varepsilon-j_{0}^{-1}}(f)$ for all $i\geq i_{0}$. Hence

\begin{align*}
x\in\bigcap_{i=i_{0}}^{\infty}\{x\in X\ |\ \mathcal{M}(P_{i}(x),Q_{i}(x))\cap B_{\varepsilon-j_{0}^{-1}}(f)\neq\emptyset\}.
\end{align*}

\noindent showing one direction. For the converse, choose an arbitrary $x\in \bigcap_{i=k}^{\infty}\{x\in X\ |\ \mathcal{M}(P_{i}(x),Q_{i}(x))\cap B_{\varepsilon-j^{-1}}(f)\neq\emptyset\}$. By hypothesis, we have a sequence of minimisers $\mu_{t}^{i}\in\mathcal{M}(P_{i}(x),Q_{i}(x))\cap\textrm{Lip}_{C}$ such that $D(f,\mu_{t}^{i})<\varepsilon-j^{-1}$ for each $i\in\mathbb{N}$. As $\textrm{Lip}_{C}$ is compact, we extract a $D$-converging subsequence $\mu_{t}^{i_{k}}$ which we know must lie in $B_{\varepsilon-j^{-1}}(f)$. By $2)$ in Definition \ref{DFN.Cnt_Dpd}, we see the limit to be a $\mu_{t}\in\mathcal{M}(\theta_{P}(x)^{2}P(x),\theta_{P}(x)^{2}Q(x))$. Since $j>0$, $\mu_{t}\in\psi_{P,Q}(x)\cap B_{\varepsilon}(f)$ and therefore $x\in\hat{\psi}_{P,Q,C}(B_{\varepsilon}(f))$.\par
To conclude, we show each $\{x\in X\ |\ \mathcal{M}(P_{i}(x),Q_{i}(x))\cap B_{\varepsilon}(f)\neq\emptyset\}$ to be measurable. Write $P_{i}=\sum_{j=1}^{n_{i}}1_{A_{ij}}p_{ij}$, $Q_{i}=\sum_{j=1}^{m_{i}}1_{B_{ij}}q_{ij}$, and choose a finite sub-partition $C_{ij}$ of $X$ for the finite partitions $A_{ij}$ and $B_{ij}$ of $X$ such that each $C_{ij}$ is measurable. Then write

\begin{align*}
P_{i}=\sum_{j=1}^{k_{i}}1_{C_{ij}}\tilde{p}_{ij},\ Q_{i}=\sum_{j=1}^{k_{i}}1_{C_{ij}}\tilde{q}_{ij}
\end{align*}

\noindent where $\tilde{p}_{ij}$ or $\tilde{q}_{ij}$ might remain the same upon varying the $j$-variable. The latter is not relevant for this proof. From the representation above, we see $\mathcal{M}(P_{i}(x),Q_{i}(x))\cap B_{\varepsilon}(f)\neq\emptyset$ for $x\in C_{ij}$ if and only if it holds true for all $x\in C_{ij}$. Thus $\{x\in X\ |\ \mathcal{M}(P_{i}(x),Q_{i}(x))\cap B_{\varepsilon}(f)\neq\emptyset\}$ is given by a finite union of some $C_{ij}$, hence measurable.
\end{proof}

\begin{ntn}
For all $x\in X$, the $L^{2}$-Wasserstein distance on $\mathcal{D}_{b}\subset\mathcal{S}_{1}(H)_{+}$ arising from $\partial_{x}$ is $\mathcal{W}_{2,x}$.
\end{ntn}

\begin{thm}\label{THM.Disint}
Let $\partial$ be a vertical gradient such that $\partial_{x}$ has continuous dependence of minimisers on start- and endpoints for a.e.~$x\in X$. For all $P,Q\in\mathcal{D}$ with finite distance, we have 

\begin{align*}
\mathcal{W}_{2}^{2}(P,Q)=\int_{X}\mathcal{W}_{2,x}^{2}(\theta_{P}(x)^{2}P(x),\theta_{P}(x)^{2}Q(x))d\nu_{P}
\end{align*}

\noindent and there exists a minimiser $\mu_{t}$ of $\mathcal{W}_{2}(P,Q)$ such that $\theta_{P}(x)^{2}\mu_{t}(x)\in\mathcal{M}(\theta_{P}(x)^{2}P(x),\theta_{P}(x)^{2}Q(x))$ for a.e.~$x\in X$. 
\end{thm}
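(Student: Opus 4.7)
Mass preservation from Proposition \ref{PRP.Mass_Prsv_Unbd} gives $\textrm{tr}(P(x))=\textrm{tr}(Q(x))$ for a.e.\ $x\in X$, so the fibre-wise problems involve genuine fibre densities (trivialising the construction on $\{P(x)=0\}$). The plan is to prove both inequalities separately, constructing the claimed minimiser in the course of the non-trivial direction.

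For the lower bound $\mathcal{W}_{2}^{2}(P,Q)\geq\int_{X}\mathcal{W}_{2,x}^{2}(\theta_{P}(x)^{2}P(x),\theta_{P}(x)^{2}Q(x))\, d\nu_{P}$, take any $\mu_{t}=(P_{t},W_{t})\in\mathcal{A}(P,Q)$, fix a representative of $W_{t}$, and apply Lemma \ref{LEM.Rctf} to obtain the rectified $\mu_{t}^{rct}(x)\in\mathcal{A}(\theta_{P}(x)^{2}P(x),\theta_{P}(x)^{2}Q(x))$ for a.e.\ $x$. The mean energy representation in Corollary \ref{COR.Rctf_DblInt} yields $E(\mu_{t})=\int_{X}E(\mu_{t}^{rct}(x))\, d\nu_{P}\geq\int_{X}\mathcal{W}_{2,x}^{2}(\theta_{P}(x)^{2}P(x),\theta_{P}(x)^{2}Q(x))\, d\nu_{P}$; taking the infimum over $\mathcal{A}(P,Q)$ yields the desired inequality.

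For the reverse direction and existence, invoke Lemma \ref{LEM.Pos_Appr} with $F=\theta_{P}\sqrt{P}$ and $F=\theta_{P}\sqrt{Q}$ to obtain simple-function approximations $P_{i}\to\theta_{P}^{2}P$ and $Q_{i}\to\theta_{P}^{2}Q$ pointwise in $\|\cdot\|_{\mathcal{S}_{1}(H)}$ by fibre densities, and form the multifunction $\psi_{P,Q}$ of the paragraph preceding Lemma \ref{LEM.Msrbl_Slct}. Non-emptiness and closedness of $\psi_{P,Q}(x)$ for a.e.\ $x$ follow from continuous dependence of minimisers on start- and endpoints via Proposition \ref{PRP.Cnt_Dpd}, while Lemma \ref{LEM.Msrbl_Slct} verifies the measurability hypothesis needed to apply Theorem 6.9.3 in \cite{BK.Bog_MsrThry}. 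This supplies a measurable selection $x\mapsto\sigma(x)\in\mathcal{M}(\theta_{P}(x)^{2}P(x),\theta_{P}(x)^{2}Q(x))$. Set $\mu_{t}^{*}(x):=\theta_{P}(x)^{-2}\sigma(x)_{t}$ (and $0$ on $\{P(x)=0\}$), with velocity $W_{t}^{*}(x):=\theta_{P}(x)^{-1}w(x)_{t}$ for the fibre velocity $w(x)_{t}$ of $\sigma(x)$. Homogeneity of the logarithmic mean forces $M_{\mu_{t}^{*}(x)}^{1/2}=\theta_{P}(x)^{-1}M_{\sigma(x)_{t}}^{1/2}$, so integrating the fibre continuity equations against any $F\in\mathfrak{A}=C_{c}(X)\odot\FinRk(H)$ over $X$ reproduces the global continuity equation. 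The rescaling identities together with $\int_{0}^{1}\|w(x)_{t}\|_{\mathcal{H}}^{2}dt=2\mathcal{W}_{2,x}^{2}$ give $E((\mu_{t}^{*},W_{t}^{*}))=\int_{X}\mathcal{W}_{2,x}^{2}(\theta_{P}(x)^{2}P(x),\theta_{P}(x)^{2}Q(x))\, d\nu_{P}$, finite by the lower bound. Hence $(\mu_{t}^{*},W_{t}^{*})\in\mathcal{A}(P,Q)$ attains the right-hand side, delivering the identity and the existence of the desired fibre-minimising minimiser.

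The main obstacle is that Lemma \ref{LEM.Msrbl_Slct} only selects paths in $C([0,1],\mathcal{S}_{cl}(\mathcal{K}(H)))$, so one must still justify joint $(t,x)$-measurability of the accompanying velocities $w(x)_{t}$ and verify $W_{t}^{*}\in L^{2}([0,1]\times X,\mathcal{H})$. The cleanest resolution is to enlarge the target of the multifunction to a Polish product space tracking (path, velocity) pairs, with closedness of the enlarged $\psi_{P,Q}$ following from lower semicontinuity of $E$ in the joint topology; alternatively one exploits the unique representability of the fibre velocity in $T_{\sigma(x)_{t}}\mathcal{D}_{b}$ via the continuity equation to transfer measurability from $\sigma$ directly to $w$.
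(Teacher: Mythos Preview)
Your plan matches the paper's proof almost exactly: the lower bound via Lemma \ref{LEM.Rctf} and Corollary \ref{COR.Rctf_DblInt}, the measurable selection via Lemma \ref{LEM.Msrbl_Slct} and Bogachev's Theorem 6.9.3, and the rescaling to build the global candidate are all how the paper proceeds.

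Where you and the paper diverge is the handling of the velocity-measurability obstacle you flagged. The paper takes neither of your two suggested routes literally. It does \emph{not} enlarge the multifunction to (path, velocity) pairs, and it does \emph{not} try to show $w(x)_{t}$ is itself strongly measurable as an $\mathcal{H}$-valued function. Instead it only extracts from the selection $\xi$ the two scalar quantities the admissibility check actually needs: first, measurability of the pairings $(t,x)\mapsto\langle w_{t}(x),M_{\xi(x)(t)}^{1/2}\partial_{x}T\rangle_{\mathcal{H}}$ for $T\in\FinRk(H)$, which comes directly from measurability of $\textrm{tr}(\xi(x)(t)T)$ and its $t$-derivative; second, measurability of $\|w_{t}(x)\|_{\mathcal{H}}$, obtained by writing it as $\sup_{T\in\FinRk(H)}\|T\|_{\xi(x)(t)}^{-1}\langle w_{t}(x),M_{\xi(x)(t)}^{1/2}\partial_{x}T\rangle_{\mathcal{H}}$ and restricting to a countable dense family. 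The continuity equation is then verified in integrated form using only the pairings, the energy bound uses only the norm, and the global velocity is finally taken to be the Riesz representative $R_{t}(W_{t})\in T_{P_{t}}\mathcal{D}$ of the resulting bounded functional---with $\|R_{t}(W_{t})\|^{2}\leq\int_{X}\|W_{t}(x)\|_{\mathcal{H}}^{2}d\nu$ by fibre-wise Cauchy--Schwarz. This is morally your second option (exploit representability via the continuity equation), but executed so as to avoid ever asserting strong measurability of $w$; that economy is the main thing your sketch was missing.
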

\begin{proof}
By finiteness of $\mathcal{W}_{2}(P,Q)$, we have $\textrm{tr}(P(x))=\textrm{tr}(Q(x))$ for a.e.~$x\in X$. Lemma \ref{LEM.Msrbl_Slct} yields a measurable selection of minimisers $\xi$. By construction, $\xi(x)\in\mathcal{M}(\theta_{P}(x)^{2}P(x),\theta_{P}(x)^{2}Q(x))$ for a.e.~$x\in X$. We claim $(t,x)\longmapsto\xi(x)(t)$ to be a strongly measurable map from $[0,1]\times X$ to $(\mathcal{S}_{1}(H),||.||_{\mathcal{S}_{1}(H)})$. For all $T_{k}$ as in the beginning of Subsection 3.3, evaluation at $T_{k}$ is a continuous map from $C([0,1],\mathcal{S}_{cl}(\mathcal{K}(H))$ to $C([0,1])$. Since $(T_{k})_{k\in\mathbb{N}}\subset\mathcal{B}(H)$ $w^{*}$-densely, pointwise approximation shows evaluation at each $T\in\mathcal{B}(H)$ to be strongly measurable. Moreover, $\xi$ is measurable w.r.t.~$\mathcal{B}(X)$ and $\mathcal{B}(C([0,1],\mathcal{S}_{cl}(\mathcal{K}(H))))$ by construction. Taken together, this implies measurability of $\textrm{tr}(\xi(x)(t)T)$ on $[0,1]\times X$ for each $T\in\mathcal{B}(H)$. As $\mathcal{S}_{1}(H)$ is separable, this proves the claim.\par
Furthermore, we proved strong measurability of $P_{t}(x):=\textrm{tr}(P(x))\xi(x)(t)$ as a map from $[0,1]\times X$ to $\mathcal{S}_{1}(H)$. By construction of $\xi$, we have $P_{t}(x)\geq 0$ with $||P_{t}(x)||_{\mathcal{S}_{1}(H)}=\textrm{tr}(P(x))$ for each $t\in [0,1]$ for a.e.~$x\in X$. Hence $P_{t}\in\mathcal{D}$ for each $t\in [0,1]$. Let $t\longmapsto w_{t}(x)$ be the vector field associated to the admissible path $\xi(x)$. By strong measurability of $\xi$, the maps

\begin{align*}
\textrm{tr}(P(x))f(x)\frac{d}{dt}\textrm{tr}(\xi(x)(t)T)=\langle\textrm{tr}(P(x))^{\frac{1}{2}}w_{t}(x),M_{P_{t}(x)}^{\frac{1}{2}}\partial_{x}(f\odot T)(x)\rangle_{\mathcal{H}}
\end{align*}

\noindent are measurable on $[0,1]\times X$ for each $f\odot T\in\mathfrak{A}$. Furthermore, $M_{\xi_{t}(x)}$ and $\partial_{x}T$ are measurable on $[0,1]\times X$ for each $T\in\FinRk(H)$. Hence each $||T||_{\xi_{t}(x)}$ is measurable on $[0,1]\times X$ as well. This in turn implies 

\begin{align*}
||w_{t}(x)||_{\mathcal{H}}=\sup_{T\in\FinRk(H)}||T||_{\xi_{t}(x)}^{-1}\langle w_{t}(x),M_{\xi(t)(x)}^{\frac{1}{2}}\partial_{x}T\rangle_{H}
\end{align*}

\noindent Yet $\FinRk(H)$ is an extension algebra for each fibre gradient, see Remark \ref{REM.Curveball}. As a pointwise limit of measurable maps on $[0,1]\times X$, $||w_{t}(x)||_{\mathcal{H}}$ is therefore measurable on $[0,1]\times X$.\par
In particular, $\mathcal{W}_{2,x}^{2}(\theta_{P}(x)^{2}P(x),\theta_{P}(x)^{2}Q(x))=\int_{0}^{1}||w_{s}(x)||_{\mathcal{H}}^{2}ds$ is measurable on $X$. We therefore know

\begin{align*}
\int_{X}\mathcal{W}_{2,x}^{2}(\theta_{P}(x)^{2}P(x),\theta_{P}(x)^{2}Q(x))d\nu_{P}\leq\mathcal{W}_{2}^{2}(P,Q)<\infty
\end{align*}

\noindent by Corollary \ref{COR.Rctf_DblInt} and construction of $\xi$. This implies integrability of $\textrm{tr}(P(x))||w_{t}(x)||_{\mathcal{H}}^{2}$ on $[0,1]\times X$. We set $W_{t}(x):=\textrm{tr}(P(x))^{\frac{1}{2}}w_{t}(x)$, and $||W_{t}(x)||_{\mathcal{H}}^{2}$ is integrable by what we just showed. By the above, integrability of $||W_{t}(x)||_{\mathcal{H}}^{2}$, and the separating function $g(f\odot T)=||\partial f\otimes T||_{\infty}$, we have

\begin{align*}
\int_{X}\textrm{tr}((P_{t}-P)(x)(f\odot T)(x))d\nu&=\int_{X}\int_{0}^{t}\langle W_{s}(x),M_{P_{s}(x)}^{\frac{1}{2}}\partial_{x}(f\odot T)(x)\rangle_{\mathcal{H}}dsd\nu\\
&=\int_{0}^{t}\int_{X}\langle W_{s}(x),M_{P_{s}(x)}^{\frac{1}{2}}\partial_{x}(f\odot T)(x)\rangle_{\mathcal{H}}d\nu ds
\end{align*}

\noindent for each $f\odot T\in\mathfrak{A}$. If $R_{t}$ is the projection onto $T_{P_{t}}\mathcal{D}\subset L^{2}(X,\mathcal{S}_{2}(H))$, then $\mu_{t}=(P_{t},R_{t}(W_{t}))$ is an admissible path by what we showed just now. On the other hand, we have

\begin{align*}
\int_{0}^{1}||R_{s}(W_{s})||_{L^{2}(X,\mathcal{S}_{2}(H))}^{2}ds\leq \int_{0}^{1}||W_{s}||_{L^{2}(X,\mathcal{S}_{2}(H))}^{2}=\int_{X}\mathcal{W}_{2,x}^{2}(\theta_{P}(x)^{2}P(x),\theta_{P}(x)^{2}Q(x))d\nu_{P}
\end{align*}

\noindent and we see $\mu_{t}$ to be a minimiser as required. The statement follows.
\end{proof}

\begin{cor}\label{COR.Disint}
Let $\partial$ be a vertical gradient and $H$ finite-dimensional. For all $P,Q\in\mathcal{D}$ with $\textrm{tr}(P(x))=\textrm{tr}(Q(x))$ for a.e.~$x\in X$, we have 

\begin{align*}
\mathcal{W}_{2}^{2}(P,Q)=\int_{X}\mathcal{W}_{2,x}^{2}(\theta_{P}(x)^{2}P(x),\theta_{P}(x)^{2}Q(x))d\nu_{P}
\end{align*}

\noindent and there exists a minimiser $\mu_{t}$ of $\mathcal{W}_{2}(P,Q)$ such that $\theta_{P}(x)^{2}\mu_{t}(x)\in\mathcal{M}(\theta_{P}(x)^{2}P(x),\theta_{P}(x)^{2}Q(x))$ for a.e.~$x\in X$. 
\end{cor}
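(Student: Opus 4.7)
The corollary is essentially Theorem \ref{THM.Disint} with the finiteness hypothesis on $\mathcal{W}_2(P,Q)$ replaced by the a priori weaker mass-preservation assumption. I will verify that the fibrewise regularity hypothesis holds automatically in finite dimensions, establish the lower bound directly from rectifiability, and recover the upper bound either trivially (when the right-hand integral is infinite) or by reusing the measurable selection argument from the proof of Theorem \ref{THM.Disint}.

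Since $H$ is finite-dimensional, $\mathcal{K}(H)=M_n(\mathbb{C})$ and $\partial_x$ is a symmetric gradient between finite-dimensional spaces for a.e.~$x\in X$. By Lemma \ref{LEM.FinDim_CntDpd_II}, every such gradient has continuous dependence of minimisers on start- and endpoints. Hence the regularity hypothesis required in Theorem \ref{THM.Disint} and Lemma \ref{LEM.Msrbl_Slct} is automatic for our $\partial$.

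For any $\mu_t\in\mathcal{A}(P,Q)$, Lemma \ref{LEM.Rctf} and Corollary \ref{COR.Rctf_DblInt} yield
\begin{align*}
E(\mu_t)=\int_X E(\mu_t^{rct}(x))\, d\nu_P\geq\int_X\mathcal{W}_{2,x}^2(\theta_P(x)^2 P(x),\theta_P(x)^2 Q(x))\, d\nu_P,
\end{align*}
and taking the infimum over $\mu_t\in\mathcal{A}(P,Q)$ gives the lower bound $\mathcal{W}_2^2(P,Q)\geq\int_X\mathcal{W}_{2,x}^2\, d\nu_P$. If the right-hand side is infinite, this forces $\mathcal{W}_2(P,Q)=\infty$ and the displayed equation holds trivially in $[0,\infty]$; no minimiser exists in this degenerate case.

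Suppose therefore the right-hand integral is finite. I re-run the proof of Theorem \ref{THM.Disint} verbatim. Its uses of the finite-distance hypothesis occur at exactly two isolated points: first, to extract the mass-preservation condition $\textrm{tr}(P(x))=\textrm{tr}(Q(x))$ a.e.\ from Proposition \ref{PRP.Mass_Prsv_Unbd}; second, to conclude integrability of $||W_t(x)||_{\mathcal{H}}^2$ on $[0,1]\times X$ from $\int_X\mathcal{W}_{2,x}^2\, d\nu_P\leq\mathcal{W}_2^2(P,Q)<\infty$. Here the first use is supplied as a hypothesis of the corollary, and the second is our standing assumption in this case. The measurable selection of fibrewise minimisers via Lemma \ref{LEM.Msrbl_Slct}, the construction of $P_t(x):=\textrm{tr}(P(x))\xi(x)(t)$ and $W_t(x):=\textrm{tr}(P(x))^{1/2}w_t(x)$, the verification of strong measurability of the velocity field, and the reduction to an admissible path all proceed without modification, producing a minimiser $\mu_t$ with $\theta_P(x)^2\mu_t(x)\in\mathcal{M}(\theta_P(x)^2 P(x),\theta_P(x)^2 Q(x))$ a.e.\ and $E(\mu_t)\leq\int_X\mathcal{W}_{2,x}^2\, d\nu_P$. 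Combined with the lower bound this gives equality, realized by $\mu_t$. The main technical work is in the measurable-selection step, but this is already handled once and for all by Lemma \ref{LEM.Msrbl_Slct} and the proof of Theorem \ref{THM.Disint}, so the bookkeeping is routine.
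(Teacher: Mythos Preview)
Your argument is organised differently from the paper's and leaves the second conclusion only partially established. The corollary asserts two things: the displayed identity \emph{and} the existence of a minimiser. In your case split, when $\int_X\mathcal{W}_{2,x}^2\,d\nu_P=\infty$ you correctly deduce $\mathcal{W}_2(P,Q)=\infty$ and the identity, but then concede ``no minimiser exists in this degenerate case.'' That is not a proof of the corollary; it is an admission that the corollary would fail if this case ever arose. To prove the statement as written you must rule this case out.

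The paper does exactly that, and this is the step you are missing. Its proof is: Lemma~\ref{LEM.FinDim_CntDpd_II} gives continuous dependence for each $\partial_x$ (as you also note), and then Proposition~\ref{PRP.FinDim_WkMetr} is invoked to observe that in finite dimensions each $\mathcal{W}_{2,x}$ has finite diameter on $\mathcal{D}_b$. From this and integrability of $\textrm{tr}(P(x))$ the paper concludes $\mathcal{W}_2(P,Q)<\infty$ whenever $\textrm{tr}(P(x))=\textrm{tr}(Q(x))$ a.e., and then simply applies Theorem~\ref{THM.Disint} as a black box. There is no need to re-run the measurable-selection argument: once finiteness is secured, Theorem~\ref{THM.Disint} delivers both the identity and the minimiser directly. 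Your lower-bound computation via Lemma~\ref{LEM.Rctf} and Corollary~\ref{COR.Rctf_DblInt}, and your re-execution of the proof of Theorem~\ref{THM.Disint}, are correct but redundant once you have $\mathcal{W}_2(P,Q)<\infty$; the missing ingredient is precisely the finite-diameter input from Proposition~\ref{PRP.FinDim_WkMetr}.
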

\begin{proof}
Using Lemma \ref{LEM.FinDim_CntDpd_II}, we see each $\partial_{x}$ to have continuous dependence of minimisers on start- and endpoints. By Proposition \ref{PRP.FinDim_WkMetr}, each $\mathcal{W}_{2,x}$ has finite diameter. This implies $\mathcal{W}_{2}(P,Q)<\infty$ if $\textrm{tr}(P(x))=\textrm{tr}(Q(x))$ for a.e.~$x\in X$ since $\textrm{tr}(P(x))$ is integrable. We apply Theorem \ref{THM.Disint} to conclude.
\end{proof}

As an end to this subsection, we give a toy application of Theorem \ref{THM.Disint}. Let $H$ be finite-dimensional and $\nu$ a probability measure. We view density matrices as modelling a system's states and assume to be given a vertical gradient $\partial$. Minimisers for $\mathcal{W}_{2,x}(p,q)$ describe all possible ways for the system to evolve from $p$ to $q$ under the cost, hence geometry, determined by $\partial_{x}$.\par
We are in the following situation: if the system changes states, then it evolves along a minimiser determined by \textit{some} $\partial_{x}$. However, we are unable to say which $\partial_{x}$ is chosen for any particular state change. We hope to find an average evolution. For $p,q\in\mathcal{S}_{1}(H)_{+}$ density matrices, set $P(x):=p$ and $Q(x):=q$. Both $P,Q\in\mathcal{D}$ with $\textrm{tr}(P(x))=\textrm{tr}(Q(x))=1$, hence $d\nu_{P}=d\nu$. By Corollary \ref{COR.Disint}, there exists a minimiser $\mu_{t}\in\mathcal{M}(P,Q)$ and we have

\begin{align*}
\mathcal{W}_{2}^{2}(P,Q)=\int_{X}\mathcal{W}_{2,x}^{2}(p,q)\hspace{0.025cm}d\nu.
\end{align*}

\noindent As $\mu_{t}(x)\in\mathcal{M}(p,q)$, we consider $\mu_{t}$ to be an average evolution (see FIG. \ref{FIG.1} and FIG. \ref{FIG.2}).\par
Another application of Theorem \ref{THM.Disint} and its proof will be presented in the next subsection in form of mean entropic curvature bounds.

\begin{figure}
\begin{center}
\begin{tikzpicture}
\node at (1.81,0.583) {$p$};
\node at (14.125,-0.445) {$q$};
\draw [black,fill=black] (1.935,0.483) circle (0.03cm);
\draw [black,fill=black] (14,-0.305) circle (0.03cm);
\draw [dashed] (2,0.5) .. node[above, yshift=-0.05cm] (ref) {\small{$\mu_{t}(x_{1})$}} controls (8,3.25) .. (13.935,-0.278);
\draw [dashed] (2,0.5) .. node[above, yshift=-0.05cm] {\small{$\mu_{t}(x_{2})$}} controls (8,2.375) .. (13.935,-0.278);
\draw [dashed] (2,0.5) .. node[above, yshift=-0.05cm] {\small{$\mu_{t}(x_{3})$}} node[below] {$\vdots$} controls (8,1.5) .. (13.935,-0.278);
\node [above = 0cm of ref] {$\vdots$};
\end{tikzpicture}
\caption{}
\label{FIG.1}
\end{center}
\end{figure}

\begin{figure}
\begin{center}
\begin{tikzpicture}
\node at (1.81,0.583) {$p$};
\node at (14.125,-0.445) {$q$};
\draw [black,fill=black] (1.935,0.483) circle (0.03cm);
\draw [black,fill=black] (14,-0.305) circle (0.03cm);
\draw [fill=gray!40, draw=none] (2,0.5) .. node[above, yshift=-0.05cm] (ref){} controls (8,3.25) .. (13.935,-0.278);
\draw [line width=0.025cm] (2,0.5) .. node[above] {$\mu_{t}$} controls (8,1.875) .. (13.935,-0.278);
\draw [fill=white, draw=none] (2,0.5) .. node[below] {$\vdots$} controls (8,1.5) .. (13.935,-0.278);
\node [above = 0cm of ref] {$\vdots$};
\end{tikzpicture}
\caption{}
\label{FIG.2}
\end{center}
\end{figure}

\subsection{Mean entropic curvature bounds}

Let $H$ be finite-dimensional and $\partial$ a symmetric gradient for $(M_{n}(\mathbb{C}),\textrm{tr})$. In the classical setting, Sturm introduced entropic curvature bounds for metric measure spaces \cite{SturmGMMSI}. Theorem \ref{THM.Disint} leads us to consider a mean relative entropy, as well as mean entropic curvature bounds. For the latter, we prove a local to global theorem. The next definition uses Proposition \ref{PRP.Entrp_Rpr}, i.e. $\textrm{Ent}(p|\tau)=\tau(p\log p)$ for all density matrices.

\begin{dfn}\label{DFN.Crv_Fibre}
We say that $(M_{n}(\mathbb{C}),\textrm{tr},\partial)$ has curvature $\geq K\in\mathbb{R}$ if for all $p,q\in\mathcal{D}_{b}$, there exists some $\mu_{t}\in\mathcal{M}(p,q)$ such that

\begin{align}
\textrm{tr}(\rho_{t}\log\rho_{t})\leq (1-t)\textrm{tr}(p\log p)+t\textrm{tr}(q\log q)-\frac{K}{2}t(1-t)\mathcal{W}_{2}^{2}(p,q)
\end{align}

\noindent for each $t\in [0,1]$. We set $\curv(A,\tau,\partial):=\sup\{K\in\mathbb{R}\ |\ (M_{n}(\mathbb{C}),\textrm{tr},\partial)\ \textrm{has}\ \textrm{curvature} \geq K\}$, where $\sup\emptyset=-\infty$ as usual.
\end{dfn}

\begin{rem}
By definition, each $(M_{n}(\mathbb{C}),\textrm{tr},\partial)$ has curvature $\geq\curv(M_{n}(\mathbb{C}),\textrm{tr},\partial)$.
\end{rem}

\begin{dfn}
For all $p,q\in\mathcal{D}_{b}$, set $\mathcal{M}(p,q,K):=\{\mu_{t}\in\mathcal{M}(p,q)\ |\ \mu_{t}\ \textrm{satisfies}\ (1)\ \textrm{for}\ K\}$.
\end{dfn}

\begin{lem}\label{LEM.FinDim_CntDpd_III}
Let $\curv(M_{n}(\mathbb{C}),\textrm{tr},\partial)\geq K$. For all $p,q\in\mathcal{D}_{b}$ and all $(p_{i})_{i\in\mathbb{N}},(q_{i})_{i\in\mathbb{N}}\subset\mathcal{D}_{b}$ with $p_{i}\longrightarrow p$, resp.~$q_{i}\longrightarrow q$ in the $||.||_{\mathcal{S}_{1}(H)}$-topology, then

\begin{itemize}
\item[1)] there exist $\mu_{t}\in\mathcal{M}(p,q,K)$ and $\mu_{t}^{i_{k}}\in\mathcal{M}(p_{i_{k}},q_{i_{k}},K)$ with $\lim_{k\in\mathbb{N}}D(\mu_{t}^{i_{k}},\mu_{t})=0$,
\item[2)] the limit of each $D$-converging sequence of $\mu_{t}^{i}\in\mathcal{M}(p,q,K)$ lies in $\mathcal{M}(p,q,K)$. 
\end{itemize}
\end{lem}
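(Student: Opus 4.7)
The plan is to follow the structure of the proof of Lemma \ref{LEM.FinDim_CntDpd_II} while additionally verifying that the $K$-displacement convexity inequality (1) passes to the limit under $D$-convergence. Since $\curv(M_n(\mathbb{C}),\textrm{tr},\partial)\geq K$, every set $\mathcal{M}(\cdot,\cdot,K)$ appearing below is automatically non-empty, so the sequences of minimisers satisfying (1) genuinely exist.

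For 1), first choose $\mu_t^i\in\mathcal{M}(p_i,q_i,K)$. Since each $\mu_t^i$ is in particular a minimiser and $\mathcal{W}_2$ depends continuously on its arguments via Proposition \ref{PRP.FinDim_WkMetr}, the energies $E(\mu_t^i)=\tfrac{1}{2}\mathcal{W}_2^2(p_i,q_i)$ are uniformly bounded. The extraction argument used in Lemma \ref{LEM.FinDim_CntDpd_I} then supplies a subsequence $\mu_t^{i_k}$ that $D$-converges to some $\mu_t\in\mathcal{M}(p,q)$. It remains to upgrade this to $\mu_t\in\mathcal{M}(p,q,K)$, which is the only real content to be added beyond Lemma \ref{LEM.FinDim_CntDpd_II}.

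Verifying that (1) passes to the limit is where the work lies, though it is ultimately routine in finite dimensions. Since $H$ is finite-dimensional, $D$-convergence on $C([0,1],\mathcal{S}_{cl}(\mathcal{K}(H)))$ coincides with uniform $\|\cdot\|_{\mathcal{S}_1(H)}$-convergence of paths, so $\rho_t^{i_k}\to\rho_t$ uniformly in norm. Continuity of $\lambda\longmapsto\lambda\log\lambda$ on $\mathbb{R}_{\geq 0}$, combined with continuity of the continuous functional calculus on self-adjoint operators, yields the pointwise convergence $\textrm{tr}(\rho_t^{i_k}\log\rho_t^{i_k})\to\textrm{tr}(\rho_t\log\rho_t)$; the identical argument handles $\textrm{tr}(p_{i_k}\log p_{i_k})\to\textrm{tr}(p\log p)$ and the analogous convergence at $q$. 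Finally, $\|\cdot\|_{\mathcal{S}_1(H)}$-convergence of the marginals implies $w^*$-convergence, so Proposition \ref{PRP.FinDim_WkMetr} gives $\mathcal{W}_2^2(p_{i_k},q_{i_k})\to\mathcal{W}_2^2(p,q)$. Passing to the limit in (1) at each fixed $t\in[0,1]$ then places $\mu_t\in\mathcal{M}(p,q,K)$, completing 1).

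For 2), a $D$-convergent sequence $\mu_t^i\in\mathcal{M}(p,q,K)$ has its limit in $\mathcal{M}(p,q)$ by Lemma \ref{LEM.FinDim_CntDpd_II}(2) applied with the constant sequences $p_i=p$, $q_i=q$, and the same continuity machinery transfers inequality (1) to the limit path. The main obstacle to watch is that the three ingredients of (1) are continuous under the correct topologies; no genuinely new technique beyond Lemma \ref{LEM.FinDim_CntDpd_II} enters, and both conclusions follow from the functional-calculus continuity of relative entropy together with the $w^*$-metrisability of $\mathcal{W}_2$ on $\mathcal{D}_b$.
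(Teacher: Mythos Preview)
Your proposal is correct and follows essentially the same approach as the paper: the paper's proof simply observes that $\|\cdot\|_{\mathcal{S}_1}$-convergence gives convergence of the entropy terms, Proposition \ref{PRP.FinDim_WkMetr} gives convergence of $\mathcal{W}_2$, hence inequality (1) is a closed condition, after which one argues exactly as in Lemma \ref{LEM.FinDim_CntDpd_II}. Your write-up spells out the functional-calculus continuity of $\lambda\mapsto\lambda\log\lambda$ and the identification of $D$-convergence with uniform norm convergence in finite dimensions, but these are precisely the details underlying the paper's terse assertion that (1) is closed.
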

\begin{proof}
Convergence w.r.t~the $||.||_{\mathcal{S}_{1}}$-topology implies $\lim\tau(p_{i}\log p_{i})=\tau(p\log p)$ and Proposition \ref{PRP.FinDim_WkMetr} shows convergence of $\mathcal{W}_{2}(p_{i},q_{i})$ to $\mathcal{W}_{2}(p,q)$. Hence $(1)$ is a closed condition, and we argue analogously to our proof of Lemma \ref{LEM.FinDim_CntDpd_II}.
\end{proof}

\begin{prp}\label{PRP.Cnt_Dpd_II}
If $\curv(M_{n}(\mathbb{C}),\textrm{tr},\partial)\geq K$, then $\mathcal{M}(p,q,K)$ is non-empty and closed w.r.t.~$D$ for each $p,q\in\mathcal{D}_{b}$.
\end{prp}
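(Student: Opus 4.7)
The plan is to observe that this proposition stands in the same relationship to Lemma \ref{LEM.FinDim_CntDpd_III} as Proposition \ref{PRP.Cnt_Dpd} does to Definition \ref{DFN.Cnt_Dpd}. That is, Lemma \ref{LEM.FinDim_CntDpd_III} effectively says that the assignment $(p,q)\mapsto\mathcal{M}(p,q,K)$ satisfies the two clauses of continuous dependence of minimisers on start- and endpoints, so non-emptiness and closedness will follow by a constant-sequence trick exactly as in the proof of Proposition \ref{PRP.Cnt_Dpd}.

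More concretely, I would argue as follows. Fix $p,q\in\mathcal{D}_b$ and consider the constant sequences $p_i:=p$, $q_i:=q$, which trivially converge to $p$ and $q$ in $||.||_{\mathcal{S}_1(H)}$. Statement $1)$ of Lemma \ref{LEM.FinDim_CntDpd_III} then yields an element $\mu_t\in\mathcal{M}(p,q,K)$ as the $D$-limit of a sequence $\mu_t^{i_k}\in\mathcal{M}(p_{i_k},q_{i_k},K)=\mathcal{M}(p,q,K)$, whence $\mathcal{M}(p,q,K)\neq\emptyset$. For closedness, let $\mu_t^i\in\mathcal{M}(p,q,K)$ be any $D$-convergent sequence; statement $2)$ of the same lemma, applied again with the constant sequences $p_i=p$, $q_i=q$, shows that its limit belongs to $\mathcal{M}(p,q,K)$. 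This is precisely the two-line argument that appeared for Proposition \ref{PRP.Cnt_Dpd}, so there is no additional work beyond unpacking the definitions.

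The main obstacle, if any, is entirely absorbed into Lemma \ref{LEM.FinDim_CntDpd_III}: one needs that the relative entropy depends continuously on density matrices in the $||.||_{\mathcal{S}_1(H)}$-topology and that $\mathcal{W}_2$ is continuous on $\mathcal{D}_b$ (by Proposition \ref{PRP.FinDim_WkMetr}), so that the $K$-convexity inequality $(1)$ passes to limits of both the minimisers and the boundary data. Since that lemma is already granted, the proposition itself reduces to a two-sentence consequence, and no further estimates or selection arguments are required.
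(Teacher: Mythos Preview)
Your proof is correct. The paper's own argument is marginally different in packaging: it obtains non-emptiness directly from the definition of $\curv(M_{n}(\mathbb{C}),\textrm{tr},\partial)\geq K$ (which by Definition \ref{DFN.Crv_Fibre} already guarantees a minimiser satisfying $(1)$), and obtains closedness by noting that $\mathcal{M}(p,q,K)$ is the intersection of the closed set $\mathcal{M}(p,q)$ with the set of paths satisfying the closed condition $(1)$. Your route via Lemma \ref{LEM.FinDim_CntDpd_III} with constant sequences is the exact analogue of the proof of Proposition \ref{PRP.Cnt_Dpd} and is equally valid; the two approaches differ only in whether one quotes the lemma as a black box or re-uses its ingredients, and neither buys anything the other does not.
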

\begin{proof}
By Definition of $\curv(M_{n}(\mathbb{C}),\textrm{tr},\partial)$, $\mathcal{M}(p,q,K)$ is non-empty. Closedness follows from $\mathcal{M}(p,q)$ being closed and $(1)$ being a closed condition. 
\end{proof}

We define the mean relative entropy of a density, as well as an associated synthetic curvature bound condition. The latter is called the mean entropic curvature bound. Lemma \ref{LEM.FinDim_CntDpd_III} and Proposition \ref{PRP.Cnt_Dpd_II} allow us to argue analogously to Lemma  \ref{LEM.Msrbl_Slct}. This yields a theorem similar in spirit to the disintegration theorem. Note that for all $P\in\mathcal{D}$, $x\longmapsto\textrm{tr}(P(x)\log P(x))$ is measurable. 

\begin{dfn}
Let $\partial$ be a vertical gradient for $(C_{0}(X,M_{n}(\mathbb{C})),\nu\otimes\textrm{tr})$. 

\begin{itemize}
\item[1)] For all $P\in\mathcal{D}$, $\textrm{Ent}_{m}(P|\nu\otimes\textrm{tr})=\int_{X}\textrm{tr}(P(x)\log P(x))d\nu\in\mathbb{R}\cup\{\pm\infty\}$ is the mean relative entropy.
\item[2)] We say that $(C_{0}(X,M_{n}(\mathbb{C})),\nu\otimes\textrm{tr},\partial)$ has mean curvature $\geq K\in\mathbb{R}$ if for all $f\in\mathcal{D}(X,\nu)$ and for all $P,Q\in\mathcal{D}_{f}$, there exists a minimiser $\mu_{t}\in\mathcal{A}(P,Q)$ such that 

\begin{align}
\textrm{Ent}_{m}(P_{t}|\nu\otimes\textrm{tr})\leq (1-t)\textrm{Ent}_{m}(P|\nu\otimes\textrm{tr})+t\textrm{Ent}_{m}(Q|\nu\otimes\textrm{tr})-\frac{K}{2}t(1-t)\mathcal{W}_{2}^{2}(P,Q)
\end{align}

\noindent for each $t\in [0,1]$. We set 

\begin{align*}
\mcurv(\nu\otimes\textrm{tr},\partial):=\sup\{K\in\mathbb{R}\ |\ (C_{0}(X,M_{n}(\mathbb{C})),\nu\otimes\textrm{tr},\partial)\ \textrm{has}\ \textrm{mean}\ \textrm{curvature} \geq K\}.
\end{align*}
\end{itemize}
\end{dfn}

\begin{prp}
The mean relative entropy is a convex function on $\mathcal{D}$. If $X$ is compact and $P\in\mathcal{D}_{b}$, then $\textrm{Ent}(P|\nu\otimes\textrm{tr})=\textrm{Ent}_{m}(P|\nu\otimes\textrm{tr})$.
\end{prp}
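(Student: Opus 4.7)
I would treat the two claims separately: convexity reduces to a fibre-wise inequality, and the identity on bounded densities combines Proposition \ref{PRP.Entrp_Rpr} with pointwise functional calculus.

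For convexity, the required ingredient is the classical fact that $p\mapsto\textrm{tr}(p\log p)$ is convex on $\mathcal{S}_{1}(H)_{+}$. This follows either from operator convexity of $t\log t$ on $[0,\infty)$ or, in the spirit of the paper, by applying Proposition \ref{PRP.Entrp_Rpr} fibre-wise to express $\textrm{tr}(p\log p)$ as a pointwise supremum of affine functionals in $p$. Given $P,Q\in\mathcal{D}$ and $\lambda\in[0,1]$, setting $R_{\lambda}:=\lambda P+(1-\lambda)Q$ yields
\begin{align*}
\textrm{tr}\big(R_{\lambda}(x)\log R_{\lambda}(x)\big)\leq\lambda\textrm{tr}(P(x)\log P(x))+(1-\lambda)\textrm{tr}(Q(x)\log Q(x))
\end{align*}
for a.e.~$x\in X$. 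The lower bound $\textrm{tr}(p\log p)\geq\textrm{tr}(p)\log\textrm{tr}(p)$ from Jensen applied to $t\log t$ rules out any $\infty-\infty$ ambiguity upon integration against $d\nu$, so the inequality survives to $\textrm{Ent}_{m}$ as claimed.

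For the identity on bounded densities, note that $A=C(X,M_{n}(\mathbb{C}))$ is unital. Proposition \ref{PRP.Entrp_Rpr} gives $\textrm{Ent}(P|\nu\otimes\textrm{tr})=(\nu\otimes\textrm{tr})(P\log P)$; its proof only uses $\tau(P)=1$, not normalisation of $\tau(1_{A})$, since Petz's supremum is attained at $x=\log P$ whenever $P$ is a density. It remains to show that continuous functional calculus on $P$ is computed fibre-wise. Approximating $t\mapsto t\log t$ uniformly on $[0,\|P\|_{\infty}]$ by polynomials $p_{k}$, the relation $p_{k}(P)(x)=p_{k}(P(x))$ is trivial, while $\|p_{k}(P)-P\log P\|_{L^{\infty}(X,M_{n}(\mathbb{C}))}\to 0$ implies a.e.~pointwise convergence by a standard null-set-union argument. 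Hence $(P\log P)(x)=P(x)\log P(x)$ a.e., and applying Proposition \ref{PRP.Prd_Tr} to $P\log P\in L^{1}(\nu\otimes\textrm{tr})$ (split into positive and negative parts from spectral functional calculus if needed) yields
\begin{align*}
(\nu\otimes\textrm{tr})(P\log P)=\int_{X}\textrm{tr}(P(x)\log P(x))\,d\nu=\textrm{Ent}_{m}(P|\nu\otimes\textrm{tr}).
\end{align*}

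I do not anticipate a serious obstacle. The most delicate bookkeeping is ensuring Proposition \ref{PRP.Entrp_Rpr} survives the weaker normalisation hypothesis here, but inspection of its proof shows that only $\tau(P)=1$ enters the computation of the supremum; and the fibre-wise transfer of continuous functional calculus, while routine, is the technical step that must be executed carefully.
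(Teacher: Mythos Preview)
Your approach is correct and matches the paper's own proof, which is stated very tersely: convexity is attributed in one line to ``convexity of the noncommutative relative entropy'' (i.e.\ the fibre-wise fact you invoke), and the identity is obtained as the chain $\textrm{Ent}(P|\nu\otimes\textrm{tr})=\tau(P\log P)=\int_{X}\textrm{tr}(P(x)\log P(x))\,d\nu=\textrm{Ent}_{m}(P|\nu\otimes\textrm{tr})$ via Proposition~\ref{PRP.Entrp_Rpr}. Your additional care about the normalisation hypothesis $\tau(1_{A})=1$ in Proposition~\ref{PRP.Entrp_Rpr}, the $\infty-\infty$ bookkeeping, and the fibre-wise transfer of functional calculus fills in details the paper leaves implicit.
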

\begin{proof}
The first statement follows immediately from convexity of the noncommtuative relative entropy. For the second one, use Proposition \ref{PRP.Entrp_Rpr} to see the first equality in

\begin{align*}
\textrm{Ent}(P|\nu\otimes\textrm{tr})=\tau(P\log P)=\int_{X}\textrm{tr}(P(x)\log P(x))d\nu=\textrm{Ent}_{m}(P|\nu\otimes\textrm{tr}).
\end{align*}
\end{proof}

\begin{thm}\label{THM.NC_MCrv}
If $\partial$ is a vertical gradient for $(C_{0}(X,M_{n}(\mathbb{C})),\nu\otimes\textrm{tr})$, then 

\begin{align*}
\mcurv(\nu\otimes\textrm{tr},\partial)\geq \essinf_{x\in X}\curv(M_{n}(\mathbb{C}),\textrm{tr},\partial_{x}).
\end{align*}
\end{thm}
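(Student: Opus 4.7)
The plan is to combine the measurable selection scheme from the proof of Theorem \ref{THM.Disint} with the analogous closure property for $K$-curvature minimisers provided by Lemma \ref{LEM.FinDim_CntDpd_III}. Set $K := \essinf_{x\in X}\curv(M_n(\mathbb{C}), \textrm{tr}, \partial_x)$; if $K = -\infty$ the conclusion is vacuous, so I would assume $K \in \mathbb{R}$, in which case $\curv(M_n(\mathbb{C}), \textrm{tr}, \partial_x) \geq K$ for $\nu$-a.e.~$x$. Fix $f \in \mathcal{D}(X,\nu)$ and $P, Q \in \mathcal{D}_f$, write $p(x) := \theta_P(x)^2 P(x)$, $q(x) := \theta_P(x)^2 Q(x)$, and note that Corollary \ref{COR.Disint} already furnishes $\mathcal{W}_2^2(P,Q) < \infty$ together with the disintegration identity
\begin{align*}
\mathcal{W}_2^2(P,Q) = \int_X \mathcal{W}_{2,x}^2(p(x), q(x))\, d\nu_P.
\end{align*}
The case where $\textrm{Ent}_m(P \mid \nu\otimes\textrm{tr}) = +\infty$ or $\textrm{Ent}_m(Q \mid \nu\otimes\textrm{tr}) = +\infty$ is trivial, so I would also assume both mean entropies are finite.

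The first step is to re-run the proof of Lemma \ref{LEM.Msrbl_Slct} verbatim with $\mathcal{M}(\cdot,\cdot)$ replaced everywhere by $\mathcal{M}(\cdot,\cdot,K)$. Proposition \ref{PRP.Cnt_Dpd_II} supplies non-emptiness and $D$-closedness of each $\mathcal{M}(p(x), q(x), K)$, while Lemma \ref{LEM.FinDim_CntDpd_III} replaces continuous dependence of minimisers on start- and endpoints inside the argument. The combinatorial expansion
\begin{align*}
\hat\psi_{P,Q,C}(B_\varepsilon(f)) = \bigcup_{j=1}^\infty\bigcup_{k=1}^\infty\bigcap_{i=k}^\infty \{x \in X \mid \mathcal{M}(P_i(x), Q_i(x), K) \cap B_{\varepsilon-j^{-1}}(f) \neq \emptyset\}
\end{align*}
and the step-function reduction of each inner set to a finite Borel union go through unchanged, since the approximating marginals $P_i, Q_i$ of Lemma \ref{LEM.Pos_Appr} take values in density matrices. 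Theorem 6.9.3 of \cite{BK.Bog_MsrThry} then delivers a measurable selection $\xi(x) \in \mathcal{M}(p(x), q(x), K)$ for a.e.~$x$.

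Following the final paragraph of the proof of Theorem \ref{THM.Disint}, I would set $P_t(x) := f(x)\, \xi(x)(t)$ and $W_t(x) := f(x)^{1/2} w_t(x)$, where $w_t$ is the vector field of $\xi(x)$, check that $\mu_t := (P_t, R_t W_t)$ is an admissible path from $P$ to $Q$, and use the disintegration identity to deduce that $\mu_t$ is a minimiser. To extract the mean inequality $(2)$ from the fibre inequality $(1)$ applied to $\xi(x)$, I would exploit the scaling identity
\begin{align*}
\textrm{tr}(c\rho \log(c\rho)) = c\log c + c\, \textrm{tr}(\rho \log \rho)
\end{align*}
(with the convention $0\log 0 := 0$) applied to $P(x) = f(x) p(x)$, $Q(x) = f(x) q(x)$ and $P_t(x) = f(x) \xi(x)(t)$. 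The resulting $\int_X f \log f\, d\nu$ contributions appear additively on both sides of $(2)$ and cancel, so that multiplying $(1)$ by $f(x)$, integrating against $d\nu$, and identifying $\int_X \mathcal{W}_{2,x}^2(p(x), q(x))\, d\nu_P$ with $\mathcal{W}_2^2(P,Q)$ yields $(2)$ for $\mu_t$ with constant $K$, proving $\mcurv(\nu\otimes\textrm{tr}, \partial) \geq K$.

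The main obstacle is the measurable selection step: one has to verify that the constrained multifunction $x \mapsto \mathcal{M}(p(x), q(x), K)$ still has closed non-empty values and Borel pre-images of open sets. The essential analytic input is that $(1)$ is a \emph{closed} condition under $D$-convergence of the minimiser jointly with $\|\cdot\|_{\mathcal{S}_1(H)}$-convergence of the marginals, which is exactly what Lemma \ref{LEM.FinDim_CntDpd_III} encodes. Once that is in place the remainder is bookkeeping around the disintegration theorem and the entropic scaling identity displayed above.
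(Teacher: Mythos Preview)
Your proposal is correct and follows essentially the same route as the paper's own proof: replace $\mathcal{M}(p,q)$ by $\mathcal{M}(p,q,K)$ in the measurable selection argument of Lemma \ref{LEM.Msrbl_Slct}, invoke Lemma \ref{LEM.FinDim_CntDpd_III} and Proposition \ref{PRP.Cnt_Dpd_II} in place of ordinary continuous dependence, re-run the construction of Theorem \ref{THM.Disint} to obtain a global minimiser whose fibre components lie in $\mathcal{M}(p(x),q(x),K)$, and then integrate the fibre inequality $(1)$ using the entropic scaling identity and the disintegration formula for $\mathcal{W}_2^2$. Your explicit handling of the case $\textrm{Ent}_m(P\mid\nu\otimes\textrm{tr})=+\infty$ or $\textrm{Ent}_m(Q\mid\nu\otimes\textrm{tr})=+\infty$ is a small addition the paper leaves implicit.
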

\begin{proof}
If the right-hand side is $-\infty$, there is nothing to show. We therefore assume

\begin{align*}
\essinf_{x\in X}\curv(M_{n}(\mathbb{C}),\textrm{tr},\partial_{x})\geq K
\end{align*}

\noindent for some $K\in\mathbb{R}$. For $f\in\mathcal{D}(X,\nu)$, let $P,Q\in\mathcal{D}_{f}$. By definition, $\textrm{tr}(P(x))=\textrm{tr}(Q(x))$ for a.e.~$x\in X$. By Lemma \ref{LEM.FinDim_CntDpd_III} and Proposition \ref{PRP.Cnt_Dpd_II}, each $\mathcal{M}(p,q,K)$ has the same properties we required of $\mathcal{M}(p,q)$ when proving $\ref{LEM.Msrbl_Slct}$. Arguing as in Theorem \ref{THM.Disint}, we obtain a minimiser $\mu_{t}\in\mathcal{A}(P,Q)$. Our choice of $\mathcal{M}(p,q,K)$ instead of $\mathcal{M}(p,q)$ ensures $\theta_{P}(x)^{2}\mu_{t}(x)\in\mathcal{M}(\theta_{P}(x)^{2}P(x),\theta_{P}(x)^{2}Q(x),K)$.\par
For all density matrices $p$ and $C>0$, we have $\textrm{tr}(Cp\log Cp)=\textrm{tr}(Cp\log C)+\textrm{tr}(Cp\log p)$ by functional calculus and unitality of $M_{n}(\mathbb{C})$. Using this, we obtain

\begin{align*}
\textrm{Ent}_{m}(P_{t}|\nu\otimes\textrm{tr})-\int_{X}\textrm{tr}(P_{t}(x)\log\textrm{tr}(P(x)))d\nu=\int_{X}\textrm{tr}(\theta_{P}(x)^{2}P_{t}(x)\log\theta_{P}(x)^{2}P_{t}(x))d\nu_{P}.
\end{align*}

\noindent Since $\theta_{P}(x)^{2}\mu_{t}(x)$ satisfies $(1)$ with $K$ for a.e.~$x\in X$, the right-hand side of the equation just above is less or equal to

\begin{align*}
&\ (1-t)\int_{X}\textrm{tr}(\theta_{P}(x)^{2}P(x)\log\theta_{P}(x)^{2}P(x))d\nu_{P}\\
+&\ t\int_{X}\textrm{tr}(\theta_{P}(x)^{2}P(x)\log\theta_{P}(x)^{2}Q(x))d\nu_{P}\\
-&\ \frac{K}{2}t(1-t)\int_{X}\mathcal{W}_{2,x}^{2}(\theta_{P}(x)^{2}P(x)),\theta_{P}(x)^{2}Q(x)))d\nu_{P}.
\end{align*}

\noindent The last summand equals $-\frac{K}{2}t(1-t)\mathcal{W}_{2}^{2}(P,Q)$ by Theorem \ref{THM.Disint}. Knowing this, it suffices to add

\begin{align*}
\int_{X}\textrm{tr}(P_{t}(x)\log\textrm{tr}(P(x)))d\nu=(1-t)\int_{X}\textrm{tr}(P_{t}(x)\log\textrm{tr}(P(x)))d\nu+t\int_{X}\textrm{tr}(P_{t}(x)\log\textrm{tr}(P(x)))d\nu
\end{align*}

\noindent to both sides of the esimate we just proved and to use $\textrm{tr}(Cp\log Cp)=\textrm{tr}(Cp\log C)+\textrm{tr}(Cp\log p)$ to show that $\mu_{t}$ satisfies $(2)$ for $\essinf_{x\in X}\curv(M_{n}(\mathbb{C}),\textrm{tr},\partial_{x})$.
\end{proof}

\section{Disintegrating $L^{2}$-Wasserstein distances}

We extend the notion of vertical gradients to $\mathcal{K}(H)$-bundles, introduce the disintegration problem for unital $C^{*}$-algebras and give sufficient conditions for solving it. Finally, we outline plans to achieve disintegration for more general fields of $C^{*}$-algebras.\par
As in the last section, let $X$ be a locally compact Hausdorff space, $\mathcal{B}(X)$ its Borel $\sigma$-algebra, $(X,\mathcal{B}(X))$ a separable measure space and $H$ a separable Hilbert space. Furthermore, let $X$ have a continuous partition of unity for each locally finite open cover. We say that $X$ has sufficiently many continuous partitions of unity. Normal spaces have sufficiently many continuous partitions of unity, see \cite{BK.Quer_MngthTop}. Compact Hausdorff spaces and paracompact topological manifolds are examples of normal spaces.

\subsection{Vertical gradients for $\mathcal{K}(H)$-bundles}

Let $E$ be an hermitian vector bundle over $X$ with fibres $H$. Since $E$ is hermitian, all structure maps of $\textrm{End}(E)$ must be unitary. In particular, $||.||_{\mathcal{B}(H)}$ changes appropriately under structure maps and we are able to define bounded sections accordingly. Finally, structure maps send compact operators to compact operators. We obtain the compact endomorphism bundle $\textrm{End}_{\mathcal{K}}(E)$ with fibres given by $\mathcal{K}(H)$, its structure maps being $^{*}$-homomorphisms. $\Gamma_{0}(\textrm{End}_{\mathcal{K}}(E))$ is the $C^{*}$-algebra of continuous sections vanishing at infinity.

\begin{ntn}
Write $\mathcal{T}(E)$ for the set of trivialising open subsets. If we pick a continuous partition of unity $(\varphi_{i})_{i\in I}$, we demand each $\textrm{supp}\hspace{0.025cm}\varphi_{i}$ to be a subset of some $U_{i}\in\mathcal{T}(E)$.
\end{ntn}

Let $\tau$ be a trace on $\Gamma_{0}(\textrm{End}_{\mathcal{K}}(E))$. Given a continuous partition of unity $(\varphi_{i})_{i\in I}$, surjectivity of the restriction map implies that each $F\longmapsto \tau(\varphi_{i}F_{|U_{i}})$ defines a trace $\tau_{|U_{i}}$ on $\Gamma_{0}(\textrm{End}_{\mathcal{K}}(E_{|U_{i}}))$. This allows a general notion of product trace.

\begin{dfn}
We call a trace $\tau$ on $\Gamma_{0}(\textrm{End}_{\mathcal{K}}(E))$ a product trace if there exists a continuous partition of unity $(\varphi_{i})_{i\in I}$ such that each $\tau_{|U_{i}}$ is a product trace.
\end{dfn}

\begin{prp}\label{PRP.Prd_Tr_All_Part}
If $\tau$ is a product trace, then $\tau_{|U_{i}}$ is a product trace for all continuous partitions of unity $(\varphi_{i})_{i\in I}$.
\end{prp}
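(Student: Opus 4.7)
Plan: The strategy is to reconstruct, from the given data $(\varphi_i, U_i, \nu_i)$, a Radon measure $\tilde{\nu}_U$ on each trivialising open $U \in \mathcal{T}(E)$, and then verify the identity
\begin{align*}
\tau(\chi G) = \int_U \chi\, \textrm{tr}(G)\, d\tilde{\nu}_U
\end{align*}
for every $\chi \in C_c(U)$ and every $G \in \Gamma_0(\textrm{End}_{\mathcal{K}}(E_{|U}))$. Applied to $\chi = \psi_j$ and $U = V_j$, this identity exhibits $\tau_{|V_j}$ as $(\psi_j \tilde{\nu}_{V_j}) \otimes \textrm{tr}$, hence as a product trace.

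As a preliminary, I would show each $\nu_i$ is supported in $\overline{\textrm{supp}\,\varphi_i}$: for $f \in C_c(U_i \setminus \overline{\textrm{supp}\,\varphi_i})$ and any $T \in \mathcal{S}_1(H)$, the global section $\varphi_i(f \odot T)$ vanishes identically, so
\begin{align*}
\int f\, d\nu_i \cdot \textrm{tr}(T) = \tau_{|U_i}(f \odot T) = \tau(\varphi_i(f \odot T)) = 0.
\end{align*}
Combined with local finiteness of $(\varphi_i)_{i \in I}$, this makes $\tilde{\nu}_U(h) := \sum_i \int h\, d\nu_i$ a finite sum of positive contributions for $h \in C_c(U)$, and Riesz representation yields the Radon measure $\tilde{\nu}_U$.

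For the central identity, I would decompose $\chi G = \sum_i \varphi_i \chi G$ (a finite sum, since $\textrm{supp}\,\chi$ is compact). Each summand has compact support in $U \cap U_i$, and using the transition unitary $u_{iU}\colon U \cap U_i \to U(H)$ the section $G_i(x) := \chi(x) u_{iU}(x) G^U(x) u_{iU}(x)^{-1}$ extends continuously by zero from $U \cap U_i$ to all of $U_i$ (legitimate because $\chi$ vanishes in a neighbourhood of $\partial U$ inside $U_i$). The global section $\varphi_i G_i$, computed via the $U_i$-trivialisation, agrees with $\varphi_i \chi G$, so
\begin{align*}
\tau(\varphi_i \chi G) = \tau_{|U_i}(G_i) = \int \textrm{tr}(G_i)\, d\nu_i = \int \chi\, \textrm{tr}(G)\, d\nu_i,
\end{align*}
where the last equality uses unitary invariance of the fibre trace $\textrm{tr}(u T u^{-1}) = \textrm{tr}(T)$. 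Summing over $i$ and invoking the definition of $\tilde{\nu}_U$ produces the displayed identity.

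The main obstacle is the bookkeeping across trivialisations: one has to confirm that the $U_i$-trivialised section $G_i$ really extends continuously across $\partial U \cap U_i$ (which rests on the fact that $\chi$, not $G$, supplies the cutoff) and that the cocycle of transition unitaries contributes nothing once the fibre trace is applied. Once these technicalities are dispatched, unitary invariance of $\textrm{tr}$ reduces the whole computation to a scalar one and the proposition follows by linearity.
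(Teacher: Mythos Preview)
Your proof is correct. Both your argument and the paper's rest on the same two ingredients: passing to a common refinement of the two partitions of unity, and using that the fibrewise trace is invariant under the transition unitaries so that the local product-trace structures patch consistently. The paper compresses this into three lines: it forms the refined partition $\chi_{i,j}:=\eta_j\varphi_i$, observes that each $\tau_{|U_{(i,j)}}$ inherits the product-trace property from $\tau_{|U_j}$, and then asserts that the statement follows at once (the unitary invariance step is left implicit).

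Your route is more constructive: rather than arguing only that \emph{some} Radon measure exists on each $V_j$, you actually build an intrinsic measure $\tilde\nu_U$ on every trivialising open $U$, assembled from the given $\nu_i$, and verify the representation $\tau(\chi G)=\int_U\chi\,\textrm{tr}(G)\,d\tilde\nu_U$ directly. This makes the role of the cocycle and the trace invariance explicit, and as a by-product yields a canonical local measure independent of the second partition $(\psi_j)$. The paper's argument is quicker but hides exactly the bookkeeping you spelled out; yours is longer but leaves nothing to the reader. One minor point: when you apply your identity with $\chi=\psi_j$, you should note that $\psi_j$ need not itself have compact support, so strictly you apply the identity with $\chi=\psi_j f$ for $f\in C_c(V_j)$ (or with a cutoff equal to $1$ on $\textrm{supp}\,G$), which is in $C_c(V_j)$ and suffices to verify both defining conditions of a product trace.
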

\begin{proof}
Choose a continuous parition of unity $(\eta_{i})_{j\in J}$ such that $\tau_{|U_{j}}$ is a product trace. Write $\varphi_{i}=\sum_{j\in J}\eta_{j}\varphi_{i}$ for an arbitrary continuous partition of unity $(\varphi_{i})_{i\in I}$ and set $\chi_{i,j}:=\eta_{j}\varphi_{i}$. Each $(\chi_{i,j})_{(i,j)\in I\times J}$ is itself a continuous partition of unity and each $\tau_{|U_{j}}$ a product trace. Therefore $\tau_{|U_{(i,j)}}$ must be a product trace as well. From this, the statement follows at once.
\end{proof}

\begin{cor}\label{COR.Prd_Tr_Bdl}
If $H$ is finite-dimensional and there exists a continuous partition of unity $(\varphi_{i})_{i\in I}$ such that each $\tau_{|U_{i}}$ is finite, $\tau$ is a product trace.
\end{cor}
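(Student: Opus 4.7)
The plan is to reduce this corollary directly to the trivial-bundle version, namely Corollary \ref{COR.Prd_Tr}, via the local trivialisations encoded in the partition of unity.

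First I would fix the given continuous partition of unity $(\varphi_{i})_{i\in I}$, each $\textrm{supp}\,\varphi_{i}\subset U_{i}\in\mathcal{T}(E)$. Because $U_{i}$ is trivialising, the choice of a unitary trivialisation of $E_{|U_{i}}$ induces a $^{*}$-isomorphism
\begin{align*}
\Phi_{i}:\Gamma_{0}(\textrm{End}_{\mathcal{K}}(E_{|U_{i}}))\longrightarrow C_{0}(U_{i},\mathcal{K}(H)).
\end{align*}
Pushing the restricted trace forward along $\Phi_{i}$ yields a l.s.c., semi-finite trace $\tau_{|U_{i}}\circ\Phi_{i}^{-1}$ on $C_{0}(U_{i},\mathcal{K}(H))$, which is finite by hypothesis on $\tau_{|U_{i}}$.

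Next I would apply Corollary \ref{COR.Prd_Tr}: since $H$ is finite-dimensional and this pushed-forward trace is a finite trace on $C_{0}(U_{i},\mathcal{K}(H))$, it is a product trace and corresponds to a finite Radon measure $\nu_{i}$ on $U_{i}$ via Proposition \ref{PRP.Prd_Tr}. Pulling back, this says precisely that $\tau_{|U_{i}}$ is a product trace in the sense introduced for the bundle $E_{|U_{i}}$, as the $^{*}$-isomorphism $\Phi_{i}$ identifies $C_{c}(U_{i},\mathcal{S}_{1}(H))$ inside $C_{0}(U_{i},\mathcal{K}(H))$ with the corresponding subspace of $\Gamma_{0}(\textrm{End}_{\mathcal{K}}(E_{|U_{i}}))$, and boundedness of $T\mapsto\tau(f\odot T)$ is intrinsic.

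Having produced a continuous partition of unity $(\varphi_{i})_{i\in I}$ such that every $\tau_{|U_{i}}$ is a product trace, the definition of a product trace on $\Gamma_{0}(\textrm{End}_{\mathcal{K}}(E))$ is met, so $\tau$ is itself a product trace. The only mildly delicate point, and the one I would check carefully, is the independence of the construction from the choice of unitary trivialisation: different trivialisations differ by a unitary section, and the trace $\textrm{tr}$ on $\mathcal{K}(H)$ is invariant under pointwise unitary conjugation, so both the product-trace property and the associated Radon measure $\nu_{i}$ are intrinsic to $E_{|U_{i}}$. With that observation, the argument above is essentially bookkeeping on top of Corollary \ref{COR.Prd_Tr} and Proposition \ref{PRP.Prd_Tr_All_Part}, the latter ensuring that the conclusion does not depend on the particular partition of unity chosen.
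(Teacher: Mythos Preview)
Your proposal is correct and follows essentially the same route as the paper's proof, which simply cites Corollary~\ref{COR.Prd_Tr} together with Proposition~\ref{PRP.Prd_Tr_All_Part}. You have merely made explicit the trivialisation step and the intrinsic nature of the product-trace property, which the paper leaves implicit in its one-line argument.
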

\begin{proof}
This follows from the above proposition and Corollary \ref{COR.Prd_Tr} by finite-dimensionality.
\end{proof}

Given a continuous partition of unity $(\varphi_{i})_{i\in I}$ and an $F\in\Gamma_{0}(\textrm{End}_{\mathcal{K}}(E))$, we write

\begin{align}
\tau(F)=\sum_{i\in I}\tau(\varphi_{i}F)=\sum_{i\in I}\int_{U_{i}}\varphi_{i}(x)\textrm{tr}(F_{|U_{i}}(x))d\nu_{i}
\end{align}

\noindent where the right-hand side is independent of our choices since the left-hand side already is. Let $\Gamma_{c}(\textrm{End}_{\mathcal{K}}(E))$ denote the space of continuous section with compact support. 

\begin{dfn}
If $p\in [1,\infty)$, then $\Gamma^{p}(\textrm{End}_{\mathcal{K}}(E),\tau)$ is defined as the Hausdorff-completion of $\Gamma_{c}(\textrm{End}_{\mathcal{K}}(E))$ w.r.t.~the semi-norm

\begin{align*}
||F||_{p}:=\sum_{i\in I}\int_{U_{i}}\varphi_{i}(x)\textrm{tr}(F_{|U_{i}}(x))d\nu_{i}.
\end{align*}

\noindent For $p=\infty$, let $\Gamma^{\infty}(\textrm{End}_{\mathcal{K}}(E),\tau)$ be the space of bounded measurable sections modulo nullsets with norm

\begin{align*}
||F||_{\infty}:=\esssup_{x\in X} ||F(x)||_{\mathcal{B}(H)}.
\end{align*}
\end{dfn}

\begin{rem}
All $\Gamma^{p}(\textrm{End}_{\mathcal{K}}(E),\tau)$ are Banach spaces. $\Gamma^{2}(\textrm{End}_{\mathcal{K}}(E),\tau)$ is a Hilbert space with the obvious inner product, and we represent $\Gamma^{\infty}(\textrm{End}_{\mathcal{K}}(E),\tau)$ canonically over $\Gamma^{2}(\textrm{End}_{\mathcal{K}}(E),\tau)$. This representation trivialises to the usual one used in the fourth section. The definition of $||.||_{\infty}$ makes sense since structure maps are unitary.
\end{rem}

\begin{ntn}
From now on, we suppress $\tau$ in the notation of the above $L^{p}$-spaces.
\end{ntn}

\begin{prp}
For all $p\in [0,\infty]$, we have $L^{p}(\Gamma_{0}(E),\tau)=\Gamma^{p}(\End_{\mathcal{K}}(E))$.
\end{prp}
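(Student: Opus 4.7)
The plan is to reduce everything to Proposition \ref{PRP.Prd_Tr_LP} via a partition of unity. Fix a continuous partition of unity $(\varphi_{i})_{i\in I}$ subordinate to trivialising open sets $U_{i}\in\mathcal{T}(E)$, so that each $\tau_{|U_{i}}$ is a product trace $\nu_{i}\otimes\textrm{tr}$ on $\Gamma_{0}(\textrm{End}_{\mathcal{K}}(E_{|U_{i}}))\cong C_{0}(U_{i},\mathcal{K}(H))$ by Proposition \ref{PRP.Prd_Tr_All_Part}. Proposition \ref{PRP.Prd_Tr_LP} then identifies $L^{p}(\Gamma_{0}(\textrm{End}_{\mathcal{K}}(E_{|U_{i}})),\tau_{|U_{i}})$ with $L^{p}(U_{i},\mathcal{S}_{p}(H))$ for each $i\in I$ and each $p\in[1,\infty]$. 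This provides the local model onto which the global identification will be glued.

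For $p\in[1,\infty)$, I would work on the common dense subset $\Gamma_{c}(\textrm{End}_{\mathcal{K}}(E))$. For $F\in\Gamma_{c}(\textrm{End}_{\mathcal{K}}(E))$, $|F|^{p}$ again lies in $\Gamma_{c}(\textrm{End}_{\mathcal{K}}(E))$ by continuous functional calculus and compactness of support, so formula (3) gives
\begin{align*}
\tau(|F|^{p})=\sum_{i\in I}\int_{U_{i}}\varphi_{i}(x)\textrm{tr}(|F_{|U_{i}}(x)|^{p})d\nu_{i}
\end{align*}
with the sum locally finite. Since local compact support forces only finitely many summands to contribute and each summand is the local $L^{p}$-norm weighted by $\varphi_{i}$, the right-hand side coincides with $\|F\|_{\Gamma^{p}}^{p}$. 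Taking completions and invoking density of $\Gamma_{c}(\textrm{End}_{\mathcal{K}}(E))$ in both spaces yields the identification for $p\in[1,\infty)$.

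The case $p=\infty$ is the main obstacle and requires the duality argument used at the end of Proposition \ref{PRP.Prd_Tr_LP}. The pairing $(F,G)\mapsto\tau(FG)$ embeds $\Gamma^{\infty}(\textrm{End}_{\mathcal{K}}(E))$ isometrically into $\Gamma^{1}(\textrm{End}_{\mathcal{K}}(E))^{*}=L^{1}(\Gamma_{0}(E),\tau)^{*}=L^{\infty}(\Gamma_{0}(E),\tau)$; checking the isometry reduces locally to the trivial case already settled. What remains is to verify that this inclusion exhausts $L^{\infty}(\Gamma_{0}(E),\tau)$, i.e.~that $\Gamma^{\infty}(\textrm{End}_{\mathcal{K}}(E))$ is closed in the strong operator topology on $\mathcal{B}(\Gamma^{2}(\textrm{End}_{\mathcal{K}}(E)))$. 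I would do this by computing the commutant: multiplication operators $T\in\Gamma^{\infty}(\textrm{End}_{\mathcal{K}}(E))^{\prime}$ localise to $L^{\infty}(U_{i},\mathcal{B}(H))^{\prime}=L^{\infty}(U_{i})$ via the partition of unity and the countable $w^{*}$-density argument from \ref{PRP.Prd_Tr_LP}, and the glued commutant is scalar-valued. Applying Theorem IV.7.10 in \cite{TakTOAI} locally and reassembling via $(\varphi_{i})_{i\in I}$ then shows $\Gamma^{\infty}(\textrm{End}_{\mathcal{K}}(E))^{\prime\prime}=\Gamma^{\infty}(\textrm{End}_{\mathcal{K}}(E))$, so $\Gamma^{\infty}$ is a $W^{*}$-algebra and coincides with the enveloping von Neumann algebra of $\Gamma_{0}(\textrm{End}_{\mathcal{K}}(E))$, which is $L^{\infty}(\Gamma_{0}(E),\tau)$. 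The hard part will be verifying that local strong-operator convergence assembles to global strong-operator convergence despite the infinite sum in the inner product on $\Gamma^{2}(\textrm{End}_{\mathcal{K}}(E))$; here the locally finite nature of the partition together with boundedness of the approximating net should suffice.
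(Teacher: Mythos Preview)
Your proposal is correct and follows exactly the approach the paper intends: the paper's proof reads in its entirety ``Use $(3)$ and Proposition \ref{PRP.Prd_Tr_LP}'', and you have faithfully unpacked what that means---apply the partition-of-unity formula $(3)$ to $|F|^{p}$ on $\Gamma_{c}$ for $p<\infty$, and for $p=\infty$ rerun the duality and commutant argument from the trivial case locally. The technical concern you flag about gluing strong-operator convergence is real but not addressed in the paper either; it is implicitly subsumed under the invocation of \ref{PRP.Prd_Tr_LP}.
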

\begin{proof}
Use $(3)$ and Proposition \ref{PRP.Prd_Tr_LP}.
\end{proof}

Let $\bigoplus_{k=1}^{m}E$ be the $m$-th Whitney sum of $E$. Next, fix a product trace $\tau$ and consider the $\Gamma^{\infty}(\textrm{End}_{\mathcal{K}}(E))$-bimodule $\bigoplus_{k=1}^{m}\Gamma^{2}(\textrm{End}_{\mathcal{K}}(E))=\Gamma^{2}(\textrm{End}_{\mathcal{K}}(\bigoplus_{k=1}^{m}E))$. By construction, the latter trivialises to $L^{2}(U,\bigoplus_{k=1}^{m}\mathcal{S}_{2}(H),\tau_{|U})$ for each $U\in\mathcal{T}(E)$. For a continuous partition of unity $(\varphi_{i})_{i\in I}$, we have

\begin{align*}
FG=\sum_{i\in I}\varphi_{i}F_{|U_{i}}G_{|U_{i}},\ GF=\sum_{i\in I}\varphi_{i}G_{|U_{i}}F_{|U_{i}}
\end{align*}

\noindent for each $F\in\Gamma_{\infty}(\textrm{End}_{\mathcal{K}}(E))$ and $G\in\Gamma^{2}(\textrm{End}_{\mathcal{K}}(E))$. As in the trivial case, we obtain a canonical symmetric bimodule structure. Moreover, we see that

\begin{align}
M_{P}(G)=\sum_{i\in I}\varphi_{i}M_{P_{|U_{i}}}(G_{|U_{i}})
\end{align}

\noindent for each $P\in\mathcal{D}_{b}$ and $G\in\bigoplus_{k=1}^{m}\Gamma^{2}(\textrm{End}_{\mathcal{K}}(E))\cap\bigoplus_{k=1}^{m}\Gamma_{\textrm{loc}}^{\infty}(\textrm{End}_{\mathcal{K}}(E))$. Here, $\Gamma_{\textrm{loc}}^{\infty}(\textrm{End}_{\mathcal{K}}(E))$ denotes the locally bounded sections modulo nullsets.\par
Consider an unbounded $C_{c}(X)$-module map $\Phi$. Given a continuous partition of unity $(\varphi_{i})_{i\in I}$, we define linear maps from $\Gamma^{2}(\textrm{End}_{\mathcal{K}}(E_{|U_{i}}))$ to $\bigoplus_{k=1}^{m}\Gamma^{2}(\textrm{End}_{\mathcal{K}}(E_{|U_{i}}))$ by setting $\Phi_{|U_{i}}(F):=\Phi(\varphi F_{|U_{i}})=\varphi_{i}\Phi (F)$.

\begin{rem}
In the next definition, we could choose to replace $\bigoplus_{k=1}^{m}\Gamma^{2}(\textrm{End}_{\mathcal{K}}(E))$ by a symmetric Hilbert $\Gamma^{\infty}(\textrm{End}_{\mathcal{K}}(E))$-subbimodule which is furthermore a subsheaf. However, composing $\partial$ with the subsheaf inclusion would then yield a vertical gradient in the sense of Definition \ref{DFN.Vrt_Grd_Bdl}.
\end{rem}

\begin{dfn}\label{DFN.Vrt_Grd_Bdl}
An unbounded $C_{c}(X)$-module map $\partial:\Gamma^{2}(\textrm{End}_{\mathcal{K}}(E))\longrightarrow\bigoplus_{k=1}^{m}\Gamma^{2}(\textrm{End}_{\mathcal{K}}(E))$ is a vertical gradient if there exists a continuous partition of unity $(\varphi_{i})_{i\in I}$ such that each $\partial_{|U_{i}}$ is a vertical gradient in the sense of Definition \ref{DFN.Vrt_Grd}.
\end{dfn}

\begin{rem}
Definition \ref{DFN.Vrt_Grd_Bdl} is consistent with Definition \ref{DFN.Vrt_Grd}. Arguing as in Proposition \ref{PRP.Prd_Tr_All_Part}, an unbounded $C_{c}(X)$-module map $\partial$ as above is a vertical gradient if and only if all $\partial_{|U_{i}}$ are vertical gradients for each continuous partition of unity $(\varphi_{i})_{i\in I}$. 
\end{rem}

Set $\mathfrak{A}:=\{F\in \Gamma_{0}(\textrm{End}_{\mathcal{K}}(E))\cap\Gamma^{2}(\textrm{End}_{\mathcal{K}}(E))\ |\ \partial F\in\bigoplus_{k=1}^{n}\Gamma_{\textrm{loc}}^{\infty}(\textrm{End}_{\mathcal{K}}(E))\}$. Then $\mathfrak{A}$ is an extension algebra by $(4)$ and $D_{cb}(\partial_{|U})=C_{c}(U)\otimes \FinRk(H)$ for each $U\in\mathcal{T}(E)$. We proceed as in Subsection 4 to obtain a disintegration theorem for general $\mathcal{K}(H)$-bundles.

\setcounter{section}{4}
\setcounter{thm}{0}

\begin{thm}\label{THM.Bdl_Disint}
Let $\partial$ be a vertical gradient such that $(\partial_{|U})_{x}$ has continuous dependence of minimisers on start- and endpoints for a.e.~$x\in U$ for each $U\in\mathcal{T}(E)$. For all $P,Q\in\mathcal{D}$ with finite distance and all partitions of unity $(\varphi_{i})_{i\in\mathbb{N}}$, we have 

\begin{align*}
\mathcal{W}_{2}^{2}(P,Q)=\sum_{i}\int_{U_{i}}\varphi_{i}(x)\mathcal{W}_{2}^{2}(\theta_{P_{|U_{i}}}^{2}P_{|U_{i}}(x),\theta_{P_{|U_{i}}}^{2}Q_{|U_{i}}(x))d\nu_{P_{|U_{i}}}
\end{align*}

\noindent and there exists a minimiser $\mu_{t}$ of $\mathcal{W}_{2}(P,Q)$ such that $\theta_{P}(x)^{2}\mu_{t}(x)$ is a fibre-wise minimiser a.e.
\end{thm}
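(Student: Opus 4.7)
The plan is to reduce the general bundle case to Theorem \ref{THM.Disint} via the partition of unity $(\varphi_{i})_{i\in I}$, using that the fiber-wise $L^{2}$-Wasserstein distance $\mathcal{W}_{2,x}$ at $x\in X$ depends only on the intrinsic datum $(\mathcal{K}(E_{x}),\textrm{tr},\partial_{x})$. Different trivializations of $E$ around $x$ yield fiber gradients that differ by unitary conjugation, and both the trace and our construction of $\mathcal{W}_{2}$ are invariant under such conjugations; hence the integrand on the right-hand side is well-defined independently of the chosen trivialization. On each $U_{i}\in\mathcal{T}(E)$, a choice of trivialization identifies $\Gamma^{2}(\End_{\mathcal{K}}(E_{|U_{i}}))$ with $L^{2}(U_{i},\mathcal{S}_{2}(H))$, turning $\partial_{|U_{i}}$ into a vertical gradient in the sense of Definition \ref{DFN.Vrt_Grd} to which Theorem \ref{THM.Disint} applies.

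For the inequality ``$\geq$'', fix any $\mu_{t}=(P_{t},V_{t})\in\mathcal{A}(P,Q)$. By the local representation $(4)$ of $M_{P_{t}}$ and the definition of the tangent-space norm,
\begin{align*}
E(\mu_{t})=\frac{1}{2}\sum_{i}\int_{0}^{1}\int_{U_{i}}\varphi_{i}(x)\|W_{t}(x)\|_{\mathcal{H}}^{2}d\nu_{i}\,dt.
\end{align*}
Within each trivializing chart $U_{i}$, Lemma \ref{LEM.Rctf} produces a pointwise rectification $\mu_{t}^{rct}(x)\in\mathcal{A}(\theta_{P}(x)^{2}P(x),\theta_{P}(x)^{2}Q(x))$ for a.e.~$x\in U_{i}$, and Corollary \ref{COR.Rctf_DblInt} rewrites the above energy as $\sum_{i}\int_{U_{i}}\varphi_{i}(x)E(\mu_{t}^{rct}(x))d\nu_{P_{|U_{i}}}$. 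Since $E(\mu_{t}^{rct}(x))\geq\frac{1}{2}\mathcal{W}_{2,x}^{2}(\theta_{P}(x)^{2}P(x),\theta_{P}(x)^{2}Q(x))$ fiberwise, passing to the infimum over $\mathcal{A}(P,Q)$ yields the desired lower bound.

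For the reverse inequality and existence of a global minimizer, adapt the measurable selection procedure of Lemma \ref{LEM.Msrbl_Slct} chart-by-chart. The continuous-dependence hypothesis holds on a.e.~fiber of each $U_{i}$ by assumption, producing strongly measurable selections $\xi_{i}:U_{i}\longrightarrow C([0,1],\mathcal{S}_{cl}(\mathcal{K}(H)))$ of fiber-wise minimizers. Since the selection problem is intrinsic to each fiber, the local selections can be assembled into a single strongly measurable $\xi:X\longrightarrow C([0,1],\mathcal{S}_{cl}(\mathcal{K}(H)))$ after passing to a countable common refinement. Setting $\mu_{t}(x):=\textrm{tr}(P(x))\xi(x)(t)$ and checking the continuity equation tested against extension-algebra elements $\varphi_{i}F_{|U_{i}}$ and summed over $i$, precisely as in the proof of Theorem \ref{THM.Disint}, yields a global admissible path whose energy realizes the right-hand side, thus a minimizer.

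The main obstacle lies in the patching of the local measurable selections over overlapping trivializing charts: one must verify that the $\xi_{i}$ can be unified into a globally strongly measurable section without running into measurable-selection obstructions. This reduces to observing that the fiber selection problem at $x$ depends only on the intrinsic quadruple $(\mathcal{K}(E_{x}),\textrm{tr},\partial_{x},P(x),Q(x))$, so different $\xi_{i}$ agree in distribution on overlaps and can be glued by a standard piecewise argument over a countable refinement of $(U_{i})_{i\in I}$, which exists since $X$ has sufficiently many continuous partitions of unity.
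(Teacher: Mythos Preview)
Your proposal is correct and is precisely the elaboration the paper intends: the paper does not give a formal proof of this theorem, stating only ``We proceed as in Subsection 4 to obtain a disintegration theorem for general $\mathcal{K}(H)$-bundles'' after setting up the extension algebra $\mathfrak{A}$ and the local representation $(4)$ of $M_{P}$. Your chart-by-chart reduction to Theorem~\ref{THM.Disint}, together with the piecewise gluing of measurable selections over a countable refinement subordinate to $(U_{i})_{i\in\mathbb{N}}$, is exactly the argument this sentence abbreviates; the phrase ``agree in distribution on overlaps'' is slightly imprecise, but your subsequent piecewise construction over a measurable partition of $X$ is all that is needed and makes no appeal to any distributional compatibility.
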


\setcounter{section}{5}
\setcounter{thm}{0}

\subsection{Sufficient conditions involving Morita equivalence}

We begin by giving sufficient conditions in case $A$ and $\tau$ are already of the required form, i.e. as in the previous subsection. We thus concern ourselves with $\partial$ only. Moreover, we restrict to the finite-dimensional case.

\begin{ntn}
An unbounded linear map trivialising to an unbounded linear map for each $U\in\mathcal{T}(E)$ is called an unbounded bundle map.
\end{ntn}

\begin{prp}\label{PRP.Grd_Dcp_I}
Let $H$ be finite-dimensional, $\tau$ a product trace and $\partial$ is a symmetric gradient for $(\Gamma_{0}(\End_{\mathcal{K}}(E)),\tau)$ mapping to $\bigoplus_{k=1}^{m}\Gamma^{2}(\End_{\mathcal{K}}(E))$. Then $\partial$ is a vertical gradient if for all $U\in\mathcal{T}(E)$, we know that

\begin{itemize}
\item[1)] $\partial_{|U}$ is bounded,
\item[2)] $C_{c}(U)\subset\ker\partial_{|U}$.
\end{itemize}
\end{prp}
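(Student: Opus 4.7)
The plan is to fix a trivialising open cover $\{U_{i}\}_{i\in I}$ with subordinate continuous partition of unity $(\varphi_{i})_{i\in I}$, which exists by the blanket assumption that $X$ has sufficiently many continuous partitions of unity, and show each $\partial_{|U_{i}}$ arises as the induced operator of a measurable family of symmetric gradients for $(M_{n}(\mathbb{C}),\textrm{tr})$ with $n:=\dim H$. Once this is established, the example following Definition \ref{DFN.Vrt_Grd}, combined with $x\longmapsto||\partial_{x}||$ being globally bounded on each $U_{i}$, will show $\partial_{|U_{i}}$ is a vertical gradient in the sense of Definition \ref{DFN.Vrt_Grd}. Consequently, $\partial$ is a vertical gradient in the sense of Definition \ref{DFN.Vrt_Grd_Bdl}.

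Fix $U\in\mathcal{T}(E)$ and use the trivialisation to identify $\Gamma^{2}(\End_{\mathcal{K}}(E_{|U}))$ with $L^{2}(U,M_{n}(\mathbb{C}))$, and its $m$-fold direct sum correspondingly. Leibniz applied to the derivation $\partial$, together with hypothesis $2)$, yields $\partial_{|U}(fG)=f\hspace{0.05cm}\partial_{|U}(G)$ and $\partial_{|U}(Gf)=\partial_{|U}(G)f$ for all $f\in C_{c}(U)$ and $G$ in the domain, so $\partial_{|U}$ is a $C_{c}(U)$-bimodule map. By hypothesis $1)$ and norm density of $C_{c}(U)\subset C_{0}(U)$, it extends to a $C_{0}(U)$-bimodule map. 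Approximating any $f\in L^{\infty}(U)$ by a sequence in $C_{c}(U)$ uniformly bounded by $||f||_{\infty}$ and converging to $f$ pointwise almost everywhere via Lusin's theorem, together with dominated convergence in $L^{2}$ on both sides of the module identity, then upgrades $\partial_{|U}$ to an $L^{\infty}(U)$-bimodule map.

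Next, I would invoke the standard disintegration for bounded $L^{\infty}(U)$-linear operators between $L^{2}(U,V)$ and $L^{2}(U,W)$ with $V,W$ finite-dimensional. Using the commutant argument from the $p=\infty$ case of Proposition \ref{PRP.Prd_Tr_LP}, or, equivalently, choosing bases for $V$ and $W$ and reducing to scalar-valued $L^{\infty}(U)$-linear maps $L^{2}(U)\to L^{2}(U)$, this produces a measurable representing field $x\longmapsto\partial_{x}\in\mathcal{B}(M_{n}(\mathbb{C}),\bigoplus_{k=1}^{m}M_{n}(\mathbb{C}))$ with $\esssup_{x\in U}||\partial_{x}||=||\partial_{|U}||$ and $(\partial_{|U}G)(x)=\partial_{x}G(x)$ for a.e.\ $x\in U$. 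Applying the derivation and symmetry identities of $\partial_{|U}$ to simple tensors $f\odot T$ with $f\in C_{c}(U)$ and $T$ ranging over a countable dense subset of $M_{n}(\mathbb{C})$, and then taking a countable intersection of conull sets, I deduce that each $\partial_{x}$ is an algebra derivation and symmetric for a.e.\ $x\in U$. By Remark \ref{REM.Mass_Prsv}, each $(\partial_{x})_{k}$ is then a fibre gradient, while measurability of $x\longmapsto\partial_{x}T$ for $T\in M_{n}(\mathbb{C})$ is built into the disintegration.

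The main obstacle will be the disintegration step, i.e.\ upgrading $C_{c}(U)$-bimodule linearity to $L^{\infty}(U)$-bimodule linearity and then extracting a measurable field of fibre operators from the latter. Finite-dimensionality of $H$ is crucial, as it turns $\Gamma^{2}(\End_{\mathcal{K}}(E_{|U}))$ into the Hilbert tensor product $L^{2}(U)\otimes M_{n}(\mathbb{C})$ and reduces the disintegration to the same commutant argument already used in Proposition \ref{PRP.Prd_Tr_LP}; a secondary technical point, cleanly handled by the countable-dense-subset trick above, is ensuring the derivation and symmetry properties of $\partial_{x}$ hold simultaneously for almost every $x$.
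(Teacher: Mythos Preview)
Your proposal is correct and follows essentially the same approach as the paper: both establish that $\partial_{|U}$ commutes with $L^{\infty}(U)$ via the Leibniz rule and condition~2), then disintegrate into a measurable field of fibre operators and verify the pointwise gradient properties. The paper cites Takesaki's Theorem~IV.7.10 directly for the disintegration step where you invoke the commutant argument from Proposition~\ref{PRP.Prd_Tr_LP} (which itself rests on that theorem), and your treatment of the upgrade to $L^{\infty}(U)$-linearity and the almost-everywhere verification of the derivation and symmetry properties is more explicit than the paper's, but the underlying logic is identical.
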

\begin{proof}
Let $\textrm{dim}_{\mathbb{C}}(H)=n$ and $U\in\mathcal{T}(E)$ such that $\tau_{|U}(1_{U}\otimes 1_{M_{n}(\mathbb{C})})<\infty$. By $1)$, we then have $L^{\infty}(U)\otimes M_{n}(\mathbb{C})\subset D(\partial)$. Thus for all $g\in C_{0}(U)$ and $F\in C_{0}(U,M_{n}(\mathbb{C}))$, we have $\partial_{|U}(gF)=g\partial_{|U}(F)$ by the Leibniz-rule and $2)$. Since $\partial_{|U}$ is bounded, $\partial$ commutes with $C_{0}(U)$ and hence $L^{\infty}(U)$. Applying Theorem IV.7.10 in \cite{TakTOAI} shows that $\partial_{|U}$ decomposes into bounded linear operators $\partial_{x}$ from $M_{n}(\mathbb{C})$ to $\bigoplus_{k=1}^{m}M_{n}(\mathbb{C})$. As $\partial$ is furthermore an unbounded bundle map, it therefore is an unbounded $C_{0}(X)$-module map. By finite-dimensionality, each $\partial_{x}$ is a fibre-gradient. Because $\partial_{|U}$ was assumed to be a symmetric gradient, $\partial_{x}$ must be a symmetric gradient for almost every $x\in U$. Boundedness of each $\partial_{|U}$ and finite-dimensionality of $H$ ensure all remaining conditions in Definition \ref{DFN.Vrt_Grd} to be met.
\end{proof}

Consider $(A,\tau,\partial)$ for unital $A$ being Morita equivalent to a $C(X)$, $X$ compact Hausdorff. By compactness, $X$ has sufficiently many partitions of unity. Assume $\partial$ to map into $\bigoplus_{k=1}^{m}L^{2}(A,\tau)$. By Morita equivalence, we have an isomorphism $\Phi$ from $A$ to a $C(X,\textrm{End}_{\mathcal{K}}(E))$ as unitality ensures any Hilbert module implementing the equivalence to be finitely projective \cite{KhalBasicNCG}. $E$ is a finite-dimensional hermitian vector bundle. Moreover, we have isomorphisms from $L^{p}(A,\tau)$ to $\Gamma^{p}(\textrm{End}_{\mathcal{K}}(E),\Phi_{*}\tau)$ for each $p\in [0,\infty]$. By Corollary \ref{COR.Prd_Tr_Bdl}, finiteness of $\tau$ implies $\Phi_{*}\tau$ to be a product trace.

\begin{dfn}\label{DFN.Disint}
Let $(A,\tau,\partial)$ such that $A$ is unital and Morita equivalent to $C(X)$, $X$ compact. Furthermore, assume $\tau$ is finite and that $\partial$ maps to $\bigoplus_{k=1}^{m}L^{2}(A,\tau)$, $m\in\mathbb{N}$. We say that $\mathcal{W}_{2}$ disintegrates if there exists an isomorphism $\Phi$ such that $\Phi_{*}\partial$ is a vertical gradient.
\end{dfn}

\begin{cor}\label{COR.Grd_Dcp_I}
If we are in the setting of Definition \ref{DFN.Disint}, then $\mathcal{W}_{2}$ disintegrates if $\Phi_{*}\partial$ satisfies the conditions in Proposition \ref{PRP.Grd_Dcp_I}.
\end{cor}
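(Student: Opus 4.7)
The plan is to verify that the hypotheses match up: once an isomorphism $\Phi$ is fixed such that $\Phi_{*}\partial$ satisfies the conditions of Proposition \ref{PRP.Grd_Dcp_I}, the conclusion of that proposition is precisely what Definition \ref{DFN.Disint} demands. So the proof is essentially a check that Proposition \ref{PRP.Grd_Dcp_I} is applicable to $\Phi_{*}\partial$ in our setting, after which the result is immediate.

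First, I would unpack the Morita equivalence exactly as in the paragraph preceding Definition \ref{DFN.Disint}: unitality of $A$ ensures the implementing Hilbert module is finitely generated and projective, hence there exists a finite-dimensional hermitian vector bundle $E$ over $X$ and a $C^{*}$-algebra isomorphism $\Phi:A\longrightarrow C(X,\End_{\mathcal{K}}(E))=\Gamma_{0}(\End_{\mathcal{K}}(E))$. Since $X$ is compact, $H$ finite-dimensional, and $\tau$ finite by hypothesis, $\Phi_{*}\tau$ is a finite trace on $\Gamma_{0}(\End_{\mathcal{K}}(E))$; applying Corollary \ref{COR.Prd_Tr_Bdl} with the trivial partition of unity $\{1_{X}\}$ shows $\Phi_{*}\tau$ is a product trace.

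Second, I would transfer $\partial$ along $\Phi$. Being a $C^{*}$-isomorphism that intertwines $\tau$ and $\Phi_{*}\tau$, $\Phi$ induces isometric isomorphisms between $L^{p}(A,\tau)$ and $\Gamma^{p}(\End_{\mathcal{K}}(E))$ for each $p\in [1,\infty]$, and in particular between $\bigoplus_{k=1}^{m}L^{2}(A,\tau)$ and $\bigoplus_{k=1}^{m}\Gamma^{2}(\End_{\mathcal{K}}(E))$, preserving the canonical symmetric bimodule structure. Transporting $\partial$ along these isomorphisms therefore produces a symmetric gradient $\Phi_{*}\partial$ for $(\Gamma_{0}(\End_{\mathcal{K}}(E)),\Phi_{*}\tau)$ mapping into $\bigoplus_{k=1}^{m}\Gamma^{2}(\End_{\mathcal{K}}(E))$, i.e.~exactly the data that Proposition \ref{PRP.Grd_Dcp_I} takes as input.

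Third, since $\Phi_{*}\partial$ is assumed to satisfy conditions $1)$ and $2)$ of Proposition \ref{PRP.Grd_Dcp_I}, that proposition yields that $\Phi_{*}\partial$ is a vertical gradient in the sense of Definition \ref{DFN.Vrt_Grd_Bdl}. By Definition \ref{DFN.Disint}, this means $\mathcal{W}_{2}$ disintegrates, finishing the proof. The only substantive step is the second one, i.e.~confirming that $\Phi_{*}\partial$ is a genuine symmetric gradient for the push-forward data; this is the main (and only) obstacle, but it reduces to the standard fact that $C^{*}$-isomorphisms intertwining traces transport symmetric gradients functorially.
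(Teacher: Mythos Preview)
Your proposal is correct and matches the paper's approach: the corollary is stated without proof there, since it is immediate from Proposition~\ref{PRP.Grd_Dcp_I} together with the unpacking of Definition~\ref{DFN.Disint} and the paragraph preceding it (which already records that $\Phi_{*}\tau$ is a product trace by Corollary~\ref{COR.Prd_Tr_Bdl}). You have simply written out this verification explicitly.
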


It is clear that the sufficient conditions presented here are too strong for easy application. For example, we require better conditions for choosing isomorphisms such that $\phi_{*}\partial$ is at least an unbounded bundle map. Nevertheless, they give a first tentative attempt to reduce general problems to the vertical gradient case.\par
There are two main avenues we wish to explore. Firstly, we require conditions for having continuous dependence of minimisers on start- and endpoints of symmetric gradients for $(\mathcal{K}(H),\textrm{tr})$ and for the hyperfinite type $\textrm{II}_{1}$ factor $R$ equipped with its canonical trace $\tau_{0}$. Secondly, we consider more general direct integrals than $L^{2}(X,H)$ since we seek to understand gradients after disintegrating $L^{\infty}(A,\tau)$ into its factors. A natural point of departure are fields of elementary $C^{*}$-algebras. However, even if a continuous field of elementary $C^{*}$-algebras satisfies Fell's condition it need not equal the induced field of elementary $C^{*}$-algebras associated to its direct integral of Hilbert spaces, cmpr.~Theorem 10.7.15 in \cite{DixC*Alg}. In our setting, this is necessary for having $L^{\infty}(X,\mathcal{B}(H))=L^{\infty}(A,\tau)$. Thus not all direct integrals are immediately suitable to our purposes.\par
Once we have determined a class of direct integrals and generalised the notion of vertical gradient, we hope to apply results of form \cite{MathUnbdDecomp} in order to decompose unbounded gradients between direct integrals. With the outlined approach, we aim to cover a large number of $C^{*}$-algebras whose $L^{\infty}$-space is isomorphic to that induced by a direct integral whose fibres are given by some $\mathcal{S}_{2}(H)$ or $L^{2}(R,\tau_{0})$.

\end{document}